\newtheorem{theorem}{Theorem} 	      	      	                              
\newtheorem{corollary}[theorem]{Corollary}     	      	      	      	      
\newtheorem{lemma}[theorem]{Lemma}     	       	      	      	      	      
\newtheorem{proposition}[theorem]{Proposition} 	      	      	      	      
\newtheorem{definition}[theorem]{Definition} 	      	      	                
\newtheorem{problem}[theorem]{Problem} 	      	      	                      
\newtheorem{remark}[theorem]{Remark}                                          
\newtheorem{example}[theorem]{Example}                                        
\numberwithin{equation}{section}                                              
\numberwithin{theorem}{section}                                               
\numberwithin{figure}{section}                                                
\newcommand{\mf}[1]{\mathfrak{#1}}                                            
\newcommand{\mc}[1]{\mathcal{#1}}                                             
\newcommand{\N}{\mathbb{N}}                                                   
\newcommand{\R}{\mathbb{R}}                                                   
\newcommand{\Sph}{\mathbb{S}}                                                 
\newcommand{\nasla}{\slashed{\nabla}}                                         
\def\comp{1}
\begin{document}


\title[Observability on Time-Dependent Domains]{On Carleman and Observability Estimates for Wave Equations on Time-Dependent Domains}

\author{Arick Shao}
\address{School of Mathematical Sciences\\
Queen Mary University of London\\
London E1 4NS\\ United Kingdom}
\email{a.shao@qmul.ac.uk}

\begin{abstract}
We establish new Carleman estimates for the wave equation, which we then apply to derive novel observability inequalities for a general class of linear wave equations.
The main features of these inequalities are that (a) they apply to a fully general class of \emph{time-dependent domains}, with \emph{timelike moving boundaries}, (b) they apply to linear wave equations in \emph{any spatial dimension} and with \emph{general time-dependent lower-order coefficients}, and (c) they allow for \emph{smaller time-dependent regions of observations} than allowed from existing Carleman estimate methods.
As a standard application, we establish exact controllability for general linear waves, again in the setting of time-dependent domains and regions of control.
\end{abstract}

\subjclass[2010]{35L05, 93B05, 93B07, 93B27, 53C50}

\setcounter{tocdepth}{2}

\maketitle



\section{Introduction} \label{sec.intro}

In this article, we establish new Carleman estimates for the wave equation using a geometric approach.
The main objective is to apply these estimates in order to derive novel observability inequalities for general linear wave equations, with the following features:
\begin{enumerate}
\item[(I)] The estimates apply to a general class of \emph{time-dependent domains}, with \emph{moving boundaries}.

\item[(II)] The estimates apply to wave equations in \emph{any spatial dimension}.

\item[(III)] The estimates apply to general linear waves with \emph{time-dependent lower-order coefficients}.

\item[(IV)] The estimates apply for a wide variety of \emph{time-dependent observation regions} that are smaller than those in standard Carleman-based observability inequalities.
\end{enumerate}
As a standard application of these observability estimates, we establish the exact controllability of linear waves on the same general class of time-dependent domains.
Again, the region of control is allowed to be time-dependent proper subsets of regions found in classical results.

While the present paper deals only with wave equations on $\R^{1+n}$ (that is, on Minkowski spacetime), the main Carleman estimates are proved using ideas and intuitions from Lorentzian geometry.
As a result, the techniques presented here will also form the foundations for studying analogous questions for geometric wave equations on Lorentzian manifolds.

\subsection{Background} \label{sec.intro_bg}

In the context of evolutionary differential equations, the question of \emph{exact controllability} is concerned with whether one can drive its solutions from any prescribed initial state to any desired final state at a later time, under the constraint that only some limited parameters in the system---the \emph{controls}---can be set.
If solutions of the differential equation represents a physical system, then the above asks whether one can completely govern the system through its controls.

\subsubsection{Controllability of Waves}

To be more specific, let us consider a general linear wave equation
\begin{equation}
\label{eq.intro_wave} - \partial_{tt}^2 \phi + \Delta_x \phi + \mc{X}^t \partial_t \phi + \mc{X}^x \cdot \nabla_x \phi + V \phi = 0 \text{,}
\end{equation}
on a bounded (spatial) domain $\Omega \subseteq \R^n$.
In this setting, the initial state and the final state each correspond to a pair of functions, representing the values of $\phi$ and $\partial_t \phi$ at given times.
There are many possible choices for the control, with some common examples being the following:
\begin{itemize}
\item \emph{Interior controllability}: An additional forcing term $F$ on the right-hand side of \eqref{eq.intro_wave}.

\item \emph{Dirichlet boundary controllability}: Part of the Dirichlet boundary data for \eqref{eq.intro_wave}.

\item \emph{Neumann boundary controllability}: Part of the Neumann boundary data for \eqref{eq.intro_wave}.
\end{itemize}
In general, the methods for attacking all of the above cases are fairly similar.
Thus, for conciseness, we focus our attention in this paper solely on Dirichlet boundary controllability:

\begin{problem} \label{prb.intro_control}
Let $\Omega$ be a bounded open subset of $\R^n$.
Fix an initial time $\tau_-$ and a final time $\tau_+$, as well as a subset $\Gamma \subseteq [ \tau_-, \tau_+ ] \times \partial \Omega$.
Given any initial and final data $( \phi_0^\pm, \phi_1^\pm ) \in L^2 ( \Omega ) \times H^{-1} ( \Omega )$, can one find some Dirichlet boundary control $\phi_b \in L^2 ( \R \times \partial \Omega )$, supported in $\Gamma$, such that the solution $\phi$ of \eqref{eq.intro_wave}, with initial and Dirichlet boundary data
\[
( \phi, \partial_t \phi ) |_{ t = \tau_- } = ( \phi_0^-, \phi_1^- ) \text{,} \qquad \phi |_{ \R \times \partial \Omega } = \phi_b \text{,}
\]
also achieves the final state
\[
( \phi, \partial_t \phi ) |_{ t = \tau_+ } = ( \phi_0^+, \phi_1^+ ) \text{?}
\]
\end{problem}

Controllability of wave equations, and Problem \ref{prb.intro_control} in particular, has been a topic of research for several decades.
Here, we give a brief, and by no means exhaustive, survey of some existing research.
For more complete discussions, see, for example, \cite{micu_zua:control_pde} and the references within.

Examples of early results include the works of Russell \cite{russe:control_wave_1, russe:control_wave_2}.
Modern treatments of controllability are derived from the \emph{Hilbert Uniqueness Method} (\emph{HUM}) of Lions \cite{lionj:control_hum, lionj:ctrlstab_hum}.
(This is also closely related to the more abstract functional analytic framework developed in \cite{dolec_russe:obs_control}.)

The main point is that by duality, controllability is equivalent to uniqueness properties of the adjoint differential equation.
More specifically, for the present setting, in order to establish an affirmative answer to Problem \ref{prb.intro_control}, the main hurdle is to prove the \emph{observability inequality}
\begin{equation}
\label{eq.intro_obs} \| \psi ( \tau_\pm ) \|_{ H^1 ( \Omega ) } + \| \partial_t \psi ( \tau_\pm ) \|_{ L^2 ( \Omega ) } \lesssim \| \partial_\nu \psi \|_{ L^2 ( \Gamma ) } \text{,}
\end{equation}
for solutions $\psi$ of the wave equation adjoint to \eqref{eq.intro_wave}.
Here, $\partial_\nu \psi$ denotes the Neumann boundary data for $\psi$, and $\Gamma$ is as described in Problem \ref{prb.intro_control}.
When the inequality \eqref{eq.intro_obs} holds, the HUM machinery then yields exact controllability, with Dirichlet boundary controls supported in $\Gamma$.

For wave equations, there is a fundamental obstruction preventing exact controllability that is due to finite speed of propagation.
Indeed, some minimum amount of time is required for information from the boundary controls to travel to all of $\Omega$.
In particular, this implies a lower bound on the timespan $\tau_+ - \tau_-$ (which depends on the region where the control is placed) required for any boundary controllability, and hence observability, result to hold.\footnote{This is in direct contrast to the heat and Schr\"odinger equations, which propagate at infinite speed. In these settings, controllability generally holds for arbitrarily small times (given correspondingly large controls).}

Many methods have been developed for proving the crucial observability estimate \eqref{eq.intro_obs}.
For simplicity, let us first restrict our attention to the free wave equation,
\begin{equation}
\label{eq.intro_wave_free} \Box \psi := - \partial_{tt} \psi + \Delta_x \psi = 0 \text{.}
\end{equation}

For wave equations in one spatial dimension, observability estimates have been proved for many PDEs using \emph{Fourier methods}.
In the context of \eqref{eq.intro_wave_free}, these results are often based on applications of Ingham's inequality \cite{ing:trig_ineq_series} and its generalizations to the Fourier series expansions of $\psi$.
Moreover, such methods are capable of retrieving the optimal timespan for control, as dictated by finite speed of propagation.
For detailed discussions of Fourier methods, the reader is referred to \cite{avdo_ivan:moments_control}.
We also remark that similar methods were recently used to prove \eqref{eq.intro_obs} in higher dimensions \cite{green_liu_mitko:harmonic_obs}.

Other proofs of \eqref{eq.intro_obs} used \emph{multiplier methods}, in which one integrates by parts an expression
\[
\int_{ [ \tau_-, \tau_+ ] \times \Omega } \Box \psi X \psi \text{,}
\]
where $X$ represents an appropriately chosen first-order operator.
Using this technique, Ho \cite{ho:obs_wave} showed that for a sufficiently large timespan $\tau_+ - \tau_-$, the observability estimate \eqref{eq.intro_obs} indeed holds for solutions $\psi$ of \eqref{eq.intro_wave_free}, for a control region $\Gamma$ of the form
\begin{equation}
\label{eq.intro_obs_region} \Gamma := ( \tau_-, \tau_+ ) \times \{ y \in \partial \Omega \mid ( y - x_0 ) \cdot \nu > 0 \} \text{,}
\end{equation}
where $x_0 \in \R^n$ is fixed, and where $\nu$ is the outward-pointing unit normal for $\Omega$.\footnote{Here, the timespan $\tau_+ - \tau_-$ depends on the choice of $x_0$.}
Note that \eqref{eq.intro_obs_region} contains all points $x \in \partial \Omega$ such that the ray in $\R^n$ emanating from $x_0$ and passing through $x$ is exiting $\Omega$ at $x$.
In particular, one needs not apply the control on all of $[ \tau_-, \tau_+ ] \times \partial \Omega$.

A modification of the above argument (see \cite{lionj:ctrlstab_hum}) resulted in a lower bound on the timespan,
\begin{equation}
\label{eq.intro_obs_timespan} \tau_+ - \tau_- > 2 \max_{ y \in \partial \Omega } | y - x_0 | \text{,}
\end{equation}
required for the inequality \eqref{eq.intro_obs} to hold, with $\Gamma$ again given by \eqref{eq.intro_obs_region}.
Furthermore, using ``rotated" multipliers, Osses \cite{osses:mult_control} obtained analogues of \eqref{eq.intro_obs} with different boundary regions $\Gamma$.

Multiplier techniques can also be used to treat some wave equations of the form \eqref{eq.intro_wave}, as well as some more general hyperbolic equations; see \cite{mira:hum_var_coeff}, for instance.
However, these results generally fail to recover lower bounds of the form \eqref{eq.intro_obs_timespan} for the required timespan.
For more detailed discussions on multiplier methods and its roles in observability estimates, see \cite{komo:control_stab}.

\eqref{eq.intro_obs} has also been established using \emph{microlocal methods}, yielding optimal results with regards to the requisite timespan and control region.
Of particular note is the seminal result of Bardos, Lebeau, and Rauch \cite{bard_leb_rauch:gcc}.
Consider now the boundary region
\begin{equation}
\label{eq.intro_obs_gcc} \Gamma := [ \tau_-, \tau_+ ] \times \Lambda \text{,} \qquad \Lambda \subseteq \partial \Omega \text{.}
\end{equation}
Then, \eqref{eq.intro_obs} holds for $\Gamma$ as in \eqref{eq.intro_obs_gcc} if and only if the \emph{geometric control condition} (\emph{GCC}) is satisfied: roughly, every null geodesic in $( \tau_-, \tau_+ ) \times \Omega$---with the condition that it is reflected whenever it hits the boundary $( \tau_-, \tau_+ ) \times \partial \Omega$---intersects some point of $\Gamma$; see \cite{bard_leb_rauch:gcc, burq_gera:control_gcc}.
Note that the necessity of the GCC follows from Gaussian beam constructions that propagate along these geodesics; see \cite{rals:gauss_beam}.

The result was generalized by Burq in \cite{burq:control_wave}, which reduced the regularity required.
A more modern and constructive proof of \eqref{eq.intro_obs} along similar directions was given in \cite{laur_leaut:obs_unif}.
Furthermore, \cite{lero_leb_terpo_trel:gcc_time} extended the above results to \emph{time-dependent} subsets $\Gamma$ of the boundary that satisfy the GCC; this further optimized the control regions required for \eqref{eq.intro_obs} to hold.

Microlocal methods can also be adapted to some wave equations \eqref{eq.intro_wave} with lower-order terms, with the caveat that the coefficients ($\mc{X}^t$, $\mc{X}^x$, $V$) are \emph{time-independent}, or at most \emph{analytic} in the time variable.\footnote{This requirement of time-analyticity is a consequence of the unique continuation results of \cite{hor:uc_interp, robb_zuil:uc_interp, tat:uc_hh}.}
Moreover, these methods also apply to (time-independent) geometric wave operators $- \partial_{tt} + \Delta_h$ on product manifolds of the form $[ - \tau_-, \tau_+ ] \times M$, where $( M, h )$ is a Riemannian manifold with or without boundary, and where $\Delta_h$ is the $h$-Laplace-Beltrami operator on $M$.

\emph{Carleman estimates} represent yet another family of tools that has been useful for deriving observability.
These are, roughly, weighted (spacetime) integral inequalities which contain an additional free parameter $\lambda > 0$.
Historically, Carleman estimates have been used extensively for establishing unique continuation properties of various PDEs, in particular with coefficients that fail to be analytic.
This theory, pioneered by Carleman \cite{carl:uc_strong}, Calder\'on \cite{cald:unique_cauchy}, and H\"ormander \cite{hor:lpdo4}, among others, has been developed over several decades; we refer the reader to \cite{tat:notes_uc} for a general survey.

In the context of Problem \ref{prb.intro_control}, Carleman estimates are advantageous due to their robustness, in that they are applicable to a wide range of settings.
For instance, in contrast to multiplier methods, they allow one to treat wave equations \eqref{eq.intro_wave} with arbitrary lower-order coefficients (with sufficient regularity), while still achieving the some control regions of the form \eqref{eq.intro_obs_region} as well as the lower bound \eqref{eq.intro_obs_timespan}.
In particular, by taking the free parameter $\lambda$ in these estimates to be as large as necessary, one can ``absorb" away any potentially dangerous contributions from lower-order terms.

For example, Carleman estimates were applied toward proving observability and controllability of wave equations (with lower-order terms) in \cite{lasie_trigg_zhang:wave_global_uc, zhang:obs_wave_pot, zhang:obs_wave_lower}.
Additional adaptions of the Carleman estimate method can be found in \cite{baudo_debuh_erv:carleman_wave, fu_yong_zhang:ctrl_semilinear}, among many others.
For further discussions, see \cite{furs_iman:ctrl_evol}.

On one hand, methods based on Carleman estimates lack the precision of microlocal methods and do not achieve the GCC in general.
However, Carleman methods apply to a wider class of settings, including wave equations with \emph{time-dependent} lower-order coefficients, without any assumption of analyticity in time.
Moreover, Carleman estimates have been extended to geometric wave equations, given additional assumptions on the (time-independent) geometry; see, e.g., \cite{duy_zhang_zua:obs_opt, fu_yong_zhang:ctrl_semilinear, trigg_yao:carleman_wave_uc}.

For this article, we are particularly concerned with lower-order coefficients $\mc{X}^t$, $\mc{X}^x$, $V$ that vary in both space and time, \emph{without any presumption of analyticity}.
As we wish for our results to be as widely applicable as possible, we resort to Carleman estimates methods here.

Finally, we remark that the Carleman methods applied in \cite{baudo_debuh_erv:carleman_wave, fu_yong_zhang:ctrl_semilinear, lasie_trigg_zhang:wave_global_uc, zhang:obs_wave_pot, zhang:obs_wave_lower} dealt only with the case in which $x_0 \not\in \bar{\Omega}$.
(This was in contrast to multiplier methods, which allows for all $x_0 \in \R^n$.)
In this article, we will also remove this restriction and consider cases with $x_0 \in \Omega$.

\subsubsection{Non-Static Domains}

All the works described thus far have dealt with wave equations on a time-independent cylindrical domain, $[ \tau_-, \tau_+ ] \times \Omega$, with time-independent boundary $[ \tau_-, \tau_+ ] \times \partial \Omega$.
However, one can also pose the analogue of Problem \ref{prb.intro_control} in settings in which the domain, and hence the boundary, are moving in time.
To be more specific, we consider a spacetime domain of the form
\begin{equation}
\label{eq.intro_gtc} \mc{U} := \bigcup_{ \tau \in \R } ( \{ \tau \} \times \Omega_\tau ) \text{,}
\end{equation}
where the $\Omega_\tau$'s are bounded open subsets of $\R^n$ that vary smoothly with respect to $\tau$.
For example, this could represent a physical system that is itself accelerating.

In comparison to static settings, there has been a relatively small amount of research on time-dependent, non-cylindrical domains.
Below, we briefly survey of the some existing literature.

An early study that predated the HUM is that of Bardos and Chen \cite{bard_chen:ctrlstab_wave3}, which established interior controllability for free waves on domains $\mc{U}$ that are expanding in time.
In particular, the result was proved by deriving energy decay bounds and adopting stabilization techniques.
Furthermore, using geometric methods, the above result was extended to geometric (free) wave equations in \cite{lu_li_chen_yao:ctrlstab_wave_moving}, under additional assumptions on the background geometry.

Using the HUM, Miranda \cite{mira:control_var_boundary} established Dirichlet boundary controllability for free waves on a class of time-dependent domains with the following features:
\begin{itemize}
\item The domains are self-similar (i.e., each $\Omega_\tau$ is of the form $k ( \tau ) \cdot \Omega_0$ for some $k (\tau) > 0$).

\item The domain becomes ``asymptotically cylindrical" (roughly, $k' (\tau)$ decays for large times).

\item On the other hand, $\mc{U}$ needs not be expanding nor contracting at all times.
\end{itemize}
The result is established by applying a change of variables to convert the problem into that of boundary controllability for a more general hyperbolic equation on a time-independent, cylindrical domain.
The transformed problem is treated in \cite{mira:hum_var_coeff} using multiplier methods and the observation that the PDE asymptotes to the standard wave equation at large times.

The problem in one spatial dimension has been further studied by several authors in recent years.
For example, \cite{cui_jiang_wang:control_wave_fec, sun_li_lu:control_wave_moving} (see references within for earlier works) used multiplier methods to study the case in which the boundary is given by two timelike lines $\ell_0$ and $\ell$ (with $\ell_0$ generally taken to be vertical).
More recently, \cite{sengou:obs_control_wave, sengou:obs_control_wave2} considered these problems using Fourier methods.
In these particular cases, the optimal timespan required for control was obtained in some of the above articles.
Analogous problems with more general boundaries (that are not lines) have been considered; one example is \cite{wang_he_li:control_wave_noncyl}, which does not achieve the optimal timespan.

One of the primary goals of the present article is to tackle this problem of \emph{Dirichlet boundary controllability} of \emph{wave equations} (with lower-order coefficients varying non-analytically in both space and time) on \emph{time-dependent domains} in full generality.
In particular, we make no assumptions on the shape of our domain $\mc{U}$, aside from its boundary being timelike; in particular, $\mc{U}$ needs not be expanding, contracting, or self-similar in time.
To the author's knowledge, the present paper provides the first results regarding controllability of waves in moving domains in this generality.

These results will be established using a novel global Carleman estimate that is entirely supported in the exterior of a null cone.
Furthermore, we obtain the best required timespan for controllability that have been achieved via multiplier and Carleman estimate methods.

\subsection{The Main Results} \label{sec.intro_results}

In this section, we state, at least roughly, the main results of this paper.
More specifically, we begin by describing the main observability inequalities, as well as how they improve upon existing results.
We then discuss the new Carleman estimate that is used to derive these observability estimates, as well as the main ideas involved in its derivation.

Our results will apply to spatially bounded but time-dependent domains of the form \eqref{eq.intro_gtc} which also have a smooth timelike boundary.
We will refer to these domains throughout the paper as \emph{generalized timelike cylinders}; see Definition \ref{def.gtc} for a precise definition of these domains.

\subsubsection{A Preliminary Estimate for Free Waves} \label{sec.intro_results_prelim}

We begin by first presenting a ``warm-up" estimate that applies only to the free wave equation on generalized timelike cylinders.
While this estimate is, in multiple ways, strictly weaker than our main results, it does allow us to first discuss the effects of time-dependent domains on observability apart from the other details of the main results.

The ``warm up" estimate can be roughly stated as follows:

\begin{theorem} \label{thm.intro_obs_prelim}
Let $\mc{U}$ be a generalized timelike cylinder of the form \eqref{eq.intro_gtc}, fix a point $x_0 \in \R^n$, and fix ``initial" and ``final" times $\tau_\pm \in \R$, with $\tau_- < \tau_+$.
Moreover, we assume that 
\begin{equation}
\label{eq.intro_obs_prelim_ass} \tau_+ - \tau_- > R_+ + R_- \text{,} \qquad R_\pm := \sup_{ y \in \partial \Omega_{ \tau_\pm } } | y - x_0 | \text{,}
\end{equation}
and we let $t_0 \in ( \tau_-, \tau_+ )$ be such that\footnote{Note that \eqref{eq.intro_obs_prelim_ass} implies that such a $t_0 \in ( \tau_-, \tau_+ )$ satisfying \eqref{eq.intro_obs_prelim_t0} exists.}
\begin{equation}
\label{eq.intro_obs_prelim_t0} \tau_+ - t_0 > R_+ \text{,} \qquad t_0 - \tau_- > R_- \text{.}
\end{equation}
Then, for any smooth solution $\phi$ of
\begin{equation}
\label{eq.intro_obs_prelim_wave} ( - \partial_{tt}^2 \phi + \Delta_x \phi ) |_{ \mc{U} } = 0 \text{,} \qquad \phi |_{ \partial \mc{U} } = 0 \text{,}
\end{equation}
we have the observability estimate
\begin{equation}
\label{eq.intro_obs_prelim} \int_{ \mc{U} \cap \{ t = \tau_\pm \} } [ ( \partial_t \phi )^2 + | \nabla_x \phi |^2 + \phi^2 ] \lesssim \int_{ \Gamma_\ast } | \mc{N} \phi |^2 \text{,}
\end{equation}
where $\mc{N}$ denotes the \emph{Minkowski} outer-pointing unit normal of $\mc{U}$, and where
\begin{equation}
\label{eq.intro_obs_prelim_region} \Gamma_\ast := \{ ( \tau, y ) \in \partial \mc{U} \mid \tau_- < \tau < \tau_+ \text{, } \mc{N} f_\ast |_{ ( \tau, y ) } > 0 \} \text{,} \qquad f_\ast := \frac{1}{4} [ | x - x_0 |^2 - ( t - t_0 )^2 ] \text{.}
\end{equation}
\end{theorem}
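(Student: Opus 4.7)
The plan is to prove Theorem \ref{thm.intro_obs_prelim} via a Morawetz-type multiplier identity tied to the weight $f_\ast$. The crucial observation is that the Minkowski gradient of $f_\ast$,
\[
K := 2 \nabla^m f_\ast = ( t - t_0 ) \partial_t + ( x - x_0 ) \cdot \nabla_x \text{,}
\]
is a conformal Killing vector field on $\R^{1+n}$ satisfying $\mc{L}_K m = 2 m$, where $m$ is the Minkowski metric. Using the stress-energy tensor of $\phi$ contracted with $K$, augmented by the lower-order correction $\tfrac{n-1}{2} \phi$ chosen to absorb the conformal anomaly, one obtains a divergence identity whose bulk is pointwise proportional to $\Box \phi \cdot ( K \phi + \tfrac{n-1}{2} \phi )$. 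Integrating this identity over $\mc{V} := \mc{U} \cap \{ \tau_- \leq t \leq \tau_+ \}$ and invoking the free wave equation, the bulk vanishes and the problem reduces to analysing boundary integrals over the two spacelike caps $\{ t = \tau_\pm \} \cap \Omega_{ \tau_\pm }$ and over the lateral timelike portion $\partial \mc{U} \cap [ \tau_-, \tau_+ ]$.

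Next I would extract the observation term from the lateral boundary. The Dirichlet hypothesis $\phi |_{ \partial \mc{U} } = 0$ forces $\nabla \phi$ on $\partial \mc{U}$ to be a scalar multiple of the Minkowski outward normal $\mc{N}$ (which is spacelike, by the timelike boundary assumption), so the lateral integrand collapses to a constant multiple of $\mc{N} f_\ast \cdot ( \mc{N} \phi )^2$. The part supported on $\partial \mc{U} \setminus \Gamma_\ast$, where $\mc{N} f_\ast \leq 0$, contributes with a favourable sign and is moved harmlessly to the left-hand side, whereas the part on $\Gamma_\ast$ is estimated above by $C \int_{ \Gamma_\ast } | \mc{N} \phi |^2$, with $C$ depending only on $\sup_{ \Gamma_\ast } | \mc{N} f_\ast |$; this yields the right-hand side of \eqref{eq.intro_obs_prelim}. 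On the spacelike caps, the identity produces a quadratic form in $\partial \phi$ with coefficients involving $( t - t_0 )$ and $( x - x_0 )$. The hypothesis \eqref{eq.intro_obs_prelim_t0} ensures $| t - t_0 | > R_\pm \geq | x - x_0 |$ pointwise on $\bar{ \Omega }_{ \tau_\pm }$ at $t = \tau_\pm$, which is precisely the strict separation needed to render this quadratic form uniformly coercive in $( \partial_t \phi )^2 + | \nabla_x \phi |^2$. The remaining $\phi^2$ term on the left-hand side of \eqref{eq.intro_obs_prelim} is recovered either from the $\tfrac{n-1}{2} \phi$ lower-order correction or from a Poincar\'e inequality on the bounded slice $\Omega_{ \tau_\pm }$.

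The main obstacle is the geometric book-keeping of the lateral boundary term when $\partial \mc{U}$ is time-dependent. In the classical static setting of Ho, the Minkowski outward normal $\mc{N}$ coincides with the purely spatial outward normal $\nu$, and the condition $\mc{N} f_\ast > 0$ reduces to $( y - x_0 ) \cdot \nu > 0$; in the moving case, $\mc{N}$ acquires a temporal component determined by the velocity of the boundary, and one must verify that the sign of the lateral contribution is captured exactly by the pointwise sign of $\mc{N} f_\ast$. This is where the timelike boundary hypothesis, which guarantees $m ( \mc{N}, \mc{N} ) > 0$, is genuinely used, since it validates the Dirichlet reduction of $\nabla \phi$ to a scalar multiple of $\mc{N}$ with the correct orientation. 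Once these Lorentzian sign computations are carried through, the remainder of the argument is a direct variant of the classical conformal Killing multiplier template, which reflects the geometric robustness of the method in the moving-domain setting.
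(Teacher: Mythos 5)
Your proposal is correct and takes essentially the same approach as the paper's own proof of Theorem~\ref{thm.obs_prelim}. Your vector field $K = 2\nabla^\sharp f_\ast$ with the correction $\tfrac{n-1}{2}\phi$ is exactly twice the paper's multiplier $S_\ast\phi = S\phi + \tfrac{n-1}{4}\phi$ where $S = \nabla^\sharp f$; the conformal-Killing framing ($\mc{L}_K g = 2g$, equivalently $\nabla^2 f = \tfrac{1}{2}g$, $\Box f = \tfrac{n+1}{2}$) is precisely what makes the bulk terms cancel in the paper's integrations by parts \eqref{eql.obs_prelim_3}, the reduction of the lateral term to $\tfrac{1}{2}\mc{N}f\,(\mc{N}\phi)^2$ and the discarding of the part with $\mc{N}f \leq 0$ match \eqref{eql.obs_prelim_4} and the last step of \eqref{eql.obs_prelim_9}, and the coercivity on the caps via $|\tau_\pm - t_0| > R_\pm$ is the Cauchy--Schwarz bound \eqref{eql.obs_prelim_7}--\eqref{eql.obs_prelim_8}, with the $\phi^2$ term recovered from the zeroth-order correction (plus Poincar\'e when $n=1$), exactly as in the paper.
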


Theorem \ref{thm.intro_obs_prelim} implies, via the standard HUM, a corresponding exact controllability result for free waves.
A rough statement of this can be expressed as follows:

\begin{corollary} \label{thm.intro_control_prelim}
Assume the definitions and hyptheses of Theorem \ref{thm.intro_obs_prelim}.
Then, the free wave equation \eqref{eq.intro_wave_free} is exactly controllable, with initial and final data on $\mc{U} \cap \{ t = \tau_- \}$ and $\mc{U} \cap \{ t = \tau_+ \}$, respectively, and with Dirichlet boundary control supported in $\Gamma_\ast$ (all in the appropriate spaces).
\end{corollary}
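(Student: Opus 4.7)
The proof will be a direct application of the Hilbert Uniqueness Method (HUM) of Lions, adapted to the non-cylindrical setting. The only feature that is not completely classical is that all underlying linear analysis, namely well-posedness of the forward and backward Dirichlet problems, the ``hidden regularity'' bound $\mc{N} \psi \in L^2 ( \partial \mc{U} )$, and the transposition theory of weak solutions with $L^2$-boundary data, must be established on the generalized timelike cylinder $\mc{U}$. All of this goes through as in the cylindrical case, since the timelikeness of $\partial \mc{U}$ ensures that the standard energy identity and multiplier arguments survive; this is a routine check.

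Granting these prerequisites, the HUM scheme proceeds as follows. For smooth data $(\psi_0, \psi_1) \in H^1_0 ( \Omega_{\tau_-} ) \times L^2 ( \Omega_{\tau_-} )$, let $\psi$ denote the finite-energy solution of $\Box \psi = 0$ on $\mc{U} \cap \{ \tau_- \leq t \leq \tau_+ \}$ with zero Dirichlet data and initial data $( \psi_0, \psi_1 )$ at $t = \tau_-$. Define $\phi$ to be the transposition solution of $\Box \phi = 0$ with boundary data $( \mc{N} \psi ) \mathbf{1}_{\Gamma_\ast}$ on $\partial \mc{U}$ and zero final Cauchy data at $t = \tau_+$; such $\phi$ belongs to $C^0 ( L^2 ) \cap C^1 ( H^{-1} )$. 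Set
\[
\Lambda ( \psi_0, \psi_1 ) := ( \partial_t \phi ( \tau_- ), - \phi ( \tau_- ) ) \in H^{-1} ( \Omega_{\tau_-} ) \times L^2 ( \Omega_{\tau_-} ) \text{.}
\]
Integration by parts on the spacetime slab, using that $\psi, \phi$ both solve $\Box = 0$, that $\psi$ vanishes on $\partial \mc{U}$, and that $\phi$ vanishes at $t = \tau_+$, yields the fundamental identity
\[
\langle \Lambda ( \psi_0, \psi_1 ), ( \psi_0, \psi_1 ) \rangle = \int_{ \Gamma_\ast } | \mc{N} \psi |^2 \text{.}
\]

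By Theorem \ref{thm.intro_obs_prelim} applied at $\tau = \tau_-$, the right-hand side is bounded below by $c \, \| ( \psi_0, \psi_1 ) \|_{ H^1_0 \times L^2 }^2$, while the hidden regularity bound gives the matching upper bound. Hence $\Lambda$ is a bounded, symmetric, coercive operator on $H^1_0 ( \Omega_{\tau_-} ) \times L^2 ( \Omega_{\tau_-} )$, and Lax--Milgram produces an isomorphism $\Lambda : H^1_0 \times L^2 \to H^{-1} \times L^2$. To solve the control problem for arbitrary $( \phi_0^\pm, \phi_1^\pm ) \in L^2 \times H^{-1}$, one first propagates $( \phi_0^-, \phi_1^- )$ forward with zero boundary data, as a transposition solution $\phi_{\mathrm{hom}}$, and records $( \phi_{\mathrm{hom}}, \partial_t \phi_{\mathrm{hom}} ) |_{ t = \tau_+ }$. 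Computing the backward-in-time transposition solution with the target final data $( \phi_0^+, \phi_1^+ )$ and zero boundary data, one reads off a target element of $H^{-1} \times L^2$ at $t = \tau_-$; applying $\Lambda^{-1}$ to this target yields $( \psi_0, \psi_1 )$, and the sought-after control is then $\phi_b := ( \mc{N} \psi ) \mathbf{1}_{ \Gamma_\ast } \in L^2 ( \Gamma_\ast )$.

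The main obstacle is not conceptual but bookkeeping: one must verify that the forward/backward well-posedness theory, the hidden regularity estimate, and the transposition framework all extend cleanly from cylindrical to generalized timelike cylinders, together with the correct functional-analytic identifications of dual spaces at the moving cross-sections $\Omega_{\tau_\pm}$. Once this is in place, the observability inequality of Theorem \ref{thm.intro_obs_prelim} furnishes coercivity, and the rest of the argument is the classical HUM duality.
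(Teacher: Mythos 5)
Your argument is correct and is essentially the paper's: the paper also runs the standard HUM duality (Theorem \ref{thm.control_hum}), built on the same integration-by-parts identity and with the observability inequality of Theorem \ref{thm.intro_obs_prelim} supplying coercivity, the only cosmetic difference being that the paper minimizes the quadratic HUM functional by the direct method of the calculus of variations rather than inverting your operator $\Lambda$ via Lax--Milgram, which for a bounded coercive quadratic form is the same step. The prerequisite well-posedness, transposition, and hidden-regularity theory on generalized timelike cylinders that you defer as a ``routine check'' is exactly the paper's Theorem \ref{thm.weak_sol}, obtained by straightening $\mc{U}$ into a static cylinder along the flow of a generator, and the linearity reduction in your last paragraph (where the element fed to $\Lambda^{-1}$ should be the difference between the given data and the data obtained by propagating the target with zero boundary control) is the paper's Lemma on reduction to null controllability.
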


\begin{remark}
See Theorem \ref{thm.control_hum} for a precise statement of exact controllability in this context.
\end{remark}

A precise version of Theorem \ref{thm.intro_obs_prelim}, using notations developed in the article, is found in Theorem \ref{thm.obs_prelim}.
In particular, despite its preliminary nature, Theorem \ref{thm.intro_obs_prelim} already achieves the properties (I) and (II) listed in the beginning of the introduction, at least in the setting of free waves.

The proof of Theorem \ref{thm.intro_obs_prelim} is based on Lorentzian geometric adaptations of the classical multiplier estimate found in \cite{lionj:ctrlstab_hum}.
The key points of this proof are as follows:
\begin{itemize}
\item The multiplier $( x - x_0 ) \cdot \nabla_x$ in \cite{lionj:ctrlstab_hum} is now replaced by the (Minkowski) gradient of $f_\ast$.

\item The proof uses the divergence theorem for Lorentzian manifolds; see \cite[Appendix B.2]{wald:gr}.
\end{itemize}
For further details, the reader is referred to the proof of Theorem \ref{thm.obs_prelim}.

We now discuss some of the main features of Theorem \ref{thm.intro_obs_prelim} and its relations to previous literature.
First, observe that when $\mc{U}$ is time-independent, so that $R_+ = R_-$, then \eqref{eq.intro_obs_prelim_ass} reduces to the standard bound \eqref{eq.intro_obs_timespan} on the timespan.
Moreover, \eqref{eq.intro_obs_prelim_ass} can be argued in terms of finite speed of propagation: information placed on the boundary at time $t = \tau_-$ needs time at most $R_-$ to travel to $x_0$, and this then needs time at most $R_+$ to travel back to the boundary at $t = \tau_+$.

Next, since the domain $\mc{U}$ may be changing in time, the outer unit normal $\mc{N}$ may have a nonzero $t$-component.
Here, we use $\mc{N}$ to denote the \emph{Minkowski}, rather than Euclidean, normal to $\mc{N}$.
More specifically, if $\bar{\mc{N}} := ( \bar{\nu}^t, \nu )$ denotes the Euclidean normal vector field to $\mc{U}$, then
\begin{equation}
\label{eq.intro_obs_prelim_normal} \mc{N} := ( \nu^t, \nu ) = ( - \bar{\nu}^t, \nu ) \text{.}
\end{equation}

Note that the control region $\Gamma_\ast$ from \eqref{eq.intro_obs_prelim_region} is simply the classical region \eqref{eq.intro_obs_region}, except that the condition $( y - x_0 ) \cdot \nu > 0$ from \eqref{eq.intro_obs_region} is now replaced by $\mc{N} f_\ast |_{ ( \tau, y ) } > 0$.
In particular, observe that when $\mc{U}$ is time-independent, \eqref{eq.intro_obs_prelim_region} reduces precisely to the standard region \eqref{eq.intro_obs_region}, since
\begin{equation}
\label{eq.intro_obs_prelim_cylinder} \mc{N} f_\ast = \frac{1}{4} \mc{N} ( | x - x_0 |^2 ) = \frac{1}{2} \sum_{ i, j = 1 }^n ( x^i - x_0^i ) \nu^j \partial_{ x_j } ( x^i - x_0^i ) = \frac{1}{2} ( x - x_0 ) \cdot \nu \text{.}
\end{equation}

\begin{remark}
Whereas the standard observability estimates obtained from multipliers or Carleman estimates can be thought to be centered about the point $x_0$ in space, in Theorem \ref{thm.intro_obs_prelim}, one would view the estimate as being centered about the event $( t_0, x_0 )$ in the spacetime.
In fact, the function $f_\ast$ in \eqref{eq.intro_obs_region} precisely measures the squared Minkowski distance to $( t_0, x_0 )$.
However, as was seen in \eqref{eq.intro_obs_prelim_cylinder}, the contribution of ``$t_0$" is not seen unless the domain $\mc{U}$ is changing in time.
\end{remark}

More generally, one can compute that
\begin{equation}
\label{eq.intro_obs_prelim_noncyl} \mc{N} f_\ast = \frac{1}{2} ( x - x_0 ) \cdot \nu - \frac{1}{2} ( t - t_0 ) \cdot \nu^t \text{.}
\end{equation}
By the definition of $\mc{N}$ and $\nu^t$ in \eqref{eq.intro_obs_prelim_normal}, we can observe the following:
\begin{itemize}
\item If $\nu^t > 0$ at a point of $\partial \mc{U}$, then $\mc{U}$ is expanding in time at that point.

\item If $\nu^t < 0$ at a point of $\partial \mc{U}$, then $\mc{U}$ is shrinking in time at that point.
\end{itemize}
Therefore, \eqref{eq.intro_obs_prelim_noncyl} implies the following general principle when $t > t_0$:
\begin{itemize}
\item Where $\mc{U}$ is expanding, $\mc{N} f_\ast$ is less positive than $( x - x_0 ) \cdot \nu$.
Thus, the region $\Gamma_\ast$ of observation is smaller than the standard region \eqref{eq.intro_obs_region} near points where $\mc{U}$ is expanding.

\item Where $\mc{U}$ is shrinking, $\mc{N} f_\ast$ is more positive than $( x - x_0 ) \cdot \nu$.
Thus, the region $\Gamma_\ast$ of observation is larger than the standard region \eqref{eq.intro_obs_region} near points where $\mc{U}$ is shrinking.
\end{itemize}
On the other hand, for times $t < t_0$, the above relations are reversed: $\Gamma_\ast$ becomes larger wherever $\mc{U}$ is expanding, while $\Gamma_\ast$ becomes smaller wherever $\mc{U}$ is shrinking.

\subsubsection{Observability Estimates}

From the ``warm-up" observability estimate, Theorem \ref{thm.intro_obs_prelim}, one can already see (from \eqref{eq.intro_obs_prelim_region}) the effect of the moving boundary on the region of observation.
However, Theorem \ref{thm.intro_obs_prelim} fails to achieve the features (III) and (IV) at the beginning of the introduction:
\begin{itemize}
\item Theorem \ref{thm.intro_obs_prelim} only applies to the free wave equation \eqref{eq.intro_wave_free}, and not to general linear wave equations of the form \eqref{eq.intro_wave}, that is, with lower-order terms.

\item The region of observation can be even significantly improved from \eqref{eq.intro_obs_prelim_region}.
\end{itemize}

As mentioned before, Carleman estimate methods have been successful in handling the first point above, without any additional assumption of independence or analyticity in time for the lower-order coefficients.
Here, we will similarly extend Theorem \ref{thm.intro_obs_prelim} to linear wave equations of the form \eqref{eq.intro_wave}---hence achieving property (III)---using our upcoming Carleman estimates.

With regards to the second point above, recall that the preliminary estimate of Theorem \ref{thm.intro_obs_prelim} is in principle centered about an event $( t_0, x_0 )$ of the spacetime.
Another particularly novel consequence of the main Carleman estimates of this article is that the region of observation can be further restricted to the exterior $\mc{D}_\ast$ of the null cone about $( t_0, x_0 )$.
In fact, the main Carleman estimate of this paper (see Theorem \ref{thm.carleman_est}) is itself supported entirely on this exterior $\mc{D}_\ast$.

With the above in mind, we now give a rough statement of our main observability estimates.
For simplicity of exposition, we avoid stating the most general cases here.

\begin{theorem} \label{thm.intro_obs_main}
Let $\mc{U}$ be a generalized timelike cylinder of the form \eqref{eq.intro_gtc}, fix a point $x_0 \in \R^n$, and fix ``initial" and ``final" times $\tau_\pm \in \R$, with $\tau_- < \tau_+$.
In addition:
\begin{itemize}
\item Assume the bound \eqref{eq.intro_obs_prelim_ass} holds, and let $t_0 \in ( \tau_-, \tau_+ )$ such that \eqref{eq.intro_obs_prelim_t0} holds.

\item Fix $V, \mc{X}^t \in C^\infty ( \bar{\mc{U}} )$, and let $\mc{X}^x \in C^\infty ( \bar{\mc{U}}; \R^n )$.

\item Let $\mc{N}$ denote the \emph{Minkowski} outer-pointing unit normal of $\mc{U}$, let $f_\ast$ be defined as in \eqref{eq.intro_obs_prelim_region}, and let $\Gamma_\dagger$ denote the following subset of $\partial \mc{U}$:
\begin{equation}
\label{eq.intro_obs_main_region} \Gamma_\dagger := \{ ( \tau, y ) \in \partial \mc{U} \mid f_\ast |_{ ( \tau, y ) } > 0 \text{, } \mc{N} f_\ast |_{ ( \tau, y ) } > 0 \} \text{.}
\end{equation}
Moreover, let $\mc{Y}_\dagger$ be any neighborhood of $\bar{\Gamma}_\dagger$ in $\partial \mc{U}$.
\end{itemize}
Then, for any smooth solution $\phi$ of
\begin{equation}
\label{eq.intro_obs_main_wave} ( - \partial_{tt}^2 \phi + \Delta_x \phi + \mc{X}^t \partial_t \phi + \mc{X}^x \cdot \nabla_x \phi + V \phi ) |_{ \mc{U} } = 0 \text{,} \qquad \phi |_{ \partial \mc{U} } = 0 \text{,}
\end{equation}
we have the observability estimate
\begin{equation}
\label{eq.intro_obs_main} \int_{ \mc{U} \cap \{ t = \tau_\pm \} } [ ( \partial_t \phi )^2 + | \nabla_x \phi |^2 + \phi^2 ] \lesssim \int_{ \mc{Y}_\dagger } | \mc{N} \phi |^2 \text{.}
\end{equation}
\end{theorem}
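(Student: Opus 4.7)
The plan is to derive \eqref{eq.intro_obs_main} by combining the main Carleman estimate (Theorem \ref{thm.carleman_est}), which lives entirely on the null-cone exterior $\mc{D}_\ast = \{f_\ast > 0\}$, with a standard energy identity that bridges this estimate to the time slices at $\tau_\pm$. An important preliminary observation is that the hypotheses $\tau_\pm - t_0 > R_\pm$, together with $|x - x_0| \leq R_\pm$ on $\bar{\Omega}_{\tau_\pm}$ (since $|\cdot - x_0|$ achieves its maximum on $\bar{\Omega}_{\tau_\pm}$ at the boundary), imply $\mc{U} \cap \{t = \tau_\pm\} \subset \mc{D}_\ast^c$. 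Hence the target slice energies lie strictly inside the null cone about $(t_0, x_0)$, and the Carleman estimate alone cannot see them.

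The first step is to apply the main Carleman estimate. I would introduce a smooth cutoff $\chi$ equal to $1$ on $\{f_\ast \geq \delta\}$ and vanishing on $\{f_\ast \leq \delta/2\}$, with $\delta > 0$ chosen small enough that the trace of $\mr{supp}(\chi)$ on $\partial\mc{U}$ is contained in $\mc{Y}_\dagger$ (possible since $\mc{Y}_\dagger$ is an open neighborhood of $\bar{\Gamma}_\dagger$ in $\partial\mc{U}$). Applying Theorem \ref{thm.carleman_est} to $\chi\phi$ and expanding $\Box(\chi\phi) = \chi\Box\phi + [\Box,\chi]\phi$, one uses \eqref{eq.intro_obs_main_wave} to estimate $|\chi\Box\phi|^2 \lesssim (\partial_t\phi)^2 + |\nabla_x\phi|^2 + \phi^2$ and then absorb this into the left-hand side by taking $\lambda$ large relative to $\|\mc{X}^t\|_\infty$, $\|\mc{X}^x\|_\infty$, $\|V\|_\infty$. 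The commutator $[\Box,\chi]\phi$ is supported in the thin shell $\{\delta/2 \leq f_\ast \leq \delta\}$, where $e^{2\lambda f_\ast} \leq e^{2\lambda\delta}$, so it produces an error that is uniform in $\lambda$. The $\partial\mc{U}$-boundary contributions in the Carleman identity collapse, via $\phi|_{\partial\mc{U}} = 0$, to a quadratic expression in $\mc{N}\phi$ weighted by $\mc{N} f_\ast$: on $\Gamma_\dagger \subset \mc{Y}_\dagger$ this has the wrong sign and is moved to the right-hand side as $C\int_{\mc{Y}_\dagger}|\mc{N}\phi|^2$, while on the complement (where $\mc{N} f_\ast \leq 0$) it has favorable sign and is discarded.

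The second step is the energy propagation from $\mc{D}_\ast$ to the slices at $\tau_\pm$. Applying the $\partial_t$-multiplier energy identity for $\phi$ to the region $\bar{\mc{U}} \cap \{f_\ast \leq \delta\} \cap [\tau_-, \tau_+]$, whose boundary consists of $\{t = \tau_\pm\} \cap \bar{\mc{U}}$ (contained in $\{f_\ast < 0\}$, contributing the full $E(\tau_\pm)$), a portion of $\partial\mc{U}$ lying in $\mc{Y}_\dagger$, and the nearly-null surface $\{f_\ast = \delta\} \cap \bar{\mc{U}}$ (which lies inside $\mc{D}_\ast$), yields after a Gr\"onwall absorption of the lower-order-coefficient contributions a bound of the form
\begin{equation*}
E(\tau_\pm) \lesssim \int_{\mc{Y}_\dagger}|\mc{N}\phi|^2 + \int_{\{f_\ast = \delta\} \cap \bar{\mc{U}}} [(\partial_t\phi)^2 + |\nabla_x\phi|^2] + \int_{\tau_-}^{\tau_+} E(\tau)\, d\tau \text{.}
\end{equation*}
Both the flux term on $\{f_\ast = \delta\}$ and the time-integrated bulk $\int_{\tau_-}^{\tau_+} E(\tau) \, d\tau$ (after splitting according to $\{f_\ast \gtrless \delta\}$ and handling the thin inner shell by another application of this same identity with $\delta$ small) are supported inside $\mc{D}_\ast$ and are therefore controlled by the Carleman estimate of the first step, using that $e^{2\lambda f_\ast} \geq e^{2\lambda\delta} \gtrsim 1$ there. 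Combining the two steps produces \eqref{eq.intro_obs_main}.

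\textbf{Main obstacle.} The principal difficulty is exactly this bridge between the Carleman estimate supported on $\mc{D}_\ast$ and the target slice energies concentrated in $\mc{D}_\ast^c$, implemented through the level surface $\{f_\ast = \delta\}$ that transitions from timelike (for $\delta > 0$) to null (at $\delta = 0$). Three parameters must be balanced simultaneously: the Carleman parameter $\lambda$ (large, to absorb the lower-order terms $\mc{X}^t, \mc{X}^x, V$), the cutoff parameter $\delta$ (small, so that $\mr{supp}(\chi) \cap \partial\mc{U} \subset \mc{Y}_\dagger$ and so that the commutator $[\Box,\chi]\phi$ does not overwhelm the main estimate), and the Gr\"onwall constants in the energy step (which should not blow up beyond what the Carleman estimate can absorb). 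Ensuring that no contribution from the lower-order coefficients escapes both the Carleman and the energy absorption mechanisms is the delicate bookkeeping at the heart of the proof.
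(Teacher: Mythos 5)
Your high-level plan (Carleman estimate in the null-cone exterior plus an energy bridge to the slices at $\tau_\pm$) is in the right spirit, but the implementation diverges from what actually works, and the divergences are genuine gaps rather than cosmetic ones. First, the cutoff $\chi$ is not only unnecessary but harmful. The whole point of the Carleman weight $\zeta^P_{a,b;\varepsilon} \sim f_P^{2a}e^{(\cdots)}$ in Theorem \ref{thm.carleman_est} is that it vanishes to high order on the null cone, so the estimate applies directly to $\phi$ on $\mc{U}\cap\mc{D}_P$ with no cutoff and no commutator. Introducing $\chi$ produces $[\Box,\chi]\phi$, a first-order interior quantity supported in the shell $\{\delta/2\le f_\ast\le\delta\}$ that is controlled neither by the boundary observation nor by the (degenerate) bulk terms of the Carleman estimate near the cone; your claim that it is ``uniform in $\lambda$'' because the weight is $\le e^{2\lambda\delta}$ is false, since $e^{2\lambda\delta}$ grows with the large parameter, and in an observability (quantitative) argument one cannot discard such a term by sending the parameter to infinity as in unique continuation. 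Second, your bridge through the hypersurface $\{f_\ast=\delta\}$ is not justified: that surface is timelike (nearly null as $\delta\to 0$), the $\partial_t$-energy flux through it is not sign-definite and involves the full gradient including the derivative transverse to the level sets of $f_\ast$, which is exactly the direction the Carleman bulk term controls only with the degenerate weights $|u_P\partial_{u_P}\phi|^2+|v_P\partial_{v_P}\phi|^2$ (one of $u_P,v_P$ is $O(\delta)$ near the cone). Moreover your Gr\"onwall step has no base slice: you cannot absorb $\int_{\tau_-}^{\tau_+}E(\tau)\,d\tau$ into $E(\tau_\pm)$ without first controlling $E$ on some fixed time slice.

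The observation you are missing is that such a slice exists and is already in the exterior: when $P=(t_0,x_0)\notin\bar{\mc{U}}$, the entire slice $\mc{U}_{t_0}$ satisfies $t_P=0$ and $r_P\ge R_->0$, hence lies in $\mc{D}_P$, and by timelikeness of $\partial\mc{U}$ so do all slices $\mc{U}_\tau$ with $|\tau-t_0|\le R_-/4$. The paper therefore runs: Carleman bulk term $\Rightarrow$ (via Fubini and the localized energy estimate of Proposition \ref{thm.energy_est_loc}) control of $\int|\nabla_{t,x}\phi|^2+\phi^2$ on $\mc{U}_{t_0}$ $\Rightarrow$ standard Gr\"onwall energy estimate (Proposition \ref{thm.energy_est}) forward and backward in $t$ to reach $\tau_\pm$. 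No flux through $\{f_\ast=\delta\}$ is ever taken. Finally, your proposal does not address the case $x_0\in\Omega_{t_0}$, where the Carleman weight degenerates on the axis $r_P=0$ inside the domain; the paper handles this by summing two Carleman estimates centered at nearby points $P_1\ne P_2$ with $t(P_1)=t(P_2)=t_0$ (Theorem \ref{thm.obs_int}), which is precisely why the observation region must be an open neighborhood $\mc{Y}_\dagger$ of $\bar\Gamma_\dagger$ rather than $\Gamma_\dagger$ itself. The absorption of the lower-order terms $\mc{X},V$ by choosing $a$ large, which you describe correctly in outline, is the one part of your plan that matches the paper.
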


The most general observability results can be found in Theorems \ref{thm.obs_ext} and \ref{thm.obs_int}, handling the cases in which $( t_0, x_0 )$ lies outside and inside $\bar{\mc{U}}$, respectively.
A precise version of Theorem \ref{thm.intro_obs_main}, which is a corollary of Theorems \ref{thm.obs_ext} and \ref{thm.obs_int}, can be found in Theorem \ref{thm.obs_combo}.
Note in particular that in contrast to previous results in the literature using Carleman estimate methods \cite{baudo_debuh_erv:carleman_wave, fu_yong_zhang:ctrl_semilinear, lasie_trigg_zhang:wave_global_uc, zhang:obs_wave_pot, zhang:obs_wave_lower}, we do not require that the point $( t_0, x_0 )$ lies outside of $\bar{\mc{U}}$ in Theorem \ref{thm.intro_obs_main}.

Observe that the assumptions of Theorem \ref{thm.intro_obs_main}, captured in \eqref{eq.intro_gtc}, \eqref{eq.intro_obs_prelim_ass}, and \eqref{eq.intro_obs_prelim_t0}, are essentially the same as in the ``warm-up" Theorem \ref{thm.intro_obs_prelim}.
The first point of departure is that \emph{Theorem \ref{thm.intro_obs_main} applies to the Dirichlet boundary problem for general linear wave equations of the form \eqref{eq.intro_wave}}.

\begin{remark}
Theorem \ref{thm.intro_obs_main}, as stated, applies only to smooth solutions $\phi$ of \eqref{eq.intro_obs_main_wave}, with smooth coefficients $\mc{X}^x$, $\mc{X}^t$, $V$.
However, as is standard, the regularities of both the solutions and the coefficients can be significantly lowered by examining more precisely the integrability and differentiability conditions required in the proofs throughout the article.
Since regularity is not presently a concern, we avoid discussing these points in this article for simplicity.
\end{remark}

The second difference between Theorems \ref{thm.intro_obs_prelim} and \ref{thm.intro_obs_main} is in the observation region, in particular, between $\Gamma_\ast$ in \eqref{eq.intro_obs_prelim_region} and $\Gamma_\dagger$ in \eqref{eq.intro_obs_main_region}.
In particular, the condition $\tau_- < t < \tau_+$ within $\Gamma_\ast$ (specifying that $\Gamma_\ast$ lies within the initial and final times) is replaced by the condition $f_\ast > 0$ in $\Gamma_\dagger$ (specifying that $\Gamma_\dagger$ lies in the exterior of the null cone about $( t_0, x_0 )$).
In Lorentzian geometric terms, \emph{Theorem \ref{thm.intro_obs_main} improves upon existing results by further restricting the observation region to events that are not causally related to $( t_0, x_0 )$.}
See Figure \ref{fig.intro_obs_main} for examples of graphical depictions of $\Gamma_\dagger$.

\begin{remark}
In Minkowski geometry, the \emph{null cone} about an event $( t_0, x_0 )$ is the set of all points in $\R^{1+n}$ satisfying $f_\ast = 0$, or equivalently, the condition $| t - t_0 | = | x - x_0 |$.
\end{remark}

\begin{remark}
The assumptions \eqref{eq.intro_obs_prelim_ass} and \eqref{eq.intro_obs_prelim_t0} imply the region $\Gamma_\dagger$ in \eqref{eq.intro_obs_main_region} is a proper subset of $\Gamma_\ast$ in \eqref{eq.intro_obs_prelim_region}, so Theorem \ref{thm.intro_obs_main} represents an improvement in terms of the observation region.
Note, however, that the observation region in Theorem \ref{thm.intro_obs_prelim} is exactly $\Gamma_\ast$, whereas the observation region $\mc{Y}_\dagger$ in Theorem \ref{thm.intro_obs_main} must be strictly larger than $\Gamma_\dagger$ (i.e., any neighborhood of $\bar{\Gamma}_\dagger$).
\end{remark}

\begin{figure}[t]
\begin{align*}
\includegraphics[scale=0.5]{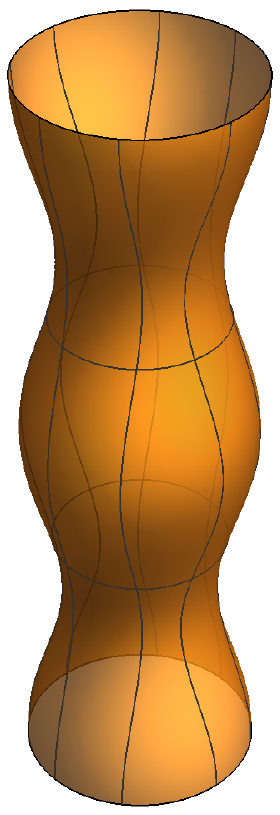} \qquad \includegraphics[scale=0.36]{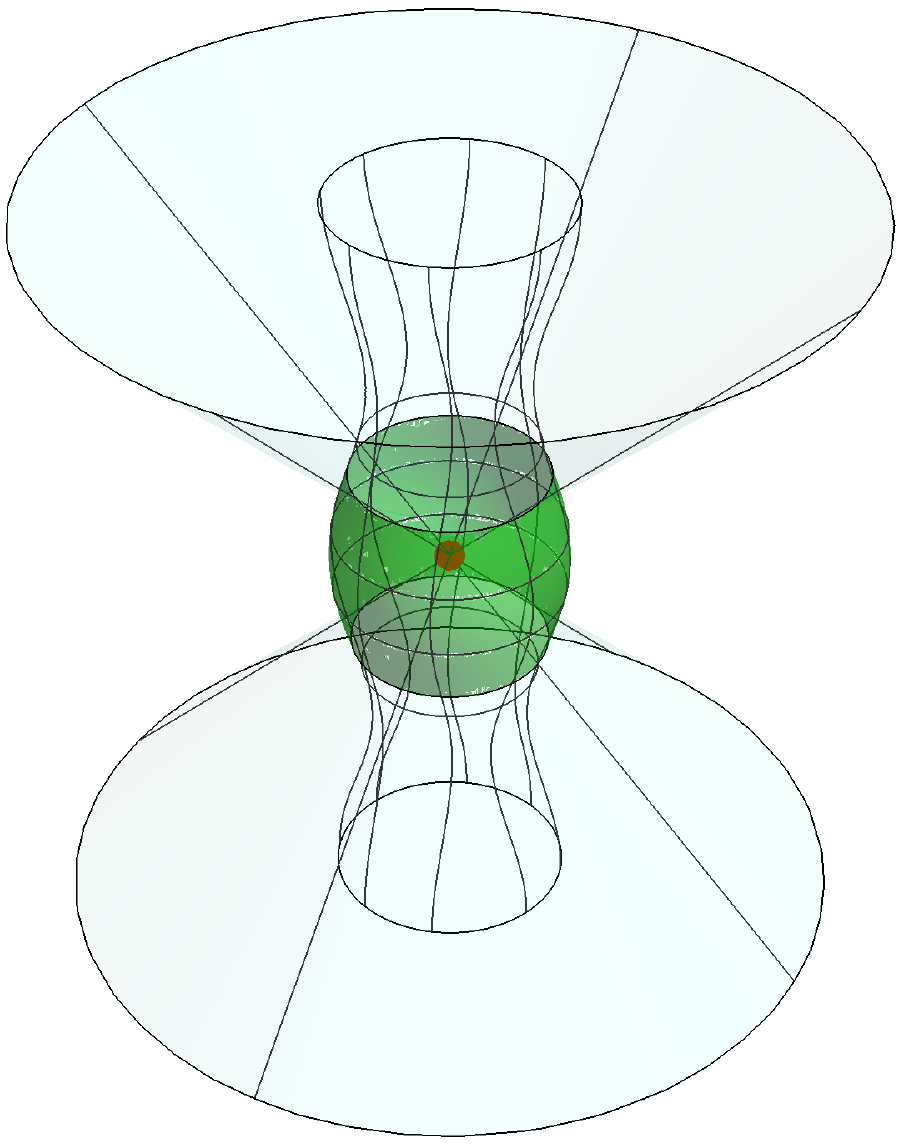} \qquad \includegraphics[scale=0.36]{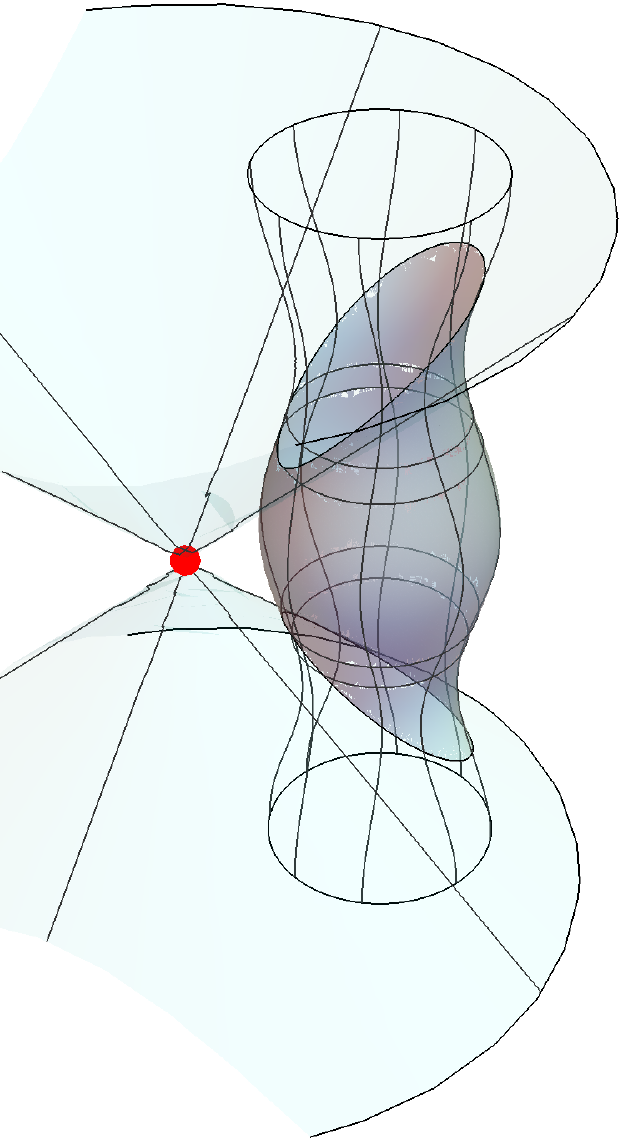} \qquad \includegraphics[scale=0.36]{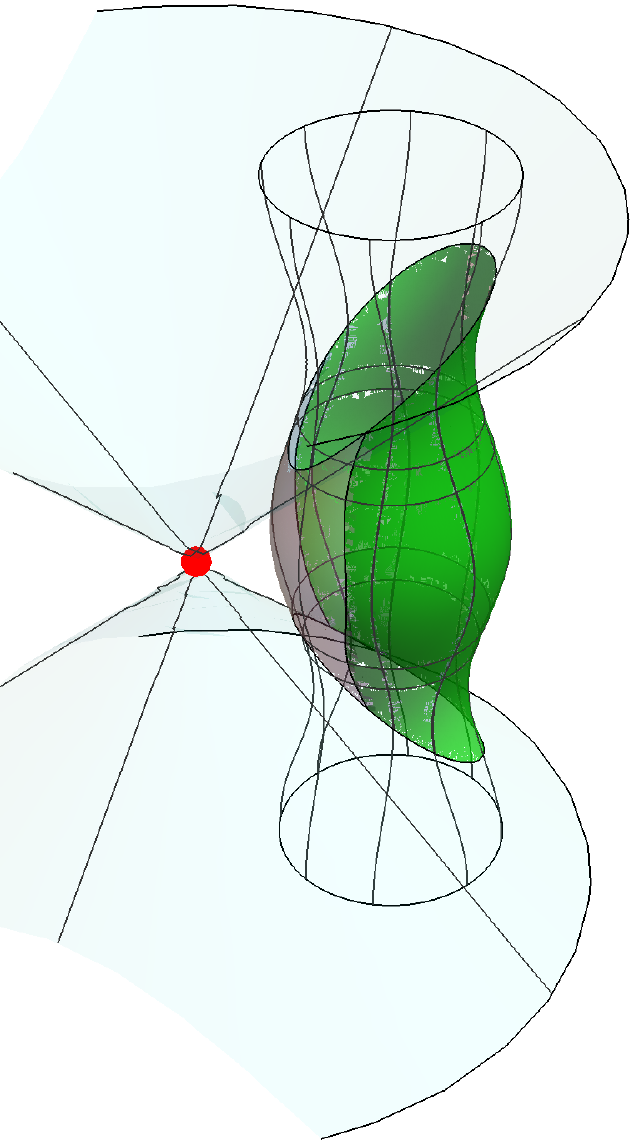}
\end{align*}
\caption{The diagrams give examples of regions considered in Theorem \ref{thm.intro_obs_main}.
The first image shows the boundary $\partial \mc{U}$ (in orange) of a generalized timelike cylinder $\mc{U}$.
The second image shows one case in which the point $( t_0, x_0 )$ (in red) lies within $\mc{U}$; here, $\Gamma_\dagger$ (drawn in green) is the full intersection of $\partial \mc{U}$ with the exterior $\mc{D}_\ast$ of the null cone about $( t_0, x_0 )$.
The last two images demonstrate another case with $( t_0, x_0 )$ outside of $\bar{\mc{U}}$: in the third image, the shaded piece (in light purple) is the full intersection of $\partial \mc{U}$ with the null cone exterior $\mc{D}_\ast$, while in the fourth image, the highlighted piece (in green) is the strictly smaller region $\Gamma_\dagger$, where $\mc{N} f_\ast > 0$.
All of the images were generated using \emph{Mathematica}.}
\label{fig.intro_obs_main}
\end{figure}

Compared to other results using multiplier and Carleman methods, Theorem \ref{thm.intro_obs_main} yields a strict improvement of the region of observation to time-independent subsets of $\partial \mc{U}$.
However, the regions $\Gamma_\dagger$ are in general weaker than the geometric control condition obtained from microlocal methods (under additional assumptions of time-independence or time-analyticity).
Thus, as far as the author is aware, the regions considered in Theorem \ref{thm.intro_obs_main} represent the best-known for non-analytic wave equations, as well as for wave equations on time-dependent domains.

\begin{remark} \label{rmk.intro_obs_main_rel}
In addition, for general time-dependent $\mc{U}$, the observation region $\Gamma_\dagger$ in \eqref{eq.intro_obs_main_region} can be interpreted as a relativistic modification of the standard condition $( x - x_0 ) \cdot \nu > 0$.
To explore this further, let us consider a point $( \tau, y ) \in \Gamma_\dagger$.
We can then perform a Lorentz boost centered about the event $( t_0, x_0 )$ to obtain a new inertial coordinate system $( t', x' )$ on $\R^{ 1 + n }$ such that
\[
t' ( \tau, y ) = t' ( t_0, x_0 ) = t ( t_0, x_0 ) = t_0 \text{.}
\]
In other words, $( \tau, y )$ and $( t_0, x_0 )$ are simultaneous with respect to $( t', x' )$.

Since the function $f_\ast$ is invariant with respect to such Lorentz boosts, then
\[
\mc{N} f_\ast |_{ ( \tau, y ) } = \frac{1}{4} \mc{N} [ | x' - x_0 |^2 ] |_{ ( \tau, y ) } = \frac{1}{2} [ ( x' - x_0 ) \cdot \nu' ] |_{ ( \tau, y ) } \text{,}
\]
where $\nu'$ denotes the spatial component of $\mc{N}$ in the boosted $( t', x' )$-coordinates.
In other words, the condition $\mc{N} f_\ast |_{ ( \tau, y ) } > 0$ is simply the standard condition $( x - x_0 ) \cdot \nu > 0$, but now with respect to a boosted inertial coordinate system in which $( t_0, x_0 )$ and $( \tau, y )$ are simultaneous.
\end{remark}

We also note the discrepancy between $\Gamma_\dagger$, which has the relativistic interpretation from Remark \ref{rmk.intro_obs_main_rel}, and the actual observation region $\mc{Y}_\dagger$ from \eqref{eq.intro_obs_main}.
In particular, $\mc{Y}_\dagger$ is strictly larger than $\Gamma_\dagger$, though by an arbitrarily small amount.
This difference arises from two technical points:
\begin{itemize}
\item First, in the main Carleman estimates, we work not with $f_\ast$, but rather with a perturbation $\bar{f}$; see Theorem \ref{thm.carleman_est} and \eqref{eq.carleman_weight} for details.\footnote{Note, however, that $\bar{f}$ can be made arbitrarily close to $f_\ast$ by adjusting the parameter $\varepsilon$ within.}
The reason is that the level sets of $f_\ast$ are only zero pseudoconvex with respect to the wave operator, and hence a Carleman estimate obtained using $f_\ast$ fails to control the full $H^1$-norm of the solution.\footnote{Earlier linear and nonlinear Carleman estimates obtained from $f_\ast$ were derived in \cite{alex_shao:uc_global, alex_shao:uc_nlwf}.}

\item Furthermore, for the ``interior" observability estimates, where $( t_0, x_0 ) \in \mc{U}$, the result is obtained by adding together two Carleman estimates centered about two nearby points.
This is needed since the weights in the Carleman estimate degenerate at the center point, hence the $H^1$-norm is not controlled near there.
For details, see Theorem \ref{thm.obs_int}.
\end{itemize}

\begin{remark}
In fact, when $\mc{U}$ is time-independent and $( t_0, x_0 ) \not\in \bar{\mc{U}}$ (that is, the situation treated by existing Carleman estimate methods in the literature), the observation region $\mc{Y}_\dagger$ in \eqref{eq.intro_obs_main} can be further improved to $\Gamma_\dagger$, which by \eqref{eq.intro_obs_prelim_cylinder} now has the form
\[
\Gamma_\dagger := \{ ( \tau, y ) \in \partial \mc{U} \mid f_\ast |_{ ( \tau, y ) } > 0 \text{, } ( y - x_0 ) \cdot \nu > 0 \} \text{,} \qquad \mc{N} := ( \nu^t, \nu ) \text{.}
\]
This is precisely the portion of the standard observation region \eqref{eq.intro_obs_region} that lies outside the null cone centered at $( t_0, x_0 )$.
See Theorem \ref{thm.obs_ext} and Corollary \ref{thm.obs_static} for details and precise computations.

On the other hand, when $( t_0, x_0 ) \in \mc{U}$, one does not expect that $\mc{Y}_\dagger$ can be replaced with $\Gamma_\dagger$, since in this case, $\Gamma_\dagger$ violates the geometric control condition.
\end{remark}

\begin{remark}
In the case $n = 1$, Theorem \ref{thm.intro_obs_main} recovers all the results in the existing literature, including the articles \cite{cui_jiang_wang:control_wave_fec, sengou:obs_control_wave, sengou:obs_control_wave2, sun_li_lu:control_wave_moving, wang_he_li:control_wave_noncyl} mentioned previously, up to the optimal observation time.\footnote{However, Theorem \ref{thm.intro_obs_main} does not treat the case in which $\tau_+ - \tau_-$ is exactly equal to the optimal time.}
In addition, Theorem \ref{thm.intro_obs_main} further extends previous results to general cases in which $\partial \mc{U}$ is given by two timelike curves, and to wave equations with arbitrary lower-order coefficients.
\end{remark}

Finally, we note that Theorem \ref{thm.intro_obs_main} again implies exact controllability:

\begin{corollary} \label{thm.intro_control_main}
Assume the definitions and hyptheses of Theorem \ref{thm.intro_obs_main}.
Then, the linear wave \eqref{eq.intro_wave} is exactly controllable, with initial and final data on $\mc{U} \cap \{ t = \tau_- \}$ and $\mc{U} \cap \{ t = \tau_+ \}$, respectively, and with Dirichlet boundary control supported in $\mc{Y}_\dagger$ (all in the appropriate spaces).
\end{corollary}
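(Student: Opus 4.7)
The plan is to deduce Corollary \ref{thm.intro_control_main} from Theorem \ref{thm.intro_obs_main} by a Hilbert Uniqueness Method (HUM) argument, suitably adapted to the moving-boundary setting. The novelty compared to the cylindrical case is bookkeeping rather than conceptual: the observability inequality already delivers the only hard analytic input, and the remaining work is to set up the correct duality between the forward (controlled) wave equation and its adjoint.

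First, I would set up the functional framework. For the adjoint wave equation associated with \eqref{eq.intro_wave}, which has the form $-\partial_{tt}^2\psi + \Delta_x \psi - \partial_t(\mc{X}^t \psi) - \nabla_x\cdot(\mc{X}^x \psi) + V\psi = 0$, the relevant energy space at time $\tau_\pm$ is $H_0^1(\Omega_{\tau_\pm}) \times L^2(\Omega_{\tau_\pm})$ for data $(\psi,\partial_t\psi)|_{t=\tau_\pm}$; Theorem \ref{thm.intro_obs_main} then reads that the semi-norm $\|\mc{N}\psi\|_{L^2(\mc{Y}_\dagger)}$ dominates this energy, and by standard direct (hidden regularity) estimates the converse inequality is also true, so $\|\mc{N}\psi\|_{L^2(\mc{Y}_\dagger)}$ is an equivalent norm on $H_0^1(\Omega_{\tau_-})\times L^2(\Omega_{\tau_-})$. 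For the forward equation \eqref{eq.intro_wave} with $L^2(\mc{Y}_\dagger)$ Dirichlet data supported in $\mc{Y}_\dagger$ and vanishing elsewhere on $\partial\mc{U}$, I would define solutions by transposition: test against solutions $\psi$ of the adjoint equation with smooth, compactly supported right-hand sides and zero boundary data, obtaining $\phi \in C([\tau_-,\tau_+];L^2(\Omega_t)) \cap C^1([\tau_-,\tau_+];H^{-1}(\Omega_t))$. The time-dependence of $\Omega_t$ is handled by pulling back through a smooth diffeomorphism flattening $\mc{U}$ onto a cylinder, a standard device under the generalized timelike cylinder assumption.

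Second, I would introduce the HUM map. Given initial data $(\psi_0^-,\psi_1^-)\in H_0^1(\Omega_{\tau_-})\times L^2(\Omega_{\tau_-})$, let $\psi$ be the corresponding adjoint solution, and then let $\phi$ be the transposition solution of \eqref{eq.intro_wave} with zero initial data at $t=\tau_-$ and Dirichlet boundary data $\phi|_{\partial\mc{U}} = \chi_{\mc{Y}_\dagger} \mc{N}\psi$. Define $\Lambda(\psi_0^-,\psi_1^-) := (\partial_t\phi(\tau_+),-\phi(\tau_+))$, viewed as an element of $H^{-1}(\Omega_{\tau_+})\times L^2(\Omega_{\tau_+})$. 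The defining duality identity, obtained by integration by parts on $\mc{U}\cap\{\tau_-<t<\tau_+\}$ using the Lorentzian divergence theorem (accounting for the nontrivial $\mc{N}$ contribution on the moving boundary), yields
\begin{equation*}
\langle \Lambda(\psi_0^-,\psi_1^-),(\psi_0^-,\psi_1^-)\rangle = \int_{\mc{Y}_\dagger} |\mc{N}\psi|^2.
\end{equation*}
By Theorem \ref{thm.intro_obs_main}, this is coercive on $H_0^1(\Omega_{\tau_-})\times L^2(\Omega_{\tau_-})$, and together with the hidden-regularity upper bound $\Lambda$ is bounded, symmetric, and coercive, hence a Hilbert space isomorphism onto the dual $H^{-1}(\Omega_{\tau_-})\times L^2(\Omega_{\tau_-})$ by Lax--Milgram.

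Third, I would reduce the exact controllability problem to inverting $\Lambda$. Given arbitrary initial data $(\phi_0^-,\phi_1^-)$ and target $(\phi_0^+,\phi_1^+)$ in $L^2\times H^{-1}$, let $\phi^{\mr{free}}$ denote the (unique) transposition solution of \eqref{eq.intro_wave} with zero boundary data and initial data $(\phi_0^-,\phi_1^-)$ at $t=\tau_-$. It then suffices to find a control $\phi_b$ supported in $\mc{Y}_\dagger$ driving zero initial data to the prescribed difference $(\phi_0^+,\phi_1^+) - (\phi^{\mr{free}}(\tau_+),\partial_t\phi^{\mr{free}}(\tau_+))$ at $t=\tau_+$; solvability is exactly the surjectivity of $\Lambda$ (after identifying dual pairs with the appropriate Riesz isomorphism between $H^{-1}(\Omega_{\tau_+})\times L^2(\Omega_{\tau_+})$ and $L^2(\Omega_{\tau_+})\times H^{-1}(\Omega_{\tau_+})$ via time-reversal symmetry in the adjoint). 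The control is then $\phi_b = \chi_{\mc{Y}_\dagger}\mc{N}\psi$ for the $\psi$ produced by $\Lambda^{-1}$, lying in $L^2(\mc{Y}_\dagger)$ as required.

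The main obstacle, as usual for HUM on non-cylindrical spacetimes, is establishing the hidden regularity estimate $\|\mc{N}\psi\|_{L^2(\partial\mc{U})} \lesssim \|(\psi_0^-,\psi_1^-)\|_{H_0^1\times L^2}$ and the corresponding transposition theory when $\partial\mc{U}$ is moving; the classical multiplier identity for the Neumann trace must be replaced by a Lorentzian analog (using for example the multiplier used in Theorem \ref{thm.intro_obs_prelim}) to correctly account for the $\nu^t$-component of $\mc{N}$, and one must verify that the time-dependent boundary does not disrupt the standard regularity of transposition solutions. Once this is in place, the remainder of the HUM machinery goes through essentially verbatim, yielding the precise version recorded as Theorem \ref{thm.control_hum}.
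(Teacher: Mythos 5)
Your proposal is correct and follows essentially the same route as the paper: the paper also deduces this corollary from the observability inequality via the HUM (Theorem \ref{thm.control_hum}, applied in Corollary \ref{thm.control_combo} to the precise observability estimate of Theorem \ref{thm.obs_combo}), built on the same duality identity between the transposition solution of the controlled problem and the adjoint solution, with observability supplying coercivity and the energy/hidden-regularity estimates of Theorem \ref{thm.weak_sol} supplying boundedness. The only cosmetic differences are that the paper packages the invertibility step as the minimization of a coercive, strictly convex quadratic functional $\mc{J}$ (whose Euler--Lagrange equation is precisely your Lax--Milgram identity for $\Lambda$), and reduces to null controllability by solving backward from the target with zero boundary data rather than subtracting the free forward evolution.
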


\begin{remark}
The initial and final data for the wave equation need not be placed on level sets of $t$.
In fact, with regards to Corollary \ref{thm.intro_control_main}, any pair of spacelike cross-sections of $\mc{U}$ lying in the past and future of the null cone about $( t_0, x_0 )$ would suffice; see the discussion in Section \ref{sec.app_control}.
This is an immediate consequence of (geometric) energy estimates for linear wave equations.
\end{remark}

\subsubsection{Carleman Estimates}

As mentioned before, our main observability results, in particular Theorem \ref{thm.intro_obs_main}, are obtained using a new Carleman estimate (derived in Section \ref{sec.carleman}), which is the foundation of this article.
Roughly speaking, this Carleman estimate has the following features:
\begin{itemize}
\item The estimate is supported in the exterior of a null cone about a point $( t_0, x_0 ) \in \R^{1+n}$.
In fact, this property implies, through the standard arguments, that the resulting observability inequalities, such as Theorem \ref{thm.intro_obs_main}, inherit this exterior support property.

\item The estimate is established through Lorentzian geometric techniques.
By taking advantage of methods that exploit the full \emph{spacetime} geometry, we obtain results that are equally applicable to domains with either fixed or moving boundaries.\footnote{In fact, most features of the Lorentzian approach can already be seen in the proof of the preliminary Theorem \ref{thm.intro_obs_prelim}.}
\end{itemize}
The precise Carleman estimate of this paper is stated in Theorem \ref{thm.carleman_est} and proved throughout Section \ref{sec.carleman}.
Below, we give a brief summary of some of the ideas behind its derivation.

The Lorentzian geometric perspective toward Carleman estimates for wave equations had its roots in other applications.
For example, in mathematical relativity, such estimates were used to establish rigidity properties for stationary black hole spacetimes; see, e.g., \cite{alex_io_kl:unique_bh, alex_io_kl:rigid_bh, chru_cost:unique_bh, io_kl:unique_bh, io_kl:unique_ip}.

Regarding the main Carleman estimates of this article, its ideas arose from the study of unique continuation properties of waves ``from infinity".
In \cite{alex_schl_shao:uc_inf}, the following result was shown using geometric Carleman estimates: if a linear wave \emph{vanished to infinite order} at a ``large enough" portion of both future and past ``null infinity" (formally, where the radiation field is manifested), then the wave must vanish locally near that portion of infinity.\footnote{See \cite{alex_schl_shao:uc_inf} for precise statements.}
Furthermore, this infinite-order vanishing is necessary, as one can construct solutions of \eqref{eq.intro_wave_free} that decay like $r^{-k}$ for any $k \in \N$.

Next, \cite{alex_shao:uc_global} showed that in $\R^{1+n}$, the infinite-order vanishing assumption can be removed provided the wave is \emph{globally} regular.\footnote{In particular, the counterexamples all blow up at some point in space.}
Technically speaking, the infinite-order vanishing condition arose from the use of cutoff functions in the standard argument proving unique continuation from Carleman estimates.
This was overcome in \cite{alex_shao:uc_global} using new \emph{global} Carleman estimates for which \emph{the associated weight itself vanishes on the null cone about the origin}.
In particular, this vanishing weight removed the need to use a cutoff function in the unique continuation argument.
In fact, a similar weight is also responsible for the exterior support property in the estimates of this paper.

Despite the global nature of the Carleman estimate in \cite{alex_shao:uc_global}, it is equally applicable to \emph{finite} spacetime regions $\mc{U} \subseteq \R^{1+n}$.
While this yields additional boundary terms on $\partial \mc{U}$, the vanishing Carleman weight again ensures that no boundary terms are found on the null cone itself.
A nonlinear variant of this estimate was applied in \cite{alex_shao:uc_nlwf} to study singularity formation for focusing subconformal nonlinear wave equations.
The key step with the Carleman estimate was to control the wave in the interior of a past time cone, based at a singular point, by its values on the cone itself.

Although this is closely related to boundary observability estimates, which also aim to control a wave in the interior of a cylindrical region $\mc{U}$ by its values on the boundary, the results of \cite{alex_shao:uc_global, alex_shao:uc_nlwf} unfortunately fail to imply such observability.
The main reason is that these Carleman estimates make heavy use of the hyperbolic function $f_\ast$ from \eqref{eq.intro_obs_prelim_region}, whose level sets fail to be strictly pseudoconvex.\footnote{Indeed, these level sets are only zero, or degenerately, pseudoconvex.}
As a result, these estimates only control the $L^2$-norm in the bulk, rather than the full $H^1$-norm that is required for observability and exact controllability.

In the existing literature on observability via Carleman estimates, this lack of pseudoconvexity was overcome by altering $f_\ast$.
More specifically, one instead considers the level sets of
\[
f_c = | x - x_0 |^2 - c^2 ( t - t_0 )^2 \text{,} \qquad 0 < c < 1 \text{,}
\]
which are now strictly pseudoconvex.
This property indeed allows for the recovery of the $H^1$-norm in the bulk.
However, this $f_c$ is now poorly adapted to the geometry, in particular the characteristics, of the wave operator.
In particular, one cannot recover, using this $f_c$, the aforementioned exterior support property of the Carleman estimates from \cite{alex_shao:uc_global, alex_shao:uc_nlwf}.

In this paper, we instead \emph{consider different alterations $\bar{f}$ of $f_\ast$}, with the following properties:
\begin{itemize}
\item $\bar{f}$ is well-adapted to the characteristics of the wave operator.
In particular, like for $f_\ast$, the level set $\{ \bar{f} = 0 \}$ corresponds to the null cone about $( t_0, x_0 )$.

\item The level sets of $\bar{f}$ are strictly pseudoconvex in the exterior of the null cone about $( t_0, x_0 )$.
\end{itemize}
In particular, by making use of the above $\bar{f}$, one can both recover the $H^1$-norm in the bulk and preserve the exterior support property in our Carleman estimates.

While these properties of $\bar{f}$ can be viewed as the consequences of extensive computations, they can also explained through the perspective of conformal geometry.
In this respect, the main idea in this paper is to consider instead a ``warped" metric $\bar{g}$ that slightly inflates the volumes of spatial spheres about the origin; see Definition \ref{def.wp_met}.
Moreover, this warped metric $\bar{g}$ can be shown to be conformally isometric to the usual Minkowski metric on $\R^{1+n}$; see Proposition \ref{thm.met_conf}.

A fairly direct computation (see Proposition \ref{thm.pseudoconvex_wp}) shows that the positive level sets of the function $f_\ast$, with $( t_0, x_0 ) = ( 0, 0 )$, are strictly pseudoconvex \emph{with respect to $\bar{g}$}.
Then:
\begin{itemize}
\item Since pseudoconvexity (with respect to geometric wave operators) is a conformally invariant property, the pullback $\bar{f}$ of $f_\ast$ through the aforementioned conformal isometry is pseudoconvex as well; this allows for the bulk $H^1$-bounds in our Carleman estimates.

\item Since the characteristics of geometric wave operators are also conformally invariant, this $\bar{f}$ is also as well-adapted to the characteristics of the (flat) wave operator as $f_\ast$ is.
As a result of this, we can still derive the external support property using this $\bar{f}$ in the place of $f_\ast$.
\end{itemize}
The above points provide the main ideas behind why the Carleman estimates in this paper work.

\begin{remark}
In fact, by conformal invariance, we can derive Carleman and observability estimates for wave equations with respect to other geometries that are conformally related to the Minkowski metric.
However, we will not pursue details of this in the present paper.
\end{remark}

Now, regarding the proof of the main Carleman estimate, the strategy proceeds as follows:
\begin{enumerate}
\item Prove a Carleman estimate for wave equations with respect to the ``warped" metric $\bar{g}$.

\item Use the above-mentioned conformal isometry and invariance in order to extract a corresponding Carleman estimate for the usual (flat) wave operator on $\R^{1+n}$.
\end{enumerate}
Steps (1) and (2) are proved in Sections \ref{sec.carleman_proof} and \ref{sec.carleman_warped}, respectively.
We remark that although this process still involves extensive computations, the benefit is that the main steps are relatively simple.

\begin{remark}
As previously mentioned, the existing literature on Carleman-based observability \cite{baudo_debuh_erv:carleman_wave, fu_yong_zhang:ctrl_semilinear, lasie_trigg_zhang:wave_global_uc, zhang:obs_wave_pot, zhang:obs_wave_lower} only considered settings with $( t_0, x_0 ) \not\in \bar{\mc{U}}$ (in the context of time-independent $\mc{U}$).
At a technical level, this was because one required uniform positivity of $| x - x_0 |$ in order to obtain a positive zero-order bulk term in the corresponding Carleman estimate.
An interesting feature of the exterior support property in our main Carleman estimate is that one no longer requires uniform positivity for $| x - x_0 |$ for a positive zero-order bulk.
As a result, our main Carleman estimate also holds when $( t_0, x_0 ) \in \mc{U}$ and hence is applicable toward ``interior" observability estimates.
\end{remark}

Finally, we remark that this warping of the metric, in particular the difference between the Minkowski and warped metrics, is primarily responsible for the discrepancy between the $\Gamma_\dagger$ and the actual observed region $\mc{Y}_\dagger$ in Theorem \ref{thm.intro_obs_main}.
Moreover, since this warping can be made arbitrarily small, the discrepancy between $\Gamma_\dagger$ and $\mc{Y}_\dagger$ can be made small as well.

\subsubsection{Other Directions}

A larger aim beyond the present paper is to study to similar controllability properties for geometric wave equations, in particular to settings with time-dependent geometry.
As far as the author is aware, there is no literature directly addressing controllability for waves on general Lorentzian manifolds.
While we do not discuss this here, our broader intention is to develop tools that can be robustly applied to geometric settings in future works.

Next, since the methods in this article do not require time-analyticity for its coefficients, they could also be used for treating nonlinear wave equations.
As already mentioned, earlier versions of the main Carleman estimate have been used in \cite{alex_shao:uc_nlwf}, in the context of studying singularity formation.

Finally, Carleman estimates have been widely applied toward solving inverse problems for wave equations, which themselves are closely connected to problems in tomography and seismology (see, e.g., \cite{stef_uhl:thermo_tomo, symes:seis_refl}).
In this respect, Carleman estimates have been applied both directly \cite{bukhg_klib:ip_carleman, iman_yama:global_det_hyp} or as part of an intermediate unique continuation argument \cite{beli_kury:ip_bc, kian:stab_timedep, isak:ip_pde, kacha_lass_kury:inv_bsp, kian:unique_timedep, kian_oks:ip_timedep}.
Another application of the estimates in this article (and their future geometric generalizations) is toward inverse problems for wave equations in settings with time-dependent domains and moving boundaries.

\subsection{Outline of the Paper}

The remainder of this paper will be organized as follows:
\begin{itemize}
\item In Section \ref{sec.prelim}, we define the objects and notations that we will use throughout the paper.
In particular, we describe the domains---the generalized timelike cylinders---that we will consider in our main results.
We also give a precise statement of the preliminary observability estimate given in Theorem \ref{thm.intro_obs_prelim}, and we give a short, simple proof of this estimate.

\item In Section \ref{sec.carleman}, we state and prove our main Carleman estimate, Theorem \ref{thm.carleman_est}.
In particular, within the proof, we introduce the ``warped" metric and establish its basic properties.

\item Section \ref{sec.obs} is dedicated to the the precise statements of the main observability inequalities of this paper, as well as to the proofs of these inequalities.

\item In Section \ref{sec.app}, we discuss some consequences of the main observability results.
In particular, Section \ref{sec.app_1d} explores the special case of one spatial dimension, while Section \ref{sec.app_control} deals with the exact controllability results that follow from our main inequalities.

\end{itemize}

\subsection{Acknowledgments}

The author wishes to thank Spyros Alexakis for numerous discussions, in particular for previous work related to an earlier form of the Carleman estimates in this paper.
The author also extends thanks to Lauri Oksanen, Matti Lassas, and Matthieu L\'eautaud for some helpful discussions, as well as to two anonymous referees for their comments and suggestions.
In addition, this work was partly supported by EPSRC grant EP/R011982/1.

\section{Preliminaries} \label{sec.prelim}

In this section, we set the notations and definitions that we will use throughout the paper.
Using these definitions, we will then give a precise formulation and a proof of the preliminary observability estimate for free waves that was stated in Theorem \ref{thm.intro_obs_prelim}.

\begin{definition} \label{def.ineq}
Throughout, we will adopt the following standard notations for inequalities:
\begin{itemize}
\item We write $A \lesssim B$ to mean that there is some universal constant $C > 0$ such that $A \leq C B$.
Moreover, $A \lesssim_{ a_1, \dots, a_m } B$ means that the above constant $C$ depends on $a_1, \dots, a_m$.

\item We write $A \ll B$ to mean that there is some sufficiently small and universal constant $c > 0$ such that $A \leq c B$.
Moreover, $A \ll_{ a_1, \dots, a_m } B$ means that this $c$ depends on $a_1, \dots, a_m$.
\end{itemize}
\end{definition}

\subsection{The Geometric Setting} \label{sec.carleman_geom}

Since the methods we use are Lorentz geometric in nature, we begin our discussions by setting the basic notations relating to Minkowski geometry---the background naturally associated to the wave operator.
In addition, we give a precise definition of \emph{generalized timelike cylinders}, the time-dependent domains on which our wave equations are set.

\subsubsection{Minkowski Geometry}

We begin by recalling the Minkowski metric on $\R^{1+n}$:

\begin{definition} \label{def.mink}
We define the following on $\R^{1+n}$:
\begin{itemize}
\item Let $t$ and $x = ( x^1, \dots, x^n )$ denote the usual Cartesian coordinates on $\R^{1+n}$, mapping to the first and the remaining $n$ components of $\R^{1+n}$, respectively.

\item Let $g$ denote the \emph{Minkowski metric} on $\R^{1+n}$:
\begin{equation}
\label{eq.mink_met} g := - dt^2 + d ( x^1 )^2 + \dots + d ( x^n )^2 \text{.}
\end{equation}
\end{itemize}
In particular, we refer to the manifold $( \R^{1+n}, g )$ as \emph{Minkowski spacetime}.
\end{definition}

\begin{definition} \label{def.mink_fct}
We also define the following standard functions on $\R^{1+n}$:
\begin{itemize}
\item Let $r := | x |$ denote the radial function.

\item Let $u$ and $v$ denote the null coordinates:
\begin{equation}
\label{eq.uv} u := \frac{1}{2} ( t - r ) \text{,} \qquad v := \frac{1}{2} ( t + r ) \text{.}
\end{equation}

\item Let $f$ denote the hyperbolic function,
\begin{equation}
\label{eq.f} f := - u v = \frac{1}{4} ( r^2 - t^2 ) \text{.}
\end{equation}
\end{itemize}
\end{definition}

\begin{remark}
The nonzero level sets of $f$ are one-sheeted ($f > 0$) and two-sheeted ($f < 0$) hyperboloids on $\R^{1+n}$, while $\{ f = 0 \}$ is the null cone with vertex at the origin.
\end{remark}

For our purposes, it would also be convenient to consider spacetime translations of the functions from Definitions \ref{def.mink_fct}.
Thus, we will also make use of the following notations:

\begin{definition} \label{def.mink_shift}
Fix a point $P \in \R^{1+n}$.
\begin{itemize}
\item We define the shifted time and spatial coordinates by
\begin{equation}
\label{eq.tx_shift} t_P := t - t (P) \text{,} \qquad x_P := x - x (P) \text{.}
\end{equation}

\item We define the shifted radial coordinate, null coordinates, and hyperbolic function by
\begin{equation}
\label{eq.ruvf_shift} r_P := | x_P | \text{,} \qquad u_P := \frac{1}{2} ( t_P - r_P ) \text{,} \qquad v_P := \frac{1}{2} ( t_P + r_P ) \text{,} \qquad f_P := - u_P v_P \text{.}
\end{equation}
\end{itemize}
\end{definition}

\begin{definition} \label{def.mink_coord}
Recall the following standard coordinate systems on $\R^{1+n}$:
\begin{itemize}
\item On $\{ r \neq 0 \}$, we recall the standard polar coordinates $( t, r, \omega )$ and null coordinates $( u, v, \omega )$, with $\omega$ being the angular coordinate taking values in $\Sph^{n - 1}$.
In addition, we let $\partial_t, \partial_r, \partial_u, \partial_v$ denote the coordinate vector fields with respect to these coordinate systems.

\item We can also shift the above coordinate systems by any $P \in \R^{1+n}$.
More specifically, on $\{ r_P \neq 0 \}$, we have the shifted polar coordinates $( t_P, r_P, \omega )$ and null coordinates $( u_P, v_P, \omega )$, with $\omega \in \Sph^{n-1}$ now denoting the angular value about the axis $\{ r_P = 0 \}$.
Again, we let $\partial_{ t_P }$, $\partial_{ r_P }$, $\partial_{ u_P }$, $\partial_{ v_P }$ denote the coordinate vector fields with respect to these coordinate systems.
\end{itemize}
\end{definition}

Recall that the Minkowski metric can be written in terms of polar and null coordinates as
\begin{equation}
\label{eq.mink_met_alt} g = - d t_P^2 + d r_P^2 + r_P^2 \mathring{\gamma} = - 4 du_P dv_P + r_P^2 \mathring{\gamma} \text{,} \qquad P \in \R^{1+n} \text{,}
\end{equation}
where $\mathring{\gamma}$ is the unit round metric on the level spheres of $( u_P, v_P )$.

\begin{definition} \label{def.mink_index}
For tensor fields on $\R^{1+n}$, we adopt the following index conventions:
\begin{itemize}
\item We use lowercase Greek letters (ranging from $0$ to $n$) for spacetime components in $\R^{1+n}$.

\item We use lowercase Latin letters (ranging from $1$ to $n - 1$) to denote angular components, corresponding to the $\omega \in \Sph^{n-1}$ in any of the coordinate systems in Definition \ref{def.mink_coord}.

\item As is standard, we will adopt the Einstein summation notation---repeated indices in both subscript and superscript refer to sums over all possible index values.

\item Unless stated otherwise, indices will be raised and lowered using $g$ and its metric dual.
\end{itemize}
\end{definition}

\begin{definition}
We adopt the following notations for covariant derivatives:
\begin{itemize}
\item Let $\nabla$ denote the Levi-Civita connection with respect to $g$.
In particular, for an appropriately defined function $A$, we write $\nabla A$ to mean the spacetime differential of $A$.

\item Let $\nabla^\sharp$ denote the gradient with respect to $g$; more specifically, for an appropriate scalar function $h$, we let $\nabla^\sharp h$ be the $g$-metric dual of $\nabla h$.\footnote{In other words, $\nabla^\sharp h$ is the vector field obtained by raising the index of $\nabla h$.}

\item Let $\Box$ denote the wave operator with respect to $g$:
\begin{equation}
\label{eq.mink_box} \Box := g^{ \alpha \beta } \nabla_{ \alpha \beta } \text{.}
\end{equation}

\item Let $\nasla$ denote the angular connections on the level spheres $\sigma$ of $(u, v)$, i.e., the Levi-Civita connections associated with the metrics $\slashed{g}$ induced by $g$ on the spheres $\sigma$.
Similarly, we use $\nasla{}^P$ to denote the angular connections on the level spheres of $( u_P, v_P )$.
\end{itemize}
\end{definition}

\begin{definition} \label{def.D}
Let $P \in \R^{1+n}$.
We define the domains $\mc{D}_P$ and $\mc{D}$ by
\begin{equation}
\label{eq.D} \mc{D}_P := \{ f_P > 0 \} \text{,} \qquad \mc{D} := \{ f > 0 \} \text{,}
\end{equation}
i.e., the exteriors of the null cones about $P$ and the origin, respectively.
\end{definition}

\begin{definition} \label{def.future_past}
Let $P \in \R^{1+n}$ and $A \subseteq \R^{1+n}$.
\begin{itemize}
\item Let $I^+ ( P )$ and $I^- ( P )$ denote the \emph{chronological future} and \emph{past}, respectively, of $P$:
\begin{equation}
\label{eq.I_point} I^+ ( P ) = \{ f_P < 0 \} \cap \{ t_P > 0 \} \text{,} \qquad I^- ( P ) = \{ f_P < 0 \} \cap \{ t_P < 0 \} \text{.}
\end{equation}

\item Let $I^+ ( A )$ and $I^- ( A )$ denote the \emph{chronological future} and \emph{past} of $A$:
\begin{equation}
\label{eq.I_subset} I^+ ( A ) = \bigcup_{ P \in A } I^+ ( P ) \text{,} \qquad I^- ( A ) = \bigcup_{ P \in A } I^- ( P ) \text{.}
\end{equation}
\end{itemize}
\end{definition}

\subsubsection{Geometric Timelike Cylinders}

Next, we define precisely the domains (in general, time-dependent and with moving boundaries) on which we will consider our wave equations.

\begin{definition} \label{def.gtc}
$\mc{U}$ is called a \emph{generalized timelike cylinder} (abbreviated \emph{GTC}) in $\R^{1+n}$ iff:
\begin{itemize}
\item $\mc{U} \subseteq \R^{1+n}$, and $\partial \mc{U}$ is a smooth timelike hypersurface of $\R^{1+n}$.

\item For any $\tau \in \R$, the set $\Omega_\tau := \{ y \in \R^n \mid ( \tau, y ) \in \mc{U} \}$ is nonempty, bounded, and open in $\R^n$.

\item There is a smooth future-directed timelike vector field $Z$ on $\bar{\mc{U}}$, with $Z |_{ \partial \mc{U} }$ tangent to $\partial \mc{U}$.
\end{itemize}
In the above, we refer to $Z$ as a \emph{generator} of $\mc{U}$.
\end{definition}

\begin{remark}
Note that if $\mc{U}$ is a GTC, then we have the following:
\begin{itemize}
\item With $\Omega_\tau$ as in Definition \ref{def.gtc}, we can write
\[
\mc{U} = \bigcup_{ \tau \in \R } \{ \tau \} \times \Omega_\tau \text{.}
\]

\item Since each $\Omega_\tau$ is open, then $\mc{U}$ is open in $\R^{1+n}$.

\item Any two $\bar{\Omega}_{ \tau_0 }$ and $\bar{\Omega}_{ \tau_1 }$ are diffeomorphic, as manifolds with boundaries.
For instance, one can identify $\bar{\Omega}_{ \tau_0 }$ with $\bar{\Omega}_{ \tau_1 }$ by flowing along the integral curves of any generator $Z$ of $\mc{U}$.
\end{itemize}
\end{remark}

\begin{remark}
The regularity of $\partial \mc{U}$ and the generator $Z$ in Definition \ref{def.gtc} could be lowered considerably without altering the main results of this paper.
However, since regularity is not a main focus here, we avoid exploring optimal regularities to avoid technical baggage.
\end{remark}

\begin{example} \label{ex.static_cyl}
Suppose $\Omega$ is bounded and open subset of $\R^n$, with a smooth boundary $\partial \Omega$.
Then, the static cylinder $\R \times \Omega$ is a GTC, and $\partial_t$ is a generator of $\R \times \Omega$.
\end{example}

Suppose $\mc{U}$ is a GTC.
Given a coordinate system $( y^1, \dots, y^n )$ on $\Omega_0$ (see Definition \ref{def.gtc}), we can lift the $y^k$'s to $\mc{U}$ by transporting them along the integral curves of a generator $Z$.
With respect to the coordinates $( t, y^1, \dots, y^n )$, we have that $\mc{U} \simeq \R \times \Omega_0$.
Thus, we can characterize GTCs as domains that can be reparametrized as static cylinders (in which the static direction is timelike).

Next, we describe geometrically the hypersurfaces on which one can impose Cauchy (i.e., initial or final) data for a linear wave equation on a GTC:

\begin{definition} \label{def.gtc_cs}
Let $\mc{U}$ be a GTC.
A subset $\mc{V} \subseteq \mc{U}$ is called a \emph{cross-section} of $\mc{U}$ iff:
\begin{itemize}
\item $\mc{V}$ is a smooth spacelike hypersurface of $\mc{U}$, and $\partial \mc{V} \subseteq \partial \mc{U}$.\footnote{Recall that $\mc{V}$ is called \emph{spacelike} iff the metric on $\mc{V}$ induced by $g$ is positive-definite.}

\item If $Z$ is a generator of $\mc{U}$, then every integral curve of $Z$ hits $\bar{\mc{V}}$ exactly once.
\end{itemize}
\end{definition}

Note that any two cross-sections $\mc{V}_1$ and $\mc{V}_2$ of a GTC must be diffeomorphic, since $\mc{V}_1$ can be identified with $\mc{V}_2$ by flowing along the integral curves of any generator.
Next, we develop some basic terminology for comparing two cross-sections of a GTC.

\begin{definition} \label{def.gtc_cs_comp}
Let $\mc{U}$ be a GTC, and let $\mc{V}_-, \mc{V}_+$ be two cross-sections of $\mc{U}$.
\begin{itemize}
\item We write $\mc{V}_- < \mc{V}_+$ iff $\mc{V}_+ \subseteq I^+ ( \mc{V}_- )$ (or equivalently, $\mc{V}_- \subseteq I^- ( \mc{V}_+ )$).

\item When $\mc{V}_- < \mc{V}_+$, we let $\mc{U} ( \mc{V}_-, \mc{V}_+ )$ denote the region of $\mc{U}$ between $\mc{V}_-$ and $\mc{V}_+$,
\begin{equation}
\label{eq.gtc_slab} \mc{U} ( \mc{V}_-, \mc{V}_+ ) := \mc{U} \cap I^+ ( \mc{V}_- ) \cap I^- ( \mc{V}_+ ) \text{,}
\end{equation}
and we let $\partial \mc{U} ( \mc{V}_-, \mc{V}_+ )$ denote the region of $\partial \mc{U}$ between $\mc{V}_-$ and $\mc{V}_+$,
\begin{equation}
\label{eq.gtc_bdry_slab} \partial \mc{U} ( \mc{V}_-, \mc{V}_+ ) := \partial \mc{U} \cap I^+ ( \mc{V}_- ) \cap I^- ( \mc{V}_+ ) \text{.}
\end{equation}
\end{itemize}
\end{definition}

The most basic examples of cross-sections are constructed from the level sets of $t$:

\begin{definition} \label{def.gtc_t}
Let $\mc{U}$ be a GTC.
\begin{itemize}
\item Given $\tau \in \R$, we define the cross-section
\begin{equation}
\label{eq.gtc_cs_t} \mc{U}_\tau := \mc{U} \cap \{ t = \tau \} \text{.}
\end{equation}

\item Moreover, given $\tau_-, \tau_+ \in \R$ with $\tau_- < \tau_+$, we define
\begin{align}
\label{eq.gtc_slab_t} \mc{U}_{ \tau_-, \tau_+ } &:= \mc{U} ( \mc{U}_{ \tau_- }, \mc{U}_{ \tau_+ } ) = \mc{U} \cap \{ \tau_- < t < \tau_+ \} \text{,} \\
\notag \partial \mc{U}_{ \tau_-, \tau_+ } &:= \partial \mc{U} ( \mc{U}_{ \tau_- }, \mc{U}_{ \tau_+ } ) = \partial \mc{U} \cap \{ \tau_- < t < \tau_+ \} \text{.}
\end{align}
\end{itemize}
\end{definition}

\subsection{Linear Wave Equations}

The next task is to give a precise description of the linear wave equations that we will consider throughout this paper:

\begin{problem} \label{prb.linear_wave}
Let $\mc{U}$ be a GTC in $\R^{1+n}$, and fix
\begin{equation}
\label{eq.XV} \mc{X} \in C^\infty ( \bar{\mc{U}}; \R^{1+n} ) \text{,} \qquad V \in C^\infty ( \bar{\mc{U}} ) \text{.}
\end{equation}
Solve the following linear wave equation on $\bar{\mc{U}}$:
\begin{equation}
\label{eq.linear_wave} \mc{P} \phi |_{ \mc{U} } := ( \Box \phi + \nabla_{ \mc{X} } \phi + V \phi ) |_{ \mc{U} } = 0 \text{.}
\end{equation}
\end{problem}

\begin{remark}
The assumption \eqref{eq.XV} that the coefficients $\mc{X}$ and $V$ in $\mc{P}$ are smooth is purely for convenience.
All the results in the paper still hold, as long as $\mc{X}$ and $V$ are sufficiently regular.
For instance, our main observability results remain true when $\mc{X}$ and $V$ are merely uniformly bounded.\footnote{The assumptions for $V$ could be further weakened (for instance, to $L^p$-integrability) with more refinements to the proofs in this article. However, we avoid doing this here in order to avoid additional technical details.}
\end{remark}

\subsubsection{Energy Estimates}

Next, we establish standard energy estimates, one global and one local, for solutions of Problem \ref{prb.linear_wave} with vanishing Dirichlet data.
Before stating and proving the energy estimates, however, we first require some notations regarding the sizes of tensorial objects:

\begin{definition} \label{def.tensor_norm}
Let $\mc{W}$ be an open subset of $\R^{1+n}$.
\begin{itemize}
\item For a scalar $h \in C^\infty ( \mc{W} )$, we define
\begin{equation}
\label{eq.tensor_norm_deriv} | \nabla_{ t, x } h |^2 = ( \partial_t h )^2 + ( \partial_{ x^1 } h )^2 + \dots + ( \partial_{ x^n } h )^2 \text{.}
\end{equation}

\item For a vector field $X \in C^\infty ( \mc{W}; \R^{1+n} )$, we define
\begin{equation}
\label{eq.tensor_norm_vf} | X^{ t, x } |^2 = ( X^t )^2 + ( X^{ x^1 } )^2 + \dots + ( X^{ x^n } )^2 \text{,}
\end{equation}
where $X^t, X^{ x^1 }, \dots, X^{ x^n }$ represent the components of $X$ in Cartesian coordinates.
\end{itemize}
\end{definition}

The following global energy estimate can be viewed as a geometric extension of the modified energy estimates of \cite{duy_zhang_zua:obs_opt, zua:ctrl_wave_1d}.
As was the case in \cite{duy_zhang_zua:obs_opt}, these modified estimates allow us to optimize our dependence on the potential in our main observability estimates.

\begin{proposition} \label{thm.energy_est}
Let $\mc{U}$ be a GTC, and let $\mc{V}_\pm$ be cross-sections of $\mc{U}$ with $\mc{V}_- < \mc{V}_+$.
Moreover, consider the linear wave equation of Problem \ref{prb.linear_wave}, and let
\begin{equation}
\label{eq.energy_est_M} \mc{M}_0 := 1 + \sup_{ \mc{U} ( \mc{V}_-, \mc{V}_+ ) } | V | \text{,} \qquad \mc{M}_1 := 1 + \sup_{ \mc{U} ( \mc{V}_-, \mc{V}_+ ) } | \mc{X}^{ t, x } | \text{,} \qquad \mc{T} := \sup_{ \mc{V}_+ } t - \inf_{ \mc{V}_- } t \text{.}
\end{equation}
Then, there exist constants $C, C' > 0$, depending on $\mc{U}$ and $\mc{V}_\pm$, such that
\begin{align}
\label{eq.energy_est} \int_{ \mc{V}_+ } ( | \nabla_{ t, x } \phi |^2 + \mc{M}_0 \cdot \phi^2 ) &\leq C e^{ C' ( \mc{M}_0^\frac{1}{2} + \mc{M}_1 ) \mc{T} } \int_{ \mc{V}_- } ( | \nabla_{ t, x } \phi |^2 + \mc{M}_0 \cdot \phi^2 ) \text{,} \\
\notag \int_{ \mc{V}_- } ( | \nabla_{ t, x } \phi |^2 + \mc{M}_0 \cdot \phi^2 ) &\leq C e^{ C' ( \mc{M}_0^\frac{1}{2} + \mc{M}_1 ) \mc{T} } \int_{ \mc{V}_+ } ( | \nabla_{ t, x } \phi |^2 + \mc{M}_0 \cdot \phi^2 ) \text{.}
\end{align}
for any solution $\phi \in C^2 ( \mc{U} ) \cap C^1 ( \bar{\mc{U}} )$ of \eqref{eq.linear_wave} that also satisfies $\phi |_{ \partial \mc{U} ( \mc{V}_-, \mc{V}_+ ) } = 0$.
\end{proposition}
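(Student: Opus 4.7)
The plan is to prove both estimates by a geometric multiplier argument using the generator $Z$ of $\mc{U}$ as multiplier. Two structural features of $Z$ are decisive: $Z$ is future-directed timelike on $\bar{\mc{U}}$, which yields a coercive energy on spacelike cross-sections, and $Z$ is tangent to $\partial\mc{U}$, which (together with the Dirichlet condition) causes the lateral flux on $\partial\mc{U}$ to vanish identically. To obtain the $\mc{M}_0^{1/2}$ (rather than $\mc{M}_0$) rate in $V$, I would include a weighted $\phi^2$-correction in the energy, in the spirit of the modified energies referenced just before the proposition.

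Concretely, foliate $\mc{U}(\mc{V}_-,\mc{V}_+)$ by a smooth family of cross-sections $\{\mc{V}_s\}_{s\in[0,S]}$ with $\mc{V}_0 = \mc{V}_-$ and $\mc{V}_S = \mc{V}_+$, obtained by transporting $\mc{V}_-$ along $Z$; by compactness of $\bar{\mc{U}}(\mc{V}_-,\mc{V}_+)$, the parameter length satisfies $S \simeq \mc{T}$, with constants depending on $\mc{U},\mc{V}_\pm,Z$. Introduce the energy--momentum tensor
\[ T_{\alpha\beta}[\phi] := \nabla_\alpha\phi\,\nabla_\beta\phi - \tfrac{1}{2}\, g_{\alpha\beta}\, g^{\gamma\delta}\nabla_\gamma\phi\,\nabla_\delta\phi, \]
and the modified current
\[ J^\alpha := T^{\alpha\beta}[\phi]\, Z_\beta + \tfrac{1}{2}\,\mc{M}_0\,\phi^2\, Z^\alpha. \]
Using $\nabla^\alpha T_{\alpha\beta} = (\Box\phi)\,\nabla_\beta\phi$ and substituting $\Box\phi = -\nabla_\mc{X}\phi - V\phi$ from \eqref{eq.linear_wave} gives
\[ \nabla_\alpha J^\alpha = -(\nabla_\mc{X}\phi)(Z\phi) - V\phi\,(Z\phi) + \tfrac{1}{2}\, T^{\alpha\beta}\,{}^{(Z)}\pi_{\alpha\beta} + \mc{M}_0\,\phi\,(Z\phi) + \tfrac{1}{2}\,\mc{M}_0\,\phi^2\,(\nabla_\alpha Z^\alpha), \]
with ${}^{(Z)}\pi := \mc{L}_Z g$. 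Applying the divergence theorem on $\mc{U}(\mc{V}_-,\mc{V}_s)$, the flux on $\partial\mc{U}(\mc{V}_-,\mc{V}_s)$ vanishes: $\phi = 0$ there forces $\nabla^\sharp\phi$ to be parallel to the $g$-normal $N$ of $\partial\mc{U}$ and gives $Z\phi = 0$, while $g(Z,N) = 0$ by tangency, so both $T_{\alpha\beta} Z^\beta N^\alpha$ and the $\phi^2\, g(Z,N)$ piece of $J \cdot N$ vanish.

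Set $E(s) := \int_{\mc{V}_s} J_\alpha n_s^\alpha$ with $n_s$ the future $g$-unit normal to $\mc{V}_s$; timelikeness of $Z$, spacelikeness of $\mc{V}_s$, and compactness yield the two-sided bound
\[ E(s) \simeq \int_{\mc{V}_s}\bigl(|\nabla_{t,x}\phi|^2 + \mc{M}_0\,\phi^2\bigr), \]
with comparability constants depending only on $\mc{U},\mc{V}_\pm,Z$. The key bulk estimate is the split Cauchy--Schwarz
\[ |V\phi\,(Z\phi)| \leq \mc{M}_0\,|\phi|\,|Z\phi| \leq \tfrac{1}{2}\,\mc{M}_0^{1/2}\bigl(\mc{M}_0\,\phi^2 + (Z\phi)^2\bigr), \]
together with the identical bound for $\mc{M}_0\,\phi\,(Z\phi)$; these are the sole reason for the $\phi^2$-correction in $J$. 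The remaining terms are routine: $|\nabla_\mc{X}\phi\,(Z\phi)| \lesssim \mc{M}_1\,|\nabla_{t,x}\phi|^2$, $|T^{\alpha\beta}\,{}^{(Z)}\pi_{\alpha\beta}| \lesssim |\nabla_{t,x}\phi|^2$, and $|\mc{M}_0\,\phi^2\,\nabla_\alpha Z^\alpha| \lesssim \mc{M}_0\,\phi^2$, with constants coming from $\mc{U}$ and $Z$. Summing and using $\mc{M}_0 \geq 1$,
\[ |\nabla_\alpha J^\alpha| \lesssim (\mc{M}_0^{1/2} + \mc{M}_1)\bigl(|\nabla_{t,x}\phi|^2 + \mc{M}_0\,\phi^2\bigr). \]
Integrating over the foliation parameter converts this into $(\mc{M}_0^{1/2}+\mc{M}_1)\int_0^s E(s')\,ds'$, so that $|E(s) - E(s_0)| \lesssim (\mc{M}_0^{1/2}+\mc{M}_1)\,\bigl|\int_{s_0}^s E(s')\,ds'\bigr|$ for any $s,s_0 \in [0,S]$. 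Forward Gronwall with $s_0 = 0$ gives the first inequality of \eqref{eq.energy_est}, and backward Gronwall with $s_0 = S$ gives the second (both with $S \simeq \mc{T}$).

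The principal obstacle is attaining the $\mc{M}_0^{1/2}$ rate rather than $\mc{M}_0$: without the $\tfrac{1}{2}\,\mc{M}_0\,\phi^2\, Z^\alpha$ correction to $J$, the cross term $V\phi\,(Z\phi)$ would be bounded only by $\tfrac{1}{2}\mc{M}_0(\phi^2 + (Z\phi)^2)$, which would yield the suboptimal rate. A secondary technical point is that $\mc{V}_\pm$ need not be level sets of $t$, so one must verify that the $Z$-flow parameter along the foliation is bi-Lipschitz comparable to the $t$-gap $\mc{T}$, with constants depending only on $\mc{U},\mc{V}_\pm,Z$.
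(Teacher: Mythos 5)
The proposal follows the paper's overall scheme --- multiplier $Z$, a $\phi^2$-corrected energy for the $\mc{M}_0^{1/2}$ rate, vanishing lateral flux from tangency of $Z$ and the Dirichlet condition, Gronwall --- but there is a sign error in the modified current that is not cosmetic. You define $J^\alpha := T^{\alpha\beta}Z_\beta + \tfrac{1}{2}\mc{M}_0\phi^2 Z^\alpha$ and claim
\[
E(s) := \int_{\mc{V}_s} J_\alpha\, n_s^\alpha \simeq \int_{\mc{V}_s} \bigl( |\nabla_{t,x}\phi|^2 + \mc{M}_0\, \phi^2 \bigr) \text{,}
\]
with $n_s$ the future $g$-unit normal to $\mc{V}_s$. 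But the two pieces of $J_\alpha n_s^\alpha$ have opposite sign: $T(n_s, Z) \geq 0$ by the positive energy condition (both $n_s$ and $Z$ are future timelike), whereas $g(n_s, Z) < 0$ for the same reason, so $\tfrac{1}{2}\mc{M}_0\phi^2\, g(n_s, Z) \leq 0$. For a concrete check take $Z = n_s = \partial_t$ on a static cylinder; then
\[
J_\alpha n_s^\alpha = \tfrac{1}{2} \bigl( |\nabla_{t,x}\phi|^2 - \mc{M}_0\, \phi^2 \bigr) \text{,}
\]
which is not comparable to $|\nabla_{t,x}\phi|^2 + \mc{M}_0\phi^2$ and can be negative, so Gronwall applied to your $E(s)$ does not yield the statement. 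The correction must enter with the opposite sign: $J^\alpha := T^{\alpha\beta}Z_\beta - \tfrac{1}{2}\mc{M}_0\phi^2 Z^\alpha$. This is exactly $Q^{\alpha\beta}Z_\beta$ for the paper's modified stress-energy tensor $Q_{\alpha\beta} = T_{\alpha\beta} - \tfrac{1}{2}\mc{M}_0\phi^2\, g_{\alpha\beta}$, so once the sign is fixed your packaging coincides with the paper's. With the corrected sign the divergence identity acquires $-\mc{M}_0\phi(Z\phi) - \tfrac{1}{2}\mc{M}_0\phi^2\, \nabla_\alpha Z^\alpha$ in place of your last two terms, the cross-term combines with $-V\phi(Z\phi)$ into $-(V + \mc{M}_0)\phi(Z\phi)$, and your split Cauchy--Schwarz still delivers the $\mc{M}_0^{1/2}$ rate, so the rest of the argument goes through.

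A secondary point: transporting $\mc{V}_-$ along $Z$ need not produce a foliation whose final leaf is $\mc{V}_+$. The paper instead fixes a time coordinate $\mf{t}$ with past-directed timelike gradient whose level sets include both $\mc{V}_\pm$ and foliates by those level sets; some construction of this kind is needed in order to legitimately compare the flow parameter with $\mc{T}$ and to run Gronwall between $\mc{V}_-$ and $\mc{V}_+$ rather than between $\mc{V}_-$ and some unrelated final leaf.
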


\begin{proof}[Proof sketch.]
First, we fix some $\mf{t} \in C^\infty ( \bar{\mc{U}} )$ satisfying the following:
\begin{itemize}
\item $\nabla^\sharp \mf{t}$ is everywhere timelike and past-directed (i.e., $\mf{t}$ is a ``time coordinate").

\item There exist $\tau_\pm \in \R$ such that
\begin{equation}
\label{eql.energy_est_time} \mc{V}_\pm = \mc{U} \cap \{ \mf{t} = \tau_\pm \} \text{,} \qquad \tau_+ - \tau_- \simeq \mc{T} \text{.}
\end{equation}
\end{itemize}
Note $\mc{U} \cap \{ \mf{t} = \tau \}$ is a cross-section of $\mc{U}$ for any $\tau \in \R$.
We also define the modified energy
\begin{equation}
\label{eql.energy_est_energy} \mc{E} ( \tau ) = \int_{ \mc{U} \cap \{ \mf{t} = \tau \} } ( | \nabla_{ t, x } \phi |^2 + \mc{M}_0 \cdot \phi^2 ) \text{,} \qquad \tau \in \R \text{,}
\end{equation}
as well as the modified stress-energy tensor:
\begin{equation}
\label{eql.energy_est_emt} Q_{ \alpha \beta } := \nabla_\alpha \phi \nabla_\beta \phi - \frac{1}{2} g_{\alpha \beta} ( g^{\mu \nu} \nabla_\mu \phi \nabla_\nu \phi + \mc{M}_0 \cdot \phi^2 ) \text{.}
\end{equation}

Let $Z$ be a generator of $\mc{U}$; we then apply the usual multiplier method to $\phi$, with multiplier $Z$.
First, an application of the (Lorentzian) divergence theorem on $\mc{U} ( \mc{V}_-, \mc{V}_+ )$ yields
\begin{equation}
\label{eql.energy_est_0} \int_{ \mc{U} ( \mc{V}_-, \mc{V}_+ ) } \nabla^\alpha ( Q_{\alpha \beta} Z^\beta ) = \int_{ \mc{V}_+ } Q ( \mc{N}', Z ) + \int_{ \mc{V}_- } Q ( \mc{N}', Z ) + \int_{ \partial \mc{U} ( \mc{V}_-, \mc{V}_+ ) } Q ( \mc{N}, Z ) \text{,}
\end{equation}
where $\mc{N}$ and $\mc{N}'$ denote the outward and inward pointing unit normals to $\mc{U} ( \mc{V}_-, \mc{V}_+ )$, respectively.
Since $Z$ is future timelike, then standard computations yield that
\begin{equation}
\label{eql.energy_est_01} \mp \int_{ \mc{V}_\pm } Q ( \mc{N}', Z ) \simeq_{ \mc{U}, \mc{V}_\pm } \mc{E} ( \tau_\pm ) \text{.}
\end{equation}
Moreover, since $Z$ and $\mc{N}$ are orthogonal on $\partial \mc{U}$, and since $\phi$ vanishes on $\partial \mc{U} ( \mc{V}_-, \mc{V}_+ )$, then
\begin{equation}
\label{eql.energy_est_02} \int_{ \partial \mc{U} ( \mc{V}_-, \mc{V}_+ ) } Q ( \mc{N}, Z ) = \int_{ \partial \mc{U} ( \mc{V}_-, \mc{V}_+ ) } \mc{N} \phi Z \phi = 0 \text{.}
\end{equation}

For the bulk term in \eqref{eql.energy_est_0}, we expand and apply \eqref{eq.linear_wave} to estimate
\begin{align}
\label{eql.energy_est_03} \left| \int_{ \mc{U} ( \mc{V}_-, \mc{V}_+ ) } \nabla^\alpha ( Q_{\alpha \beta} Z^\beta ) \right| &\leq \int_{ \mc{U} ( \mc{V}_-, \mc{V}_+ ) } ( | \Box \phi | | Z \phi | + \mc{M}_0 | \phi | | Z \phi | + | Q_{\alpha \beta} \nabla^\alpha Z^\beta | ) \\
\notag &\lesssim_{ \mc{U}, \mc{V}_\pm } \int_{ \mc{U} ( \mc{V}_-, \mc{V}_+ ) } ( \mc{M}_1 | \nabla_{ t, x } \phi | | Z \phi | + \mc{M}_0 | \phi | | Z \phi | + | \nabla_{ t, x } \phi |^2 ) \text{.}
\end{align}
By \eqref{eql.energy_est_energy}, \eqref{eql.energy_est_03}, H\"older's inequality, and Fubini's theorem, we conclude that
\begin{equation}
\label{eql.energy_est_1} \left| \int_{ \mc{U} ( \mc{V}_-, \mc{V}_+ ) } \nabla^\alpha ( Q_{\alpha \beta} Z^\beta ) \right| \lesssim_{ \mc{U}, \mc{V}_\pm } ( \mc{M}_0^\frac{1}{2} + \mc{M}_1 ) \int_{ \tau_- }^{ \tau_+ } \mc{E} ( \tau ) d \tau \text{.}
\end{equation}
Combining \eqref{eql.energy_est_0}--\eqref{eql.energy_est_02} and \eqref{eql.energy_est_1}, we obtain the inequalities
\begin{equation}
\label{eql.energy_est_2} \mc{E} ( \tau_\pm ) \lesssim_{ \mc{U}, \mc{V}_\pm } \mc{E} ( \tau_\mp ) + ( \mc{M}_0^\frac{1}{2} + \mc{M}_1 ) \int_{ \tau_- }^{ \tau_+ } \mc{E} ( \tau ) d \tau \text{.}
\end{equation}

By varying $\mc{V}_\pm$ to be other level sets of $\mf{t}$, we see that \eqref{eql.energy_est_2} holds for arbitrary $\tau_\pm \in \R$.
The desired estimate \eqref{eq.energy_est} now follows from \eqref{eql.energy_est_2} and from Gronwall's inequality.
\end{proof}

We will also need a variant of Proposition \ref{thm.energy_est} that is localized to a null cone exterior.

\begin{proposition} \label{thm.energy_est_loc}
Fix $P \in \R^{1+n}$, let $\mc{U}$ be a GTC, and let $\mc{V}$ be a cross-section of $\mc{U}$ satisfying either $\mc{U}_{ t (P) } < \mc{V}$ or $\mc{V} < \mc{U}_{ t (P) }$.
Moreover, we consider Problem \ref{prb.linear_wave}, and we let
\begin{equation}
\label{eq.energy_est_loc_M} \mc{M}_0 := 1 + \sup_{ \mc{U} \cap \mc{D}_P } | V | \text{,} \qquad \mc{M}_1 := 1 + \sup_{ \mc{U} \cap \mc{D}_P } | \mc{X}^{ t, x } | \text{,} \qquad \mc{T} := \sup_{ \mc{V} } | t - t ( P ) |\text{.}
\end{equation}
Then, there exist $C, C' > 0$, depending on $\mc{U}$ and $\mc{V}$, such that
\begin{equation}
\label{eq.energy_est_loc} \int_{ \mc{V} \cap \mc{D}_P } ( | \nabla_{ t, x } \phi |^2 + \mc{M}_0 \cdot \phi^2 ) \leq C e^{ C' ( \mc{M}_0^\frac{1}{2} + \mc{M}_1 ) \mc{T} } \int_{ \mc{U}_{ t (P) } \cap \mc{D}_P } ( | \nabla_{ t, x } \phi |^2 + \mc{M}_0 \cdot \phi^2 ) \text{,}
\end{equation}
for any solution $\phi \in C^\infty ( \mc{U} ) \cap C^1 ( \bar{\mc{U}} )$ of \eqref{eq.linear_wave} that also satisfies $\phi |_{ \partial \mc{U} \cap \mc{D}_P } = 0$.
\end{proposition}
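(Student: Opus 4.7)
The plan is to adapt the proof of Proposition \ref{thm.energy_est} to the localized region $\mc{W} := \mc{U}(\mc{U}_{t(P)}, \mc{V}) \cap \mc{D}_P$, assuming without loss of generality that $\mc{U}_{t(P)} < \mc{V}$ (the reverse case follows by a time-reversal argument, interchanging the roles of the future cone $\{u_P = 0\}$ and the past cone $\{v_P = 0\}$). As before, I first fix a time function $\mf{t} \in C^\infty(\bar{\mc{U}})$ with past-directed timelike gradient, chosen so that $\mc{U}_{t(P)} = \mc{U} \cap \{\mf{t} = \tau_-\}$ and $\mc{V} = \mc{U} \cap \{\mf{t} = \tau_+\}$ with $\tau_+ - \tau_- \simeq \mc{T}$, and set $\mc{V}_\tau := \mc{U} \cap \{\mf{t} = \tau\}$ and $\mc{E}_\mr{loc}(\tau) := \int_{\mc{V}_\tau \cap \mc{D}_P}(|\nabla_{t,x}\phi|^2 + \mc{M}_0 \cdot \phi^2)$. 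The argument then applies the divergence theorem to $\nabla^\alpha(Q_{\alpha\beta} Z^\beta)$ on $\mc{W}(\tau) := \mc{U}(\mc{V}_{\tau_-}, \mc{V}_\tau) \cap \mc{D}_P$, for each $\tau \in [\tau_-, \tau_+]$, where $Q$ is the modified stress-energy tensor from \eqref{eql.energy_est_emt} and $Z$ is a generator of $\mc{U}$.

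The boundary $\partial \mc{W}(\tau)$ splits into four pieces: the two spacelike slices $\mc{V}_\tau \cap \mc{D}_P$ and $\mc{V}_{\tau_-} \cap \mc{D}_P$, the GTC-boundary piece $\partial \mc{U} \cap \mc{W}(\tau)$, and the null-cone piece $\partial \mc{D}_P \cap \mc{W}(\tau)$, which lies along the future null cone $\{u_P = 0,\, v_P > 0\}$ from $P$. The two spacelike pieces yield $\mc{E}_\mr{loc}(\tau)$ and $\mc{E}_\mr{loc}(\tau_-)$ up to constants depending only on $\mc{U}$ and $\mc{V}$, exactly as in \eqref{eql.energy_est_01}. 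The GTC-boundary piece vanishes by the same reasoning as in \eqref{eql.energy_est_02}: since $\phi \equiv 0$ on $\partial \mc{U} \cap \mc{D}_P$ and $Z$ is tangent to $\partial \mc{U}$, both $Z\phi$ and $g(\mc{N}, Z)$ vanish along that face, forcing $Q(\mc{N}, Z) = 0$.

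The genuinely new step---and the one I expect to be the main technical point---is showing that the null-cone contribution appears with the correct sign to be simply dropped. Along $\{u_P = 0\}$ the outward conormal to $\mc{W}(\tau)$ is a positive multiple of $du_P$, so the natural null generator used to express the flux is the future-null $L = \tfrac{1}{2}\partial_{v_P}$, and the induced boundary integral pairs $Q(L, Z)$ against a non-negative surface density on the truncated null cone. Since $Z$ is future-timelike and $L$ is future-null, the usual dominant-energy-condition computation gives
\begin{equation*}
\nabla_L \phi \cdot \nabla_Z \phi - \tfrac{1}{2}\, g(L, Z)\, |\nabla \phi|^2 \geq 0,
\end{equation*}
while the modification in $Q$ contributes an additional $-\tfrac{1}{2}\, g(L, Z)\, \mc{M}_0 \phi^2 \geq 0$, since $g(L, Z) < 0$. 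Hence $Q(L, Z) \geq 0$, and after rearrangement this null-cone flux can be absorbed with a favorable sign when estimating $\mc{E}_\mr{loc}(\tau)$ from above.

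Finally, the bulk term $\int_{\mc{W}(\tau)} \nabla^\alpha(Q_{\alpha\beta} Z^\beta)$ is controlled exactly as in \eqref{eql.energy_est_03}--\eqref{eql.energy_est_1} by $(\mc{M}_0^{1/2} + \mc{M}_1) \int_{\tau_-}^\tau \mc{E}_\mr{loc}(\sigma)\, d\sigma$. Combining the four boundary identifications with this bulk bound produces the Gronwall-ready inequality
\begin{equation*}
\mc{E}_\mr{loc}(\tau) \lesssim_{\mc{U}, \mc{V}} \mc{E}_\mr{loc}(\tau_-) + (\mc{M}_0^{1/2} + \mc{M}_1) \int_{\tau_-}^\tau \mc{E}_\mr{loc}(\sigma)\, d\sigma,
\end{equation*}
and Gronwall's inequality then delivers \eqref{eq.energy_est_loc}.
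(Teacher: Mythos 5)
Your proposal is correct and follows essentially the same route as the paper: localize the multiplier argument of Proposition \ref{thm.energy_est} to $\mc{U}(\mc{U}_{t(P)},\mc{V})\cap\mc{D}_P$, observe that the timelike boundary flux vanishes and that the null-cone flux has a favorable sign by the positive (dominant) energy condition, bound the bulk by $(\mc{M}_0^{1/2}+\mc{M}_1)\int\mc{E}_{\mathrm{loc}}$, and conclude with Gronwall. The only difference is cosmetic: you spell out the sign computation $Q(L,Z)\geq 0$ on the null cone that the paper relegates to a footnote.
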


\begin{proof}[Proof sketch.]
Again, we fix a ``time coordinate" $\mf{t} \in C^\infty ( \bar{\mc{U}} )$ such that:
\begin{itemize}
\item $\nabla^\sharp \mf{t}$ is everywhere timelike and past-directed.

\item There exist $\tau_0, \tau_1 \in \R$ such that
\begin{equation}
\label{eql.energy_est_local_time} \mc{V} = \mc{U} \cap \{ \mf{t} = \tau_1 \} \text{,} \qquad \mc{U} \cap \{ \mf{t} = \tau_0 \} \text{,} \qquad | \tau_1 - \tau_0 | \simeq \mc{T} \text{.}
\end{equation}
\end{itemize}
Moreover, let $Q$ be defined as in \eqref{eql.energy_est_emt}, and define the localized energy,
\begin{equation}
\label{eql.eneryg_est_local_energy} \mc{H} ( \tau ) = \int_{ \mc{U} \cap \{ \mf{t} = \tau \} \cap \mc{D}_P } ( | \nabla_{ t, x } \phi |^2 + \mc{M}_0 \cdot \phi^2 ) \text{,} \qquad \tau \in \R \text{.}
\end{equation}
We only prove the case $\mc{U}_{ t (P) } < \mc{V}$ (that is, $\tau_0 < \tau_1$), as the remaining case $\mc{V} < \mc{U}_{ t (P) }$ can be derived analogously.
We also assume $\mc{V} \cap \mc{D}_P \neq \emptyset$, since the result is trivial otherwise.

Again, we let $Z$ be a generator of $\mc{U}$.
The argument proceeds like the proof of Proposition \ref{thm.energy_est}, except we integrate $\nabla^\alpha ( Q_{ \alpha \beta } Z^\beta )$ over $\mc{U} ( \mc{U}_{ t (P) }, \mc{V} ) \cap \mc{D}_P$.
Applying the divergence theorem yields
\begin{align}
\label{eql.energy_est_local_0} \int_{ \mc{U} ( \mc{U}_{ t (P) }, \mc{V} ) \cap \mc{D}_P } \nabla^\alpha ( Q_{\alpha \beta} Z^\beta ) &= \int_{ \mc{V} \cap \mc{D}_P } Q ( \mc{N}', Z ) + \int_{ \mc{U}_{ t (P) } \cap \mc{D}_P } Q ( \mc{N}', Z ) \\
\notag &\qquad + \int_{ \partial \mc{U} ( \mc{U}_{ t (P) }, \mc{V} ) \cap \mc{D}_P } Q ( \mc{N}, Z ) + \mc{I} \text{,}
\end{align}
where $\mc{N}$ and $\mc{N}'$ denotes the outward and inward pointing unit normals to $\mc{U} ( \mc{V}_-, \mc{V}_+ )$, respectively, and where the last term $\mc{I}$ denotes an integral over the null boundary $\mc{U} ( \mc{U}_{ t (P) }, \mc{V} ) \cap \partial \mc{D}_P$.

By computations similar to those in the proof of Proposition \ref{thm.energy_est}, we obtain
\begin{equation}
\label{eql.energy_est_local_01} - \int_{ \mc{V} \cap \mc{D}_P } Q ( \mc{N}', Z ) \simeq_{ \mc{U}, \mc{V} } \mc{H} ( \tau_1 ) \text{,} \qquad \int_{ \mc{U}_{ t (P) } \cap \mc{D}_P } Q ( \mc{N}', Z ) \simeq_{ \mc{U}, \mc{V} } \mc{H} ( \tau_0 ) \text{,}
\end{equation}
as well as
\begin{equation}
\label{eql.energy_est_local_02} \int_{ \partial \mc{U} ( \mc{U}_{ t (P) }, \mc{V} ) \cap \mc{D}_P } Q ( \mc{N}, Z ) = 0 \text{,}
\end{equation}
For the remaining null boundary integral, standard computations using $Q$ imply that $\mc{I} \geq 0$.\footnote{In particular, we use that $Q$ satisfies the so-called \emph{positive energy condition}.}

Next, using \eqref{eq.linear_wave}, we can bound
\begin{align}
\label{eql.energy_est_local_1} \left| \int_{ \mc{U} ( \mc{U}_{ t (P) }, \mc{V} ) \cap \mc{D}_P } \nabla^\alpha ( Q_{\alpha \beta} Z^\beta ) \right| &\leq \int_{ \mc{U} ( \mc{U}_{ t (P) }, \mc{V} ) \cap \mc{D}_P } ( | \Box \phi | | Z \phi | + \mc{M}_0 | \phi | | Z \phi | + | Q_{\alpha \beta} \nabla^\alpha Z^\beta | ) \\
\notag &\lesssim_{ \mc{U}, \mc{V} } ( \mc{M}_0^\frac{1}{2} + \mc{M}_1 ) \int_{ \tau_0 }^{ \tau_1 } \mc{H} ( \tau ) d \tau \text{.}
\end{align}
Combining \eqref{eql.energy_est_local_0}--\eqref{eql.energy_est_local_1} yields
\begin{equation}
\label{eql.energy_est_local_2} \mc{H} ( \tau_1 ) \lesssim_{ \mc{U}, \mc{V} } \mc{H} ( \tau_0 ) + ( \mc{M}_0^\frac{1}{2} + \mc{M}_1 ) \int_{ \tau_0 }^{ \tau_1 } \mc{H} ( \tau ) d \tau \text{.}
\end{equation}
Varying $\mc{V}$ yields that \eqref{eql.energy_est_local_2} holds for any $\tau_1 > 0$, and \eqref{eq.energy_est_loc} follows from Gronwall's inequality.
\end{proof}

\subsubsection{The Preliminary Result}

We conclude this section by proving the preliminary observability estimate for free waves described in Section \ref{sec.intro_results_prelim}.
More specifically, we establish the following:

\begin{theorem} \label{thm.obs_prelim}
Let $( \mc{U}, Z )$ be a GTC, and fix $x_0 \in \R^n$ and $\tau_\pm \in \R$, with $\tau_- < \tau_+$.
Also, assume
\begin{equation}
\label{eq.obs_prelim_ass} \tau_+ - \tau_- > R_+ + R_- \text{,} \qquad R_\pm := \sup_{ ( \tau_\pm, y ) \in \partial \mc{U} } | y - x_0 | \text{,}
\end{equation}
and fix $t_0 \in ( \tau_-, \tau_+ )$ such that
\begin{equation}
\label{eq.obs_prelim_t0} \tau_+ - t_0 > R_+ \text{,} \qquad t_0 - \tau_- > R_- \text{.}
\end{equation}
Then, for any smooth solution $\phi$ of
\begin{equation}
\label{eq.obs_prelim_wave} \Box \phi |_{ \mc{U} } = 0 \text{,} \qquad \phi |_{ \partial \mc{U} } = 0 \text{,}
\end{equation}
we have the observability estimate
\begin{equation}
\label{eq.obs_prelim} \int_{ \mc{U}_{ \tau_\pm } } ( | \nabla_{t, x} \phi |^2 + \phi^2 ) \lesssim_{ \mc{U}, ( t_0, x_0 ), \tau_+ - \tau_- } \int_{ \Gamma_\ast } | \mc{N} \phi |^2 \text{,}
\end{equation}
where $\mc{N}$ denotes the \emph{Minkowski} outer-pointing unit normal of $\mc{U}$, and where
\begin{equation}
\label{eq.obs_prelim_region} \Gamma_\ast := \{ ( \tau, y ) \in \partial \mc{U}_{ \tau_-, \tau_+ } \mid \mc{N} f_P > 0 \} \text{,} \qquad P := ( t_0, x_0 ) \text{.}
\end{equation}
\end{theorem}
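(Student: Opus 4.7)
The plan is to adapt Lions' multiplier argument to the Lorentzian setting, replacing the classical radial multiplier $(x-x_0)\cdot\nabla_x$ by the Minkowski gradient
\[
X := \nabla^\sharp f_P = \tfrac{1}{2}(t-t_0)\partial_t + \tfrac{1}{2}\sum_{i=1}^n (x^i - x_0^i)\partial_{x^i}.
\]
A direct calculation yields $\mc{L}_X g = g$, so $X$ is a conformal Killing field on Minkowski spacetime (the dilation centered at $P$), while $g(X,X) = f_P$ shows that $X$ is timelike precisely inside the null cone about $P$ and spacelike in its exterior.

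Concretely, I would multiply \eqref{eq.obs_prelim_wave} by $X\phi + c\phi$ for a constant $c$ to be calibrated, and rewrite the result in divergence form using the stress-energy tensor $T_{\alpha\beta} := \nabla_\alpha\phi\nabla_\beta\phi - \tfrac{1}{2}g_{\alpha\beta}\nabla^\gamma\phi\nabla_\gamma\phi$. Since $\mc{L}_X g = g$ and $T^\alpha{}_\alpha = \tfrac{1-n}{2}\nabla^\gamma\phi\nabla_\gamma\phi$ for free waves, the resulting bulk integrand is a linear combination of $\nabla^\gamma\phi\nabla_\gamma\phi = -(\partial_t\phi)^2 + |\nabla_x\phi|^2$ whose coefficient is controlled by $c$. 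Applying the Lorentzian divergence theorem on $\mc{U}_{\tau_-,\tau_+}$ produces contributions on the cross-sections $\mc{U}_{\tau_\pm}$ together with one on the timelike side $\partial\mc{U}_{\tau_-,\tau_+}$; on the latter, the Dirichlet condition $\phi|_{\partial\mc{U}} = 0$ forces $\nabla\phi$ to be proportional to the unit conormal and kills the $c\phi$-piece, so the integrand collapses to $\tfrac{1}{2}(\mc{N}\phi)^2\,\mc{N}f_P$. Splitting this integral into its pieces over $\Gamma_\ast = \{\mc{N}f_P > 0\}$ and its complement (where the integrand has the opposite, favorable sign) yields an upper bound by $\tfrac{1}{2}\int_{\Gamma_\ast}(\mc{N}\phi)^2\,|\mc{N}f_P|$, which is exactly the right-hand side of \eqref{eq.obs_prelim}.

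The two cross-section contributions produce the energies on the left-hand side of \eqref{eq.obs_prelim}: condition \eqref{eq.obs_prelim_t0} gives $r_P < |t_P|$ on $\mc{U}_{\tau_\pm}$, hence $f_P < 0$ and $X$ is future/past timelike respectively, so the dominant energy condition for $T$ yields $|T(\mc{N}', X)| \simeq |\nabla_{t,x}\phi|^2$, while the $\phi^2$ piece of \eqref{eq.obs_prelim} is recovered by a Poincar\'e inequality on $\Omega_{\tau_\pm}$, valid because $\phi|_{\partial\Omega_{\tau_\pm}} = 0$. The main technical obstacle is the calibration of $c$ (possibly combined with the standard conservation identity for the multiplier $\partial_t$) so as to extract a bulk contribution of controllable sign: the quantity $\nabla^\gamma\phi\nabla_\gamma\phi$ is indefinite and requires the usual trick of adding a multiple of the $\partial_t$-energy identity before any positivity becomes visible. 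This is precisely where the strict timespan assumption \eqref{eq.obs_prelim_ass} enters quantitatively, since the bulk scales with $\tau_+-\tau_-$ while the ``bad'' cross-section terms to be absorbed scale with $R_++R_-$, and the surplus $\tau_+-\tau_- - (R_++R_-) > 0$ is exactly what allows the left-hand side of \eqref{eq.obs_prelim} to dominate after absorption. Once this calibration is fixed, the remainder of the argument is a bookkeeping of Lorentzian boundary terms in Cartesian coordinates.
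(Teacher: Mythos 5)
Your overall framework---multiplier $\nabla^\sharp f_P + c$, Lorentzian divergence theorem, Dirichlet boundary condition collapsing $\nabla\phi|_{\partial\mc{U}}$ to the normal derivative---matches the paper's argument closely. The remarkable fact $\nabla^2 f_P = \tfrac{1}{2}g$, or equivalently $\mc{L}_X g = g$, is indeed the engine of the proof. However, the specific calibration you leave open is exactly the point, and your heuristic for how it should go out is off.

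The right value is $c = \tfrac{n-1}{4}$, and the reason is that this choice makes the bulk term \emph{vanish identically}. With $X = \nabla^\sharp f_P$ and $\nabla^2 f_P = \tfrac{1}{2}g$, $\Box f_P = \tfrac{n+1}{2}$, one computes that the bulk from $\int \Box\phi\, X\phi$ equals $\tfrac{n-1}{4}\int \nabla^\alpha\phi\nabla_\alpha\phi$, while the bulk from $\tfrac{n-1}{4}\int \Box\phi\cdot\phi$ equals $-\tfrac{n-1}{4}\int \nabla^\alpha\phi\nabla_\alpha\phi$; these cancel exactly, leaving a pure boundary \emph{identity} relating the cross-section integrals on $\mc{U}_{\tau_\pm}$ to the flux $\tfrac{1}{2}\int_{\partial\mc{U}_{\tau_-,\tau_+}}\mc{N}f_P\,(\mc{N}\phi)^2$. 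Consequently there is no bulk that ``scales with $\tau_+ - \tau_-$'': the timespan assumption \eqref{eq.obs_prelim_ass} enters entirely through the comparison of cross-section terms. After normalizing $t_0 = 0$, the coefficient of $|\nabla_{t,x}\phi|^2$ on $\mc{U}_{\tau_\pm}$ is $\pm\tau_\pm/4$, and the conjugate piece $\partial_t\phi\,\Sigma_\ast\phi$ (with $\Sigma_\ast$ the spatial dilation) is dominated by $\tfrac{R_\pm}{2}|\nabla_{t,x}\phi|^2$; the condition $\tau_+ > R_+$, $-\tau_- > R_-$ (i.e.\ \eqref{eq.obs_prelim_t0}) is precisely what makes the surviving coefficients positive.

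Two further points. First, adding the $\partial_t$-energy identity cannot produce a bulk of definite sign, because $\partial_t$ is Killing and contributes no bulk at all; so the ``usual trick'' you invoke does not apply here, and in fact is not needed. Second, the $\phi^2$-term on the left-hand side of \eqref{eq.obs_prelim} is not recovered by Poincar\'e for $n \geq 2$: it drops out directly from integrating by parts the cross-term $\nabla(r_P^2)\cdot\nabla(\phi^2)$ on each cross-section, which yields $-\tfrac{n(n-1)}{2}\int\phi^2$ and, combined with the $(\tfrac{n-1}{2})^2\phi^2$ piece, a strictly negative contribution $-\tfrac{n^2-1}{8R_\pm}\int\phi^2$; Poincar\'e is only needed as a patch in the degenerate case $n = 1$. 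So the gap in your proposal is the identification $c = \tfrac{n-1}{4}$ and the realization that the estimate is driven purely by boundary-term bookkeeping once the bulk is annihilated.
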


\begin{proof}
By the translation symmetry of Minkowski spacetime, we can assume without loss of generality that $P = ( t_0, x_0 ) := 0$ (and hence $f_P = f$).
For convenience, we also define
\begin{equation}
\label{eql.obs_prelim_abbrev} S := \nabla^\sharp f \text{,} \qquad S_\ast := S + \frac{ n - 1 }{4} \text{.}
\end{equation}
Note that direct computations yield
\begin{equation}
\label{eql.obs_prelim_0} \nabla^2 f = \frac{1}{2} g \text{,} \qquad \Box f = \frac{ n + 1 }{ 2 } \text{.}
\end{equation}

The proof of \eqref{eq.obs_prelim} proceeds in a manner analogous to the multiplier proof in \cite{lionj:ctrlstab_hum}, except that here we use $S_\ast \phi$ as our multiplier.
First, recalling \eqref{eq.obs_prelim_wave} and \eqref{eql.obs_prelim_abbrev}, we see that
\begin{equation}
\label{eql.obs_prelim_1} 0 = \int_{ \mc{U}_{ \tau_-, \tau_+ } } \Box \phi S_\ast \phi = \int_{ \mc{U}_{ \tau_-, \tau_+ } } \Box \phi S \phi + \frac{n - 1}{4} \int_{ \mc{U}_{ \tau_-, \tau_+ } } \Box \phi \cdot \phi := I_1 + I_2 \text{.}
\end{equation}
For $I_2$, we integrate by parts (using the Lorentzian divergence theorem \cite{wald:gr}) to obtain
\begin{equation}
\label{eql.obs_prelim_2} I_2 = - \frac{n - 1}{4} \left. \int_{ \mc{U}_\tau } \partial_t \phi \cdot \phi \right|^{ \tau = \tau_+ }_{ \tau = \tau_- } - \frac{n - 1}{4} \int_{ \mc{U}_{ \tau_-, \tau_+ } } \nabla^\alpha \phi \nabla_\alpha \phi \text{.}
\end{equation}

Similarly, for $I_1$, we integrate by parts twice and recall \eqref{eq.f}, \eqref{eq.obs_prelim_wave}, \eqref{eql.obs_prelim_abbrev}, and \eqref{eql.obs_prelim_0}:
\begin{align}
\label{eql.obs_prelim_3} I_1 &= - \left. \int_{ \mc{U}_\tau } \partial_t \phi S \phi \right|^{ \tau = \tau_+ }_{ \tau = \tau_- } + \int_{ \partial \mc{U}_{ \tau_-, \tau_+ } } \mc{N} \phi S \phi - \int_{ \mc{U}_{ \tau_-, \tau_+ } } \nabla_\alpha \phi \nabla^\alpha ( \nabla^\beta f \nabla_\beta \phi ) \\
\notag &= - \left. \int_{ \mc{U}_\tau } \partial_t \phi S \phi \right|^{ \tau = \tau_+ }_{ \tau = \tau_- } + \int_{ \partial \mc{U}_{ \tau_-, \tau_+ } } \mc{N} f ( \mc{N} \phi )^2 - \int_{ \mc{U}_{ \tau_-, \tau_+ } } \nabla_\alpha \phi \nabla^{ \alpha \beta } f \nabla_\beta \phi \\
\notag &\qquad - \frac{1}{2} \int_{ \mc{U}_{ \tau_-, \tau_+ } } \nabla^\beta f \nabla_\beta ( \nabla_\alpha \phi \nabla^\alpha \phi ) \\
\notag &= - \left. \int_{ \mc{U}_\tau } \partial_t \phi S \phi \right|^{ \tau = \tau_+ }_{ \tau = \tau_- } + \frac{1}{2} \left. \int_{ \mc{U}_\tau } \partial_t f \nabla^\alpha \phi \nabla_\alpha \phi \right|^{ \tau = \tau_+ }_{ \tau = \tau_- } + \frac{1}{2} \int_{ \partial \mc{U}_{ \tau_-, \tau_+ } } \mc{N} f ( \mc{N} \phi )^2 \\
\notag &\qquad - \int_{ \mc{U}_{ \tau_-, \tau_+ } } \nabla^{ \alpha \beta } f \nabla_\alpha \phi \nabla_\beta \phi + \frac{1}{2} \int_{ \mc{U}_{ \tau_-, \tau_+ } } \Box f \nabla_\alpha \phi \nabla^\alpha \phi \\
\notag &= - \left. \int_{ \mc{U}_\tau } \partial_t \phi S \phi \right|^{ \tau = \tau_+ }_{ \tau = \tau_- } + \frac{1}{2} \left. \int_{ \mc{U}_\tau } \partial_t f \nabla^\alpha \phi \nabla_\alpha \phi \right|^{ \tau = \tau_+ }_{ \tau = \tau_- } + \frac{1}{2} \int_{ \partial \mc{U}_{ \tau_-, \tau_+ } } \mc{N} f ( \mc{N} \phi )^2 \\
\notag &\qquad + \frac{n - 1}{4} \int_{ \mc{U}_{ \tau_-, \tau_+ } } \nabla_\alpha \phi \nabla^\alpha \phi \text{.}
\end{align}
In particular, for the boundary terms along $\partial \mc{U}_{ \tau_-, \tau_+ }$ in \eqref{eql.obs_prelim_3}, we observed that the only nonzero components of $\nabla \phi$ lie in the $\mc{N}$-direction.
Combining \eqref{eql.obs_prelim_1}--\eqref{eql.obs_prelim_3}, we then obtain
\begin{equation}
\label{eql.obs_prelim_4} \left. \int_{ \mc{U}_\tau } \left( \partial_t \phi S_\ast \phi - \frac{1}{2} \partial_t f \nabla^\alpha \phi \nabla_\alpha \phi \right) \right|^{ \tau = \tau_+ }_{ \tau = \tau_- } = \frac{1}{2} \int_{ \partial \mc{U}_{ \tau_-, \tau_+ } } \mc{N} f ( \mc{N} \phi )^2 \text{.}
\end{equation}

Using \eqref{eq.f} and \eqref{eql.obs_prelim_abbrev}, we expand
\begin{align}
\label{eql.obs_prelim_5} \partial_t \phi S_\ast \phi - \frac{1}{2} \partial_t f \nabla^\alpha \phi \nabla_\alpha \phi &= \frac{1}{2} t ( \partial_t \phi )^2 + \frac{1}{4} \partial_t \phi \nabla^\alpha ( | x |^2 ) \nabla_\alpha \phi - \frac{ n - 1 }{4} \phi \partial_t \phi + \frac{1}{4} t \nabla^\alpha \phi \nabla_\alpha \phi \\
\notag &= \frac{1}{4} t | \nabla_{ t, x } \phi |^2 + \frac{1}{2} \partial_t \phi \Sigma_\ast \phi \text{,}
\end{align}
where
\begin{equation}
\label{eql.obs_prelim_sigma} \Sigma_\ast \phi := \frac{1}{2} [ \nabla^\alpha ( r^2 ) \nabla_\alpha \phi - ( n - 1 ) \phi ] \text{.}
\end{equation}
Combinining \eqref{eql.obs_prelim_4} and \eqref{eql.obs_prelim_5} then yields
\begin{equation}
\label{eql.obs_prelim_6} \frac{ \tau_+ }{2} \int_{ \mc{U}_{ \tau_+ } } | \nabla_{ t, x } \phi |^2 - \frac{ \tau_- }{2} \int_{ \mc{U}_{ \tau_- } } | \nabla_{ t, x } \phi |^2 = - \left. \int_{ \mc{U}_\tau } \partial_t \phi \Sigma_\ast \phi \right|^{ \tau = \tau_+ }_{ \tau = \tau_- } + \int_{ \partial \mc{U}_{ \tau_-, \tau_+ } } \mc{N} f ( \mc{N} \phi )^2 \text{.}
\end{equation}

Now, at $t = \tau_\pm$, we recall \eqref{eql.obs_prelim_sigma} and bound
\begin{align}
\label{eql.obs_prelim_7} \left| \int_{ \mc{U}_{ \tau_\pm } } \partial_t \phi \Sigma_\ast \phi \right| &\leq \frac{ R_\pm }{2} \int_{ \mc{U}_{ \tau_\pm } } ( \partial_t \phi )^2 + \frac{1}{ 2 R_\pm } \int_{ \mc{U}_{ \tau_\pm } } ( \Sigma_\ast \phi )^2 \\
\notag &= \frac{ R_\pm  }{2} \int_{ \mc{U}_{ \tau_\pm } } ( \partial_t \phi )^2 + \frac{1}{ 2 R_{ \tau_\pm } } \int_{ \mc{U}_{ \tau_\pm } } \frac{1}{4} [ \nabla^\alpha ( r^2 ) \nabla_\alpha \phi ]^2 \\
\notag &\qquad + \frac{1}{ 2 R_\pm } \int_{ \mc{U}_{ \tau_\pm } } \left[ \left( \frac{ n - 1 }{2} \right)^2 \phi^2 + \frac{ n - 1 }{4} \nabla^\alpha ( r^2 ) \nabla_\alpha ( \phi^2 ) \right] \text{.}
\end{align}
By \eqref{eq.obs_prelim_ass}, the first two integrands in the right-hand side of \eqref{eql.obs_prelim_7} can be bounded by
\begin{equation}
\label{eql.obs_prelim_80} \left. \left\{ \frac{ R_\pm }{2} ( \partial_t \phi )^2 + \frac{1}{ 2 R_\pm } \cdot \frac{1}{4} [ \nabla^\alpha ( r^2 ) \nabla_\alpha \phi ]^2 \right\} \right|_{ \mc{U}_\pm } \leq \left. \frac{ R_\pm }{2} | \nabla_{ t, x } \phi |^2 \right|_{ \mc{U}_\pm } \text{.}
\end{equation}
Furthermore, the last term in \eqref{eql.obs_prelim_7} can be simplified via integrating by parts (using the classical divergence theorem on a domain in $\R^n$) and recalling from \eqref{eq.obs_prelim_wave} that $\phi$ vanishes on $\partial \mc{U}$:
\begin{equation}
\label{eql.obs_prelim_81} \int_{ \mc{U}_\pm } \nabla^\alpha ( r^2 ) \nabla_\alpha ( \phi^2 ) = - 2 n \int_{ \mc{U}_\pm } \phi^2 \text{.}
\end{equation}
From \eqref{eql.obs_prelim_7}--\eqref{eql.obs_prelim_81}, we then estimate
\begin{align}
\label{eql.obs_prelim_8} \left| \int_{ \mc{U}_{ \tau_\pm } } \partial_t \phi \Sigma_\ast \phi \right| &\leq \frac{ R_\pm }{2} \int_{ \mc{U}_{ \tau_\pm } } | \nabla_{ t, x } \phi |^2 + \frac{1}{ 2 R_\pm } \int_{ \mc{U}_{ \tau_\pm } } \left[ \frac{ ( n - 1 )^2 }{4} - \frac{ n ( n - 1 ) }{2} \right] \phi^2 \\
\notag &= \frac{ R_\pm }{2} \int_{ \mc{U}_{ \tau_\pm } } | \nabla_{ t, x } \phi |^2 - \frac{ n^2 - 1 }{ 8 R_\pm } \int_{ \mc{U}_{ \tau_\pm } } \phi^2 \text{.}
\end{align}

Finally, combining \eqref{eql.obs_prelim_6} and \eqref{eql.obs_prelim_8}, we conclude that
\begin{align}
\label{eql.obs_prelim_9} &\frac{ \tau_+ }{2} \int_{ \mc{U}_{ \tau_+ } } | \nabla_{ t, x } \phi |^2 - \frac{ \tau_- }{2} \int_{ \mc{U}_{ \tau_- } } | \nabla_{ t, x } \phi |^2 + \frac{ n^2 - 1 }{ 8 } \left( \frac{1}{ R_- } \int_{ \mc{U}_{ \tau_- } } \phi^2 + \frac{ 1 }{ R_+ } \int_{ \mc{U}_{ \tau_+ } } \phi^2 \right) \\
\notag &\quad \leq \frac{ R_+ }{2} \int_{ \mc{U}_{ \tau_+ } } | \nabla_{ t, x } \phi |^2 + \frac{ R_- }{2} \int_{ \mc{U}_{ \tau_- } } | \nabla_{ t, x } \phi |^2 + \int_{ \partial \mc{U}_{ \tau_-, \tau_+ } } \mc{N} f ( \mc{N} \phi )^2 \\
\notag &\quad \leq \frac{ R_+ }{2} \int_{ \mc{U}_{ \tau_+ } } | \nabla_{ t, x } \phi |^2 + \frac{ R_- }{2} \int_{ \mc{U}_{ \tau_- } } | \nabla_{ t, x } \phi |^2 + \int_{ \Gamma_\ast } \mc{N} f ( \mc{N} \phi )^2 \text{,}
\end{align}
In the last step of \eqref{eql.obs_prelim_9}, we observed that the sign of $\mc{N} f ( \mc{N} \phi )^2$ in the boundary integral is simply the sign of $\mc{N} f$, hence the part of $\partial \mc{U}_{ \tau_-, \tau_+ }$ where $\mc{N} f < 0$ can be discarded.
The desired estimate \eqref{eq.obs_prelim} now follows by applying the assumption \eqref{eq.obs_prelim_t0} (with $t_0 = 0$) to \eqref{eql.obs_prelim_9}.\footnote{Although \eqref{eql.obs_prelim_9} does not control the $L^2$-norm of $\phi$ when $n = 1$, this can be recovered using the Poincar\'e inequality.}
\end{proof}

\section{Carleman Estimates} \label{sec.carleman}

The objective of this section is to establish the main Carleman estimate of this paper:

\begin{theorem} \label{thm.carleman_est}
Let $\mc{U}$ be a GTC, and fix $P \in \R^{1+n}$.
Moreover, assume there exists $R > 0$ with
\begin{equation}
\label{eq.carleman_domain} \bar{\mc{U}} \cap \mc{D}_P \subseteq \{ r_P < R \} \text{.}
\end{equation}
Fix also $\varepsilon, a, b > 0$, and suppose the following conditions hold:
\begin{equation}
\label{eq.carleman_choices} a \geq n^2 \text{,} \qquad \varepsilon \ll_n b \ll R^{-1} \text{.}
\end{equation}
Then, there exist constants $C, C' > 0$---which are independent of $\varepsilon, a, b, R$---such that given any
\begin{equation}
\label{eq.carleman_dirichlet} \phi \in C^2 ( \mc{U} ) \cap C^1 ( \bar{\mc{U}} ) \text{,} \qquad \phi |_{ \partial \mc{U} \cap \mc{D}_P } = 0 \text{,}
\end{equation}
we have the Carleman estimate
\begin{align}
\label{eq.carleman_est} &\frac{1}{a} \int_{ \mc{U} \cap \mc{D}_P } \zeta^P_{ a, b; \varepsilon } f_P | \Box \phi |^2 + C' \int_{ \partial \mc{U} \cap \mc{D}_P } \zeta^P_{ a, b; \varepsilon } [ ( 1 - \varepsilon r_P ) \mc{N} f_P + \varepsilon f_P \mc{N} r_P ] \cdot | \mc{N} \phi |^2 \\
\notag &\quad \geq C \varepsilon \int_{ \mc{U} \cap \mc{D}_P } \zeta^P_{ a, b; \varepsilon } r_P^{-1} ( | u_P \cdot \partial_{ u_P } \phi |^2 + | v_P \cdot \partial_{ v_P } \phi |^2 + f_P \cdot g^{ a b } \nasla{}^P_a \phi \nasla{}^P_b \phi ) \\
\notag &\quad\qquad + C b a^2 \int_{ \mc{U} \cap \mc{D}_P } \zeta^P_{ a, b; \varepsilon } f_P^{- \frac{1}{2} } \cdot \phi^2 \text{,}
\end{align}
where:
\begin{itemize}
\item $\zeta^P_{ a, b; \varepsilon }$ is the associated Carleman weight
\begin{equation}
\label{eq.carleman_weight} \zeta^P_{ a, b; \varepsilon } := \left\{ \frac{ f_P }{ ( 1 + \varepsilon u_P ) ( 1 - \varepsilon v_P ) } \cdot \exp \left[ \frac{ 2 b f_P^\frac{1}{2} }{ ( 1 - \varepsilon u_P )^\frac{1}{2} ( 1 + \varepsilon v_P )^\frac{1}{2} } \right] \right\}^{2a} \text{.}
\end{equation}

\item $\mc{N}$ is the outer-pointing unit normal of $\mc{U}$ (with respect to $g$).
\end{itemize}
\end{theorem}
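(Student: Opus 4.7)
The plan is to follow the two-stage strategy described in the introduction. By the translation invariance of Minkowski spacetime, I first reduce to the case $P = 0$, so that $f_P = f$, $u_P = u$, $v_P = v$, and $r_P = r$. The remainder then splits into three stages: (a) constructing a warped Lorentzian metric $\bar g$ on $\R^{1+n}$, conformal to $g$, for which the level sets of $f$ become strictly pseudoconvex in $\mc D = \{ f > 0 \}$; (b) proving a Carleman estimate for the wave operator $\bar \Box$ of $\bar g$ with weight $\bar\zeta = \bar f^{2a} \exp( 2 a b\, \bar f^{1/2} )$, where $\bar f$ is the pullback of $f$ by the conformal isometry; and (c) pulling the estimate back to $\Box$ and reading off \eqref{eq.carleman_est} and \eqref{eq.carleman_weight}.

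For stage (a), I would take $\bar g$ to be obtained from the Minkowski form $-4\,du\,dv + r^2 \mathring \gamma$ by a small $\varepsilon$-warping of the spherical part, chosen so that an explicit radial reparametrisation $\Phi$ yields $\Phi^\ast g = \Omega^2 \bar g$ for a conformal factor $\Omega$ depending on $u$ and $v$. The pullback $\bar f := \Phi^\ast f$ should then agree, up to simple adjustments, with $f / [ ( 1 + \varepsilon u )( 1 - \varepsilon v ) ]$, matching the first bracket of the stated weight. The key geometric verification is that the Hessian $\bar\nabla^2 \bar f$ is positive definite on vectors tangent to the level sets of $\bar f$ inside $\mc D$, with a margin of order $\varepsilon r$; the gain over the Minkowski Hessian $\nabla^2 f = \tfrac{1}{2} g$ comes entirely from the warping, and it persists down to the null cone $\{ \bar f = 0 \}$ precisely because the warping acts on the spheres, where $f$ itself is only degenerately pseudoconvex.

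For stage (b), I would conjugate $\bar\Box$ by $\bar\zeta^{1/2}$, writing $\psi = \bar\zeta^{1/2}\phi$, and decompose the conjugated operator $\bar\zeta^{1/2}\bar\Box\bar\zeta^{-1/2}$ into its symmetric and antisymmetric parts with respect to $\bar g$. Integrating the product of the two parts over $\mc U \cap \mc D$ and applying the Lorentzian divergence theorem generates: (i) a positive bulk term built from $\bar\nabla^2 \bar f$ restricted to level sets, which by stage (a) dominates $\varepsilon\, r^{-1}\bigl( |u\partial_u \phi|^2 + |v\partial_v \phi|^2 + \bar f\cdot g^{ab}\nasla_a\phi\,\nasla_b\phi \bigr)$; (ii) a positive zero-order bulk of size $b a^2 \bar f^{-1/2}\phi^2$ generated by the $\bar f^{1/2}$ piece of the exponent; and (iii) boundary integrals on $\partial \mc U \cap \mc D$. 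The Dirichlet condition \eqref{eq.carleman_dirichlet} collapses the boundary contributions to terms of the schematic form $\bar\zeta\cdot(\text{density})\cdot|\mc N\phi|^2$, and all error terms from the conjugation and from the warping are absorbed under the hierarchy \eqref{eq.carleman_choices}: the choice $a \geq n^2$ swallows $n$-dependent constants arising from $\bar\nabla^2 \log\bar\zeta$, while $\varepsilon \ll_n b \ll R^{-1}$ keeps the $R$- and $\varepsilon$-induced error terms dominated by the main pseudoconvex bulk.

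Stage (c) transfers the estimate from $\bar\Box$ to $\Box$ via the standard conformal identity for the wave operator, namely $\Box( \Omega^{-(n-1)/2}\bar\phi ) = \Omega^{-(n+3)/2}\bar\Box\bar\phi$ modulo a scalar-curvature correction that is lower order and absorbable by the $b a^2 \bar f^{-1/2}\phi^2$ bulk. Substituting $\phi = \Omega^{-(n-1)/2}\bar\phi$ and tracking the weight reproduces precisely $\zeta^P_{a,b;\varepsilon}$ as in \eqref{eq.carleman_weight}, and the boundary density transforms into $( 1 - \varepsilon r )\mc N f + \varepsilon f\, \mc N r$, which is the $\bar g$-gradient of $\bar f$ expressed in terms of the Minkowski outer unit normal $\mc N$. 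Undoing the translation restores general $P$. The hardest step, to my mind, is stage (a): verifying strict pseudoconvexity of $\bar f$ uniformly up to the null cone, with explicit quantitative gain in each of the null, transverse-null, and angular directions simultaneously. A secondary difficulty is the bookkeeping in stage (b)---tracking conjugation errors, warped Christoffel contributions, and the $\Omega$-dependent lower-order terms from stage (c) together, and confirming that every one of them is dominated by the main pseudoconvex bulk under the gains $a \geq n^2$, $\varepsilon \ll_n b$, and $b \ll R^{-1}$.
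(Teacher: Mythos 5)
Your proposal follows precisely the paper's two-stage conformal strategy: introduce the $\varepsilon$-warped metric $\bar g = -4\,du\,dv + \bar\rho^2\mathring\gamma$ with $\bar\rho = r + 2\varepsilon f$ (Definition~\ref{def.wp_met}), show the level sets of $f$ become pseudoconvex with respect to $\bar g$ on $\mc D$ (Proposition~\ref{thm.pseudoconvex_wp}), prove a Carleman estimate for $\bar\Box$ with weight $f^{2a}e^{4abf^{1/2}}$ (Theorem~\ref{thm.carleman_est_wp}), and pull back through the conformal isometry $\bar\Phi(u,v,\omega) = (u/(1+\varepsilon u),\,v/(1-\varepsilon v),\,\omega)$, using $\bar\Phi_\ast\bar g = \xi^{-2}g$ and $f\circ\bar\Phi = \xi^{-1}f$ with $\xi = (1+\varepsilon u)(1-\varepsilon v)$, to recover the stated weight and boundary density. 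The paper's proof of the warped estimate is organized as a multiplier argument with multiplier $\bar S_w = \bar\nabla^\sharp f + \bar w$, which is the same family of techniques as your symmetric/antisymmetric decomposition, differing only in bookkeeping.

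One point in your stage (a) is imprecise enough to be worth correcting. You write that the pseudoconvexity margin ``persists down to the null cone'' with a gain ``of order $\varepsilon r$.'' In fact, the tangential components of the relevant tensor $\bar\pi = \bar\nabla^2 f - \bar h\,\bar g$ are $\bar\pi_{TT} = \bar\pi_{ab}/\bar g_{ab} = \varepsilon f/(2\bar\rho)$, which \emph{vanishes} on $\{f=0\}$; the margin is of order $\varepsilon f/r$, not $\varepsilon r$. Pseudoconvexity degenerates at the null cone at exactly the same rate as the Carleman weight $\zeta\sim f^{2a}$ does, and this matched degeneracy is precisely what makes the exterior support property work (no boundary contribution is picked up along the cone in the divergence-theorem step, verified in the coarea computation \eqref{eql.carleman_est_wp_3}--\eqref{eql.carleman_est_wp_4}). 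Your computation would reveal this and the proof would survive, but the phrasing suggests a uniform positive gain near the cone that does not exist and should not be relied upon.
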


\begin{remark}
We note that by \eqref{eq.carleman_choices}, neither $1 + \varepsilon u_P$ or $1 - \varepsilon v_P$ can vanish on $\bar{\mc{U}} \cap \mc{D}_P$; see Proposition \ref{thm.uvf_bound}.
As a result, the Carleman weight $\zeta^P_{ a, b; \varepsilon }$ in \eqref{eq.carleman_weight} is always well-defined.
\end{remark}

The proof of Theorem \ref{thm.carleman_est} consists of two main steps:
\begin{itemize}
\item First, by a conformal transformation, we reduce the desired estimate \eqref{eq.carleman_est} to a corresponding Carleman estimate in the exterior $\mc{D}$ of a null cone, but with respect to a ``warped" Minkowski metric.
This step is carried out in Section \ref{sec.carleman_warped}, where we discuss this warped metric in detail and then perform this conformal reduction.

\item In Section \ref{sec.carleman_proof}, we prove the Carleman estimate in the warped Minkowski spacetime.
\end{itemize}

\subsection{The Warped Geometry} \label{sec.carleman_warped}

This section is devoted to discussing the warped geometries that form the backbone of the proof of Theorem \ref{thm.carleman_est}.
We first establish some basic properties of these geometries, and we then demonstrate how they are conformally related to the Minkowski metric.
Finally, at the end of this section, we reduce the proof of Theorem \ref{thm.carleman_est} to that of establishing a corresponding Carleman estimate on the warped spacetime; see Theorem \ref{thm.carleman_est_wp}.

\subsubsection{The Warped Metric}

The warped metric can be formally defined as follows:

\begin{definition} \label{def.wp_fct}
Fix a constant $\varepsilon \in \R$, which we refer to as the \emph{warping factor}.
\end{definition}

\begin{definition} \label{def.wp_met}
We define the $\varepsilon$\emph{-warped Minkowski metric} on $\R^{1+n} \setminus \{ r = 0 \}$ by
\begin{equation}
\label{eq.g_wp} \bar{g} := - 4 d u d v + \bar{\rho}^2 \mathring{\gamma} \text{.}
\end{equation}
where $\mathring{\gamma}$ is the unit round metric on the level spheres of $( u, v )$, and where $\rho$ is the \emph{warped radius}:
\begin{equation}
\label{eq.rho_wp} \bar{\rho} := r + 2 \varepsilon f \text{,}
\end{equation}
\end{definition}

\begin{remark}
Note that $\bar{g}$ and its associated objects depend on the warping factor $\varepsilon$.
\end{remark}

We will adopt the following notational conventions regarding objects associated with $\bar{g}$.

\begin{definition} \label{def.wp_notation}
In general, objects defined with respect to the warped metric $\bar{g}$ are denoted with the same symbols as the corresponding objects defined with respect to the Minkowski metric $g$, except that these symbols will have a ``bar" over them.
For example:
\begin{itemize}
\item Let $\bar{\nabla}$ denote the Levi-Civita connection with respect to $\bar{g}$.

\item Let $\bar{\Box} := \bar{g}^{\alpha \beta} \bar{\nabla}_{\alpha \beta}$ denote the wave operator with respect to $\bar{g}$.

\item Let $\bar{\nasla}$ denote the connections induced by $\bar{g}$ on the level spheres of $(u, v)$.
\end{itemize}
\end{definition}

\begin{remark}
Note that $\bar{g}$ is simply the Minkowsi metric $g$ when $\varepsilon = 0$.
In particular, all of the general properties that we establish for $\bar{g}$ will also apply to $g$.
\end{remark}

We now list the results of some basic computations involving $\bar{g}$:

\begin{proposition} \label{thm.f_deriv_wp}
The following properties hold:
\begin{itemize}
\item The $\bar{g}$-gradient $\bar{\nabla}^\sharp f$ of $f$ satisfies
\begin{equation}
\label{eq.f_grad_wp} \bar{\nabla}^\sharp f = \frac{1}{2} ( u \partial_u + v \partial_v ) \text{,} \qquad \bar{\nabla}^\alpha f \bar{\nabla}_\alpha f = f \text{.}
\end{equation}

\item The nonzero components of $\bar{\nabla}^2 f$, in null coordinates, are given by
\begin{equation}
\label{eq.f_hess_wp} \bar{\nabla}_{u v} f \equiv -1 \text{,} \qquad \bar{\nabla}_{a b} f = \left( \frac{1}{2} + \frac{ \varepsilon f }{ \bar{\rho} } \right) \bar{g}_{a b} \text{,}
\end{equation}

\item $f$ also satisfies the following identities:
\begin{equation}
\label{eq.f_box_wp} \bar{\Box} f = \frac{n + 1}{2} + \frac{ (n - 1) \varepsilon f }{ \bar{\rho} } \text{,} \qquad \bar{\nabla}^\alpha f \bar{\nabla}^\beta f \bar{\nabla}_{\alpha \beta} f = \frac{1}{2} f \text{.}
\end{equation}
\end{itemize}
In the above, all indices are raised and lowered using $\bar{g}$.
\end{proposition}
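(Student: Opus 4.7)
The plan is to prove each identity by direct computation in the null coordinate system $(u,v,\omega)$, in which the warped metric is given explicitly by \eqref{eq.g_wp}. The key observation is that $f = -uv$ depends only on $(u,v)$ and has $\partial_u f = -v$, $\partial_v f = -u$, while on the angular directions $f$ is constant. From \eqref{eq.g_wp} one reads off the inverse metric components $\bar{g}^{uv} = -\tfrac{1}{2}$ and $\bar{g}^{ab} = \bar{\rho}^{-2}\mathring{\gamma}^{ab}$. This immediately gives $\bar{\nabla}^u f = \bar{g}^{uv}\partial_v f = u/2$ and $\bar{\nabla}^v f = \bar{g}^{uv}\partial_u f = v/2$, yielding the first part of \eqref{eq.f_grad_wp}. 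Contracting once more,
\[
\bar{\nabla}^\alpha f\,\bar{\nabla}_\alpha f \;=\; 2\bar{g}^{uv}(\partial_u f)(\partial_v f) \;=\; 2\cdot\bigl(-\tfrac{1}{2}\bigr)(-v)(-u) \;=\; -uv \;=\; f.
\]

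Next, for the Hessian, I would compute only the Christoffel symbols that are actually needed. Since $\bar{g}_{uv} \equiv -2$ is constant and the spherical block $\bar{g}_{ab} = \bar{\rho}^2\mathring{\gamma}_{ab}$ is the only part that varies in $(u,v)$, one checks that $\Gamma^u_{uv} = \Gamma^v_{uv} = 0$, while $\Gamma^u_{ab} = \tfrac{1}{4}\partial_v \bar{g}_{ab}$ and $\Gamma^v_{ab} = \tfrac{1}{4}\partial_u\bar{g}_{ab}$ are the only surviving pieces. Using $r = v - u$, $f = -uv$, one has
\[
\partial_v\bar{\rho} \;=\; 1 - 2\varepsilon u, \qquad \partial_u\bar{\rho} \;=\; -(1 + 2\varepsilon v).
\]
The mixed Hessian component is immediate: $\bar{\nabla}_{uv}f = \partial_u\partial_v f = -1$. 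For the angular block, $\bar{\nabla}_{ab}f = v\,\Gamma^u_{ab} + u\,\Gamma^v_{ab}$, and after substituting the $\partial\bar{\rho}$ identities above, everything collapses via the algebraic simplification
\[
v(1 - 2\varepsilon u) - u(1 + 2\varepsilon v) \;=\; (v-u) - 4\varepsilon uv \;=\; r + 4\varepsilon f \;=\; \bar{\rho} + 2\varepsilon f,
\]
which gives $\bar{\nabla}_{ab} f = \bigl(\tfrac{1}{2} + \varepsilon f/\bar{\rho}\bigr)\bar{g}_{ab}$, establishing \eqref{eq.f_hess_wp}.

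The remaining two identities in \eqref{eq.f_box_wp} follow by tracing against $\bar{g}^{-1}$. For the wave operator,
\[
\bar{\Box} f \;=\; 2\bar{g}^{uv}\bar{\nabla}_{uv}f + \bar{g}^{ab}\bar{\nabla}_{ab} f \;=\; 1 + (n-1)\!\left(\tfrac{1}{2} + \tfrac{\varepsilon f}{\bar{\rho}}\right) \;=\; \tfrac{n+1}{2} + \tfrac{(n-1)\varepsilon f}{\bar{\rho}}.
\]
For the cubic contraction, only the $uv$ cross-term of $\bar{\nabla}^2 f$ contributes (since the angular components of $\bar{\nabla}^\sharp f$ vanish), giving
\[
\bar{\nabla}^\alpha f\,\bar{\nabla}^\beta f\,\bar{\nabla}_{\alpha\beta}f \;=\; 2(\bar{\nabla}^u f)(\bar{\nabla}^v f)\bar{\nabla}_{uv}f \;=\; 2\cdot\tfrac{u}{2}\cdot\tfrac{v}{2}\cdot(-1) \;=\; \tfrac{f}{2}.
\]

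There is no real obstacle here: the proof is entirely mechanical once one has written the metric, its inverse, and the derivatives of $\bar{\rho}$ in null coordinates. The only delicate point is the cancellation in the angular Hessian; without the precise warping $\bar{\rho} = r + 2\varepsilon f$, the combination $v\partial_v\bar{\rho} - u(-\partial_u\bar{\rho})$ would not simplify to a multiple of $\bar{\rho}$, and the $\bar{\nabla}^2 f$ tensor would not be pure trace on the spheres. It is precisely this feature that makes $\bar{g}$ well-adapted to Carleman estimates based on level sets of $f$, as required in the proof of Theorem \ref{thm.carleman_est}.
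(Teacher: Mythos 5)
Your proof is correct and follows essentially the same route as the paper's: a direct computation in null coordinates using the components of $\bar{g}$, $\bar{g}^{-1}$, and the Christoffel symbols $\bar{\Gamma}^u_{ab}$, $\bar{\Gamma}^v_{ab}$, together with $\partial_u f = -v$, $\partial_v f = -u$ and the derivatives of $\bar{\rho}$. The only (inessential) difference is that for the cubic contraction the paper uses the shortcut $\bar{\nabla}^\alpha f \bar{\nabla}^\beta f \bar{\nabla}_{\alpha\beta} f = \tfrac{1}{2}\bar{\nabla}^\alpha f \bar{\nabla}_\alpha(\bar{\nabla}^\beta f \bar{\nabla}_\beta f) = \tfrac{1}{2}f$, whereas you contract the Hessian directly; both are fine.
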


\begin{proposition} \label{thm.f_rho_wp}
The quantity $\bar{\rho}^{-1} f$ satisfies
\begin{equation}
\label{eq.f_rho_deriv_wp} \partial_u \left( \frac{f}{ \bar{\rho} } \right) = - \frac{ v^2 }{ \bar{\rho}^2 } \text{,} \qquad \partial_v \left( \frac{f}{ \bar{\rho} } \right) = \frac{ u^2 }{ \bar{\rho}^2 } \text{,}
\end{equation}
as well as the following wave equation:
\begin{equation}
\label{eq.f_rho_box_wp} \bar{\Box} \left( \frac{f}{ \bar{\rho} } \right) = \frac{ n - 1 }{ 2 \bar{\rho} } \left( 1 - \frac{ 2 \varepsilon f }{ \bar{\rho} } \right) - \frac{ (n - 3) f }{ \bar{\rho}^3 } \text{.}
\end{equation}
\end{proposition}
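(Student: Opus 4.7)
Both identities will be verified by direct computation in the null coordinate system $(u,v,\omega)$, where one has the explicit formulas $f = -uv$, $r = v-u$, and $\bar{\rho} = (v-u) - 2\varepsilon uv$. In particular,
\[
\partial_u f = -v, \quad \partial_v f = u, \quad \partial_u \bar{\rho} = -1 - 2\varepsilon v, \quad \partial_v \bar{\rho} = 1 - 2\varepsilon u.
\]

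For \eqref{eq.f_rho_deriv_wp}, I would apply the quotient rule. The numerator of $\partial_u(f/\bar{\rho})$ equals $-v \bar{\rho} - (-uv)(-1 - 2\varepsilon v) = -v\bar{\rho} - uv - 2\varepsilon u v^2$. Expanding $-v\bar{\rho} = -v^2 + uv + 2\varepsilon u v^2$, the $uv$ and $2\varepsilon u v^2$ terms cancel, leaving $-v^2$. The identity for $\partial_v(f/\bar{\rho})$ follows by the same mechanism (with the analogous cancellation yielding $u^2$).

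For the wave equation \eqref{eq.f_rho_box_wp}, my strategy is to first extract a coordinate formula for $\bar{\Box}$ acting on functions of $(u,v)$ alone, and then apply it with $h = f/\bar{\rho}$. From \eqref{eq.g_wp}, the inverse metric satisfies $\bar{g}^{uv} = -1/2$ and $\bar{g}^{ab} = \bar{\rho}^{-2} \mathring{\gamma}^{ab}$, while $\sqrt{-\bar{g}} = 2 \bar{\rho}^{n-1} \sqrt{\det \mathring{\gamma}}$. Substituting into the standard identity $\bar{\Box} h = (-\bar{g})^{-1/2} \partial_\alpha [ (-\bar{g})^{1/2} \bar{g}^{\alpha\beta} \partial_\beta h ]$ and using that $h$ is independent of $\omega$, one obtains
\[
\bar{\Box} h = -\partial_u \partial_v h - \frac{n-1}{2 \bar{\rho}} \bigl( \partial_u \bar{\rho} \cdot \partial_v h + \partial_v \bar{\rho} \cdot \partial_u h \bigr).
\]

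Finally, with $h = f/\bar{\rho}$, I would substitute the derivatives from \eqref{eq.f_rho_deriv_wp}. A short computation using $2 u \bar{\rho} = 2uv - 2u^2 - 4\varepsilon u^2 v$ shows $\partial_u \partial_v(f/\bar{\rho}) = -2 f / \bar{\rho}^3$. For the cross term, one gets $[-(u^2 + v^2) + 2\varepsilon uv(v-u)]/\bar{\rho}^2$, which via $u^2 + v^2 = r^2 - 2f$ and $uv = -f$ reduces to $[-r\bar{\rho} + 2f]/\bar{\rho}^2 = -r/\bar{\rho} + 2f/\bar{\rho}^2$. Combining and using $r = \bar{\rho} - 2\varepsilon f$ to rewrite $(n-1) r/(2\bar{\rho}^2)$ in the form stated in \eqref{eq.f_rho_box_wp} completes the proof. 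The only real obstacle is the bookkeeping of the numerous cancellations; there is no conceptual difficulty, since everything reduces to polynomial identities in $u, v, \varepsilon$.
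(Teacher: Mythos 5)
Your proposal is correct and follows essentially the same route as the paper: both identities are verified by direct computation in null coordinates, with the quotient-rule cancellations for \eqref{eq.f_rho_deriv_wp} and the mixed-derivative plus radial-term bookkeeping for \eqref{eq.f_rho_box_wp} matching the paper's appendix computation step for step (the paper reduces $\bar{\Box}$ on radial functions via the Christoffel symbols $\bar{\Gamma}^u_{ab}$, $\bar{\Gamma}^v_{ab}$ rather than via the $\sqrt{-\bar{g}}$ divergence formula, but these give the identical scalar expression). One slip to fix: in your list of preliminary derivatives you wrote $\partial_v f = u$, whereas $f = -uv$ gives $\partial_v f = -u$; your later steps implicitly use the correct sign, since otherwise the numerator of $\partial_v(f/\bar{\rho})$ would come out as $2uv - u^2 - 4\varepsilon u^2 v$ rather than $u^2$.
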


Next, we recall the region $\mc{D}$ from Definition \ref{def.D}, representing the exterior of the (Minkowski) null cone about the origin.
We can then establish the following basic inequalities on $\mc{D}$:

\begin{proposition} \label{thm.uvf_bound}
The following inequalities hold on $\mc{D}$:
\begin{equation}
\label{eq.uvf_bound} 0 < - u < r \text{,} \qquad 0 < v < r \text{,} \qquad 0 < f < r^2 \text{.}
\end{equation}
Furthermore, if $\varepsilon \geq 0$, then the following inequality holds on $\mc{D}$:
\begin{equation}
\label{eq.rho_est_wp} f^\frac{1}{2} < \bar{\rho} \text{.}
\end{equation}
\end{proposition}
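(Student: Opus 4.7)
The plan is to reduce everything to the single observation that on $\mc{D}$, the defining inequality $f > 0$ combined with $f = \frac{1}{4}(r^2 - t^2)$ forces $r^2 > t^2$, hence $r > |t| \geq 0$. With this in hand, each of the claimed inequalities follows from direct algebraic manipulations of the definitions in Definition \ref{def.mink_fct}.

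More precisely, I would proceed as follows. First, since $r \geq 0$ always and $r > |t|$ on $\mc{D}$, we have $-r < t < r$. Substituting into $u = \frac{1}{2}(t - r)$ gives $u < 0$, hence $-u > 0$, while $-u = \frac{1}{2}(r - t) < \frac{1}{2}(r + r) = r$, proving $0 < -u < r$. Analogously, $v = \frac{1}{2}(t + r) > 0$ since $t > -r$, and $v = \frac{1}{2}(t + r) < r$ since $t < r$; this gives $0 < v < r$. For the bound on $f$, since $-u, v \in (0, r)$, we have $0 < f = (-u) \cdot v < r \cdot r = r^2$, which establishes $0 < f < r^2$. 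This completes \eqref{eq.uvf_bound}.

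Finally, for \eqref{eq.rho_est_wp}, the bound $0 < f < r^2$ from \eqref{eq.uvf_bound} yields $f^{1/2} < r$. When $\varepsilon \geq 0$, we have $2\varepsilon f \geq 0$, so
\[
f^{1/2} < r \leq r + 2\varepsilon f = \bar{\rho},
\]
which gives the desired estimate.

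There is no real obstacle here; the entire proposition is a sequence of elementary inequalities once one unpacks the definitions. The only point that requires a moment of care is noting that $r > 0$ strictly on $\mc{D}$ (since $r = 0$ forces $f = -\frac{1}{4}t^2 \leq 0$), but this is immediate. The last inequality crucially uses the sign hypothesis $\varepsilon \geq 0$, which is why that part is stated separately.
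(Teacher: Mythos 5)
Your proof is correct and follows essentially the same route as the paper: the first set of inequalities is just the unpacking of $f>0 \Rightarrow r>|t|$ through the definitions of $u$, $v$, $f$ (which the paper declares immediate), and the second part is the same square-root argument, since $f^{1/2}<r=\bar{\rho}-2\varepsilon f\leq\bar{\rho}$ when $\varepsilon\geq 0$. Nothing is missing.
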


\begin{proof}
First, \eqref{eq.uvf_bound} follows immediately from \eqref{eq.uv}, \eqref{eq.f}, and \eqref{eq.D}.
Then, by \eqref{eq.rho_wp}, we have
\[
f < r^2 = ( \bar{\rho} - 2 \varepsilon f )^2 \text{.}
\]
Since $f > 0$ on $\mc{D}$ and $\varepsilon \geq 0$, taking square roots of the above yields \eqref{eq.rho_est_wp}.
\end{proof}

Next, we derive the null convexity properties of level sets of $f$ (with respect to $\bar{g}$) whenever $\varepsilon$ is positive.\footnote{Null convexity (or pseudoconvexity, in more general contexts) of these level sets can be characterized as follows: $\bar{\nabla}_{ X X } f > 0$ for all null vector fields $X$ that are tangent to the level sets of $f$.
In particular, observe that the preceding property is a consequence of the results for $\bar{\pi}$ in Proposition \ref{thm.TN_wp}.}
This is most straightforwardly shown using a frame that is adapted to $f$.

\begin{definition} \label{def.TN}
We define the following vector fields on $\mc{D}$,
\begin{equation}
\label{eq.TN} T := \frac{1}{2} f^{-\frac{1}{2}} ( - u \partial_u + v \partial_v ) \text{,} \qquad N := \frac{1}{2} f^{-\frac{1}{2}} ( u \partial_u + v \partial_v ) \text{,}
\end{equation}
both of which are normal to the level spheres of $( u, v )$.
\end{definition}

\begin{proposition} \label{thm.TN_wp}
$T$ is everywhere tangent to the level sets of $f$, while $N$ is everywhere normal to the level sets of $f$.
Moreover, $\bar{\nabla}^2 f$ satisfies the following:
\begin{equation}
\label{eq.f_TN_wp} \bar{\nabla}_{T a} f = \bar{\nabla}_{N a} f = \bar{\nabla}_{T N} f \equiv 0 \text{,} \qquad \bar{\nabla}_{T T} f = - \frac{1}{2} \text{,} \qquad \bar{\nabla}_{N N} f = \frac{1}{2} \text{.}
\end{equation}
\end{proposition}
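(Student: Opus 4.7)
The proposition is purely an algebraic bookkeeping exercise in null coordinates, with Proposition \ref{thm.f_deriv_wp} supplying all of the nontrivial geometric input. The plan is to handle the tangency/normality claims first (using $\bar{\nabla}^\sharp f$), and then expand $\bar{\nabla}^2 f(T, \cdot)$ and $\bar{\nabla}^2 f(N, \cdot)$ directly from the null-coordinate components listed in \eqref{eq.f_hess_wp}.

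\textbf{Tangency and normality.} From \eqref{eq.f_grad_wp}, the $\bar{g}$-gradient of $f$ is $\bar{\nabla}^\sharp f = \tfrac{1}{2}(u \partial_u + v \partial_v)$, which by \eqref{eq.TN} is exactly $f^{1/2} N$ on $\mc{D}$. Hence $N$ is a positive scalar multiple of the $\bar{g}$-gradient of $f$, so it is $\bar{g}$-normal to each level set of $f$. For $T$, I would simply compute $T(f)$ from $f = -uv$ (hence $\partial_u f = -v$, $\partial_v f = -u$):
\[
T(f) = \tfrac{1}{2} f^{-1/2}\bigl(-u \cdot (-v) + v \cdot (-u)\bigr) = 0,
\]
so $T$ annihilates $f$ and is therefore tangent to every level set of $f$.

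\textbf{Hessian components.} By \eqref{eq.f_hess_wp}, the only nonvanishing components of $\bar{\nabla}^2 f$ in $(u, v, \omega)$-coordinates are $\bar{\nabla}_{uv} f = -1$ and the angular block $\bar{\nabla}_{ab} f = (\tfrac{1}{2} + \varepsilon f / \bar{\rho}) \bar{g}_{ab}$; in particular, $\bar{\nabla}_{uu} f = \bar{\nabla}_{vv} f = 0$ and $\bar{\nabla}_{ua} f = \bar{\nabla}_{va} f = 0$. Since $T$ and $N$ are linear combinations of $\partial_u$ and $\partial_v$ only, the identities $\bar{\nabla}_{Ta} f = \bar{\nabla}_{Na} f = 0$ are immediate. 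For the $(T, N)$-block, I would expand
\[
\bar{\nabla}_{TT} f = 2 T^u T^v \bar{\nabla}_{uv} f, \qquad \bar{\nabla}_{NN} f = 2 N^u N^v \bar{\nabla}_{uv} f, \qquad \bar{\nabla}_{TN} f = (T^u N^v + T^v N^u)\, \bar{\nabla}_{uv} f,
\]
and substitute $T^u = -\tfrac{u}{2} f^{-1/2}$, $T^v = \tfrac{v}{2} f^{-1/2}$, $N^u = \tfrac{u}{2} f^{-1/2}$, $N^v = \tfrac{v}{2} f^{-1/2}$, together with the identity $uv = -f$ from \eqref{eq.f}. The three expressions then collapse to $-\tfrac{1}{2}$, $\tfrac{1}{2}$, and $0$ respectively.

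\textbf{Obstacles.} There is no real obstacle: once Proposition \ref{thm.f_deriv_wp} is in hand, the whole statement is routine. The only minor caveat is that $T$ and $N$ are only defined on $\mc{D} = \{ f > 0 \}$ (because of the $f^{-1/2}$ factor), but this is precisely the domain on which the proposition is asserted, so this causes no trouble.
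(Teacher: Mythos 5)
Your proposal is correct and follows essentially the same route as the paper: both identify $N$ as $f^{-1/2}\bar{\nabla}^\sharp f$ via \eqref{eq.f_grad_wp}, verify $Tf = 0$ directly, observe that the mixed components vanish because $\bar{\nabla}_{ua}f = \bar{\nabla}_{va}f = 0$ in \eqref{eq.f_hess_wp}, and compute the $TT$, $NN$, $TN$ components by expanding in $\partial_u,\partial_v$ and using $\bar{\nabla}_{uv}f = -1$ together with $uv = -f$.
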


For our purposes, the null convexity is most conveniently captured in the quantity $\pi$ defined below.
This will be exploited in the proof of the ``warped" Carleman estimate, Theorem \ref{thm.carleman_est_wp}.

\begin{definition} \label{def.pseudoconvex_wp}
We define the following modified deformation tensor:
\begin{equation}
\label{eq.pi_wp} \bar{\pi} := \bar{\nabla}^2 f - \bar{h} \cdot \bar{g} \text{,} \qquad \bar{h} := \frac{1}{2} + \frac{ \varepsilon f }{ 2 \bar{\rho} } \text{.}
\end{equation}
In future computations, we will also make use of the following quantity:
\begin{equation}
\label{eq.w_wp} \bar{w} := \frac{1}{2} \bar{\Box} f - \bar{h} = \frac{n - 1}{4} + \frac{ (n - 2) \varepsilon f }{ 2 \bar{\rho} } \text{.}
\end{equation}
\end{definition}

\begin{proposition} \label{thm.pseudoconvex_wp}
The nonvanishing components of $\bar{\pi}$ are given by
\begin{equation}
\label{eq.pseudoconvex_wp} \bar{\pi}_{T T} = \frac{ \varepsilon f }{ 2 \bar{\rho} } \text{,} \qquad \bar{\pi}_{a b} = \frac{ \varepsilon f }{ 2 \bar{\rho} } \cdot \bar{g}_{a b} \text{,} \qquad \bar{\pi}_{N N} = - \frac{ \varepsilon f }{ 2 \bar{\rho} } \text{.}
\end{equation}
Furthermore, $\bar{w}$ satisfies the following wave equation:
\begin{equation}
\label{eq.w_box_wp} \bar{\Box} \bar{w} = - \frac{ (n - 2) \varepsilon }{ 2 \bar{\rho} } \left[ \frac{ (n - 3) f }{ \bar{\rho}^2 } - \frac{ n - 1 }{ 2 } \left( 1 - \frac{ 2 \varepsilon f }{ \bar{\rho} } \right) \right] \text{.}
\end{equation}
\end{proposition}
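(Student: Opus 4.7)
The proposition is a direct verification built on the two preceding propositions, so the plan is essentially bookkeeping. The only (mild) subtlety is that $\bar{\pi}$ is defined via $\bar{\nabla}^2 f - \bar{h}\cdot\bar{g}$, whereas Propositions \ref{thm.f_deriv_wp} and \ref{thm.TN_wp} give $\bar{\nabla}^2 f$ both in null and in the $(T,N,\text{angular})$ frames. The natural plan is to work in the $(T,N,\text{angular})$ frame, where both the Hessian and (as I will check) $\bar{g}$ have a particularly clean form.

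First I would compute the relevant components of $\bar{g}$ in the frame $(T, N, e_a)$. From $T = \tfrac{1}{2} f^{-1/2}(-u\partial_u + v\partial_v)$, $N = \tfrac{1}{2} f^{-1/2}(u\partial_u + v\partial_v)$ and $\bar{g}_{uv} = -2$, $\bar{g}_{uu} = \bar{g}_{vv} = 0$ (from \eqref{eq.g_wp}), together with $f = -uv$, an immediate calculation gives $\bar{g}(T,T) = -1$, $\bar{g}(N,N) = 1$, $\bar{g}(T,N) = 0$; and $T, N$ are orthogonal to the angular directions by construction since $\partial_u, \partial_v$ are. Combining these with the frame values of $\bar{\nabla}^2 f$ from Proposition \ref{thm.TN_wp} (and the angular values from \eqref{eq.f_hess_wp}), the components of $\bar{\pi} = \bar{\nabla}^2 f - \bar{h}\bar{g}$ follow directly:
\begin{align*}
\bar{\pi}_{TT} &= -\tfrac{1}{2} - \bar{h}\cdot(-1) = \tfrac{\varepsilon f}{2\bar{\rho}}, \\
\bar{\pi}_{NN} &= \tfrac{1}{2} - \bar{h} = -\tfrac{\varepsilon f}{2\bar{\rho}}, \\
\bar{\pi}_{ab} &= \Bigl(\tfrac{1}{2} + \tfrac{\varepsilon f}{\bar{\rho}}\Bigr)\bar{g}_{ab} - \bar{h}\,\bar{g}_{ab} = \tfrac{\varepsilon f}{2\bar{\rho}}\bar{g}_{ab},
\end{align*}
while the mixed components $\bar{\pi}_{Ta}, \bar{\pi}_{Na}, \bar{\pi}_{TN}$ all vanish since both $\bar{\nabla}^2 f$ and $\bar{g}$ do in these slots. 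This proves \eqref{eq.pseudoconvex_wp}.

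For the wave equation on $\bar{w}$, the key observation is that by \eqref{eq.w_wp}, $\bar{w}$ decomposes as a constant plus a multiple of $f/\bar{\rho}$:
\[
\bar{w} = \tfrac{n-1}{4} + \tfrac{(n-2)\varepsilon}{2}\cdot\tfrac{f}{\bar{\rho}}.
\]
Applying $\bar{\Box}$ kills the constant term and reduces the problem to computing $\bar{\Box}(f/\bar{\rho})$, which is exactly \eqref{eq.f_rho_box_wp}. Substituting that identity and factoring out $-\tfrac{(n-2)\varepsilon}{2\bar{\rho}}$ yields \eqref{eq.w_box_wp} after a one-line rearrangement.

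There is no substantial obstacle — the whole proof is a direct computation in a well-chosen frame plus one invocation of Proposition \ref{thm.f_rho_wp}. The main thing to be careful about is the sign bookkeeping in computing $\bar{g}(T,T)$ and $\bar{g}(N,N)$ (where the identity $f = -uv$ is used to flip a sign), since an error there would propagate into the sign of $\bar{\pi}_{TT}$ and $\bar{\pi}_{NN}$ and hence spoil the pseudoconvexity interpretation in the exterior $\{f > 0\}$.
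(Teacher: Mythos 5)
Your proposal is correct and follows the same route as the paper: the components of $\bar{\pi}$ are read off in the $(T,N,e_a)$ frame by combining Propositions \ref{thm.f_deriv_wp} and \ref{thm.TN_wp} with $\bar{g}(T,T)=-1$, $\bar{g}(N,N)=1$ (the paper leaves this normalization implicit in writing $\bar{\pi}_{TT}=\bar{\nabla}_{TT}f+\bar{h}$), and \eqref{eq.w_box_wp} is obtained by applying $\bar{\Box}$ to $\bar{w}=\tfrac{n-1}{4}+\tfrac{(n-2)\varepsilon}{2}\cdot\tfrac{f}{\bar{\rho}}$ and invoking \eqref{eq.f_rho_box_wp}. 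The sign bookkeeping you flag is exactly the one point worth care, and your values check out.
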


\begin{remark} \label{rmk.pseudoconvex}
In particular, one can show from the first two identities in \eqref{eq.pseudoconvex_wp} that
\[
\bar{\nabla}_{ X X } f = \bar{\pi}_{ X X } > 0 \text{,}
\]
for any null vector field $X$ tangent to the level sets of $f$.
In other words, these level sets are ($\bar{g}$)\emph{-null convex} in the direction of increasing $f$.
This null convexity is also connected to the more general notion of \emph{pseudoconvexity} (with respect to $\bar{\Box}$) in unique continuation theory; see \cite{hor:lpdo4, ler_robb:unique}.
\end{remark}

\subsubsection{The Conformal Isometry}

The next step is to describe how this warped geometry, as described in Definitions \ref{def.wp_fct} and \ref{def.wp_met}, is conformally related to Minkowski geometry.

\begin{definition} \label{def.conformal}
Let $R > 0$ and $\varepsilon \in \R$, and assume $| \varepsilon | \ll_n R^{-1}$.
We then define the map
\[
\bar{\Phi}: \mc{D} \cap \{ r < R \} \rightarrow \mc{D} \text{,}
\]
in terms of null coordinates (about the origin) by
\begin{align}
\label{eq.conformal} \bar{\Phi} ( u, v, \omega ) &:= ( \bar{u} ( u, v, \omega ), \bar{v} ( u, v, \omega ), \bar{\omega} ( u, v, \omega ) ) \\
\notag &:= ( u ( 1 + \varepsilon u )^{-1}, v ( 1 - \varepsilon v )^{-1}, \omega ) \text{.}
\end{align}
In addition, for future convenience, we define the function
\begin{equation}
\label{eq.conformal_factor} \xi := ( 1 + \varepsilon u ) ( 1 - \varepsilon v ) \text{.}
\end{equation}
\end{definition}

\begin{remark}
Note that by \eqref{eq.uvf_bound}, the right-hand side of \eqref{eq.conformal} is well-defined.
\end{remark}

\begin{proposition} \label{thm.met_conf}
Assume the setting of Definition \ref{def.conformal}.
Then:
\begin{itemize}
\item The following identities hold:
\begin{equation}
\label{eq.rf_conf} f \circ \bar{\Phi} = \xi^{-1} f \text{,} \qquad \bar{\rho} \circ \bar{\Phi} = \xi^{-1} r \text{.}
\end{equation}

\item $\bar{\Phi}$ defines a conformal isometry between $\mc{D} \cap \{ r < R \}$ and an open subset of $\mc{D}$.
In particular,
\begin{equation}
\label{eq.met_conf} \bar{\Phi}_\ast \bar{g} = \xi^{-2} g |_{ \mc{D} \cap \{ r < R \} } \text{,}
\end{equation}
where $\bar{\Phi}_\ast$ denotes the pullback of $\bar{g}$ through $\bar{\Phi}$.
\end{itemize}
\end{proposition}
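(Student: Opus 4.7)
The plan is to verify both claims by direct computation in null coordinates, using nothing beyond the definitions of $\bar\Phi$, $\xi$, $f$, and $\bar\rho$. For the identities in \eqref{eq.rf_conf}, I would substitute the defining formulas of $\bar\Phi$ into $f = -uv$ on the target, obtaining
\[
f \circ \bar\Phi = -(\bar u\circ\bar\Phi)(\bar v\circ\bar\Phi) = -\frac{u}{1+\varepsilon u}\cdot\frac{v}{1-\varepsilon v} = -\frac{uv}{\xi} = \xi^{-1} f.
\]
For the radial statement, I would first compute $\bar r \circ \bar\Phi = (\bar v - \bar u)\circ\bar\Phi$ by placing the two fractions over the common denominator $\xi$, which yields $\xi^{-1}(v - u + 2\varepsilon uv) = \xi^{-1}(r - 2\varepsilon f)$. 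Adding $2\varepsilon(f\circ\bar\Phi) = 2\varepsilon\xi^{-1}f$ then produces $\bar\rho\circ\bar\Phi = \xi^{-1} r$, as required. The hypothesis $|\varepsilon|\ll_n R^{-1}$, together with the bounds $-u,\, v < r < R$ from Proposition \ref{thm.uvf_bound}, ensures that $\xi$ is bounded away from zero on the source, so that these expressions are all well-defined.

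For the conformal identity \eqref{eq.met_conf}, I would write $\bar g$ in the null form \eqref{eq.g_wp} on the target and pull back each factor. The quotient rule applied to $\bar u = u(1+\varepsilon u)^{-1}$ and $\bar v = v(1-\varepsilon v)^{-1}$ produces the clean cancellations
\[
d(\bar u \circ \bar\Phi) = (1+\varepsilon u)^{-2}\, du, \qquad d(\bar v \circ \bar\Phi) = (1-\varepsilon v)^{-2}\, dv,
\]
so the null part of the pullback is $-4\, d(\bar u\circ\bar\Phi)\, d(\bar v\circ\bar\Phi) = -4\xi^{-2}\, du\, dv$. The angular piece is handled by the already established $\bar\rho\circ\bar\Phi = \xi^{-1} r$, which gives $(\bar\rho\circ\bar\Phi)^2 \mathring\gamma = \xi^{-2} r^2 \mathring\gamma$. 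Summing both contributions and comparing with the null form of $g$ in \eqref{eq.mink_met_alt} yields $\bar\Phi^*\bar g = \xi^{-2}(-4\, du\, dv + r^2 \mathring\gamma) = \xi^{-2} g$ on $\mc{D}\cap\{r<R\}$.

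The argument is essentially bookkeeping, and I expect no genuine obstacle. The two auxiliary points requiring a moment's care are (a) that $\bar\Phi$ actually sends $\mc{D}\cap\{r<R\}$ into $\mc{D}$, which follows from $f\circ\bar\Phi = \xi^{-1} f > 0$ since $\xi > 0$ by the smallness of $\varepsilon$, together with $\bar u < 0 < \bar v$ inherited from $u < 0 < v$; and (b) that the Jacobian factor $(1+\varepsilon u)^{-2}(1-\varepsilon v)^{-2}$ is nowhere zero, which both guarantees that $\bar\Phi$ is a local diffeomorphism (hence a genuine conformal isometry onto its image) and underlies the cancellation producing the $\xi^{-2}$ conformal factor above.
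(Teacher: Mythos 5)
Your proposal is correct and takes essentially the same route as the paper: direct substitution for \eqref{eq.rf_conf}, then pulling back $du$, $dv$ through the quotient rule and combining with $\bar\rho\circ\bar\Phi=\xi^{-1}r$ to obtain \eqref{eq.met_conf}. The only cosmetic difference is that you first compute $r\circ\bar\Phi=\xi^{-1}(r-2\varepsilon f)$ and then add $2\varepsilon(f\circ\bar\Phi)$, whereas the paper expands $\bar\rho\circ\bar\Phi$ over the common denominator in one step; both are the same bookkeeping.
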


\begin{proposition} \label{thm.conf_wave}
Assume the setting of Definition \ref{def.conformal}.
Also, let $\Omega$ be an open subset of $\mc{D}$, let $\bar{\phi} \in C^2 ( \Omega )$, and let $\phi = \bar{\phi} \circ \bar{\Phi}$.
Then, the following identity holds:
\begin{equation}
\label{eq.conf_wave} \left[ \bar{\Box} + \frac{ (n - 1)^2 \varepsilon }{ 2 ( \bar{\rho} \circ \bar{\Phi} ) } \right] ( \xi^\frac{n - 1}{2} \bar{\phi} ) = \xi^\frac{n + 3}{2} \Box \phi \text{.}
\end{equation}
\end{proposition}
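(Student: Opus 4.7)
The identity \eqref{eq.conf_wave} is essentially the conformal transformation of the wave operator, adapted to the conformal isometry $\bar{\Phi}$ of Proposition \ref{thm.met_conf}. My plan is to transfer the computation from the target (with warped metric $\bar{g}$) to the source (with flat Minkowski metric $g$), perform the conformal algebra there, and then pull back.

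Since $\bar{\Phi}_\ast \bar{g} = \xi^{-2} g|_{\mc{D} \cap \{r < R\}}$, the map $\bar{\Phi}$ is an isometry from $(\mc{D} \cap \{r < R\}, \xi^{-2} g)$ onto its image in $(\mc{D}, \bar{g})$, so the Laplace--Beltrami operator is natural: $\Box_{\xi^{-2} g}(F \circ \bar{\Phi}) = (\bar{\Box} F) \circ \bar{\Phi}$ for every scalar $F$ on the target. Applied with $F$ representing $\xi^{(n-1)/2} \bar{\phi}$ on the target, it therefore suffices to prove the equivalent source-side identity
\[
\Box_{\xi^{-2} g}\bigl( \xi^{(n-1)/2} \phi \bigr) + \frac{(n-1)^2 \varepsilon}{2 (\bar{\rho} \circ \bar{\Phi})}\, \xi^{(n-1)/2} \phi = \xi^{(n+3)/2} \Box \phi.
\]

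For the conformal algebra, I would invoke the pointwise transformation formula: for $\tilde{g} = \Omega^2 g$ in dimension $N$, starting from the standard $\Box_{\tilde{g}} u = \Omega^{-2} \Box_g u + (N-2) \Omega^{-3} g^{\alpha\beta} \partial_\alpha \Omega\, \partial_\beta u$ and applying Leibniz to $u = \Omega^a v$, one obtains
\begin{align*}
\Box_{\tilde{g}}(\Omega^a v) &= \Omega^{a-2} \Box_g v + a \Omega^{a-3} v\, \Box_g \Omega + a(a + N - 3)\, \Omega^{a-4} v\, |\nabla \Omega|_g^2 \\
&\qquad + (2a + N - 2)\, \Omega^{a-3}\, g^{\alpha\beta} \partial_\alpha \Omega\, \partial_\beta v.
\end{align*}
Specializing to $\Omega = \xi^{-1}$, $N = n + 1$, $v = \phi$, and the conformal weight $a = -(n-1)/2$ (which kills the mixed $\langle \nabla \Omega, \nabla v \rangle$ term), this reduces the problem to computing $\Box_g \xi^{-1}$ and $|\nabla \xi^{-1}|_g^2$. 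Since $\xi = (1 + \varepsilon u)(1 - \varepsilon v)$ depends only on $u, v$, both are short direct computations using the null-coordinate reduction $\Box_g F = -\partial_u \partial_v F + \tfrac{n-1}{2r}(\partial_v F - \partial_u F)$ for $F = F(u, v)$ (coming from $r = v - u$); I expect
\[
|\nabla \xi^{-1}|_g^2 = \xi^{-3} \varepsilon^{2}, \qquad \Box_g \xi^{-1} = \xi^{-2} \varepsilon \left[ \tfrac{n-1}{r} - \tfrac{(n-3)\varepsilon}{2} \right].
\]

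Substituting into the reduced formula, the two $\varepsilon^2$ contributions (one from $\Box_g \xi^{-1}$ and one from $|\nabla \xi^{-1}|_g^2$) cancel exactly, leaving
\[
\Box_{\xi^{-2} g}\bigl( \xi^{(n-1)/2} \phi \bigr) = \xi^{(n+3)/2} \Box \phi - \frac{(n-1)^2 \varepsilon}{2 r}\, \xi^{(n+1)/2} \phi.
\]
The relation $\bar{\rho} \circ \bar{\Phi} = \xi^{-1} r$ from Proposition \ref{thm.met_conf} then rewrites $\xi^{(n+1)/2}/r = \xi^{(n-1)/2}/(\bar{\rho} \circ \bar{\Phi})$, which is precisely the source-side identity required in the first step; pulling back via $\bar{\Phi}$ yields \eqref{eq.conf_wave}. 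The main point where care is needed is this final cancellation: the vanishing of the $\varepsilon^2$ anomaly depends on the matching of the coefficients $-\tfrac{n-1}{2} \cdot \tfrac{n-3}{2}$ (from $\Box_g \xi^{-1}$) and $-\tfrac{(n-1)(n-3)}{4}$ (from $|\nabla \xi^{-1}|_g^2$), and it is exactly this matching that forces the correction term in \eqref{eq.conf_wave} to be first-order in $\varepsilon$ and purely geometric (depending only on $\bar{\rho}$).
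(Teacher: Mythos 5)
Your approach is correct, and it takes a genuinely different route from the paper. The paper's proof computes the scalar curvature $\bar{\mathcal{R}}$ of the warped metric by pulling it back through $\bar{\Phi}$ (using Minkowski flatness and the conformal transformation law for scalar curvature), and then invokes the conformal covariance of the conformal d'Alembertian $\bar{\Box} - \tfrac{n-1}{4n}\bar{\mathcal{R}}$ from Wald's Appendix D as a black box; the weight factors $\xi^{(n-1)/2}$ and $\xi^{(n+3)/2}$ then drop out automatically. You instead transfer everything to the source via the isometry naturality $\Box_{\xi^{-2}g}(F \circ \bar{\Phi}) = (\bar{\Box}F) \circ \bar{\Phi}$ and perform the conformal algebra by hand with a Leibniz expansion of $\Box_{\xi^{-2}g}(\Omega^a v)$, choosing the exponent $a = -(n-1)/2$ precisely so the mixed $\langle \nabla\Omega, \nabla v \rangle$ term vanishes, and then verifying the $\varepsilon^2$ cancellation directly. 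The two arguments are computationally parallel --- both reduce to $\Box_g$ and $|\nabla\cdot|_g^2$ of (a power of) $\xi$ in null coordinates, and both are of comparable length --- but conceptually they package the information differently: the paper identifies the correction term as $-\tfrac{n-1}{4n}\bar{\mathcal{R}} \circ \bar{\Phi}$, making the geometric meaning transparent and the choice of weight automatic, while your version makes visible at the algebraic level why the coefficient is first-order in $\varepsilon$ (the $\varepsilon^2$ contributions from $\Box_g(\xi^{-1})$ and $|\nabla(\xi^{-1})|_g^2$ exactly cancel for the conformal weight $a = -(n-1)/2$). I verified your intermediate formulas: $|\nabla \xi^{-1}|_g^2 = \varepsilon^2 \xi^{-3}$, $\Box_g(\xi^{-1}) = \xi^{-2}\varepsilon\bigl[\tfrac{n-1}{r} - \tfrac{(n-3)\varepsilon}{2}\bigr]$, and $a(a+N-3) = -\tfrac{(n-1)(n-3)}{4}$ with $N = n+1$; the cancellation and the final rewrite via $\bar{\rho} \circ \bar{\Phi} = \xi^{-1} r$ all go through.
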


We now establish a number of elementary comparisons showing that objects with respect to $\bar{g}$ differ little from corresponding objects with respect to $g$:

\begin{proposition} \label{thm.uvf_comp}
Assume the setting of Definition \ref{def.conformal}.
Then:
\begin{itemize}
\item The following comparisons hold on $\mc{D} \cap \{ r < R \}$:
\begin{equation}
\label{eq.conf_comp} ( 1 + \varepsilon u )^n \simeq 1 \text{,} \qquad ( 1 - \varepsilon v )^n \simeq 1 \text{,} \qquad \xi^n \simeq 1 \text{.}
\end{equation}

\item The following comparisons hold on $\mc{D} \cap \{ r < R \}$:
\begin{equation}
\label{eq.uvf_comp} - ( u \circ \bar{\Phi} ) \simeq - u \text{,} \qquad v \circ \bar{\Phi} \simeq v \text{,} \qquad f \circ \bar{\Phi} \simeq f \text{.}
\end{equation}

\item The following comparisons hold on $\mc{D} \cap \{ r < R \}$:
\begin{equation}
\label{eq.conf_d_comp} | \partial_u \xi | \simeq \varepsilon \text{,} \qquad | \partial_v \xi | \simeq \varepsilon \text{.}
\end{equation}

\item For any open subset $\Omega \subseteq \mc{D}$ and $\bar{\phi} \in C^1 ( \Omega )$, we have
\begin{equation}
\label{eq.deriv_comp} | \partial_u \bar{\phi} | \simeq | \partial_u \phi | \text{,} \qquad | \partial_v \bar{\phi} | \simeq | \partial_v \phi | \text{,} \qquad \bar{g}^{ab} \bar{\nasla}_a \bar{\phi} \bar{\nasla}_b \bar{\phi} \simeq g^{ab} \nasla_a \phi \nasla_b \phi \text{.}
\end{equation}
\end{itemize}
\end{proposition}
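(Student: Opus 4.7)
The plan is to reduce every assertion in Proposition \ref{thm.uvf_comp} to the single smallness observation that on $\mc{D} \cap \{r < R\}$ one has $|\varepsilon u|, |\varepsilon v| \ll_n 1$. This bound is immediate from Proposition \ref{thm.uvf_bound}, which yields $|u|, |v| < r < R$ on $\mc{D} \cap \{r<R\}$, combined with the standing hypothesis $|\varepsilon| \ll_n R^{-1}$ of Definition \ref{def.conformal}. All four items then follow from elementary manipulation, with no further geometric input required beyond the conformal identities already proved in Propositions \ref{thm.met_conf} and \ref{thm.conf_wave}.

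For \eqref{eq.conf_comp}, since $|\varepsilon u|, |\varepsilon v| \ll_n 1$, both factors $1 + \varepsilon u$ and $1 - \varepsilon v$ are close to $1$, so raising to the $n$-th power preserves the comparison, and the same holds for the product $\xi$. For \eqref{eq.uvf_comp}, the first two comparisons follow from the defining formulas $u \circ \bar{\Phi} = u ( 1 + \varepsilon u )^{-1}$ and $v \circ \bar{\Phi} = v ( 1 - \varepsilon v )^{-1}$ in \eqref{eq.conformal}, combined with \eqref{eq.conf_comp}; the third then follows from the identity $f \circ \bar{\Phi} = \xi^{-1} f$ in \eqref{eq.rf_conf} together with $\xi \simeq 1$. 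For \eqref{eq.conf_d_comp}, I would simply differentiate $\xi = ( 1 + \varepsilon u )( 1 - \varepsilon v )$ to obtain
\[
\partial_u \xi = \varepsilon ( 1 - \varepsilon v ) \text{,} \qquad \partial_v \xi = - \varepsilon ( 1 + \varepsilon u ) \text{,}
\]
and then appeal again to \eqref{eq.conf_comp}.

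For \eqref{eq.deriv_comp}, write $\phi = \bar{\phi} \circ \bar{\Phi}$ and apply the chain rule. Since $\bar{\Phi}$ preserves the angular coordinate and $\bar{u}$ depends only on $u$ (respectively $\bar{v}$ only on $v$), a direct computation yields
\[
\bar{u}'(u) = ( 1 + \varepsilon u )^{-2} \text{,} \qquad \bar{v}'(v) = ( 1 - \varepsilon v )^{-2} \text{,}
\]
so that $\partial_u \phi = ( 1 + \varepsilon u )^{-2} \cdot ( \partial_u \bar{\phi} ) \circ \bar{\Phi}$ and analogously for $\partial_v \phi$; the first two comparisons in \eqref{eq.deriv_comp} then follow from \eqref{eq.conf_comp}. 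For the angular comparison, I would use the conformal identity \eqref{eq.met_conf}: at a point $\bar{\Phi} ( u, v, \omega )$, the spherical parts of $g$ and $\bar{g}$ are $r^2 \mathring{\gamma}$ and $\bar{\rho}^2 \mathring{\gamma}$ respectively, and by \eqref{eq.rf_conf} one has $\bar{\rho} \circ \bar{\Phi} = \xi^{-1} r$. Since the angular coordinate is preserved under $\bar{\Phi}$, the partial derivatives $\partial_a \phi$ and $\partial_a \bar{\phi}$ agree at corresponding points, so
\[
\bar{g}^{ab} \bar{\nasla}_a \bar{\phi} \bar{\nasla}_b \bar{\phi} = \xi^2 \cdot g^{ab} \nasla_a \phi \nasla_b \phi \text{,}
\]
and \eqref{eq.conf_comp} again gives the desired equivalence.

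No step presents a genuine obstacle: every comparison is either a direct algebraic consequence of $|\varepsilon u|, |\varepsilon v| \ll_n 1$ or a straightforward application of the chain rule combined with the conformal relation. The only point that requires slight care is the angular derivative comparison in \eqref{eq.deriv_comp}, where one must track how $\bar{g}^{ab}$ on the image sphere of radius $\bar{\rho} \circ \bar{\Phi}$ relates to $g^{ab}$ on the source sphere of radius $r$; but this reduces immediately to the identity $\bar{\rho} \circ \bar{\Phi} = \xi^{-1} r$ from Proposition \ref{thm.met_conf}.
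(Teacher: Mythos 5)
Your proposal is correct and follows essentially the same route as the paper: the paper's own proof derives the first two comparisons in \eqref{eq.conf_comp} from $0 < -u, v < r < R$ (Proposition \ref{thm.uvf_bound}) and the smallness of $\varepsilon$, and then deduces \eqref{eq.uvf_comp}, \eqref{eq.conf_d_comp}, and \eqref{eq.deriv_comp} from \eqref{eq.conformal}, \eqref{eq.rf_conf}, and direct computations exactly as you do. Your write-up merely fills in the explicit derivative formulas (which match those in the proof of Proposition \ref{thm.met_conf}) and the $\bar{\rho}\circ\bar{\Phi}=\xi^{-1}r$ bookkeeping for the angular term, all of which is accurate.
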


\begin{proof}
Since $0 < -u, v < R$ by \eqref{eq.uvf_bound}, the smallness assumption on $\varepsilon$ implies the first two comparisons in \eqref{eq.conf_comp}; the remaining part of \eqref{eq.conf_comp} now follows from \eqref{eq.conformal_factor}.
Using \eqref{eq.conf_comp}, then:
\begin{itemize}
\item \eqref{eq.uvf_comp} follows from \eqref{eq.conformal} and \eqref{eq.rf_conf}.

\item \eqref{eq.conf_d_comp} follows from direct computations along with \eqref{eq.conf_comp}.

\item \eqref{eq.deriv_comp} follows from \eqref{eq.conformal} and \eqref{eq.conf_comp}.\qedhere
\end{itemize}
\end{proof}

\begin{proposition} \label{thm.gtc_conf}
Assume the setting of Definition \ref{def.conformal}, and suppose $\mc{U}$ is a GTC satisfying
\[
\bar{\mc{U}} \cap \mc{D} \subseteq \{ r < R \} \text{.}
\]
Then, $\bar{\Phi} ( \mc{U} \cap \mc{D} )$ is an open subset of $\mc{D}$, and its boundary in $\mc{D}$ is the hypersurface $\bar{\Phi} ( \partial \mc{U} \cap \mc{D} )$.
\end{proposition}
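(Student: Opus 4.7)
The plan is to reduce everything to the observation that $\bar{\Phi}$ is a diffeomorphism from its domain $\mc{D} \cap \{r < R\}$ onto an open subset of $\mc{D}$, and then to exploit the containment hypothesis $\bar{\mc{U}} \cap \mc{D} \subseteq \{r<R\}$ to ensure that all relevant closures stay inside the domain of $\bar{\Phi}$.

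First I would record that, by the explicit formula \eqref{eq.conformal}, $\bar{\Phi}$ is a smooth map from $\mc{D} \cap \{r < R\}$ into $\mc{D}$ with smooth inverse $(\bar{u},\bar{v},\bar{\omega}) \mapsto (\bar{u}(1-\varepsilon\bar{u})^{-1},\bar{v}(1+\varepsilon\bar{v})^{-1},\bar{\omega})$ (defined on the image). In particular, $\bar{\Phi}$ is a homeomorphism onto the open set $\mc{W} := \bar{\Phi}(\mc{D} \cap \{r<R\}) \subseteq \mc{D}$. Since $\mc{U}$ is open in $\R^{1+n}$ (by Definition \ref{def.gtc}) and $\mc{D}$ is open, the set $\mc{U} \cap \mc{D}$ is open in $\mc{D} \cap \{r<R\}$; applying the diffeomorphism gives that $\bar{\Phi}(\mc{U} \cap \mc{D})$ is open in $\mc{W}$ and hence in $\mc{D}$.

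For the boundary identification, the key step is to show that $\bar{\Phi}(\bar{\mc{U}} \cap \mc{D})$ is the closure of $\bar{\Phi}(\mc{U} \cap \mc{D})$ inside $\mc{D}$. Since $\mc{U}$ is open, the closure of $\mc{U} \cap \mc{D}$ within $\mc{D}$ is exactly $\bar{\mc{U}} \cap \mc{D}$; the hypothesis $\bar{\mc{U}} \cap \mc{D} \subseteq \{r<R\}$, combined with the fact that on $\mc{D}$ one has $|t|<r<R$, shows that $\bar{\mc{U}} \cap \mc{D}$ is a bounded closed subset of $\mc{D} \cap \{r<R\}$, hence compact. Its image under the continuous map $\bar{\Phi}$ is therefore compact, and in particular closed in $\mc{D}$. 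Since $\bar{\Phi}$ is a homeomorphism onto $\mc{W}$ it sends closure-in-domain to closure-in-image, so $\bar{\Phi}(\bar{\mc{U}} \cap \mc{D})$ is the closure of $\bar{\Phi}(\mc{U} \cap \mc{D})$ in $\mc{W}$, and by the previous sentence equally so in $\mc{D}$.

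Combining these two facts, the boundary of $\bar{\Phi}(\mc{U} \cap \mc{D})$ in $\mc{D}$ is
\[
\bar{\Phi}(\bar{\mc{U}} \cap \mc{D}) \setminus \bar{\Phi}(\mc{U} \cap \mc{D}) = \bar{\Phi}\bigl((\bar{\mc{U}} \cap \mc{D}) \setminus (\mc{U} \cap \mc{D})\bigr) = \bar{\Phi}(\partial \mc{U} \cap \mc{D}),
\]
using injectivity of $\bar{\Phi}$ in the first equality. Smoothness of this boundary as a hypersurface follows from smoothness of $\partial \mc{U}$ (Definition \ref{def.gtc}) and the fact that $\bar{\Phi}$ is a diffeomorphism. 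The only point that requires any care is the verification that $\bar{\mc{U}} \cap \mc{D}$ is actually closed in $\mc{D}$ (not merely in $\R^{1+n}$) and contained in the domain of $\bar{\Phi}$; both are immediate from the standing hypothesis, so I do not anticipate any genuine obstacle—this proposition is essentially a bookkeeping step to set up the conformal reduction used in the Carleman estimate that follows.
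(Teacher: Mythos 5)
Your overall strategy---treat $\bar{\Phi}$ as a diffeomorphism of $\mc{D}\cap\{r<R\}$ onto an open subset $\mc{W}$ of $\mc{D}$ and push the topology through---is sound, and it is in fact more explicit than the paper's own proof, which simply invokes that $\bar{\Phi}$ is a conformal isometry (Proposition \ref{thm.met_conf}) and that conformal isometries preserve the causal geometry. However, one step fails as written: you claim that $\bar{\mc{U}}\cap\mc{D}$ is ``a bounded closed subset of $\mc{D}\cap\{r<R\}$, hence compact.'' The closedness here is only \emph{relative} to the open set $\mc{D}$; in general $\bar{\mc{U}}\cap\mc{D}$ is not closed in $\R^{1+n}$, since its closure picks up points of the null cone $\{f=0\}$ (this necessarily happens, for instance, in the interior configuration where the cone's vertex lies in $\mc{U}$, which is precisely the case needed for Theorem \ref{thm.obs_int}). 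A bounded, relatively closed subset of an open set need not be compact, so the deduction that $\bar{\Phi}(\bar{\mc{U}}\cap\mc{D})$ is compact, hence closed in $\mc{D}$, does not follow.

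What you actually need at that point is that $\bar{\Phi}(\mc{U}\cap\mc{D})$ has no limit points in $\mc{D}\setminus\mc{W}$. This is true and can be repaired cheaply: if $p_k\in\mc{U}\cap\mc{D}$ with $\bar{\Phi}(p_k)\to q\in\mc{D}$, boundedness of $\bar{\mc{U}}\cap\mc{D}$ lets you extract a subsequence $p_k\to p$ with $p\in\bar{\mc{U}}\cap\{f\geq 0\}\cap\{r\leq R\}$; since $f\circ\bar{\Phi}=\xi^{-1}f$ with $\xi\simeq 1$ on this region (by \eqref{eq.rf_conf} and \eqref{eq.conf_comp}), having $f(p)=0$ would force $f(q)=0$, contradicting $q\in\mc{D}$. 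Hence $p\in\bar{\mc{U}}\cap\mc{D}$ and $q=\bar{\Phi}(p)$, which is exactly the closedness-in-$\mc{D}$ you wanted. (Equivalently, extend $\bar{\Phi}$ continuously to the compact set $\{f\geq 0\}\cap\{r\leq R\}$, which is legitimate since $1+\varepsilon u$ and $1-\varepsilon v$ stay away from zero there, and note it maps $\{f=0\}$ into $\{f=0\}$.) With that substitution, the remainder of your argument---the identity $(\bar{\mc{U}}\cap\mc{D})\setminus(\mc{U}\cap\mc{D})=\partial\mc{U}\cap\mc{D}$, injectivity of $\bar{\Phi}$, and smoothness of the image hypersurface---goes through.
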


\begin{proof}
This is an immediate consequence of $\bar{\Phi}$ being a conformal isometry (see Proposition \ref{thm.met_conf}) and of the fact that conformal isometries preserve the causal geometry.
\end{proof}

\subsubsection{The Warped Carleman Estimate}

We now reduce the proof of Theorem \ref{thm.carleman_est} to establishing an intermediate Carleman estimate on the warped exterior $( \mc{D}, \bar{g} )$.
The precise statement of this intermediate estimate is given in the subsequent theorem:

\begin{theorem} \label{thm.carleman_est_wp}
Fix $R > 0$, and let $\mc{U}$ be a GTC satisfying
\begin{equation}
\label{eq.carleman_domain_wp} \bar{\mc{U}} \cap \mc{D} \subseteq \{ r < R \} \text{.}
\end{equation}
Fix also $\varepsilon, a, b > 0$ such that the following conditions hold:
\begin{equation}
\label{eq.carleman_choices_wp} a \geq n^2 \text{,} \qquad \varepsilon \ll_n b \ll R^{-1} \text{.}
\end{equation}
Then, for any $\phi \in C^2 ( \bar{\Phi} ( \mc{U} \cap \mc{D} ) ) \cap C^1 ( \bar{\Phi} ( \bar{\mc{U}} \cap \mc{D} ) )$ that is uniformly $C^1$-bounded and satisfies
\begin{equation}
\label{eq.carleman_dirichlet_wp} \phi |_{ \bar{\Phi} ( \partial \mc{U} \cap \mc{D} ) } = 0 \text{,}
\end{equation}
we have the Carleman estimate
\begin{align}
\label{eq.carleman_est_wp} &\frac{1}{2 a} \int_{ \bar{\Phi} ( \mc{U} \cap \mc{D} ) } \zeta_{a, b} f | \bar{\Box} \phi |^2 + \int_{ \bar{\Phi} ( \partial \mc{U} \cap \mc{D} ) } \zeta_{a, b} \bar{\mc{N}} f | \bar{\mc{N}} \phi |^2 \\
\notag &\quad \geq \frac{ \varepsilon }{ 8 } \int_{ \bar{\Phi} ( \mc{U} \cap \mc{D} ) } \zeta_{a, b} \bar{\rho}^{-1} ( | u \cdot \partial_u \phi |^2 + | v \cdot \partial_v \phi |^2 + f \bar{g}^{ab} \bar{\nasla}_a \phi \bar{\nasla}_b \phi ) + \frac{ b a^2 }{4} \int_{ \bar{\Phi} ( \mc{U} \cap \mc{D} ) } \zeta_{a, b} f^{- \frac{1}{2} } \phi^2 \text{,}
\end{align}
where all integrals are with respect to volume forms induced by $\bar{g}$, and where:
\begin{itemize}
\item $\bar{\Phi}$ is as given in Definition \ref{def.conformal}.

\item $\zeta_{a, b}$ is the (warped) Carleman weight
\begin{equation}
\label{eq.carleman_weight_wp} \zeta_{a, b} := f^{2 a} e^{ 4 a b f^\frac{1}{2} } \text{.}
\end{equation}

\item $\bar{\mc{N}}$ is the outer-pointing unit normal of $\bar{\Phi} ( \mc{U} \cap \mc{D} )$ (with respect to $\bar{g}$).
\end{itemize}
\end{theorem}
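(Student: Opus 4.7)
The plan is a multiplier-method (positive-commutator) proof exploiting the $\bar{g}$-null convexity of the level sets of $f$ captured in Proposition \ref{thm.pseudoconvex_wp}. Setting $\lambda := a\log f + 2ab\, f^{1/2}$ so that $\zeta_{a,b} = e^{2\lambda}$, I would consider the master integral
\[
\mc{I} := \int_{\bar{\Phi}(\mc{U}\cap\mc{D})} \zeta_{a,b}\, \bar{\Box}\phi \cdot (\bar{\nabla}^\sharp f \cdot \phi + \bar{w}\,\phi),
\]
which I will bound two ways. The vector-field multiplier $\bar{\nabla}^\sharp f$ is chosen because its $\bar{g}$-deformation tensor is exactly $\bar{\nabla}^2 f = \bar{\pi} + \bar{h}\,\bar{g}$ by \eqref{eq.f_hess_wp}, so its traceless piece $\bar{\pi}$ directly produces a positive quadratic form in the $T$-tangential and angular directions via \eqref{eq.pseudoconvex_wp}; the zeroth-order addition $\bar{w}\phi$, with $\bar{w}$ as in \eqref{eq.w_wp}, is tuned to cancel the $\bar{h}\bar{g}$-contribution coming out of the integration by parts.

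Applying the Lorentzian divergence theorem rewrites $\mc{I}$ as a bulk quadratic form in $\bar{\nabla}\phi$, a zeroth-order bulk in $\phi^2$, and boundary integrals on $\bar{\Phi}(\partial \mc{U}\cap \mc{D})$ and on the null boundary $\bar{\Phi}(\mc{U}\cap \partial \mc{D})$. The derivative bulk inherits positivity from two sources: the $\bar{\pi}$-contribution, whose $T$-tangential and angular components have size $\varepsilon f/\bar{\rho}$ by \eqref{eq.pseudoconvex_wp}, producing directly the $\tfrac{\varepsilon}{8}\,\zeta_{a,b}\bar{\rho}^{-1}(|u\,\partial_u\phi|^2 + |v\,\partial_v\phi|^2 + f\,\bar{g}^{ab}\bar{\nasla}_a\phi\bar{\nasla}_b\phi)$ term on the right-hand side of \eqref{eq.carleman_est_wp}; and a weight contribution from $\bar{\nabla}\lambda = af^{-1}(1 + b f^{1/2})\bar{\nabla}f$ that supplies the normal-direction positivity missing from $\bar{\pi}_{NN} \le 0$. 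The zeroth-order bulk coefficient combines $\bar{\nabla}^\sharp f\,|\bar{\nabla}\lambda|^2$, $\bar{\Box}\bar{w}$ (given explicitly by \eqref{eq.w_box_wp}), and related scalars; expanding $|\bar{\nabla}\lambda|^2 = a^2 f^{-1}(1 + b f^{1/2})^2$ via \eqref{eq.f_grad_wp} isolates the cross term $2a^2 b\, f^{-1/2}$, which matches the $\tfrac{ba^2}{4}\,\zeta_{a,b} f^{-1/2}\phi^2$ on the right-hand side.

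The boundary analysis is the more routine step: on $\bar{\Phi}(\partial \mc{U}\cap \mc{D})$, the Dirichlet hypothesis \eqref{eq.carleman_dirichlet_wp} forces $\bar{\nabla}\phi$ to be parallel to $\bar{\mc{N}}$, reducing every boundary integrand to $\zeta_{a,b}\, \bar{\mc{N}}f\,|\bar{\mc{N}}\phi|^2$, with the factor $\bar{\mc{N}}f$ emerging from the projection of $\bar{\nabla}^\sharp f$ onto $\bar{\mc{N}}$. On the null boundary $\bar{\Phi}(\mc{U}\cap \partial \mc{D})$, the infinite-order vanishing of $\zeta_{a,b} = f^{2a}e^{4a b f^{1/2}}$ at $\{f = 0\}$ together with the uniform $C^1$-bound on $\phi$ makes the contribution vanish. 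Finally, $\mc{I}$ itself is estimated by Cauchy--Schwarz:
\[
|\mc{I}| \le \tfrac{1}{2a}\!\int \zeta_{a,b}\,f\,|\bar{\Box}\phi|^2 + \tfrac{a}{2}\!\int \zeta_{a,b}\,f^{-1}|\bar{\nabla}^\sharp f\cdot\phi + \bar{w}\,\phi|^2,
\]
and the error integral is absorbed into a small fraction of the positive bulk produced in the previous step.

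The main obstacle is sign-tracking through the many scalar coefficients generated by the integration by parts, and ensuring that every lower-order error term is dominated by one of the two good positive bulks. The three smallness conditions in \eqref{eq.carleman_choices_wp} play distinct roles: $a \ge n^2$ ensures the $a^2$-order positive zero-order bulk dominates the $O(a)$ errors produced by $\bar{\Box}\bar{w}$ and by commutator mismatches; $b \ll R^{-1}$ keeps the $b$-perturbation of $\lambda$ sufficiently small on the bounded region $\{r < R\}$ (recall \eqref{eq.carleman_domain_wp}) that it does not spoil the structural identities from the $a\log f$ part; and $\varepsilon \ll_n b$ guarantees that the $\varepsilon$-order error terms arising from the non-constant pieces of $\bar{\pi}$ and $\bar{w}$---those produced by the warping itself---are subleading to the $b$-order positive bulk. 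Assembling the bulk positivity, the controlled boundary terms, and the Cauchy--Schwarz bound on $\mc{I}$ then yields \eqref{eq.carleman_est_wp}.
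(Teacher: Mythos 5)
Your proposal is correct and follows essentially the same route as the paper: the identical multiplier $\bar{\nabla}^\sharp f + \bar{w}$, positivity from $\bar{\pi}$ in the tangential/angular directions and from the weight derivative in the normal and zeroth-order directions, the Cauchy--Schwarz split producing the $\tfrac{1}{2a}f|\bar{\Box}\phi|^2$ term, the Dirichlet reduction of the lateral boundary integrand to $\zeta_{a,b}\bar{\mc{N}}f|\bar{\mc{N}}\phi|^2$, and the vanishing of the null-boundary contribution via the $f^{2a}$ factor (which vanishes to finite, not infinite, order---but order $2a>3/2$ suffices, exactly as in the paper's limiting coarea argument). The only cosmetic difference is that you work with the weighted integral $\int \zeta_{a,b}\,\bar{\Box}\phi\,\bar{S}_w\phi$ directly, whereas the paper first conjugates ($\psi=e^{-F}\phi$) and then undoes the conjugation; these are equivalent bookkeeping choices.
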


\begin{remark}
Note that by the conclusions of Proposition \ref{thm.gtc_conf}, both the integral along $\bar{\Phi} ( \partial \mc{U} \cap \mc{D} )$ and the outer unit normal $\bar{\mc{N}}$ to $\bar{\Phi} ( \mc{U} \cap \mc{D} )$ are well-defined objects.
\end{remark}

\begin{remark}
The factors $f^{1/2}$ present in the last term of \eqref{eq.carleman_est_wp} and in \eqref{eq.carleman_weight_wp} could be replaced by $f^p$, for some $p > 0$, provided the constants of the inequality \eqref{eq.carleman_est_wp} are also adjusted; see, for instance, a similar estimate in \cite{alex_shao:uc_global}.
However, we will not need this flexibility here.
\end{remark}

Below, we show that by assuming Theorem \ref{thm.carleman_est_wp}, we can recover our main Carleman estimate, Theorem \ref{thm.carleman_est}.
This effectively reduces the proof of Theorem \ref{thm.carleman_est} to that of Theorem \ref{thm.carleman_est_wp}.

\begin{proof}[Proof (Theorem \ref{thm.carleman_est_wp} $\Rightarrow$ Theorem \ref{thm.carleman_est})]
Assume the hypotheses of Theorem \ref{thm.carleman_est}; in particular, we let $R$, $\mc{U}$, $a$, $b$, $\varepsilon$, $\phi$ be as in its statement.
Throughout the proof, we will use ``$d \bar{g}$" and ``$d g$" to denote integrals with respect to volume forms induced by $\bar{g}$ and $g$, respectively.
Also, by the translation symmetry of Minkowski spacetime, we can assume without loss of generality that $P = 0$.

Consider the (uniformly $C^1$-bounded) function
\begin{equation}
\label{eql.carleman_est_0} \bar{\phi} := \phi \circ \bar{\Phi}^{-1} \in C^2 ( \bar{\Phi} ( \mc{U} \cap \mc{D} ) ) \cap C^1 ( \bar{\Phi} ( \bar{\mc{U}} \cap \mc{D} ) ) \text{,}
\end{equation}
which we note satisfies the Dirichlet boundary condition \eqref{eq.carleman_dirichlet_wp}.
Applying Theorem \ref{thm.carleman_est_wp} to the above $R$, $\mc{U}$, $a$, $b$, $\varepsilon$, and with $\xi^\frac{n - 1}{2} \bar{\phi}$ in the place of $\phi$, we obtain
\begin{align}
\label{eql.carleman_est_00} &\frac{1}{2 a} \int_{ \bar{\Phi} ( \mc{U} \cap \mc{D} ) } \zeta_{a, b} f | \bar{\Box} ( \xi^\frac{n-1}{2} \bar{\phi} ) |^2 \cdot d \bar{g} + \int_{ \bar{\Phi} ( \partial \mc{U} \cap \mc{D} ) } \zeta_{a, b} \bar{\mc{N}} f | \bar{\mc{N}} ( \xi^\frac{n - 1}{2} \bar{\phi} ) |^2 \cdot d \bar{g} \\
\notag &\quad \geq \frac{ \varepsilon }{ 8 } \int_{ \bar{\Phi} ( \mc{U} \cap \mc{D} ) } \zeta_{a, b} \bar{\rho}^{-1} [ | u \cdot \partial_u ( \xi^\frac{n-1}{2} \bar{\phi} ) |^2 + | v \cdot \partial_v ( \xi^\frac{n-1}{2} \bar{\phi} ) |^2 + f \bar{g}^{ab} \xi^{n - 1} \bar{\nasla}_a \bar{\phi} \bar{\nasla}_b \bar{\phi} ] \cdot d \bar{g} \\
\notag &\quad\qquad + \frac{ b a^2 }{4} \int_{ \bar{\Phi} ( \mc{U} \cap \mc{D} ) } \zeta_{a, b} f^{- \frac{1}{2} } \xi^{n-1} \bar{\phi}^2 \cdot d \bar{g} \text{.}
\end{align}

We now pull each integral in \eqref{eql.carleman_est_00} back through the diffeomorphism $\bar{\Phi}$ and bound some extraneous factors.
First, for the last term on the right-hand side of \eqref{eql.carleman_est_00}, we have that
\begin{align}
\label{eql.carleman_est_10} \int_{ \bar{\Phi} ( \mc{U} \cap \mc{D} ) } \zeta_{a, b} f^{- \frac{1}{2} } \xi^{n-1} \bar{\phi}^2 \cdot d \bar{g} &= \int_{ \mc{U} \cap \mc{D} } \zeta^0_{a, b; \varepsilon} ( \xi^{-1} f )^{ - \frac{1}{2} } \xi^{n-1} \phi^2 \cdot \xi^{-n-1} d g \\
\notag &\gtrsim \int_{ \mc{U} \cap \mc{D} } \zeta^0_{a, b; \varepsilon} f^{ - \frac{1}{2} } \phi^2 \cdot d g \text{.}
\end{align}
where we in particular note the following:
\begin{itemize}
\item The extra factor $\xi^{-n-1}$ arises from the change of volume forms from $\bar{g}$ to $g$.

\item By \eqref{eq.rf_conf}, all instances of $f$ in the left-hand side of \eqref{eql.carleman_est_10} are replaced by $\xi^{-1} f$.
Note that in particular, this results in the weight $\zeta_{a, b}$ being replaced by $\zeta^0_{a, b; \varepsilon}$; see \eqref{eq.carleman_weight}.

\item The last inequality in \eqref{eql.carleman_est_10} is a consequence of \eqref{eq.conf_comp}.
\end{itemize}

By similar reasoning, we also obtain
\begin{equation}
\label{eql.carleman_est_11} \int_{ \bar{\Phi} ( \mc{U} \cap \mc{D} ) } \zeta_{a, b} \bar{\rho}^{-1} f \bar{g}^{ab} \xi^{n - 1} \bar{\nasla}_a \bar{\phi} \bar{\nasla}_b \bar{\phi} \cdot d \bar{g} \gtrsim \int_{ \mc{U} \cap \mc{D} } \zeta^0_{a, b; \varepsilon} r^{-1} f g^{ab} \nasla_a \phi \nasla_b \phi \cdot d g \text{,}
\end{equation}
where the factor $\bar{\rho}^{-1}$ is also handled using \eqref{eq.rf_conf}.
Next, writing
\[
\bar{\Box} ( \xi^\frac{n-1}{2} \bar{\phi} ) = \left[ \bar{\Box} + \frac{ (n - 1)^2 \varepsilon }{ 2 \bar{\rho} } \right] ( \xi^\frac{n - 1}{2} \bar{\phi} ) - \frac{ (n - 1)^2 \varepsilon }{ 2 \bar{\rho} } ( \xi^\frac{n - 1}{2} \bar{\phi} ) \text{,}
\]
and recalling \eqref{eq.conf_wave}, we see (using the same reasoning as before) that
\begin{align}
\label{eql.carleman_est_12} \int_{ \bar{\Phi} ( \mc{U} \cap \mc{D} ) } \zeta_{a, b} f | \bar{\Box} ( \xi^\frac{n-1}{2} \bar{\phi} ) |^2 \cdot d \bar{g} &\lesssim \int_{ \mc{U} \cap \mc{D} } \zeta^0_{a, b; \varepsilon} f | \Box \phi |^2 \cdot d g + n^4 \varepsilon^2 \int_{ \mc{U} \cap \mc{D} } \zeta^0_{a, b; \varepsilon} r^{-2} f \phi^2 \cdot d g \text{.}
\end{align}

For the boundary term, we observe from Proposition \ref{thm.gtc_conf} that $\bar{\Phi} ( \partial \mc{U} \cap \mc{D} )$ is mapped via $\bar{\Phi}^{-1}$ to $\partial \mc{U} \cap \mc{D}$.
Moreover, Proposition \ref{thm.gtc_conf} implies that the push-forward of $\bar{\mc{N}}$ through $\bar{\Phi}^{-1}$ is precisely $\xi \mc{N}$, where $\mc{N}$ is as in the statement of Theorem \ref{thm.carleman_est}.
As a result, we have
\begin{align}
\label{eql.carleman_est_13} \int_{ \bar{\Phi} ( \partial \mc{U} \cap \mc{D} ) } \zeta_{a, b} \bar{\mc{N}} f | \bar{\mc{N}} ( \xi^\frac{n - 1}{2} \bar{\phi} ) |^2 \cdot d \bar{g} &= \int_{ \partial \mc{U} \cap \mc{D} } \zeta^0_{a, b; \varepsilon} ( \xi \mc{N} ) ( \xi^{-1} f ) | ( \xi \mc{N} ) ( \xi^\frac{n - 1}{2} \phi ) |^2 \cdot \xi^{-n} d g \\
\notag &= \int_{ \partial \mc{U} \cap \mc{D} } \zeta^0_{a, b; \varepsilon} \xi^2 \mc{N} ( \xi^{-1} f ) | \mc{N} \phi |^2 \cdot d g \text{,}
\end{align}
where in the last step, we also used \eqref{eq.carleman_dirichlet}.

For the remaining first-order bulk terms, we apply \eqref{eq.conformal}, \eqref{eq.conf_comp}, \eqref{eq.conf_d_comp}, and \eqref{eq.deriv_comp} to obtain
\begin{align}
\label{eql.carleman_est_14} \int_{ \bar{\Phi} ( \mc{U} \cap \mc{D} ) } \zeta_{a, b} \bar{\rho}^{-1} | u \cdot \partial_u ( \xi^\frac{n-1}{2} \bar{\phi} ) |^2 \cdot d \bar{g} &\geq C \int_{ \mc{U} \cap \mc{D} } \zeta^0_{a, b; \varepsilon} r^{-1} | u \cdot \partial_u ( \xi^\frac{n-1}{2} \bar{\phi} ) |^2 \cdot d g \\
\notag &\geq C_1 \int_{ \mc{U} \cap \mc{D} } \zeta^0_{a, b; \varepsilon} r^{-1} | u \cdot \partial_u \phi |^2 \cdot d g \\
\notag &\qquad - C_2 \varepsilon^2 n^2 \int_{ \mc{U} \cap \mc{D} } \zeta^0_{a, b; \varepsilon} r^{-1} u^2 \phi^2 \cdot d g \text{,}
\end{align}
for some universal constants $C, C_1, C_2 > 0$.
A similar computation yields
\begin{align}
\label{eql.carleman_est_15} \int_{ \bar{\Phi} ( \mc{U} \cap \mc{D} ) } \zeta_{a, b} \bar{\rho}^{-1} | v \cdot \partial_v ( \xi^\frac{n-1}{2} \bar{\phi} ) |^2 \cdot d \bar{g} &\geq C_1 \int_{ \mc{U} \cap \mc{D} } \zeta^0_{a, b; \varepsilon} r^{-1} | v \cdot \partial_v \phi |^2 \cdot d g \\
\notag &\qquad - C_2 \varepsilon^2 n^2 \int_{ \mc{U} \cap \mc{D} } \zeta^0_{a, b; \varepsilon} r^{-1} v^2 \phi^2 \cdot d g \text{.}
\end{align}

Finally, combining \eqref{eql.carleman_est_00} with \eqref{eql.carleman_est_10}--\eqref{eql.carleman_est_15} yields
\begin{align}
\label{eql.carleman_est_20} &\frac{1}{a} \int_{ \mc{U} \cap \mc{D} } \zeta^0_{a, b; \varepsilon} f | \Box \phi |^2 \cdot d g + C' \int_{ \partial \mc{U} \cap \mc{D} } \zeta^0_{a, b; \varepsilon} \xi^2 \mc{N} ( \xi^{-1} f ) | \mc{N} \phi |^2 \cdot d g \\
\notag &\quad \geq C \varepsilon \int_{ \mc{U} \cap \mc{D} } \zeta^0_{a, b; \varepsilon} r^{-1} [ | u \cdot \partial_u \phi |^2 + | v \cdot \partial_v \phi |^2 + f g^{ab} \nasla_a \phi \nasla_b \phi ] \cdot d g \\
\notag &\quad\qquad + C b a^2 \int_{ \mc{U} \cap \mc{D} } \zeta^0_{a, b; \varepsilon} f^{- \frac{1}{2} } \phi^2 \cdot d g - \frac{ C'' n^4 \varepsilon^2 }{a} \int_{ \mc{U} \cap \mc{D} } \zeta^0_{a, b; \varepsilon} r^{-2} f \phi^2 \cdot d g \\
\notag &\quad\qquad - C'' \varepsilon^3 n^2 \int_{ \mc{U} \cap \mc{D} } \zeta^0_{a, b; \varepsilon} r^{-1} ( u^2 + v^2 ) \phi^2 \cdot d \bar{g} \text{,}
\end{align}
for some constants $C, C', C'' > 0$.
By \eqref{eq.carleman_domain}, \eqref{eq.carleman_choices}, and \eqref{eq.uvf_bound}, we have
\begin{align}
\label{eql.carleman_est_21} \frac{ \varepsilon^2 n^4 }{a} \cdot \frac{f}{ r^2 } &\leq \varepsilon^2 a \leq \varepsilon \cdot \varepsilon a^2 \cdot f^\frac{1}{2} f^{ - \frac{1}{2} } \ll b \cdot R^{-1} a^2 \cdot R f^{ - \frac{1}{2} } = b a^2 f^{ - \frac{1}{2} } \text{,} \\
\notag \frac{ \varepsilon^3 n^2 ( u^2 + v^2 ) }{r} &\leq 2 \varepsilon^3 n^2 R^2 \cdot r^{-1} f^\frac{1}{2} \cdot f^{ -\frac{1}{2} } \leq 2 \varepsilon^3 n^2 R^2 \cdot f^{ - \frac{1}{2} } \ll b a^2 f^{ - \frac{1}{2} } \text{.}
\end{align}
In particular, \eqref{eql.carleman_est_21} implies that the last two (negative) terms in the right-hand side of \eqref{eql.carleman_est_20} can be absorbed into the third last (positive) term.
This results in the inequality
\begin{align}
\label{eql.carleman_est_22} &\frac{1}{a} \int_{ \mc{U} \cap \mc{D} } \zeta^0_{a, b; \varepsilon} \cdot f | \Box \phi |^2 \cdot d g + C' \int_{ \partial \mc{U} \cap \mc{D} } \zeta^0_{a, b; \varepsilon} \cdot \xi^2 \mc{N} ( \xi^{-1} f ) | \mc{N} \phi |^2 \cdot d g \\
\notag &\quad \geq C \varepsilon \int_{ \mc{U} \cap \mc{D} } \zeta^0_{a, b; \varepsilon} \cdot r^{-1} [ | u \cdot \partial_u \phi |^2 + | v \cdot \partial_v \phi |^2 + f g^{ab} \nasla_a \phi \nasla_b \phi ] \cdot d g \\
\notag &\quad\qquad + C b a^2 \int_{ \mc{U} \cap \mc{D} } \zeta^0_{a, b; \varepsilon} \cdot f^{- \frac{1}{2} } \phi^2 \cdot d g \text{,}
\end{align}
for some constants $C, C' > 0$.
Theorem \ref{thm.carleman_est} now follows from \eqref{eql.carleman_est_22} and the identity
\[
\xi^2 \mc{N} ( \xi^{-1} f ) = \xi \mc{N} f - f \mc{N} \xi = ( 1 - \varepsilon r ) \mc{N} f + \varepsilon f \mc{N} r \text{.} \qedhere
\]
\end{proof}

\subsection{Proof of Theorem \ref{thm.carleman_est_wp}} \label{sec.carleman_proof}

It remains only to prove the warped Carleman estimate, Theorem \ref{thm.carleman_est_wp}, in order to complete the proof of Theorem \ref{thm.carleman_est}.
This is the topic of the present subsection.

Throughout (only) this subsection, we will assume that all indices are raised and lowered using $\bar{g}$.
We begin by defining some auxiliary quantities to aid with the proof:

\begin{definition} \label{def.F_ab}
Assume the hypotheses of Theorem \ref{thm.carleman_est_wp}, and define
\begin{equation}
\label{eq.F} F := F (f) := - a ( \log f + 2 b f^\frac{1}{2} ) \text{.}
\end{equation}
Furthermore, for brevity, we will let $\prime$ denote differentiation with respect to $f$.
\end{definition}

\begin{definition} \label{def.F_conj}
Assume the hypotheses of Theorem \ref{thm.carleman_est_wp}, and define the operators
\begin{equation}
\label{eq.F_conj} \bar{\mc{L}} := e^{-F} \bar{\Box} e^F \text{,} \qquad \bar{S} := \bar{\nabla}^\sharp f \text{,} \qquad \bar{S}_w := \bar{S} + \bar{w} \text{.}
\end{equation}
\end{definition}

\begin{lemma} \label{thm.F_deriv}
Assuming the hypotheses of Theorem \ref{thm.carleman_est_wp}, we have that
\begin{equation}
\label{eq.F_deriv} e^{-2 F} = \zeta_{a, b} \text{,} \qquad F' = - a ( f^{-1} + b f^{- \frac{1}{2} } ) \text{.}
\end{equation}
\end{lemma}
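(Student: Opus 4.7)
The plan is to observe that both identities in \eqref{eq.F_deriv} are immediate consequences of the definitions, and to verify them by direct computation. There is no real obstacle here; the lemma is included essentially to record two bookkeeping identities that will be invoked repeatedly in the Carleman computation that follows.

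First I would handle the weight identity. Starting from $F = -a(\log f + 2b f^{1/2})$ as in \eqref{eq.F}, I would write
\begin{equation*}
e^{-2F} = \exp\!\bigl[\,2a\log f + 4ab f^{1/2}\,\bigr] = f^{2a}\,e^{4abf^{1/2}},
\end{equation*}
which coincides with the definition \eqref{eq.carleman_weight_wp} of $\zeta_{a,b}$. Note that this computation makes sense precisely because $f>0$ on $\mc{D}$ by Proposition \ref{thm.uvf_bound}, so $\log f$ and $f^{1/2}$ are well-defined on the region of interest.

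For the second identity, I would simply differentiate $F$ with respect to $f$, treating $a$ and $b$ as constants (as per Definition \ref{def.F_ab}, where $\prime$ denotes $d/df$). The derivative of $\log f$ is $f^{-1}$ and the derivative of $2b f^{1/2}$ is $b f^{-1/2}$, so
\begin{equation*}
F' = -a\bigl(f^{-1} + b f^{-1/2}\bigr),
\end{equation*}
as claimed. Both assertions follow, completing the proof.
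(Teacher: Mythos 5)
Your proof is correct and is exactly the direct computation the paper has in mind (the paper's proof simply says ``These are direct computations''). The added remark that $f>0$ on $\mc{D}$, so that $\log f$ and $f^{1/2}$ are well-defined, is a sensible touch but does not change the argument.
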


\begin{proof}
These are direct computations.
\end{proof}

\subsubsection{The Pointwise Identity}

The first step in proving Theorem \ref{thm.carleman_est_wp} is to establish a pointwise identity for the conjugated wave operator $\bar{\mc{L}}$ defined in \eqref{eq.F_conj}:

\begin{lemma} \label{thm.carleman_id_ptwise}
Assume the hypotheses of Theorem \ref{thm.carleman_est_wp}, and suppose also that $\psi \in C^2 ( \bar{\Phi} ( \mc{U} \cap \mc{D} ) )$.
Then, at every point of $\bar{\Phi} ( \mc{U} \cap \mc{D} )$, we have the identity
\begin{align}
\label{eq.carleman_id_ptwise} - \bar{\mc{L}} \psi \bar{S}_w \psi + \bar{\nabla}^\beta \bar{P}_\beta &= - 2 F' \cdot | \bar{S}_w \psi |^2 + \frac{ \varepsilon f }{ 2 \bar{\rho} } ( | T \psi |^2 + | \bar{\nasla} \psi |^2 - | N \psi |^2 ) \\
\notag &\qquad - \frac{ \varepsilon f }{ \bar{\rho} } F' \cdot \psi \bar{S}_w \psi + \frac{1}{2} \left[ ( f \mc{A} )' + \frac{ \varepsilon f }{ \bar{\rho} } \mc{A} - \bar{\Box} \bar{w} \right] \cdot \psi^2 \text{,}
\end{align}
where the reparametrization $\mc{A} := \mc{A} (f)$ of $f$ is given by
\begin{equation}
\label{eq.carleman_id_A} \mc{A} := f ( F' )^2 + ( f F' )' = a^2 f^{-1} + b a \left( 2 a - \frac{1}{2} \right) f^{- \frac{1}{2} } + b^2 a^2 \text{.}
\end{equation}
and where the current $\bar{P} := \bar{P} [ \psi ]$ is given by
\begin{equation}
\label{eq.carleman_id_current} \bar{P}_\beta := \bar{S} \psi \bar{\nabla}_\beta \psi - \frac{1}{2} \bar{\nabla}_\beta f \cdot \bar{\nabla}^\mu \psi \bar{\nabla}_\mu \psi + \bar{w} \cdot \psi \bar{\nabla}_\beta \psi + \frac{1}{2} ( \mc{A} \bar{\nabla}_\beta f - \bar{\nabla}_\beta \bar{w} ) \cdot \psi^2 \text{.}
\end{equation}
\end{lemma}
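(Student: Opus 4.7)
The plan is to apply the standard Carleman multiplier scheme with multiplier $-\bar{S}_w\psi$ to the conjugated operator $\bar{\mc{L}}$, and then reorganize the result into a divergence plus the asserted right-hand side. Since $F = F(f)$, a short expansion of $e^{-F}\bar{\Box}(e^{F}\psi)$ combined with $\bar{\nabla}^\alpha f \bar{\nabla}_\alpha f = f$ and the formula for $\bar{\Box} f$ from Proposition \ref{thm.f_deriv_wp} gives
\begin{equation*}
\bar{\mc{L}}\psi = \bar{\Box}\psi + 2F'\bar{S}\psi + G\,\psi, \qquad G := F'\bar{\Box}f + \bigl(F'' + (F')^2\bigr)f.
\end{equation*}
Writing $-\bar{S}_w\psi = -\bar{S}\psi - \bar{w}\psi$, I would split $-\bar{\mc{L}}\psi \cdot \bar{S}_w\psi$ into a principal piece ($-\bar{\Box}\psi$ against $\bar{S}_w\psi$) and a lower-order piece ($-2F'\bar{S}\psi - G\psi$ against $\bar{S}_w\psi$), and match each against the proposed divergence $\bar{\nabla}^\beta \bar{P}_\beta$ term by term.

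For the principal piece, a one-step integration by parts using $\bar{\nabla}^\alpha f \,\bar{\nabla}_\alpha\bar{\nabla}^\beta\psi\,\bar{\nabla}_\beta\psi = \tfrac{1}{2}\bar{S}(\bar{\nabla}^\mu\psi\bar{\nabla}_\mu\psi)$ yields
\begin{equation*}
-\bar{\Box}\psi\,\bar{S}\psi = -\bar{\nabla}^\beta\bigl[\bar{S}\psi\,\bar{\nabla}_\beta\psi - \tfrac{1}{2}\bar{\nabla}_\beta f\,\bar{\nabla}^\mu\psi\bar{\nabla}_\mu\psi\bigr] + \bar{\nabla}^{\alpha\beta}f\,\bar{\nabla}_\alpha\psi\bar{\nabla}_\beta\psi - \tfrac{1}{2}\bar{\Box}f\,\bar{\nabla}^\mu\psi\bar{\nabla}_\mu\psi,
\end{equation*}
and a two-step integration by parts on $-\bar{w}\psi\bar{\Box}\psi$ yields
\begin{equation*}
-\bar{w}\psi\,\bar{\Box}\psi = -\bar{\nabla}^\beta\bigl[\bar{w}\,\psi\,\bar{\nabla}_\beta\psi - \tfrac{1}{2}\bar{\nabla}_\beta\bar{w}\,\psi^2\bigr] + \bar{w}\,\bar{\nabla}^\mu\psi\bar{\nabla}_\mu\psi - \tfrac{1}{2}\bar{\Box}\bar{w}\,\psi^2.
\end{equation*}
Substituting $\bar{\nabla}^2 f = \bar{h}\bar{g} + \bar{\pi}$ and $\bar{w} = \tfrac{1}{2}\bar{\Box}f - \bar{h}$ from Definition \ref{def.pseudoconvex_wp} collapses the two gradient-squared contributions into the single term $\bar{\pi}^{\alpha\beta}\bar{\nabla}_\alpha\psi\bar{\nabla}_\beta\psi$. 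Evaluating this in the orthonormal frame $\{T, N, e_a\}$ of Definition \ref{def.TN} and applying the component formulas \eqref{eq.pseudoconvex_wp} of Proposition \ref{thm.pseudoconvex_wp} reproduces exactly the $\tfrac{\varepsilon f}{2\bar{\rho}}\bigl(|T\psi|^2 + |\bar{\nasla}\psi|^2 - |N\psi|^2\bigr)$ term in \eqref{eq.carleman_id_ptwise}, while the divergence pieces build up the first three terms of $\bar{P}_\beta$ and half of the fourth.

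For the lower-order piece $-2F'\bar{S}\psi\,\bar{S}_w\psi - G\psi\,\bar{S}_w\psi$, I would first complete the square via $\bar{S}\psi = \bar{S}_w\psi - \bar{w}\psi$ to extract the $-2F'|\bar{S}_w\psi|^2$ term. The residual $\psi\bar{S}\psi$ couplings are then recast by $\psi\bar{S}\psi = \tfrac{1}{2}\bar{S}(\psi^2)$ and integrated by parts; this produces the remaining divergence $\bar{\nabla}^\beta[\tfrac{1}{2}\mc{A}\bar{\nabla}_\beta f\,\psi^2]$ that completes $\bar{P}_\beta$ once the collected coefficient is identified with $\mc{A}$. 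The explicit definition $\mc{A} = f(F')^2 + (fF')'$ from \eqref{eq.carleman_id_A} is exactly what arises from the standard $(G'f + G\bar{\Box}f)/2$ byproduct of integration by parts applied to the purely $f$-dependent part of $G$. The leftover $\psi\bar{S}_w\psi$ and $\psi^2$ contributions — coming from $2F'\bar{w}\psi\bar{S}_w\psi$, the portion of $G = F'\bar{\Box}f + (F'' + (F')^2)f$ that is not purely $f$-dependent (through the $\tfrac{(n-1)\varepsilon f}{\bar{\rho}}$ part of $\bar{\Box}f$), and the $-\tfrac{1}{2}\bar{\Box}\bar{w}\,\psi^2$ piece from before — must combine to form the last two terms of \eqref{eq.carleman_id_ptwise}.

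The main obstacle is the final algebraic bookkeeping: one must verify that the accumulated coefficient of $\psi\bar{S}_w\psi$ collapses precisely to $-\tfrac{\varepsilon f}{\bar{\rho}}F'$ and that the accumulated coefficient of $\psi^2$ collapses to $\tfrac{1}{2}\bigl[(f\mc{A})' + \tfrac{\varepsilon f}{\bar{\rho}}\mc{A} - \bar{\Box}\bar{w}\bigr]$. This hinges on the decomposition $F'\bar{\Box}f = 2F'\bar{h} + 2F'\bar{w}$ together with the explicit formulas $\bar{h} = \tfrac{1}{2} + \tfrac{\varepsilon f}{2\bar{\rho}}$ and $\bar{w} = \tfrac{n-1}{4} + \tfrac{(n-2)\varepsilon f}{2\bar{\rho}}$ from Definition \ref{def.pseudoconvex_wp}. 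A useful sanity check is the Minkowski limit $\varepsilon = 0$: all warping-dependent corrections drop out and the identity reduces to a classical Carleman pointwise identity with hyperbolic weight, matching the form derived in \cite{alex_shao:uc_global}, so the warping-dependent terms can be pinned down by their linear dependence on $\varepsilon f/\bar{\rho}$.
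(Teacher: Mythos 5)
Your proposal is correct and follows essentially the same route as the paper: conjugate $\bar{\Box}$ by $e^{F}$, multiply by $\bar{S}_w\psi$, handle the principal part via the stress--energy current modified by $\bar{w}$ so that the gradient terms collapse to $\bar{\pi}^{\alpha\beta}\bar{\nabla}_\alpha\psi\bar{\nabla}_\beta\psi$ (evaluated with Proposition \ref{thm.pseudoconvex_wp}), and handle the zeroth-order part by writing $\psi\bar{S}\psi=\tfrac{1}{2}\bar{S}(\psi^2)$ and absorbing the resulting divergence into the $\tfrac{1}{2}\mc{A}\bar{\nabla}_\beta f\cdot\psi^2$ piece of $\bar{P}_\beta$. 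The only (cosmetic) difference is that the paper performs the $\bar{S}\to\bar{S}_w$ substitution at the level of the expansion of $\bar{\mc{L}}\psi$ itself, using $F'\bar{\Box}f=2F'\bar{h}+2F'\bar{w}$ to cancel the $-2F'\bar{w}\psi$ term immediately, whereas you complete the square after multiplying; both lead to the same bookkeeping.
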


\begin{proof}
First, we expand $\bar{\mc{L}} \psi$ by applying Proposition \ref{thm.f_deriv_wp}, Definition \ref{def.pseudoconvex_wp}, and Definition \ref{def.F_ab}:
\begin{align*}
\bar{\mc{L}} \psi &= \bar{\Box} \psi + 2 F' \cdot \bar{S} \psi + ( F' )^2 \bar{\nabla}^\alpha f \bar{\nabla}_\alpha f \cdot \psi + F'' \bar{\nabla}^\alpha f \bar{\nabla}_\alpha f \cdot \psi + F' \bar{\Box} f \cdot \psi \\
&= \bar{\Box} \psi + 2 F' \cdot \bar{S}_w \psi + f ( F' )^2 \cdot \psi + ( f F'' + 2 \bar{h} F' ) \cdot \psi \\
&= \bar{\Box} \psi + 2 F' \cdot \bar{S}_w \psi + ( \mc{A} + \varepsilon f \bar{\rho}^{-1} F' ) \cdot \psi \text{,}
\end{align*}
where $\mc{A}$ is as in \eqref{eq.carleman_id_A}.
Multiplying the above by $\bar{S}_w \psi$ yields
\begin{equation}
\label{eql.carleman_id_ptwise_1} \bar{\mc{L}} \psi \bar{S}_w \psi = \bar{\Box} \psi \bar{S}_w \psi + 2 F' \cdot | \bar{S}_w \psi |^2 + ( \mc{A} + \varepsilon f \bar{\rho}^{-1} F' ) \cdot \psi \bar{S}_w \psi \text{.}
\end{equation}

We can now use Proposition \ref{thm.f_deriv_wp} and Definition \ref{def.pseudoconvex_wp} to obtain
\begin{align*}
\mc{A} \cdot \psi \bar{S}_w \psi &= \frac{1}{2} \mc{A} \cdot \bar{\nabla}^\alpha f \bar{\nabla}_\alpha ( \psi^2 ) + \mc{A} \bar{w} \cdot \psi^2 \\
&= \frac{1}{2} \bar{\nabla}^\alpha ( \mc{A} \bar{\nabla}_\alpha f \cdot \psi^2 ) - \left( \frac{1}{2} f \mc{A}' + \frac{1}{2} \mc{A} \bar{\Box} f - \mc{A} \bar{w} \right) \cdot \psi^2 \\
&= \frac{1}{2} \bar{\nabla}^\alpha ( \mc{A} \bar{\nabla}_\alpha f \cdot \psi^2 ) - \left( \frac{1}{2} f \mc{A}' + \bar{h} \mc{A} \right) \cdot \psi^2 \\
&= \frac{1}{2} \bar{\nabla}^\alpha ( \mc{A} \bar{\nabla}_\alpha f \cdot \psi^2 ) - \frac{1}{2} ( f \mc{A} )' \cdot \psi^2 - \frac{ \varepsilon }{ 2 } \frac{ f }{ \bar{\rho} } \mc{A} \cdot \psi^2 \text{.}
\end{align*}
Thus, letting
\begin{equation}
\label{eql.carleman_est_wp_PA} \bar{P}^A_\beta := \frac{1}{2} \mc{A} \bar{\nabla}_\beta f \cdot \psi^2 \text{,}
\end{equation}
we see from \eqref{eql.carleman_id_ptwise_1} and the above that
\begin{equation}
\label{eql.carleman_id_ptwise_2} \bar{\mc{L}} \psi \bar{S}_w \psi = \bar{\nabla}^\alpha \bar{P}^A_\alpha + \bar{\Box} \psi \bar{S}_w \psi + 2 F' \cdot | \bar{S}_w \psi |^2 - \frac{ \varepsilon f }{ \bar{\rho} } F' \cdot \psi \bar{S}_w \psi - \frac{1}{2} \left[ ( f \mc{A} )' + \frac{ \varepsilon f }{ \bar{\rho} } \mc{A} \right] \cdot \psi^2 \text{.}
\end{equation}

Consider next the stress-energy tensor for $\bar{\Box}$ applied to $\psi$,
\begin{equation}
\label{eql.carleman_est_wp_Q} \bar{Q}_{\alpha \beta} := \bar{\nabla}_\alpha \psi \bar{\nabla}_\beta \psi - \frac{1}{2} \bar{g}_{\alpha\beta} \bar{\nabla}^\mu \psi \bar{\nabla}_\mu \psi \text{,}
\end{equation}
and recall that
\[
\bar{\nabla}^\beta ( \bar{Q}_{\alpha\beta} \bar{S}^\alpha ) = \bar{\Box} \psi \bar{S} \psi + \bar{\nabla}_{\alpha \beta} f \cdot \bar{\nabla}^\alpha \psi \bar{\nabla}^\beta \psi - \frac{1}{2} \bar{\Box} f \cdot \bar{\nabla}^\mu \psi \bar{\nabla}_\mu \psi \text{.}
\]
In addition, we note that
\[
\bar{\nabla}^\beta \left( \bar{w} \cdot \psi \bar{\nabla}_\beta \psi - \frac{1}{2} \bar{\nabla}_\beta \bar{w} \cdot \psi^2 \right) = \bar{w} \cdot \psi \bar{\Box} \psi + \bar{w} \cdot \bar{\nabla}^\beta \psi \bar{\nabla}_\beta \psi - \frac{1}{2} \bar{\Box} \bar{w} \cdot \psi^2 \text{.}
\]
Consequently, defining the modified current
\begin{equation}
\label{eql.carleman_est_wp_PQ} \bar{P}^Q_\beta := \bar{Q}_{\alpha\beta} \bar{S}^\alpha + \bar{w} \cdot \psi \bar{\nabla}_\beta \psi - \frac{1}{2} \bar{\nabla}_\beta \bar{w} \cdot \psi^2 \text{,}
\end{equation}
and recalling \eqref{eq.pi_wp} and \eqref{eq.pseudoconvex_wp}, we see that
\begin{align}
\label{eql.carleman_id_ptwise_3} \bar{\nabla}^\beta \bar{P}^Q_\beta &= \bar{\Box} \psi \bar{S}_w \psi + \bar{\pi}_{\alpha\beta} \bar{\nabla}^\alpha \psi \bar{\nabla}^\beta \psi - \frac{1}{2} \bar{\Box} \bar{w} \cdot \psi^2 \\
\notag &= \bar{\Box} \psi \bar{S}_w \psi + \frac{ \varepsilon f }{ 2 \bar{\rho} } ( | T \psi |^2 + \bar{g}^{ab} \bar{\nasla}_a \psi \bar{\nasla}_b \psi - | N \psi |^2 ) - \frac{1}{2} \bar{\Box} \bar{w} \cdot \psi^2 \text{.}
\end{align}

The desired identities \eqref{eq.carleman_id_ptwise}--\eqref{eq.carleman_id_current} now follow from \eqref{eql.carleman_est_wp_PA}, \eqref{eql.carleman_id_ptwise_2}, \eqref{eql.carleman_est_wp_PQ}, and \eqref{eql.carleman_id_ptwise_3}.
\end{proof}

\subsubsection{A Pointwise Inequality}

The next step is to derive from Lemma \ref{thm.carleman_id_ptwise} a pointwise inequality, in which all the non-divergence terms have a definite sign.

\begin{lemma} \label{thm.carleman_est_ptwise}
Assume the hypotheses of Theorem \ref{thm.carleman_est_wp}, and suppose also that $\psi \in C^2 ( \bar{\Phi} ( \mc{U} \cap \mc{D} ) )$.
Then, at every point of $\bar{\Phi} ( \mc{U} \cap \mc{D} )$, we have the inequality
\begin{align}
\label{eq.carleman_est_ptwise} \frac{1}{ 4 a } f | \bar{\mc{L}} \psi |^2 + \bar{\nabla}^\beta \bar{P}_\beta &\geq \frac{ \varepsilon f }{ 2 \bar{\rho} } ( | T \psi |^2 + \bar{g}^{ab} \bar{\nasla}_a \psi \bar{\nasla}_b \psi ) + \frac{1}{4} a | \tilde{N} \psi |^2 + \frac{1}{4} b a^2 f^{- \frac{1}{2} } \cdot \psi^2 \text{,}
\end{align}
where $\tilde{N}$ denotes the operator
\begin{equation}
\label{eq.carleman_est_tilde} \tilde{N} := f^{ \frac{n - 1}{4} } N f^{ - \frac{n - 1}{4} } \text{.}
\end{equation}
\end{lemma}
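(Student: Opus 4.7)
The plan is to take the pointwise identity from Lemma \ref{thm.carleman_id_ptwise} and convert it into \eqref{eq.carleman_est_ptwise} in two steps: first, absorb the $-\bar{\mc{L}}\psi \cdot \bar{S}_w\psi$ term into the $\frac{1}{4a}f|\bar{\mc{L}}\psi|^2$ buffer; second, rewrite the resulting $|\bar{S}_w\psi|^2$ bulk term as an $|\tilde{N}\psi|^2$ contribution. The elementary Young inequality gives $\frac{1}{4a}f|\bar{\mc{L}}\psi|^2 + \bar{\mc{L}}\psi \cdot \bar{S}_w\psi \geq -af^{-1}|\bar{S}_w\psi|^2$, and when this is combined with the identity \eqref{eq.carleman_id_ptwise} and the formula $-2F' = 2a(f^{-1} + bf^{-1/2})$ from Lemma \ref{thm.F_deriv}, one obtains
\begin{align*}
\frac{1}{4a}f|\bar{\mc{L}}\psi|^2 + \bar{\nabla}^\beta \bar{P}_\beta &\geq (af^{-1} + 2abf^{-1/2})|\bar{S}_w\psi|^2 + \frac{\varepsilon f}{2\bar{\rho}}\bigl(|T\psi|^2 + |\bar{\nasla}\psi|^2 - |N\psi|^2\bigr) \\
&\qquad - \frac{\varepsilon f}{\bar{\rho}} F'\psi\, \bar{S}_w\psi + \frac{1}{2}\!\left[(f\mc{A})' + \frac{\varepsilon f}{\bar{\rho}}\mc{A} - \bar{\Box}\bar{w}\right]\!\psi^2.
\end{align*}
The $|T\psi|^2$ and $|\bar{\nasla}\psi|^2$ pieces already appear with the correct coefficient, so the remaining issue is converting $|\bar{S}_w\psi|^2$ and $-|N\psi|^2$ into $|\tilde{N}\psi|^2$ and tracking $\psi^2$.

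The algebraic key is the identity $\bar{S}_w\psi = f^{1/2}\tilde{N}\psi + c\psi$ with $c := \frac{n-1}{2} + \frac{(n-2)\varepsilon f}{2\bar{\rho}}$, which follows by writing $\bar{S}\psi = f^{1/2}N\psi$ (from $\bar{S} = \bar{\nabla}^\sharp f$ and $Nf = f^{1/2}$), then using $N\psi = \tilde{N}\psi + \frac{n-1}{4}f^{-1/2}\psi$ together with the expression \eqref{eq.w_wp} for $\bar{w}$. Substituting this identity into $(af^{-1} + 2abf^{-1/2})|\bar{S}_w\psi|^2$ and applying Young's inequality to the resulting $\tilde{N}\psi \cdot \psi$ cross terms (with weight chosen to retain $\tfrac{1}{4}a|\tilde{N}\psi|^2$) produces the desired $\tfrac{1}{4}a|\tilde{N}\psi|^2$ plus an error of the form $C\, c^2(af^{-1} + abf^{-1/2})\psi^2$. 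The $-\frac{\varepsilon f}{2\bar{\rho}}|N\psi|^2$ piece is handled in parallel, using $N\psi = \tilde{N}\psi + \tfrac{n-1}{4}f^{-1/2}\psi$ and the estimate $\varepsilon f/\bar{\rho} \leq \varepsilon R \ll a$ from \eqref{eq.rho_est_wp} and \eqref{eq.carleman_choices_wp}, so its $|\tilde{N}\psi|^2$ contribution is negligible. The cross term $-\frac{\varepsilon f}{\bar{\rho}}F'\psi\bar{S}_w\psi$ is similarly an $O(\varepsilon)$ perturbation after substituting for $\bar{S}_w\psi$ and applying Young's inequality.

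Finally, the $\psi^2$ coefficient is anchored by the explicit formula
\[
(f\mc{A})' = \tfrac{1}{2}ab\bigl(2a - \tfrac{1}{2}\bigr)f^{-1/2} + a^2 b^2,
\]
whose leading contribution is $\tfrac{1}{2}ba^2 f^{-1/2}$; the direct terms $\frac{\varepsilon f}{2\bar{\rho}}\mc{A} \geq 0$ and $-\tfrac{1}{2}\bar{\Box}\bar{w}$ (controlled via \eqref{eq.w_box_wp}) are subleading under \eqref{eq.carleman_choices_wp}. The main obstacle is then the careful bookkeeping needed to absorb the negative $\psi^2$ errors produced in the previous step against this leading contribution: one must exploit $a \geq n^2$ to convert the $c^2 \lesssim n^2$ factors into $a$ factors, together with $\varepsilon \ll b \ll R^{-1}$ and $\bar{\rho} \geq f^{1/2}$, to ensure every error term is a small fraction of $\tfrac{1}{2}ba^2 f^{-1/2}$, leaving at least $\tfrac{1}{4}ba^2 f^{-1/2}\psi^2$ on the right-hand side as claimed in \eqref{eq.carleman_est_ptwise}.
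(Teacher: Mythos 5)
Your overall architecture is the same as the paper's: Young's inequality trades $\bar{\mc{L}} \psi \, \bar{S}_w \psi$ against the $\frac{1}{4a} f | \bar{\mc{L}} \psi |^2$ buffer at a cost dominated by $- 2 F' | \bar{S}_w \psi |^2$, and one then rewrites $\bar{S}_w \psi$ in terms of $\tilde{N} \psi$ and absorbs errors. However, your final absorption step fails. You correctly compute that the definition \eqref{eq.carleman_est_tilde} as written gives $\bar{S}_w \psi = f^{1/2} \tilde{N} \psi + c \psi$ with $c = \frac{n-1}{2} + \frac{(n-2) \varepsilon f}{2 \bar{\rho}}$, so $c \gtrsim n$. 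Any splitting of $( a f^{-1} + 2 a b f^{-1/2} ) | f^{1/2} \tilde{N} \psi + c \psi |^2$ that retains a positive multiple of $a | \tilde{N} \psi |^2$ necessarily costs a negative term of size $\gtrsim a c^2 f^{-1} \psi^2 \sim a n^2 f^{-1} \psi^2$ (the positive $c^2 \psi^2$ from the exact expansion cannot rescue this, since the Young error on the cross term always exceeds it when you keep a fraction of the square). This error cannot be absorbed into $\frac{1}{2} b a^2 f^{-\frac{1}{2}} \psi^2$: even after using $n^2 \leq a$ you are comparing $a^2 f^{-1}$ against $b a^2 f^{-\frac{1}{2}}$, and since $f^{1/2} < R$ and $b \ll R^{-1}$ we have $f^{-1} \geq R^{-1} f^{-1/2} \gg b f^{-1/2}$ \emph{everywhere} on $\bar{\Phi}(\mc{U} \cap \mc{D})$, not just near the cone. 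There is no $f^{-1}$-weighted positive $\psi^2$ term available ($( f \mc{A} )'$ contains only $f^{-1/2}$ and constant pieces), so the inequality you are left with is strictly weaker than \eqref{eq.carleman_est_ptwise}.

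The argument only closes if the constant part of $c$ vanishes identically, i.e.\ if the conjugation is chosen so that $f^{1/2} \tilde{N} \psi = \bar{S} \psi + \frac{n-1}{4} \psi$, which means $\tilde{N} \psi = N \psi + \frac{n-1}{4} f^{-1/2} \psi$, i.e.\ $\tilde{N} = f^{-\frac{n-1}{4}} N f^{\frac{n-1}{4}}$ (the exponents in \eqref{eq.carleman_est_tilde} carry a sign error; the paper's own proof silently uses the identity $\bar{S}_w \psi = f^{1/2} \tilde{N} \psi + \frac{(n-2) \varepsilon f}{2 \bar{\rho}} \psi$, which is consistent only with the flipped conjugation). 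With that choice $c = \frac{(n-2) \varepsilon f}{2 \bar{\rho}}$, and since $\bar{\rho}^2 > f$ one gets $c^2 f^{-1} \lesssim \varepsilon^2 n^2$, so the error is $O( a \varepsilon^2 n^2 ) \psi^2 \leq a^2 \varepsilon^2 \psi^2 = \varepsilon f^{1/2} \cdot a^2 \varepsilon f^{-1/2} \psi^2 \ll b a^2 f^{-\frac{1}{2}} \psi^2$ by $\varepsilon \ll b$ and $\varepsilon f^{1/2} \leq \varepsilon R \ll 1$. Note also that the two versions of the lemma are genuinely inequivalent (they differ by an uncontrolled $a n^2 f^{-1} \psi^2$), so this is not merely a bookkeeping issue: you must flip the conjugation, not push harder on the absorption. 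Your treatment of the remaining terms ($- \frac{\varepsilon f}{2 \bar{\rho}} | N \psi |^2$, the $F'$ cross term, $( f \mc{A} )'$, and $\bar{\Box} \bar{w}$) is fine and matches the paper.
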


\begin{proof}
Throughout the proof, we let $C$ denote positive universal constants that may change between lines.
Note that from \eqref{eq.uvf_bound}, \eqref{eq.carleman_domain_wp}, \eqref{eq.carleman_choices_wp}, and \eqref{eq.F_deriv}, we have
\begin{equation}
\label{eql.F_ineq} - F' > 0 \text{,} \qquad 1 + b f^\frac{1}{2} \simeq 1 \text{.}
\end{equation}
We begin by applying the inequality
\[
| \bar{\mc{L}} \psi | | \bar{S}_w \psi | \leq - \frac{1}{ 4 F' } | \bar{\mc{L}} \psi |^2 - F' | \bar{S}_w \psi |^2
\]
to $\psi$ and the identity \eqref{eq.carleman_id_ptwise} to obtain
\begin{align}
\label{eql.carleman_est_ptwise_1} - \frac{1}{ 4 F' } | \bar{\mc{L}} \psi |^2 + \bar{\nabla}^\beta \bar{P}_\beta &\geq - F' \cdot | \bar{S}_w \psi |^2 + \frac{ \varepsilon f }{ 2 \bar{\rho} } ( | T \psi |^2 + \bar{g}^{ab} \bar{\nasla}_a \psi \bar{\nasla}_b \psi - | N \psi |^2 ) \\
\notag &\qquad - \frac{ \varepsilon f }{ \bar{\rho} } F' \cdot \psi \bar{S}_w \psi + \frac{1}{2} \left[ ( f \mc{A} )' + \frac{ \varepsilon f }{ \bar{\rho} } \mc{A} - \bar{\Box} \bar{w} \right] \cdot \psi^2 \text{.}
\end{align}

Moreover, by \eqref{eq.f_grad_wp}, \eqref{eq.rho_est_wp}, \eqref{eq.TN}, and \eqref{eq.carleman_est_tilde}, we have that
\[
\bar{S}_w \psi = f^\frac{1}{2} \tilde{N} \psi + \frac{ (n - 2) \varepsilon f }{ 2 \bar{\rho} } \cdot \psi \geq f^\frac{1}{2} \tilde{N} \psi - \varepsilon n C f^\frac{1}{2} \cdot | \psi | \text{.}
\]
Combining the above with \eqref{eq.uvf_bound}, \eqref{eq.rho_est_wp}, \eqref{eq.w_box_wp}, \eqref{eq.carleman_choices_wp}, and \eqref{eq.F_deriv}, we see that
\begin{align*}
- \frac{ \varepsilon f }{ \bar{\rho} } F' \cdot \psi \bar{S}_w \psi &\geq - \varepsilon a ( 1 + b f^\frac{1}{2} ) ( | \psi | | \tilde{N} \psi | + \varepsilon n C \cdot \psi^2 ) \\
&\geq - a ( 1 + b f^\frac{1}{2} ) \left( \frac{1}{4} | \tilde{N} \psi |^2 + \varepsilon^2 n C \cdot \psi^2 \right) \text{,} \\
- F' \cdot | \bar{S}_w \psi |^2 &\geq a ( 1 + b f^\frac{1}{2} ) ( | \tilde{N} \psi |^2 - \varepsilon n C \cdot | \psi | | \tilde{N} \psi | - \varepsilon^2 n^2 C \cdot \psi^2 ) \text{,} \\
&\geq a ( 1 + b f^\frac{1}{2} ) \left( \frac{3}{4} | \tilde{N} \psi |^2 - \varepsilon^2 n^2 C \cdot \psi^2 \right) \text{,} \\
- \frac{ \varepsilon f }{ 2 \bar{\rho} } | N \psi |^2 &\geq - \varepsilon C f^\frac{1}{2} ( | \tilde{N} \psi |^2 + n f^{ - \frac{1}{2} } \cdot | \psi | | \tilde{N} \psi | + n^2 f^{-1} \cdot \psi^2 ) \\
&\geq - \varepsilon C ( f^\frac{1}{2} \cdot | \tilde{N} \psi |^2 + n^2 f^{- \frac{1}{2} } \cdot \psi^2 ) \text{,} \\
- \frac{1}{2} \bar{\Box} \bar{w} \cdot \psi^2 &\geq - \varepsilon n^2 C f^{ - \frac{1}{2} } \cdot \psi^2 \text{.}
\end{align*}
Note that the combined right-hand sides consist of one positive term and several error terms.

Now, abbreviating
\begin{equation}
\label{eql.carleman_est_ptwise_B} \mc{B} := - F' \cdot | \bar{S}_w \psi |^2 - \frac{ \varepsilon f }{ 2 \bar{\rho} } | N \psi |^2 - \frac{ \varepsilon f }{ \bar{\rho} } F' \cdot \psi \bar{S}_w \psi - \frac{1}{2} \bar{\Box} \bar{w} \cdot \psi^2 \text{,}
\end{equation}
we see from summing the above that
\begin{align*}
\mc{B} \geq a ( 1 + b f^\frac{1}{2} ) \left( \frac{1}{2} | \tilde{N} \psi |^2 - \varepsilon^2 n^2 C \cdot \psi^2 \right) - \varepsilon C f^\frac{1}{2} \cdot | \tilde{N} \psi |^2 - \varepsilon C n^2 f^{ - \frac{1}{2} } \cdot \psi^2 \text{.}
\end{align*}
Noting from \eqref{eq.uvf_bound} and \eqref{eq.carleman_choices_wp} that
\begin{align*}
\frac{1}{2} a ( 1 + b f^\frac{1}{2} ) - \varepsilon C f^\frac{1}{2} &\geq \frac{1}{4} a \text{,} \\
a \varepsilon^2 n^2 C \cdot \psi^2 = \varepsilon f^\frac{1}{2} \cdot a \varepsilon n^2 C \cdot f^{- \frac{1}{2} } \psi^2 &\ll a^2 \varepsilon C \cdot f^{ - \frac{1}{2} } \psi^2 \text{,}
\end{align*}
we conclude that
\begin{equation}
\label{eql.carleman_est_ptwise_2} \mc{B} \geq \frac{1}{4} a \cdot | \tilde{N} \psi |^2 - a^2 \varepsilon C \cdot f^{ - \frac{1}{2} } \psi^2 \text{.}
\end{equation}

In addition, recalling \eqref{eq.carleman_id_A}, we see that
\[
( f \mc{A} )' = \frac{1}{2} b a \left( 2 a - \frac{1}{2} \right) f^{- \frac{1}{2} } + b^2 a^2 \text{.}
\]
Using \eqref{eq.carleman_choices_wp} and \eqref{eq.carleman_id_A}, we then obtain
\begin{align}
\label{eql.carleman_est_ptwise_3} ( f \mc{A} )' + \frac{ \varepsilon f }{ \rho } \mc{A} &\geq \frac{1}{2} b a \left( 2 a - \frac{1}{2} \right) f^{- \frac{1}{2}} + b^2 a^2 - \varepsilon f^\frac{1}{2} \left[ a^2 f^{-1} + b a \left( 2 a - \frac{1}{2} \right) f^{- \frac{1}{2} } + b^2 a^2 \right] \\
\notag &\geq \frac{1}{2} b a^2 f^{- \frac{1}{2} } - a^2 \varepsilon C \cdot f^{ - \frac{1}{2} } \text{.}
\end{align}

Finally, combining \eqref{eql.carleman_est_ptwise_1}, \eqref{eql.carleman_est_ptwise_2}, and \eqref{eql.carleman_est_ptwise_3} yields
\begin{align}
\label{eql.carleman_est_ptwise_4} - \frac{1}{ 4 F' } | \bar{\mc{L}} \psi |^2 + \bar{\nabla}^\beta \bar{P}_\beta &\geq \frac{1}{4} a \cdot | \tilde{N} \psi |^2 + \frac{ \varepsilon f }{ 2 \bar{\rho} } ( | T \psi |^2 + \bar{g}^{ab} \bar{\nasla}_a \psi \bar{\nasla}_b \psi - | N \psi |^2 ) \\
\notag &\qquad + \left( \frac{1}{2} b - \varepsilon C \right) a^2 f^{ - \frac{1}{2} } \cdot \psi^2 \text{.}
\end{align}
The desired estimate \eqref{eq.carleman_est_ptwise} now follows from \eqref{eql.carleman_est_ptwise_4}, the observation
\[
- \frac{1}{ 4 F' } \leq \frac{ f }{ 4 a ( 1 + b f^\frac{1}{2} ) } \leq \frac{ f }{ 4 a } \text{,}
\] 
and the assumption $\varepsilon \ll b$ from \eqref{eq.carleman_choices_wp}.
\end{proof}

\subsubsection{Reversal of Conjugation}

We now establish a pointwise inequality for the original wave operator by undoing the conjugation process from Definition \ref{def.F_conj}.

\begin{lemma} \label{thm.carleman_est_rev}
Assume the hypotheses of Theorem \ref{thm.carleman_est_wp}, and let $\phi \in C^2 ( \bar{\Phi} ( \mc{U} \cap \mc{D} ) ) \cap C^1 ( \bar{\Phi} ( \bar{\mc{U}} \cap \mc{D} ) )$.
Then, there exists $C > 0$ such that on any point of $\bar{\Phi} ( \mc{U} \cap \mc{D} )$, we have the pointwise inequality
\begin{align}
\label{eq.carleman_est_rev} \frac{1}{ 4 a } f e^{ -2 F } | \bar{\Box} \phi |^2 + \bar{\nabla}^\beta \bar{P}^\star_\beta &\geq \frac{ \varepsilon }{ 16 \bar{\rho} } e^{ -2 F } ( | u \cdot \partial_u \phi |^2 + | v \cdot \partial_v \phi |^2 + f \cdot \bar{g}^{ab} \bar{\nasla}_a \phi \bar{\nasla}_b \phi ) \\
\notag &\qquad + \frac{1}{8} b a^2 f^{- \frac{1}{2} } e^{ - 2 F } \cdot \phi^2 \text{,}
\end{align}
where $\bar{P}^\star := \bar{P}^\star [ \phi ]$ is the $1$-form on $\bar{\Phi} ( \bar{\mc{U}} \cap \mc{D} )$ given by
\begin{align}
\label{eq.carleman_rev_current} \bar{P}^\ast_\beta &:= \bar{S} ( e^{ - F } \phi ) \bar{\nabla}_\beta ( e^{ - F } \phi ) - \frac{1}{2} \bar{\nabla}_\beta f \cdot \bar{\nabla}^\mu ( e^{ - F } \phi ) \bar{\nabla}_\mu ( e^{ - F } \phi ) + \bar{w} \cdot e^{ - F } \phi \bar{\nabla}_\beta ( e^{ - F } \phi ) \\
\notag &\qquad + \frac{1}{2} ( \mc{A} \bar{\nabla}_\beta f - \bar{\nabla}_\beta \bar{w} ) \cdot e^{ - 2 F } \phi^2 \text{.}
\end{align}
Furthermore, $\bar{P}^\star$ satisfies the following:
\begin{itemize}
\item If \eqref{eq.carleman_dirichlet_wp} also holds for $\phi$, then we have on $\bar{\Phi} ( \partial \mc{U} \cap \mc{D} )$ that
\begin{equation}
\label{eq.carleman_rev_dirichlet} \bar{P}^\ast ( \bar{\mc{N}} ) |_{ \bar{\Phi} ( \partial \mc{U} \cap \mc{D} ) } = \frac{1}{2} \zeta_{a, b} \cdot \bar{\mc{N}} f | \bar{\mc{N}} \phi |^2 |_{ \bar{\Phi} ( \partial \mc{U} \cap \mc{D} ) } \text{.}
\end{equation}

\item $\bar{P}^\star$ satisfies the following estimate on $\bar{\Phi} ( \bar{\mc{U}} \cap \mc{D} )$:
\begin{equation}
\label{eq.carleman_rev_currest} | \bar{P}^\star ( \bar{S} ) | \lesssim R^2 e^a f^{2 a} ( | \partial_u \phi |^2 + | \partial_v \phi |^2 + \bar{g}^{ab} \bar{\nasla}_a \phi \bar{\nasla}_b \phi + a^2 f^{-1} \phi^2 ) \text{.}
\end{equation}
\end{itemize}
\end{lemma}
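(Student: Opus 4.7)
The plan is to apply Lemma \ref{thm.carleman_est_ptwise} to the conjugated function $\psi := e^{-F} \phi$ and then translate the resulting inequality back to $\phi$. Since $\bar{\mc{L}} = e^{-F} \bar{\Box} e^F$, one has $\bar{\mc{L}} \psi = e^{-F} \bar{\Box} \phi$, so $| \bar{\mc{L}} \psi |^2 = \zeta_{a, b} | \bar{\Box} \phi |^2$ by Lemma \ref{thm.F_deriv}. Moreover, inspecting \eqref{eq.carleman_id_current} termwise, the current $\bar{P} [ \psi ]$ coincides exactly with the $\bar{P}^\star [ \phi ]$ of \eqref{eq.carleman_rev_current}. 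Thus the left-hand sides of \eqref{eq.carleman_est_ptwise} and the target \eqref{eq.carleman_est_rev} already match, and the task reduces to converting the $\psi$-derivatives on the right-hand side into $\phi$-derivatives and combining positive terms to recover $| u \partial_u \phi |^2 + | v \partial_v \phi |^2$.

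For the translation, I would use $T f = 0$ and $\bar{\nasla} f = 0$ to immediately obtain $T \psi = e^{-F} T \phi$ and $\bar{\nasla}_a \psi = e^{-F} \bar{\nasla}_a \phi$. For the normal direction, $N f = f^{1/2}$ (from \eqref{eq.f_grad_wp} and \eqref{eq.TN}) yields $\tilde{N} \psi = e^{-F} [ \tilde{N} \phi - F' f^{1/2} \phi ]$, and the elementary inequality $( A - B )^2 \geq \tfrac{1}{2} A^2 - B^2$ gives $| \tilde{N} \psi |^2 \geq \tfrac{1}{2} | N \psi |^2 - ( \tfrac{n-1}{4} )^2 f^{-1} \psi^2$ after writing $\tilde N \psi = N \psi - \tfrac{n-1}{4} f^{-1/2} \psi$ from \eqref{eq.carleman_est_tilde}. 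The key geometric identity to exploit is $| T \psi |^2 + | N \psi |^2 = \tfrac{1}{2 f} [ u^2 ( \partial_u \psi )^2 + v^2 ( \partial_v \psi )^2 ]$, which is immediate from \eqref{eq.TN}. Combining this with $u \partial_u \psi = e^{-F} ( u \partial_u \phi - F' f \phi )$ and $v \partial_v \psi = e^{-F} ( v \partial_v \phi - F' f \phi )$ (using $\partial_u f = -v$, $\partial_v f = -u$, $uv = -f$), another application of the same inequality recovers $| u \partial_u \phi |^2$ and $| v \partial_v \phi |^2$ modulo $\phi^2$-errors of size $( F' f )^2 \lesssim a^2$.

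Concretely, I would split $\tfrac{1}{4} a | \tilde N \psi |^2 = \tfrac{ \varepsilon f }{ \bar{\rho} } | \tilde N \psi |^2 + ( \tfrac{a}{4} - \tfrac{ \varepsilon f }{ \bar{\rho} } ) | \tilde N \psi |^2$, a nonnegative decomposition because $\varepsilon f / \bar{\rho} \leq \varepsilon R \ll a$ by \eqref{eq.carleman_choices_wp}, then combine the first piece with $\tfrac{ \varepsilon f }{ 2 \bar{\rho} } | T \psi |^2$ to form $\tfrac{ \varepsilon }{ 4 \bar{\rho} } [ u^2 ( \partial_u \psi )^2 + v^2 ( \partial_v \psi )^2 ]$, discard the second, and similarly retain only $\tfrac{ \varepsilon f }{ 16 \bar{\rho} }$ out of the angular coefficient $\tfrac{ \varepsilon f }{ 2 \bar{\rho} }$. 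All accumulated $\phi^2$-errors are of the form $\varepsilon a^2 f^{-1/2} e^{-2F} \phi^2$ (up to constants), upon invoking $\bar{\rho}^{-1} \leq f^{-1/2}$ from \eqref{eq.rho_est_wp}; since $\varepsilon \ll b$, they are absorbed into half of the positive bulk term $\tfrac{1}{4} b a^2 f^{-1/2} e^{-2F} \phi^2$, leaving $\tfrac{1}{8} b a^2 f^{-1/2} e^{-2F} \phi^2$ as required. The main obstacle I anticipate is precisely this bookkeeping: verifying that no $\phi^2$-coefficient arising from the Cauchy--Schwarz expansions grows like $a^3$ or $a^2 b^{-1}$, which would obstruct absorption under the weak hypothesis $\varepsilon \ll b$.

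For the auxiliary claims about $\bar{P}^\star$, the Dirichlet identity \eqref{eq.carleman_rev_dirichlet} follows because $\phi$ vanishing on $\bar{\Phi} ( \partial \mc{U} \cap \mc{D} )$ forces $e^{-F} \phi$ and all its tangential derivatives to vanish there, so $\bar{\nabla}_\beta ( e^{-F} \phi ) = e^{-F} ( \bar{\mc{N}} \phi ) \bar{\mc{N}}_\beta$ on the boundary. Substituting into \eqref{eq.carleman_rev_current}, contracting with $\bar{\mc{N}}$, and using $\bar{\mc{N}}^\alpha \bar{\mc{N}}_\alpha = 1$ together with $\bar{S} \cdot \bar{\mc{N}} = \bar{\mc{N}} f$, the $\bar w$ and $\mc A$ summands of $\bar{P}^\star$ drop out and the first two collapse to $\tfrac{1}{2} \zeta_{a, b} ( \bar{\mc{N}} f ) | \bar{\mc{N}} \phi |^2$. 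Finally, the a priori bound \eqref{eq.carleman_rev_currest} follows from bounding each of the four summands of $\bar{P}^\star ( \bar{S} )$ crudely, using $| \bar{S} | \lesssim f^{1/2} \leq R$, $| F' | \lesssim a f^{-1}$, $\mc{A} \lesssim a^2 f^{-1}$, together with $e^{-2 F} = f^{ 2 a } e^{ 4 a b f^{1/2} } \leq e^{a} f^{ 2 a }$ (valid on $\bar{\mc{U}} \cap \mc{D}$ since $4 a b f^{1/2} \leq 4 a b R \lesssim a$ by \eqref{eq.carleman_choices_wp}).
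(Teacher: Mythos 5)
Your proposal is correct and follows essentially the same route as the paper: apply Lemma \ref{thm.carleman_est_ptwise} to $\psi = e^{-F}\phi$, identify $\bar{P}[\psi]$ with $\bar{P}^\star[\phi]$, convert the normal/null derivatives back to $\phi$ using $Tf = 0$ and $\tilde{N}\psi = N\psi - \tfrac{n-1}{4}f^{-1/2}\psi$, and absorb the resulting $O(\varepsilon a^2 f^{-1/2})\phi^2$ errors into half of the $\tfrac14 b a^2 f^{-1/2}\phi^2$ term via $\varepsilon \ll b$; the boundary identity and the crude bound on $\bar{P}^\star(\bar{S})$ are also handled as in the paper. The only (immaterial) difference is the order of operations: you assemble $|u\partial_u\psi|^2 + |v\partial_v\psi|^2$ from $|T\psi|^2 + |N\psi|^2$ before undoing the conjugation, whereas the paper first converts $\tilde{N}(e^{-F}\phi)$ to $N\phi$ and only then passes to the null derivatives — and your bookkeeping concern is indeed resolved, since $(F'f)^2 \simeq a^2$ keeps all error coefficients at $\varepsilon a^2 f^{-1/2}$.
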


\begin{proof}
We begin with the properties for $\bar{P}^\star$.
If \eqref{eq.carleman_dirichlet_wp} holds, then \eqref{eq.carleman_rev_current} simplifies to
\[
\bar{P}^\star ( \bar{\mc{N}} ) |_{ \bar{\Phi} ( \partial \mc{U} \cap \mc{D} ) } = e^{ - 2 F } \left. \left( \bar{S} \phi \bar{\mc{N}} \phi - \frac{1}{2} \bar{\mc{N}} f \cdot | \bar{\mc{N}} \phi |^2 \right) \right|_{ \bar{\Phi} ( \partial \mc{U} \cap \mc{D} ) } \text{.}
\]
Since $\bar{S} - \bar{\mc{N}} f \cdot \bar{\mc{N}}$ is precisely the $\bar{g}$-orthogonal projection of $\bar{S}$ onto $\bar{\Phi} ( \partial \mc{U} \cap \mc{D} )$, we have
\[
\bar{S} \phi |_{ \bar{\Phi} ( \partial \mc{U} \cap \mc{D} ) } = \bar{\mc{N}} f \cdot \bar{\mc{N}} \phi |_{ \bar{\Phi} ( \partial \mc{U} \cap \mc{D} ) } \text{.}
\]
Combining the above and \eqref{eq.F_deriv} results in \eqref{eq.carleman_rev_dirichlet}.

Next, we observe from \eqref{eq.f}, \eqref{eq.carleman_choices_wp}, and \eqref{eq.F_deriv} that
\[
| \partial_u ( e^{ - F } \phi ) | \lesssim e^{ - F } ( | \partial_u \phi | + a f^{-1} | v | | \phi | ) \text{,} \qquad | \partial_v ( e^{ - F } \phi ) | \lesssim e^{ - F } ( | \partial_u \phi | + a f^{-1} | u | | \phi | ) \text{.}
\]
Using this, along with \eqref{eq.f_grad_wp}, \eqref{eq.uvf_bound}, \eqref{eq.carleman_domain_wp}, and \eqref{eq.F_conj}, we estimate
\begin{align*}
| \bar{S} ( e^{ - F } \phi ) \bar{S} ( e^{ - F } \phi ) | &\lesssim e^{ - 2 F } ( | u | | \partial_u \phi | + | v | | \partial_v \phi | + a f^{-1} | u v | | \phi | )^2 \\
&\lesssim R^2 e^{ - 2 F } ( | \partial_u \phi |^2 + | \partial_v \phi |^2 + a^2 f^{-1} \phi^2 ) \text{,} \\
| \bar{S} f \cdot \bar{\nabla}^\mu ( e^{ - F } \phi ) \bar{\nabla}_\mu ( e^{ - F } \phi ) | &\lesssim e^{ - 2 F } f [ | \partial_u \phi |^2 + | \partial_v \phi |^2 + \bar{g}^{ab} \bar{\nasla}_a \phi \bar{\nasla}_b \phi + a^2 f^{-2} ( | u |^2 + | v |^2 ) \phi^2 ] \\
&\lesssim R^2 e^{ - 2 F } ( | \partial_u \phi |^2 + | \partial_v \phi |^2 + \bar{g}^{ab} \bar{\nasla}_a \phi \bar{\nasla}_b \phi + a^2 f^{-1} \phi^2 ) \text{,} \\
| \bar{w} \cdot e^{ - F } \phi \cdot \bar{S} ( e^{ - F } \phi ) | &\lesssim e^{ - F } | \phi | ( | u | | \partial_u \phi | + | v | | \partial_v \phi | + a f^{-1} | u v | | \phi | ) \\
&\lesssim R^2 e^{ - 2 F } ( | \partial_u \phi |^2 + | \partial_v \phi |^2 + a^2 f^{-1} \phi^2 ) \text{.}
\end{align*}
Recalling in addition \eqref{eq.f_rho_deriv_wp}, \eqref{eq.w_wp}, \eqref{eq.carleman_choices_wp}, and \eqref{eq.carleman_id_A} yields
\begin{align*}
| \mc{A} \cdot \bar{S} f | &\lesssim a^2 + b a^2 f^\frac{1}{2} + b^2 a^2 f \lesssim a^2 \text{,} \\
| \bar{S} \bar{w} | &\lesssim \frac{ \varepsilon n f ( | u | + | v | ) }{ \bar{\rho}^2 } \lesssim \varepsilon n R \lesssim a^2 \text{,}
\end{align*}
from which we then conclude
\[
| ( \mc{A} \bar{\nabla}_\beta f + \bar{\nabla}_\beta \bar{w} ) \cdot e^{ - 2 F } \phi^2 | \lesssim a^2 e^{ - 2 F } \phi^2 \lesssim R^2 e^{ -2 F } \cdot a^2 f^{-1} \phi^2 \text{.}
\]
Combining all the above and recalling \eqref{eq.carleman_rev_current} yields
\[
| \bar{P}^\star ( \bar{S} ) | \lesssim R^2 e^{ - 2 F } ( | \partial_u \phi |^2 + | \partial_v \phi |^2 + \bar{g}^{ab} \bar{\nasla}_a \phi \bar{\nasla}_b \phi + a^2 f^{-1} \phi^2 ) \text{.}
\]
Estimating $e^{ - 2 F }$ using \eqref{eq.carleman_choices_wp} and \eqref{eq.F} now results in \eqref{eq.carleman_rev_currest}.

Finally, for \eqref{eq.carleman_est_rev}, we set $\psi := e^{-F} \phi$ and apply \eqref{eq.F_conj} and \eqref{eq.carleman_est_ptwise} to this $\psi$, which yields
\begin{align}
\label{eql.carleman_est_rev_1} \frac{1}{ 4 a } f e^{ -2 F } | \bar{\Box} \phi |^2 + \bar{\nabla}^\beta \bar{P}^\star_\beta &\geq \frac{ \varepsilon f }{ 2 \bar{\rho} } e^{ -2 F } ( | T \phi |^2 + \bar{g}^{ab} \bar{\nasla}_a \phi \bar{\nasla}_b \phi ) + \frac{1}{4} a | \tilde{N} ( e^{ - F } \phi ) |^2 \\
\notag &\qquad + \frac{1}{4} b a^2 f^{- \frac{1}{2} } e^{ - 2 F } \cdot \phi^2 \text{,}
\end{align}
Applying \eqref{eq.f_grad_wp}, \eqref{eq.TN}, \eqref{eq.carleman_choices_wp}, \eqref{eq.F_deriv}, and \eqref{eq.carleman_est_tilde}, we see that
\begin{align*}
e^{-2 F} | N \phi |^2 &\lesssim | \tilde{N} ( e^{-F} \phi ) |^2 + a^2 e^{-2 F} f^{-1} \cdot \phi^2 \text{,} \\
\varepsilon e^{-2 F} f^\frac{1}{2} | N \phi |^2 &\ll a | \tilde{N} ( e^{-F} \phi ) |^2 + b a^2 e^{-2 F} f^{ - \frac{1}{2} } \cdot \phi^2 \text{,}
\end{align*}
From the above, we conclude
\begin{align}
\label{eql.carleman_est_rev_2} \frac{1}{ 4 a } f e^{ -2 F } | \bar{\Box} \phi |^2 + \bar{\nabla}^\beta \bar{P}^\star_\beta &\geq \frac{ \varepsilon f }{ 2 \bar{\rho} } e^{ -2 F } ( | T \phi |^2 + \bar{g}^{ab} \bar{\nasla}_a \phi \bar{\nasla}_b \phi ) + \varepsilon f^\frac{1}{2} e^{ - 2 F } | N \phi |^2 \\
\notag &\qquad + \frac{1}{8} b a^2 f^{- \frac{1}{2} } e^{ - 2 F } \cdot \phi^2 \text{.}
\end{align}
We also note from \eqref{eq.f} and \eqref{eq.TN} that
\begin{align*}
| u \partial_u \phi |^2 + | v \partial_v \phi |^2 &\leq 4 f ( | T \phi | + | N \phi | )^2 + 4 f ( | T \phi | + | N \phi | )^2 \\
&\leq 8 f ( | T \phi |^2 + | N \phi |^2 ) \text{,}
\end{align*}
hence \eqref{eq.rho_est_wp} implies
\begin{align*}
\frac{ \varepsilon f }{ 2 \bar{\rho} } ( | T \phi |^2 + \bar{g}^{ab} \bar{\nasla}_a \phi \bar{\nasla}_b \phi ) + \varepsilon f^\frac{1}{2} | N \phi |^2 &\geq \frac{ \varepsilon f }{ 2 \bar{\rho} } ( | T \phi |^2 + \bar{g}^{ab} \bar{\nasla}_a \phi \bar{\nasla}_b \phi + | N \phi |^2 ) \\
&\geq \frac{ \varepsilon }{ 16 \bar{\rho} } ( | u \cdot \partial_u \phi |^2 + | v \cdot \partial_v \phi |^2 + f \bar{g}^{ab} \bar{\nasla}_a \phi \bar{\nasla}_b \phi ) \text{.}
\end{align*}
Combining the above with \eqref{eql.carleman_est_rev_2} now results in \eqref{eq.carleman_est_rev}.
\end{proof}

\subsubsection{The Integral Estimate}

In order to complete the proof of Theorem \ref{thm.carleman_est_wp}, we integrate \eqref{eq.carleman_est_rev} over $\bar{\Phi} ( \mc{U} \cap \mc{D} )$.
However, here we must be especially careful with boundary terms.

Let $\phi$ be as in the statement of Theorem \ref{thm.carleman_est_wp}.
For sufficiently small $\delta > 0$, we define the sets
\[
\mc{G}_\delta := \bar{\Phi} ( \mc{U} \cap \mc{D} ) \cap \{ f > \delta \} \text{,} \qquad \mc{H}_\delta := \bar{\Phi} ( \mc{U} \cap \mc{D} ) \cap \{ f = \delta \} \text{,}
\]
on which $r, \bar{\rho}, f$ are all bounded from above and below by positive constants; see \eqref{eq.uvf_bound}, \eqref{eq.rho_est_wp}, and \eqref{eq.carleman_domain_wp}.
Integrating \eqref{eq.carleman_est_rev}, with the above $\phi$, over $\mc{G}_\delta$ and recalling \eqref{eq.F_deriv}, we obtain 
\begin{align*}
&\frac{1}{ 4 a } \int_{ \mc{G}_\delta } \zeta_{a, b} f | \bar{\Box} \phi |^2 + \int_{ \mc{G}_\delta } \bar{\nabla}^\alpha \bar{P}^\star_\alpha \\
&\quad \geq \frac{ \varepsilon }{ 16 } \int_{ \mc{G}_\delta } \zeta_{a, b} \bar{\rho}^{-1} ( | u \cdot \partial_u \phi |^2 + | v \cdot \partial_v \phi |^2 + f \bar{g}^{ab} \bar{\nasla}_a \phi \bar{\nasla}_b \phi ) + \frac{ b a^2 }{ 8 } \int_{ \mc{G}_\delta } \zeta_{a, b} f^{- \frac{1}{2} } \cdot \phi^2 \text{.}
\end{align*}
Taking a limit as $\delta \searrow 0$ and applying the monotone covergence theorem yields
\begin{align}
\label{eql.carleman_est_wp_1} &\frac{1}{ 4 a } \int_{ \bar{\Phi} ( \mc{U} \cap \mc{D} ) } \zeta_{a, b} f | \bar{\Box} \phi |^2 + \lim_{ \delta \searrow 0 } \int_{ \mc{G}_\delta } \bar{\nabla}^\alpha \bar{P}^\star_\alpha \\
\notag &\quad \geq \frac{ \varepsilon }{ 16 } \int_{ \bar{\Phi} ( \mc{U} \cap \mc{D} ) } \zeta_{a, b} \bar{\rho}^{-1} ( | u \partial_u \phi |^2 + | v \partial_v \phi |^2 + f \bar{g}^{ab} \bar{\nasla}_a \phi \bar{\nasla}_b \phi ) + \frac{ b a^2 }{ 8 } \int_{ \bar{\Phi} ( \mc{U} \cap \mc{D} ) } \zeta_{a, b} f^{- \frac{1}{2} } \phi^2 \text{.}
\end{align}
provided the limit on the left-hand side exists.

Next, we apply the divergence theorem to the term with the limit.
Noting that $\{ f = \delta \}$ is a timelike hypersurface of $( \mc{D}, \bar{g} )$, with outward (from $\{ f > \delta \}$) unit normal $- f^{ - \frac{1}{2} } \bar{S}$, we obtain
\begin{align}
\label{eql.carleman_est_wp_2} \lim_{ \delta \searrow 0 } \int_{ \mc{G}_\delta } \bar{\nabla}^\alpha \bar{P}^\star_\alpha &= \lim_{ \delta \searrow 0 } \int_{ \bar{\Phi} ( \partial \mc{U} \cap \mc{D} ) \cap \{ f > \delta \} } \bar{P}^\star ( \bar{\mc{N}} ) - \lim_{ \delta \searrow 0 } \int_{ \mc{H}_\delta } f^{ - \frac{1}{2} } \bar{P}^\star ( \bar{S} ) \\
\notag &= \frac{1}{2} \int_{ \bar{\Phi} ( \partial \mc{U} \cap \mc{D} ) } \zeta_{a, b} \bar{\mc{N}} f \cdot | \bar{\mc{N}} \phi |^2 - \lim_{ \delta \searrow 0 } \delta^{ - \frac{1}{2} } \int_{ \mc{H}_\delta } \bar{P}^\star ( \bar{S} ) \text{,}
\end{align}
where in the last step, we applied \eqref{eq.carleman_rev_dirichlet} and the fact that $\phi$ is uniformly $C^1$-bounded.
Moreover, applying the estimate \eqref{eq.carleman_rev_currest}, the remaining limit can be bounded as
\begin{align}
\label{eql.carleman_est_wp_3} \left| \lim_{ \delta \searrow 0 } \delta^{ - \frac{1}{2} } \int_{ \mc{H}_\delta } \bar{P}^\star ( \bar{S} ) \right| &\lesssim R^2 e^a \lim_{ \delta \searrow 0 } \delta^{ 2 a - \frac{1}{2} } \int_{ \mc{H}_\delta } ( | \partial_u \phi |^2 + | \partial_v \phi |^2 + \bar{g}^{ab} \bar{\nasla}_a \phi \bar{\nasla}_b \phi ) \\
\notag &\qquad + R^2 e^a \lim_{ \delta \searrow 0 } \delta^{ 2 a - \frac{1}{2} } \int_{ \mc{H}_\delta } a^2 f^{-1} \phi^2 \\
\notag &\lesssim C \lim_{ \delta \searrow 0 } \delta^{ 2 a - \frac{3}{2} } \int_{ \mc{H}_\delta } 1 \text{,}
\end{align}
where $C > 0$ is some constant depending on both $\phi$ and $R$.

To control this limit, we foliate $\mc{H}_\delta$ using level sets of $t$ and apply the coarea formula.
Let $\bar{D} t$ denote the $\bar{g}$-gradient of $t$ on $\mc{H}_\delta$, with respect to the induced metric.
Note that $\bar{D} t$ and $T$ must point in the same direction, since both vector fields are tangent to $\mc{H}_\delta$ and are normal to the level sets of $( u, v )$.
As a result, by \eqref{eq.uv} and \eqref{eq.TN}, we see that
\[
| \bar{g} ( \bar{D} t, \bar{D} t ) |^\frac{1}{2} = | g ( \bar{D} t, T ) | = \frac{1}{2} f^{ - \frac{1}{2} } ( - u \partial_u t + v \partial_v t ) = \frac{1}{2} f^{ - \frac{1}{2} } r \text{.}
\]
Moreover, noting from \eqref{eq.f} that
\[
r^{-1} |_{ \mc{H}_\delta } = ( t^2 + 4 \delta )^{ - \frac{1}{2} } |_{ \mc{H}_\delta } \lesssim \delta^{ - \frac{1}{2} } \text{,}
\]
then the coarea formula, along with \eqref{eq.uvf_bound} and \eqref{eq.carleman_domain_wp}, yields
\begin{align}
\label{eql.carleman_est_wp_4} \lim_{ \delta \searrow 0 } \delta^{ 2 a - \frac{3}{2} } \int_{ \mc{H}_\delta } 1 &\lesssim \lim_{ \delta \searrow 0 } \delta^{ 2 a - \frac{3}{2} } \int_{ - R }^R \int_{ \Sph^{n - 1} } | \bar{g} ( \bar{D} t, \bar{D} t ) |^{ -\frac{1}{2} } \bar{\rho}^{n - 1} |_{ ( t, r = \sqrt{ t^2 + 4 \delta }, \omega ) } d \mathring{\gamma}_\omega d t \\
\notag &\lesssim R^{n - 1} \lim_{ \delta \searrow 0 } \delta^{ 2 a - 1 } \int_{ - R }^R r^{-1} |_{ ( t, r = \sqrt{ t^2 + 4 \delta }, \omega ) } dt \\
\notag &\lesssim R^n \lim_{ \delta \searrow 0 } \delta^{ 2 a - \frac{3}{2} } \\
\notag &= 0 \text{,}
\end{align}
where we recalled in the last step that $2 a > \frac{3}{2}$ by \eqref{eq.carleman_choices_wp}.

Finally, combining \eqref{eql.carleman_est_wp_1}--\eqref{eql.carleman_est_wp_4} yields \eqref{eq.carleman_est_wp} and completes the proof of Theorem \ref{thm.carleman_est_wp}.

\section{Observability Estimates} \label{sec.obs}

In this section, we apply the Carleman estimates of Theorem \ref{thm.carleman_est} to prove our two most general observability estimates for solutions of linear wave equations on GTCs:
\begin{enumerate}
\item The \emph{exterior estimate} (stated in Theorem \ref{thm.obs_ext}), where the reference point of the corresponding Carleman estimate (represented as $P$ in Theorem \ref{thm.carleman_est}) lies outside the GTC.

\item The \emph{interior estimate} (stated in Theorem \ref{thm.obs_int}), where the reference point of the corresponding Carleman estimate lies within the GTC.\footnote{Interior observability bounds have previously been obtained from multiplier, but not Carleman, methods.}
\end{enumerate}
The exterior and interior estimates are discussed in Sections \ref{sec.obs_ext} and \ref{sec.obs_int}, respectively.
Some corollaries of these estimates, such as Theorem \ref{thm.intro_obs_main}, are discussed in Section \ref{sec.obs_cor}.

\subsection{Exterior Observability} \label{sec.obs_ext}

We now discuss the first of our general observability inequalities, based on Carleman estimates centered outside the given GTC.

\begin{theorem} \label{thm.obs_ext}
Let $\mc{U}$ be a GTC in $\R^{1+n}$, and fix $P \in \R^{1+n} \setminus \bar{\mc{U}}$ and $0 < \delta \ll 1$.
In addition, assume $\mc{U} \cap \mc{D}_P$ is bounded, and consider the setting of Problem \ref{prb.linear_wave}.
Moreover:
\begin{itemize}
\item Let $\mc{X}, V$ be as in \eqref{eq.XV}, and define the constants
\begin{align}
\label{eq.obs_ext_MR} M_0 := \sup_{ \mc{U} \cap \mc{D}_P } | V | \text{,} &\qquad M_1 := \sup_{ \mc{U} \cap \mc{D}_P } | \mc{X}^{ t, x } | \text{,} \\
\notag R_+ := \sup_{ \mc{U} \cap \mc{D}_P } r_P \text{,} &\qquad R_- := \inf_{ \mc{U}_{ t (P) } } r_P \text{.}
\end{align}

\item Let $\mc{N}$ be the outward-pointing ($g$-)unit normal to $\mc{U}$, and let
\begin{equation}
\label{eq.obs_ext_delta} \mc{S} := \left( 1 - \frac{ \delta^2 r_P }{ R_+ } \right) \mc{N} f_P + \frac{ \delta^2 f_P }{ R_+ } \mc{N} r_P \text{} \qquad \Gamma_+ := \partial \mc{U} \cap \mc{D}_P \cap \{ \mc{S} > 0 \} \text{.}
\end{equation}
\end{itemize}
Then, there exist $C, N > 0$, depending on $\mc{U}$, such that the observability estimate,
\begin{equation}
\label{eq.obs_ext_est} \int_{ \mc{U}_{ t (P) } } ( | \nabla_{ t, x } \phi |^2 + \phi^2 ) \leq \frac{ C }{ \delta^2 R_- } \left( \frac{ 16 R_+ }{ R_- } \right)^{ N \left( n + R_+ + \frac{ R_+^\frac{4}{3} M_0^\frac{2}{3} }{ \delta^\frac{1}{3} } + \frac{ R_+^4 M_1^2 }{ \delta^2 R_-^2 } \right) } \int_{ \Gamma_+ } | \mc{S} | | \mc{N} \phi |^2 \text{,}
\end{equation}
holds for any solution $\phi \in C^2 ( \mc{U} ) \cap C^1 ( \bar{\mc{U}} )$ of \eqref{eq.linear_wave} that also satisfies $\phi |_{ \partial \mc{U} \cap \mc{D}_P } = 0$.
\end{theorem}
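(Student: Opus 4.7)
The plan is to apply Theorem \ref{thm.carleman_est} at the reference point $P$, absorb the lower-order terms of $\mc{P}$ into the bulk right-hand side by making the Carleman parameters large, localize to a thin spacetime slab around $\mc{U}_{t(P)}$, and convert the resulting bulk bound into an initial-slice bound via the local energy inequality in Proposition \ref{thm.energy_est_loc}. Since $\mc{U} \cap \mc{D}_P$ is bounded (as $\mc{U}$ is spatially bounded and $\mc{D}_P$ forces a finite time range), fix $R$ just larger than $R_+$ so that \eqref{eq.carleman_domain} holds, and set $\varepsilon := \delta^2/R_+$ and $b := c \delta/R_+$ for a small universal $c$; these satisfy \eqref{eq.carleman_choices} by the smallness of $\delta$. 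Take $a$ to be the maximum of $n^2$ and $C_1$ times each of $R_+^{4/3} M_0^{2/3}/\delta^{1/3}$ and $R_+^4 M_1^2/(\delta^2 R_-^2)$, for a large universal $C_1$. With this $\varepsilon$, the boundary factor $(1 - \varepsilon r_P) \mc{N} f_P + \varepsilon f_P \mc{N} r_P$ in \eqref{eq.carleman_est} is precisely $\mc{S}$ from \eqref{eq.obs_ext_delta}, so restricting the boundary integral to $\Gamma_+$ keeps the nonnegative contribution.

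Next, we apply Theorem \ref{thm.carleman_est} to $\phi$ and use \eqref{eq.linear_wave} to replace $\Box \phi$ by $-\nabla_{\mc{X}} \phi - V \phi$, giving $|\Box \phi|^2 \lesssim M_1^2 |\nabla_{t,x} \phi|^2 + M_0^2 \phi^2$. The zero-order contribution $M_0^2 f_P \phi^2/a$ is absorbed into $C b a^2 \int \zeta^P_{a,b;\varepsilon} f_P^{-1/2} \phi^2$ by writing $f_P \leq R_+^{3/2} f_P^{1/2}$ and using the lower bound on $a$. For the first-order term, we decompose $|\nabla_{t,x} \phi|^2 \lesssim |\partial_{u_P} \phi|^2 + |\partial_{v_P} \phi|^2 + |\nasla^P \phi|^2$ in the null frame about $P$; the angular part is absorbed into $C\varepsilon \int \zeta^P_{a,b;\varepsilon} r_P^{-1} f_P |\nasla^P \phi|^2$ using $a \gtrsim M_1^2 R_+^2/(c\delta^2)$, and the null-direction parts into the bulk terms $C \varepsilon \int \zeta^P_{a,b;\varepsilon} r_P^{-1}(u_P^2 |\partial_{u_P} \phi|^2 + v_P^2 |\partial_{v_P} \phi|^2)$. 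Away from $\partial \mc{D}_P$ this absorption is pointwise; near the null cone, where $f_P/u_P^2 = v_P/|u_P|$ is unbounded, one leverages the fast vanishing $\zeta^P_{a,b;\varepsilon} \sim f_P^{2a}$ of the Carleman weight along with the Dirichlet condition \eqref{eq.carleman_dirichlet} to suppress the residual. Discarding the portion of the boundary integral where $\mc{S} \leq 0$ then yields
\[
\int_{\Gamma_+} \zeta^P_{a,b;\varepsilon} |\mc{S}| |\mc{N} \phi|^2 \gtrsim \varepsilon \int_{\mc{U} \cap \mc{D}_P} \zeta^P_{a,b;\varepsilon} r_P^{-1} \bigl( |u_P \partial_{u_P} \phi|^2 + |v_P \partial_{v_P} \phi|^2 + f_P |\nasla^P \phi|^2 \bigr) + b a^2 \int_{\mc{U} \cap \mc{D}_P} \zeta^P_{a,b;\varepsilon} f_P^{-1/2} \phi^2.
\]

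To conclude, let $\eta \simeq R_-$ (small enough, in terms of the smoothness of $\partial \mc{U}$, to keep nearby slices disjoint from $P$), and set $\Omega_\eta := \mc{U} \cap \{ |t - t(P)| < \eta \} \cap \mc{D}_P$. On $\Omega_\eta$ one has $|u_P|, v_P \gtrsim R_-$ and $f_P \gtrsim R_-^2$, so $\zeta^P_{a,b;\varepsilon} \gtrsim (R_-/4)^{4a}$ there, while on $\Gamma_+$ one has $\zeta^P_{a,b;\varepsilon} \lesssim (R_+/2)^{4a}$. Combining with the inequality above produces
\[
\int_{\Omega_\eta} \bigl( |\nabla_{t,x} \phi|^2 + \phi^2 \bigr) \lesssim \frac{R_+}{\varepsilon R_-^2} \left( \frac{16 R_+}{R_-} \right)^{N a} \int_{\Gamma_+} |\mc{S}| |\mc{N} \phi|^2
\]
for a universal $N$. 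Finally, applying Proposition \ref{thm.energy_est_loc} on each cross-section $\mc{U} \cap \{ t = t(P) + s \}$ for $|s| \leq \eta$ and integrating in $s$ yields $\eta \int_{\mc{U}_{t(P)}} (|\nabla \phi|^2 + \phi^2) \lesssim e^{C'(M_0^{1/2} + M_1) \eta} \int_{\Omega_\eta} (\cdots)$; substituting $\varepsilon = \delta^2/R_+$, choosing $\eta \sim R_-$, and folding the energy exponential and the residual ratios of $R_+/R_-$ into the $(16 R_+/R_-)^{N a}$ factor (via the size of $a$) produces \eqref{eq.obs_ext_est}.

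The main obstacle will be the null-direction absorption step: the weight $f_P$ multiplying $|\Box \phi|^2$ on the left of \eqref{eq.carleman_est} degenerates exactly on $\partial \mc{D}_P$, where the corresponding bulk weights $u_P^2$ and $v_P^2$ on the right also vanish, so a naive pointwise absorption of $M_1^2 f_P |\partial_{u_P} \phi|^2$ against $\varepsilon r_P^{-1} u_P^2 |\partial_{u_P} \phi|^2$ fails near $\{ u_P = 0 \}$. Making this step rigorous, by leveraging the higher-order vanishing of the Carleman weight and the Dirichlet condition, is the technical heart of the argument.
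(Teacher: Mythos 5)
Your overall strategy matches the paper's: same choices $\varepsilon = \delta^2 R_+^{-1}$, $b \sim \delta R_+^{-1}$, same lower bounds on $a$, same absorption of the zero-order term, and the same endgame of localizing to a slab of width $\sim R_-$ about $\mc{U}_{t(P)}$ and invoking the energy estimates. However, there is a genuine gap precisely at the step you flag as the ``technical heart,'' and the mechanism you gesture at there is not the right one. The Carleman weight $\zeta^P_{a,b;\varepsilon}$ multiplies \emph{both} sides of \eqref{eq.carleman_est}, so its rapid vanishing near $\partial\mc{D}_P$ gives no gain in a pointwise comparison of $a^{-1} f_P M_1^2 |\partial_{u_P}\phi|^2$ against $\varepsilon r_P^{-1} u_P^2 |\partial_{u_P}\phi|^2$ (the ratio of weights is $1$), and the Dirichlet condition \eqref{eq.carleman_dirichlet} lives on $\partial\mc{U}\cap\mc{D}_P$, not on the null cone, so it cannot suppress a bulk residual near $\{u_P = 0\}$ or $\{v_P=0\}$.

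The paper's resolution is different: it never absorbs the near-cone first-order contribution into the bulk at all. One partitions $\mc{U}\cap\mc{D}_P$ into $\mc{U}_>$, where $f_P/[(1+\varepsilon u_P)(1-\varepsilon v_P)] > R_-^2/64$ (so $-u_P, v_P \gtrsim R_-^2/R_+$ and the pointwise absorption you describe works), and the complementary region $\mc{U}_\leq$ containing a neighborhood of the cone. On $\mc{U}_\leq$ the weight satisfies the \emph{upper} bound $\zeta^P_{a,b;\varepsilon} \leq (\tfrac{R_-}{8} e^{bR_-/8})^{4a}$, so the offending term $I_{1,\leq}$ is bounded above by this constant times $\int_{\mc{U}\cap\mc{D}_P}|\nabla_{t,x}\phi|^2$, which the local energy estimate (Proposition \ref{thm.energy_est_loc}) converts into $e^{K'R_+(1+M_0^{1/2}+M_1)}$ times the initial-slice energy. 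Meanwhile the retained bulk on the slab $|t_P| < R_-/4 \subseteq \mc{U}_>$ is bounded \emph{below} by $(\tfrac{R_-}{4} e^{bR_-/4})^{4a} e^{-KR_-(\cdots)}$ times the same initial-slice energy. Since the ratio of the two weight constants is $(1/2)^{4a}$ and $a \gg_{\mc{U}} R_+(1+M_0^{1/2}+M_1)$ (which follows from your stated lower bounds on $a$ via Young's inequality), the $I_{1,\leq}$ contribution is absorbed \emph{after} both sides have been reduced to multiples of the initial energy, not pointwise. Without this domain splitting and the energy-estimate detour, the first-order absorption cannot be closed.
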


\begin{remark}
Note that the assumption that $\mc{U} \cap \mc{D}_P$ is bounded in the statement of Theorem \ref{thm.obs_ext} implies that all of the constants $M_0, M_1, R_\pm$ in \eqref{eq.obs_ext_MR} are finite.
\end{remark}

\begin{remark}
Note that the dependence of \eqref{eq.obs_ext_est} on $M_0$ and $M_1$ matches those found in \cite{duy_zhang_zua:obs_opt}.
In particular, the dependence on $M_0$ is known to be sharp; see the discussions in \cite{duy_zhang_zua:obs_opt}.
\end{remark}

\begin{remark}
Since $r_P \neq 0$ on $\mc{D}_P$, the quantity $\mc{S}$ in \eqref{eq.obs_ext_delta} is well-defined and smooth.
\end{remark}

In principle, the proof of Theorem \ref{thm.obs_ext} is similar to the standard process of obtaining observability from Carleman estimates.
The primary new technical difficulty is that the pseudoconvexity and the Carleman weight in \eqref{eq.carleman_est} degenerate toward the null cone centered about the point $P$.
As a result, one must be considerably more careful when absorbing terms (mainly, those involving $\nabla \phi$) in the Carleman estimate.
In particular, various constants must be tracked more carefully.

The remainder of this section is dedicated to the proof of Theorem \ref{thm.obs_ext}.

\subsubsection{Application of the Carleman Estimate}

Since $\mc{U} \cap \mc{D}_P$ is bounded, it follows from \eqref{eq.obs_ext_MR} that \eqref{eq.carleman_domain} holds, with $R := R_+$.
In addition, choose $a \geq n$ large enough so it also satisfies
\begin{equation}
\label{eql.obs_ext_a} a \gg_{ \mc{U} } R_+ \text{,} \qquad a \gg_{ \mc{U} } \delta^{ - \frac{1}{3} } R_+^\frac{4}{3} M_0^\frac{2}{3} \text{,} \qquad a \gg_{ \mc{U} } \delta^{-2} R_-^{-2} R_+^4 M_1^2 \text{,}
\end{equation}
and choose $\varepsilon$ and $b$ as follows (note in particular that \eqref{eq.carleman_choices} holds, with $R_+$ in place of $R$):
\begin{equation}
\label{eql.obs_ext_b} \varepsilon := \delta^2 R_+^{-1} \text{,} \qquad b := \delta R_+^{-1} \text{.}
\end{equation}
The key step is to apply the Carleman estimate \eqref{eq.carleman_est}, with the above values of $a, b, \varepsilon$, to our given $\mc{U}$ and $P$.
From \eqref{eq.carleman_est}, as well as \eqref{eq.linear_wave}, \eqref{eq.uvf_bound}, \eqref{eq.obs_ext_delta}, and \eqref{eql.obs_ext_b}, we see that
\begin{align}
\label{eql.obs_ext_1} &\frac{2}{a} \int_{ \mc{U} \cap \mc{D}_P } \zeta^P_{ a, b; \varepsilon } f_P | \nabla_{ \mc{X} } \phi |^2 + \frac{2}{a} \int_{ \mc{U} \cap \mc{D}_P } \zeta^P_{ a, b; \varepsilon } f_P V^2 \cdot \phi^2 + \int_{ \partial \mc{U} \cap \mc{D}_P } \zeta^P_{ a, b; \varepsilon } \mc{S} \cdot | \mc{N} \phi |^2 \\
\notag &\quad \geq \frac{ C \delta^2 }{ R_+^2 } \int_{ \mc{U} \cap \mc{D}_P } \zeta^P_{ a, b; \varepsilon } ( | u_P \partial_{ u_P } \phi |^2 + | v_P \partial_{ v_P } \phi |^2 + f_P g^{ a b } \nasla^P_a \phi \nasla^P_b ) + \frac{ C \delta a^2 }{ R_+^2 } \int_{ \mc{U} \cap \mc{D}_P } \zeta^P_{ a, b; \varepsilon } \phi^2 \text{.}
\end{align}
Here, all notations are as in the statement of Theorem \ref{thm.carleman_est}.

Let $I_1$, $I_0$, $I_\Gamma$ denote the first, second, and third terms in the left-hand side of \eqref{eql.obs_ext_1}, respectively.
Recalling \eqref{eq.uvf_bound} and our conditions \eqref{eql.obs_ext_a} for $a$, we see that
\[
\frac{1}{a} \int_{ \mc{U} \cap \mc{D}_P } \zeta^P_{ a, b; \varepsilon } f_P V^2 \cdot \phi^2 \leq \frac{ R_+^2 M_0^2 }{ a } \int_{ \mc{U} \cap \mc{D}_P } \zeta^P_{ a, b; \varepsilon } \cdot \phi^2 \ll_{ \mc{U} } \frac{ \delta a^2 }{ R_+^2 } \int_{ \mc{U} \cap \mc{D}_P } \zeta^P_{ a, b; \varepsilon } \cdot \phi^2 \text{.}
\]
As a result, $I_0$ can be absorbed into the right-hand side of \eqref{eql.obs_ext_1}:
\begin{equation}
\label{eql.obs_ext_2} I_1 + I_\Gamma \geq C \int_{ \mc{U} \cap \mc{D}_P } \zeta^P_{ a, b; \varepsilon } \left[ \frac{ \delta^2 }{ R_+^2 } ( | u_P \partial_{ u_P } \phi |^2 + | v_P \partial_{ v_P } \phi |^2 + f_P g^{ab} \nasla^P_a \phi \nasla^P_b \phi ) + \frac{ \delta a^2 }{ R_+^2 } \phi^2 \right] \text{.}
\end{equation}

We now partition $\mc{U} \cap \mc{D}_P$ into
\begin{align}
\label{eql.obs_ext_Udec} \mc{U}_\leq &:= \mc{U} \cap \mc{D}_P \cap \left\{ \frac{ f_P }{ ( 1 + \varepsilon u_P ) ( 1 - \varepsilon v_P ) } \leq \frac{ R_-^2 }{ 64 } \right\} \text{,} \\
\notag \mc{U}_> &:= \mc{U} \cap \mc{D}_P \cap \left\{ \frac{ f_P }{ ( 1 + \varepsilon u_P ) ( 1 - \varepsilon v_P ) } > \frac{ R_-^2 }{ 64 } \right\} \text{,}
\end{align}
Note that on $\mc{U}_>$, we have from \eqref{eq.uvf_bound}, \eqref{eq.conf_comp}, and \eqref{eql.obs_ext_Udec} that
\[
v_P = \frac{ f_P }{ - u_P } \gtrsim \frac{ R_-^2 }{ R_+ } \text{,} \qquad - u_P = \frac{ f_P }{ v_P } \gtrsim \frac{ R_-^2 }{ R_+ } \text{.}
\]
Thus, shrinking domains from $\mc{U} \cap \mc{D}_P$ to $\mc{U}_>$, we see from \eqref{eql.obs_ext_2} that
\begin{equation}
\label{eql.obs_ext_3} I_1 + I_\Gamma \geq C \int_{ \mc{U}_> } \zeta^P_{ a, b; \varepsilon } \left[ \frac{ \delta^2 R_-^2 }{ R_+^3 } ( - u_P | \partial_{ u_P } \phi |^2 + v_P | \partial_{ v_P } \phi |^2 + v_P g^{ab} \nasla^P_a \phi \nasla^P_b \phi ) + \frac{ \delta a^2 }{ R_+^2 } \phi^2 \right] \text{.}
\end{equation}

We now decompose $I_1$ as
\begin{equation}
\label{eql.obs_ext_I1dec} I_1 = \frac{2}{a} \int_{ \mc{U}_\leq } \zeta^P_{ a, b; \varepsilon } f_P | \nabla_{ \mc{X} } \phi |^2 + \frac{2}{a} \int_{ \mc{U}_> } \zeta^P_{ a, b; \varepsilon } f_P | \nabla_{ \mc{X} } \phi |^2 := I_{1, \leq} + I_{1, >} \text{.}
\end{equation}
From \eqref{eq.uvf_bound}, \eqref{eq.obs_ext_MR}, and \eqref{eql.obs_ext_a}, we obtain
\begin{align*}
I_{1, >} &\leq \frac{ R_+ M_1^2 }{ a } \int_{ \mc{U}_> } \zeta^P_{ a, b; \varepsilon } ( - u_P | \partial_{ u_P } \phi |^2 + v_P | \partial_{ v_P } \phi |^2 + v_P g^{ab} \nasla^P_a \phi \nasla^P_b \phi ) \\
&\ll_{ \mc{U} } \frac{ \delta^2 R_-^2 }{ R_+^3 } \int_{ \mc{U}_> } \zeta^P_{ a, b; \varepsilon } ( - u_P | \partial_{ u_P } \phi |^2 + v_P | \partial_{ v_P } \phi |^2 + v_P g^{ab} \nasla^P_a \phi \nasla^P_b \phi ) \text{.}
\end{align*}
Thus, $I_{1, >}$ can be absorbed into the right-hand side of \eqref{eql.obs_ext_3}, and we have
\begin{equation}
\label{eql.obs_ext_4} I_{1, \leq} + I_\Gamma \geq C \int_{ \mc{U}_> } \zeta^P_{ a, b; \varepsilon } \left[ \frac{ \delta^2 R_-^2 }{ R_+^3 } ( - u_P | \partial_{ u_P } \phi |^2 + v_P | \partial_{ v_P } \phi |^2 + v_P g^{ab} \nasla^P_a \phi \nasla^P_b \phi ) + \frac{ \delta a^2 }{ R_+^2 } \phi^2 \right] \text{.}
\end{equation}

\subsubsection{Applications of Energy Estimates}

Since $\partial \mc{U}$ is timelike, then by \eqref{eq.uv} and \eqref{eq.obs_ext_MR}, we have
\begin{equation}
\label{eql.obs_ext_50} r_P |_{ \mc{U}_\tau } \geq \frac{ 3 R_- }{4} \text{,} \qquad -u_P |_{ \mc{U}_\tau } \geq \frac{ R_- }{4} \text{,} \qquad v_P |_{ \mc{U}_\tau } \geq \frac{ R_- }{4} \text{,}
\end{equation}
for any $\tau \in \R$ with $| \tau - t ( P ) | \leq \frac{1}{4} R_-$.
As a result, by \eqref{eq.f}, \eqref{eq.carleman_weight}, and \eqref{eql.obs_ext_50}, we have that
\[
\left. \frac{ f_P }{ ( 1 + \varepsilon u_P ) ( 1 - \varepsilon v_P ) } \right|_{ \mc{U}_\tau } \geq \frac{ R_-^2 }{ 16 } \text{,} \qquad \zeta_{ a, b; \varepsilon } |_{ \mc{U}_\tau } \geq \left( \frac{ R_- }{4} e^\frac{ b R_- }{4} \right)^{ 4 a } \text{,}
\]
for the same $\tau$ as above.
In particular, we see that
\[
\mc{U} \cap \left\{  - \frac{ R_- }{4} < t_P < \frac{ R_- }{4} \right\} \subseteq \mc{U}_> \text{,}
\]
so that by shrinking domains, applying Fubini's theorem, and recalling \eqref{eql.obs_ext_50} again, we obtain
\begin{equation}
\label{eql.obs_ext_5} I_{1, \leq} + I_\Gamma \geq C \left( \frac{ R_- }{ 4 } e^\frac{ b R_- }{4} \right)^{4 a} \int_{ t ( P ) - \frac{ R_- }{4} }^{ t ( P ) + \frac{ R_- }{4} } \int_{ \mc{U}_\tau } \left[ \frac{ \delta^2 R_-^3 }{ R_+^3 } \cdot | \nabla_{ t, x } \phi |^2 + \frac{ \delta a^2 }{ R_+^2 } \cdot \phi^2 \right] d \tau \text{.}
\end{equation}

Since \eqref{eq.obs_ext_MR} and \eqref{eql.obs_ext_a} imply that $a \gg R_+ \geq R_-$, we see that
\[
\frac{ \delta a^2 }{ R_+^2 } \geq \frac{ \delta^2 R_-^2 }{ R_+^2 } \geq \frac{ \delta^2 R_-^3 }{ R_+^3 } \text{,} \qquad \frac{ \delta a^2 }{ R_+^2 } \geq \frac{ \delta a^\frac{1}{2} ( \delta^{ - \frac{1}{3} } R_+^\frac{4}{3} M_0^\frac{2}{3} )^\frac{3}{2} }{ R_+^2 } = \delta^\frac{1}{2} a^\frac{1}{2} M_0 \geq \frac{ \delta^2 R_-^3 M_0 }{ R_+^3 } \text{,}
\]
and hence we conclude
\begin{equation}
\label{eql.obs_ext_6} I_{1, \leq} + I_\Gamma \geq \frac{ C \delta^2 R_-^3 }{ R_+^3 } \left( \frac{ R_- }{ 4 } e^\frac{ b R_- }{4} \right)^{4 a} \int_{ t (P) - \frac{ R_- }{4} }^{ t (P) + \frac{ R_- }{4} } \int_{ \mc{U}_\tau } [ | \nabla_{ t, x } \phi |^2 + ( 1 + M_0 ) \phi^2 ] d \tau \text{.}
\end{equation}
Applying the energy estimate \eqref{eq.energy_est} to each $\mc{U}_\tau$ in \eqref{eql.obs_ext_6} yields
\[
\int_{ \mc{U}_{ t (P) } } [ | \nabla_{ t, x } \phi |^2 + ( 1 + M_0 ) \phi^2 ] \leq C e^{ K R_- ( 1 + M_0^\frac{1}{2} + M_1 ) } \int_{ \mc{U}_\tau } [ | \nabla_{ t, x } \phi |^2 + ( 1 + M_0 ) \phi^2 ] \text{,}
\]
where the constants $C, K > 0$ also depend on $\mc{U}$.
Hence, it follows that
\begin{equation}
\label{eql.obs_ext_7} I_{1, \leq} + I_\Gamma \geq \frac{ C \delta^2 R_-^4 }{ R_+^3 } \left( \frac{ R_- }{ 4 } e^\frac{ b R_- }{4} \right)^{4 a} e^{ - K R_- ( 1 + M_0^\frac{1}{2} + M_1 ) } \int_{ \mc{U}_{ t (P) } } [ | \nabla_{ t, x } \phi |^2 + ( 1 + M_0 ) \phi^2 ] \text{.}
\end{equation}

For $I_{1, \leq}$, we use \eqref{eq.carleman_weight}, \eqref{eq.uvf_bound}, \eqref{eq.conf_comp}, \eqref{eql.obs_ext_b}, and \eqref{eql.obs_ext_Udec} to bound
\[
\zeta^P_{ a, b; \varepsilon } |_{ \mc{U}_\leq } \leq \left( \frac{ R_- }{ 8 } e^\frac{ b R_- }{ 8 } \right)^{ 4 a } \text{,} \qquad f_P |_{ \mc{U}_\leq } \leq R_-^2 \text{.}
\]
We then recall \eqref{eq.obs_ext_MR}, \eqref{eql.obs_ext_a}, and \eqref{eql.obs_ext_I1dec} to obtain
\[
I_{1, \leq} \leq \frac{ C' R_-^2 M_1^2 }{ a } \left( \frac{ R_- }{ 8 } e^\frac{ b R_- }{8} \right)^{4 a} \int_{ \mc{U} \cap \mc{D}_P } | \nabla_{ t, x } \phi |^2 \leq \frac{ C' \delta^2 R_-^4 }{ R_+^4 } \left( \frac{ R_- }{ 8 } e^\frac{ b R_- }{8} \right)^{4 a} \int_{ \mc{U} \cap \mc{D}_P } | \nabla_{ t, x } \phi |^2 \text{.}
\]
Note that since $\partial \mc{U}$ is timelike, then \eqref{eq.obs_ext_MR} implies that $t_P |_{ \mc{U} \cap \mc{D}_P }$ must be bounded by $R_+$.
Consequently, applying \eqref{eq.energy_est_loc} and Fubini's theorem to the above yields
\begin{equation}
\label{eql.obs_ext_80} I_{1, \leq} \leq \frac{ C' \delta^2 R_-^4 }{ R_+^3 } \left( \frac{ R_- }{ 8 } e^\frac{ b R_- }{8} \right)^{4 a} e^{ K' R_+ ( 1 + M_0^\frac{1}{2} + M_1 ) } \int_{ \mc{U}_{ t (P) } } [ | \nabla_{ t, x } \phi |^2 + ( 1 + M_0 ) \phi^2 ] \text{,}
\end{equation}
where $C', K' > 0$ also depend on $\mc{U}$.
Combining \eqref{eql.obs_ext_7} and \eqref{eql.obs_ext_80} then yields
\begin{align}
\label{eql.obs_ext_8} I_\Gamma &\geq \frac{ \delta^2 R_-^4 }{ R_+^3 } \left[ C \left( \frac{ R_- }{ 4 } e^\frac{ b R_- }{4} \right)^{4 a} e^{ - K R_- ( 1 + M_0^\frac{1}{2} + M_1 ) } - C' \left( \frac{ R_- }{ 8 } e^\frac{ b R_- }{8} \right)^{4 a} e^{ K' R_+ ( 1 + M_0^\frac{1}{2} + M_1 ) } \right] \\
\notag &\qquad \cdot \int_{ \mc{U}_{ t (P) } } [ | \nabla_{ t, x } \phi |^2 + ( 1 + M_0 ) \phi^2 ] \text{.}
\end{align}

We now claim that
\begin{equation}
\label{eql.obs_ext_absorb} C' \left( \frac{ R_- }{ 8 } e^\frac{ b R_- }{8} \right)^{4 a} e^{ K' R_+ ( 1 + M_0^\frac{1}{2} + M_1 ) } \ll C \left( \frac{ R_- }{ 4 } e^\frac{ b R_- }{4} \right)^{4 a} e^{ - K R_- ( 1 + M_0^\frac{1}{2} + M_1 ) } \text{.}
\end{equation}
To prove \eqref{eql.obs_ext_absorb}, it suffices to show, for known $c, k > 0$ depending on $\mc{U}$, that
\[
c \left( \frac{1}{2} \right)^{4a} e^{ k R_+ ( 1 + M_0^\frac{1}{2} + M_1 ) } \ll 1 \text{.}
\]
For this, we need only show that
\begin{equation}
\label{eql.obs_ext_absorb2} a \gg_{ \mc{U} } R_+ ( 1 + M_0^\frac{1}{2} + M_1 ) \text{,}
\end{equation}
which is a consequence of \eqref{eql.obs_ext_a}, since
\[
R_+ M_0^\frac{1}{2} \lesssim \delta + \delta^{ - \frac{1}{3} } R_+^\frac{4}{3} M_0^\frac{2}{3} \ll_{ \mc{U} } a \text{,} \qquad R_+ M_1 \lesssim \delta^2 + \delta^{-2} R_-^{-2} R_+^4 M_1^2 \ll_{ \mc{U} } a \text{.}
\]

Having proved \eqref{eql.obs_ext_absorb}, we combine this with \eqref{eql.obs_ext_8} and obtain
\begin{equation}
\label{eql.obs_ext_9} I_\Gamma \geq \frac{ C \delta^2 R_-^4 }{ R_+^3 } \left( \frac{ R_- }{ 4 } e^\frac{ b R_- }{4} \right)^{4 a} e^{ - K R_- ( 1 + M_0^\frac{1}{2} + M_1 ) } \int_{ \mc{U}_{ t (P) } } ( | \nabla_{ t, x } \phi |^2 + \phi^2 ) \text{.}
\end{equation}

\subsubsection{The Boundary Term}

Since the integrand in $I_\Gamma$ is negative when $\mc{S}$ is, then \eqref{eql.obs_ext_9} becomes
\begin{equation}
\label{eql.obs_ext_A} \int_{ \Gamma_+ } \zeta^P_{ a, b; \varepsilon } | \mc{S} | \cdot | \mc{N} \phi |^2 \geq \frac{ C \delta^2 R_-^4 }{ R_+^3 } \left( \frac{ R_- }{4} \right)^{4a} e^{ - K R_- ( 1 + M_0^\frac{1}{2} + M_1 ) } \int_{ \mc{U}_{ t (P) } } ( | \nabla_{ t, x } \phi |^2 + \phi^2 ) \text{.}
\end{equation}
Moreover, \eqref{eq.carleman_weight}, \eqref{eq.uvf_bound}, \eqref{eq.conf_comp}, \eqref{eq.obs_ext_MR}, and \eqref{eql.obs_ext_b} imply
\[
\zeta^P_{ a, b; \varepsilon } \leq ( 4 f_P )^{2a} \leq ( 2 R_+ )^{4a} \text{,}
\]
hence \eqref{eql.obs_ext_A} becomes
\[
\int_{ \mc{U}_{ t (P) } } ( | \nabla_{ t, x } \phi |^2 + \phi^2 ) \leq \frac{ C }{ \delta^2 R_- } \left( \frac{ 8 R_+ }{ R_- } \right)^{4a + 3} e^{ K R_- ( 1 + M_0^\frac{1}{2} + M_1 ) } \int_{ \Gamma_+ } | \mc{S} | \cdot | \mc{N} \phi |^2 \text{.}
\]
Combining the above with \eqref{eql.obs_ext_absorb2} then yields
\begin{equation}
\label{eql.obs_ext_B} \int_{ \mc{U}_{ t (P) } } ( | \nabla_{ t, x } \phi |^2 + \phi^2 ) \leq \frac{ C }{ \delta^2 R_- } \left( \frac{ 16 R_+ }{ R_- } \right)^{4a + 3} \int_{ \Gamma_+ } | \mc{S} | \cdot | \mc{N} \phi |^2 \text{.}
\end{equation}
Choosing $a \geq n$ in \eqref{eql.obs_ext_B} so that it also satisfies the assumptions \eqref{eql.obs_ext_a} results in \eqref{eq.obs_ext_est}.

\subsection{Interior Observability} \label{sec.obs_int}

In this subsection, we prove the second of our general observability inequalities, based on applying Carleman estimates that are centered within $\mc{U}$.

\begin{theorem} \label{thm.obs_int}
Let $\mc{U}$ be a GTC in $\R^{1+n}$, let $0 < \delta \ll 1$, and fix $P_1, P_2 \in \mc{U}$ with
\begin{equation}
\label{eq.obs_int_Pi} P_1 \neq P_2 \text{,} \qquad t ( P_1 ) = t ( P_2 ) := t_0 \text{.}
\end{equation}
Also, assume $\mc{U} \cap ( \mc{D}_{ P_1 } \cup \mc{D}_{ P_2 } )$ is bounded, and consider the setting of Problem \ref{prb.linear_wave}.
Moreover:
\begin{itemize}
\item Let $\mc{X}, V$ be as in \eqref{eq.XV}, and define the constants
\begin{align}
\label{eq.obs_int_MR} M_0 := \max_{ i = 1, 2 } \sup_{ \mc{U} \cap \mc{D}_{ P_i } } | V | \text{,} &\qquad M_1 := \max_{ i = 1, 2 } \sup_{ \mc{U} \cap \mc{D}_{ P_i } } | \mc{X}^{ t, x } | \text{,} \\
\notag R_+ := \max_{ i = 1, 2 } \sup_{ \mc{U} \cap \mc{D}_{ P_i } } r_{ P_i } \text{,} &\qquad R_- := \frac{1}{2} | x ( P_2 ) - x ( P_1 ) | \text{.}
\end{align}

\item Let $\mc{N}$ be the outward-pointing ($g$-)unit normal to $\mc{U}$, and define, for $i \in \{ 1, 2 \}$,
\begin{equation}
\label{eq.obs_int_delta} \mc{S}_i := \left( 1 - \frac{ \delta^2 r_{ P_i } }{ R_+ } \right) \mc{N} f_{ P_i } + \frac{ \delta^2 f_{ P_i } }{ R_+ } \mc{N} r_{ P_i } \text{,} \qquad \Gamma^i_+ := \partial \mc{U} \cap \mc{D}_{ P_i } \cap \{ \mc{S}_i > 0 \} \text{.}
\end{equation}
\end{itemize}
Then, there exist $C, N > 0$, depending on $\mc{U}$, such that the observability estimate,
\begin{equation}
\label{eq.obs_int_est} \int_{ \mc{U}_{ t_0 } } ( | \nabla_{ t, x } \phi |^2 + \phi^2 ) \leq \frac{ C }{ \delta^2 R_- } \left( \frac{ 16 R_+ }{ R_- } \right)^{ N \left( n + R_+ + \frac{ R_+^\frac{4}{3} M_0^\frac{2}{3} }{ \delta^\frac{1}{3} } + \frac{ R_+^4 M_1^2 }{ \delta^2 R_-^2 } \right) } \sum_{ i = 1 }^2 \int_{ \Gamma^i_+ } | \mc{S}_i | | \mc{N} \phi |^2 \text{.}
\end{equation}
holds for any solution $\phi \in C^2 ( \mc{U} ) \cap C^1 ( \bar{\mc{U}} )$ of \eqref{eq.linear_wave} that also satisfies $\phi |_{ \partial \mc{U} \cap ( \mc{D}_{ P_1 } \cup \mc{D}_{ P_2 } ) } = 0$.
\end{theorem}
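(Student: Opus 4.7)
The plan is to apply the main Carleman estimate Theorem \ref{thm.carleman_est} separately at each of the two centers $P_1$ and $P_2$, and then to combine the two resulting inequalities via a triangle-inequality argument that exploits the spatial separation $|x(P_1)-x(P_2)| = 2R_-$. For each $i \in \{1,2\}$, I would apply Theorem \ref{thm.carleman_est} with $P = P_i$ and with parameters $a, b, \varepsilon$ chosen exactly as in \eqref{eql.obs_ext_a}--\eqref{eql.obs_ext_b}; as in the proof of Theorem \ref{thm.obs_ext}, the lower-order bulk contributions from $V\phi$ and $\nabla_{\mc{X}}\phi$ may then be absorbed into the positive pseudoconvexity term, and the boundary integral collapses to one over $\Gamma^i_+$ since the sign of the integrand is that of $\mc{S}_i$.

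The key new ingredient compared to the exterior case is the localization to a subset of the time slice $\mc{U}_{t_0}$ on which $r_{P_i}$ is uniformly positive. Since $P_i \in \mc{U}$, the function $r_{P_i}$ vanishes at $P_i$, so a single Carleman estimate centered at $P_i$ cannot control all of $\int_{\mc{U}_{t_0}}$. Instead, I would restrict attention to
\[
A_i := \mc{U}_{t_0} \cap \{ r_{P_i} \geq R_- \}.
\]
On $A_i$, one has $f_{P_i} \geq R_-^2/4$, so $A_i$ lies in the analogue of the set $\mc{U}_>$ from \eqref{eql.obs_ext_Udec}. Moreover, in the time slab $\{|t-t_0| < R_-/4\}$ above $A_i$ (intersected with $\mc{D}_{P_i}$), the null coordinates satisfy $-u_{P_i}, v_{P_i} \gtrsim R_-$, hence $\zeta^{P_i}_{a,b;\varepsilon}$ is bounded below by $(R_-/4)^{4a}$ times an exponential in $b R_- a$. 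Repeating verbatim the chain of absorptions \eqref{eql.obs_ext_4}--\eqref{eql.obs_ext_9} on this slab, and then applying the localized energy estimate Proposition \ref{thm.energy_est_loc} (which is stated for general $P \in \R^{1+n}$ and precisely handles transport between $\mc{U}_{t(P)} \cap \mc{D}_P$ and nearby cross-sections intersected with $\mc{D}_P$) to transport back to the $t = t_0$ slice, yields
\[
\int_{\Gamma^i_+} \zeta^{P_i}_{a,b;\varepsilon} \, |\mc{S}_i| \, |\mc{N}\phi|^2 \;\gtrsim\; \frac{C \delta^2 R_-^4}{R_+^3} \left( \frac{R_-}{4} \right)^{4a} e^{-K R_- (1 + M_0^{1/2} + M_1)} \int_{A_i} ( |\nabla_{t,x}\phi|^2 + \phi^2 ).
\]

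The final combination step is immediate: for any $Q \in \mc{U}_{t_0}$ the triangle inequality gives $r_{P_1}(Q) + r_{P_2}(Q) \geq |x(P_2) - x(P_1)| = 2 R_-$, so $\max_i r_{P_i}(Q) \geq R_-$; hence $A_1 \cup A_2 = \mc{U}_{t_0}$, and summing the two versions of the bound above while estimating $\zeta^{P_i}_{a,b;\varepsilon} \leq (2 R_+)^{4a}$ on $\Gamma^i_+$ as in \eqref{eql.obs_ext_B} produces the desired \eqref{eq.obs_int_est}, with the same dependence on $a$ as in the exterior proof.

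The main potential obstacle is the absorption step on the time slab above $A_i$: one must verify that the analogue of \eqref{eql.obs_ext_absorb} still holds even though $A_i$ is only a proper subset of $\mc{U}_{t_0}$. This reduces to a weight comparison showing that $\zeta^{P_i}_{a,b;\varepsilon}$ on the ``small-weight'' region $\mc{U}_\leq^i := \mc{U} \cap \mc{D}_{P_i} \cap \{ f_{P_i}/[(1+\varepsilon u_{P_i})(1-\varepsilon v_{P_i})] \leq R_-^2/64 \}$ is smaller than on the slab above $A_i$ by a factor of $2^{4a}$; since the gap is a purely weight-theoretic comparison, independent of whether $P_i$ lies inside or outside $\mc{U}$, it goes through verbatim, and the remaining numerology (the exponents in $R_+, M_0, M_1$) matches \eqref{eq.obs_ext_est} term by term.
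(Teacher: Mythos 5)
Your overall architecture coincides with the paper's: two applications of Theorem \ref{thm.carleman_est} centered at $P_1$ and $P_2$ with the parameter choices \eqref{eql.obs_ext_a}--\eqref{eql.obs_ext_b}, the same absorption of the zeroth- and first-order bulk terms, and the triangle inequality $r_{P_1}+r_{P_2}\ge|x(P_1)-x(P_2)|=2R_-$ to cover the full time slice. The weight comparison you flag at the end as the "main potential obstacle" is indeed purely weight-theoretic and goes through exactly as in \eqref{eql.obs_ext_absorb}; that part of your plan is sound.

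There is, however, one step that fails as written: the claim that Proposition \ref{thm.energy_est_loc} lets you "transport back to the $t=t_0$ slice," i.e.\ bound $\int_{A_i}$ from \emph{above} by $\int_{\mc{V}^i_\tau}$ for $\tau\neq t_0$. The inequality \eqref{eq.energy_est_loc} goes in the opposite direction: it controls the energy on a cross-section \emph{away from} $t(P)$, localized to $\mc{D}_P$, by the energy on $\mc{U}_{t(P)}\cap\mc{D}_P$, and its proof relies on the sign of the null-boundary flux $\mc{I}\ge 0$, which is favorable only in that direction (energy may leak out of $\mc{D}_P$ into the cone interior as $|t-t(P)|$ grows, so the reverse localized inequality is not what the proposition provides). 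The inequality you actually need --- energy at $t_0$ on $A_i$ controlled by energy at time $\tau$ on $\{r_{P_i}>3R_-/4\}$ --- is true, but only via a separate finite-speed-of-propagation argument: the domain of dependence of $A_i$ over $|t-t_0|\le R_-/4$ stays inside $\{r_{P_i}>3R_-/4\}$, and the relevant portion of $\partial\mc{U}$ there lies in $\mc{D}_{P_i}$, so the Dirichlet condition is available; this would have to be supplied as an additional lemma. The paper avoids the issue entirely by reversing your order of operations: it first sums the two slab lower bounds over $i$, using $\mc{V}^1_\tau\cup\mc{V}^2_\tau=\mc{U}_\tau$ to produce the \emph{full} cross-sections $\mc{U}_\tau$, and only then applies the global energy estimate of Proposition \ref{thm.energy_est} between $\mc{U}_\tau$ and $\mc{U}_{t_0}$ (noting that $\partial\mc{U}\cap\{|t-t_0|\le R_-/4\}\subseteq\mc{D}_{P_1}\cup\mc{D}_{P_2}$, so $\phi$ vanishes there), for which no localization is required. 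Either fix works, but as written the citation of Proposition \ref{thm.energy_est_loc} does not support the step.
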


\begin{remark}
Again, the assumption that $\mc{U} \cap ( \mc{D}_{ P_1 } \cup \mc{D}_{ P_2 } )$ is bounded implies that the constants $M_0, M_1, R_\pm$ in \eqref{eq.obs_int_MR} are finite.
Note also the $\mc{S}_i$'s in \eqref{eq.obs_int_delta} are again well-defined and smooth.
\end{remark}

The main difference between the exterior and interior estimates (Theorems \ref{thm.obs_ext} and \ref{thm.obs_int}, respectively) is that the latter requires the application of \emph{two Carleman estimates, about two distinct reference points} ($P_1$, $P_2$ in Theorem \ref{thm.obs_int}).
The technical reason is that the weights in the right-hand side of the Carleman estimate \eqref{eq.carleman_est} vanish at the center point $P$, which now lies in the domain.

\begin{remark}
Moreover, for any $P \in \mc{U}$, the boundary region $\partial \mc{U} \cap \mc{D}_P$ fails the geometric control condition.
Indeed, null geodesics traveling through $P$ (i.e., along $\partial \mc{D}_P$) fail to touch $\partial \mc{U} \cap \mc{D}_P$.
\end{remark}

We prove Theorem \ref{thm.obs_int} in the remainder of this subsection.
Much of this proof is analogous to that of Theorem \ref{thm.obs_ext}; for those parts, we omit some details and refer the reader to Section \ref{sec.obs_ext}.

\subsubsection{Applications of the Carleman Estimate}

Fix $i \in \{ 1, 2 \}$.
Note from \eqref{eq.obs_int_MR} that
\begin{equation}
\label{eql.obs_int_Ui} \mc{U}^i := \mc{U} \cap \mc{D}_{ P_i } \subseteq \mc{D}_{ P_i } \cap \{ r_{ P_i } < R_+ \} \text{.}
\end{equation}
Let $a \geq n$ be large enough to also satisfy
\begin{equation}
\label{eql.obs_int_a} a \gg_{ \mc{U} } R_+ \text{,} \qquad a \gg_{ \mc{U} } \delta^{ - \frac{1}{3} } R_+^\frac{4}{3} M_0^\frac{2}{3} \text{,} \qquad a \gg_{ \mc{U} } \delta^{-2} R_-^{-2} R_+^4 M_1^2 \text{,}
\end{equation}
and choose $\varepsilon$ and $b$ as follows:
\begin{equation}
\label{eql.obs_int_b} \varepsilon := \delta^2 R_+^{-1} \text{,} \qquad b := \delta R_+^{-1} \text{.}
\end{equation}
Applying \eqref{eq.carleman_est}, with $a, b, \varepsilon$ as above, to $\mc{U}$ and $P_i$ yields
\begin{align}
\label{eql.obs_int_1} &\frac{2}{a} \int_{ \mc{U}^i } \zeta_{ a, b; \varepsilon }^{ P_i } f_{ P_i } | \nabla_{ \mc{X} } \phi |^2 + \frac{2}{a} \int_{ \mc{U}^i } \zeta_{ a, b; \varepsilon }^{ P_i } f_{ P_i } V^2 \cdot \phi^2 + \int_{ \partial \mc{U} \cap \mc{D}_{ P_i } } \zeta_{ a, b; \varepsilon }^{ P_i } \mc{S}_i \cdot | \mc{N} \phi |^2 \\
\notag &\quad \geq \frac{ C \delta^2 }{ R_+^2 } \int_{ \mc{U}^i } \zeta_{ a, b; \varepsilon }^{ P_i } ( | u_{ P_i } \partial_{ u_{ P_i } } \phi |^2 + | v_{ P_i } \partial_{ v_{ P_i } } \phi |^2 + f_{ P_i } g^{ab} \nasla^{ P_i }_a \phi \nasla^{ P_i }_b \phi ) + \frac{ C \delta a^2 }{ R_+^2 } \int_{ \mc{U}^i } \zeta_{ a, b; \varepsilon }^{ P_i } \phi^2 \text{,}
\end{align}
where we also recalled \eqref{eq.linear_wave}, \eqref{eq.uvf_bound}, \eqref{eq.obs_int_delta}, and \eqref{eql.obs_int_b}.

Let $I^i_1$, $I^i_0$, $I^i_\Gamma$ be the terms in the left-hand side of \eqref{eql.obs_int_1}.
Like in the proof of Theorem \ref{thm.obs_ext}, we apply \eqref{eq.uvf_bound} and \eqref{eql.obs_int_a} to show that $I^i_0$ can be absorbed into the right-hand side of \eqref{eql.obs_int_1}:
\begin{equation}
\label{eql.obs_int_2} I^i_1 + I^i_\Gamma \geq C \int_{ \mc{U}^i } \zeta_{ a, b; \varepsilon }^{ P_i } \frac{ \delta^2 }{ R_+^2 } \left[ ( | u_{ P_i } \partial_{ u_{ P_i } } \phi |^2 + | v_{ P_i } \partial_{ v_{ P_i } } \phi |^2 + f_{ P_i } g^{ab} \nasla^{ P_i }_a \phi \nasla^{ P_i }_b \phi ) + \frac{ \delta a^2 }{ R_+^2 } \phi^2 \right] \text{.}
\end{equation}
We now partition $\mc{U}_i$ into
\begin{align}
\label{eql.obs_int_Udec} \mc{U}^i_\leq &:= \mc{U}^i \cap \left\{ \frac{ f_{ P_i } }{ ( 1 + \varepsilon u_{ P_i } ) ( 1 - \varepsilon v_{ P_i } ) } \leq \frac{ R_-^2 }{ 64 } \right\} \text{,} \\
\notag \mc{U}^i_> &:= \mc{U}^i \cap \left\{ \frac{ f_{ P_i } }{ ( 1 + \varepsilon u_{ P_i } ) ( 1 - \varepsilon v_{ P_i } ) } > \frac{ R_-^2 }{ 64 } \right\} \text{,}
\end{align}
and we decompose $I^i_1$ as
\[
I^i_1 = \frac{2}{a} \int_{ \mc{U}^i_\leq } \zeta_{ a, b; \varepsilon }^{ P_i } f_{ P_i } | \nabla_{ \mc{X} } \phi |^2 + \frac{2}{a} \int_{ \mc{U}^i_> } \zeta_{ a, b; \varepsilon }^{ P_i } f_{ P_i } | \nabla_{ \mc{X} } \phi |^2 := I^i_{1, \leq} + I^i_{1, >} \text{.}
\]

Since \eqref{eq.uvf_bound} and \eqref{eq.conf_comp} imply that on $\mc{U}^i_>$,
\[
v_{ P_i } \gtrsim \frac{ R_-^2 }{ R_+ } \text{,} \qquad - u_{ P_i } \gtrsim \frac{ R_-^2 }{ R_+ } \text{,}
\]
a similar argument as in the proof of Theorem \ref{thm.obs_ext}, using \eqref{eq.uvf_bound}, \eqref{eq.obs_int_MR}, and \eqref{eql.obs_int_a}, imply that $I^i_{1, >}$ can be absorbed into the right-hand side of \eqref{eql.obs_int_2}.
As a result, \eqref{eql.obs_int_2} now becomes
\begin{equation}
\label{eql.obs_int_4} I^i_{1, \leq} + I^i_\Gamma \geq C \int_{ \mc{U}^i_> } \zeta_{ a, b; \varepsilon }^{ P_i } \left[ \frac{ \delta^2 R_-^2 }{ R_+^3 } ( - u_{ P_i } | \partial_{ u_{ P_i } } \phi |^2 + v_{ P_i } | \partial_{ v_{ P_i } } \phi |^2 + v_{ P_i } g^{ab} \nasla^{ P_i }_a \phi \nasla^{ P_i }_b \phi ) + \frac{ \delta a^2 }{ R_+^2 } \phi^2 \right] \text{.}
\end{equation}

\subsubsection{Applications of the Energy Estimates}

Since $\partial \mc{U}$ is timelike, then on any
\[
\mc{V}^i_\tau := \mc{U}^i_\tau \cap \left\{ r_{ P_i } > \frac{ 3 R_- }{4} \right\} \text{,} \qquad | \tau - t_0 | \leq \frac{ R_- }{4} \text{,}
\]
we have from \eqref{eq.uv} and \eqref{eq.carleman_weight} that
\begin{align}
\label{eql.obs_int_50} - u_{ P_i } |_{ \mc{V}^i_\tau } \geq \frac{ R_- }{4} \text{,} &\qquad v_{ P_i } |_{ \mc{V}^i_\tau } \geq \frac{ R_- }{4} \text{,} \\
\notag \frac{ f_{ P_i } }{ ( 1 + \varepsilon u_{ P_i } ) ( 1 - \varepsilon v_{ P_i } ) } |_{ \mc{V}^i_\tau } \geq \frac{ R_-^2 }{ 16 } \text{,} &\qquad \zeta_{ a, b; \varepsilon }^{ P_i } |_{ \mc{V}^i_\tau } \geq \left( \frac{ R_- }{4} e^\frac{ b R_- }{4} \right)^{ 4a } \text{.}
\end{align}
In particular, we see that
\[
\mc{V}^i_\tau \subseteq \mc{U}^i_> \text{,} \qquad | \tau - t_0 | \leq \frac{ R_- }{4} \text{,}
\]
and, similar to the proof of Theorem \ref{thm.obs_ext}, it follows from \eqref{eql.obs_int_a} that
\begin{align}
\label{eql.obs_int_6} I^i_{1, \leq} + I^i_\Gamma &\geq C \left( \frac{ R_- }{ 4 } e^\frac{ b R_- }{4} \right)^{4 a} \int_{ t_0 - \frac{ R_- }{4} }^{ t_0 + \frac{ R_- }{4} } \int_{ \mc{V}^i_\tau } \left[ \frac{ \delta^2 R_-^3 }{ R_+^3 } \cdot | \nabla_{ t, x } \phi |^2 + \frac{ \delta a^2 }{ R_+^2 } \cdot \phi^2 \right] d \tau \\
\notag &\geq \frac{ C \delta^2 R_-^3 }{ R_+^3 } \left( \frac{ R_- }{ 4 } e^\frac{ b R_- }{4} \right)^{4 a} \int_{ t_0 - \frac{ R_- }{4} }^{ t_0 + \frac{ R_- }{4} } \int_{ \mc{V}^i_\tau } [ | \nabla_{ t, x } \phi |^2 + ( 1 + M_0 ) \phi^2 ] d \tau \text{.}
\end{align}

Next, we observe that by \eqref{eq.obs_int_MR} and the triangle inequality,
\[
\mc{V}^1_\tau \cup \mc{V}^2_\tau = \mc{U}_\tau \text{,} \qquad | \tau - t_0 | \leq \frac{ R_- }{4} \text{,}
\]
Thus, summing \eqref{eql.obs_int_6} over $i \in \{ 1, 2 \}$ and applying \eqref{eq.energy_est} yields
\begin{align}
\label{eql.obs_int_7} \sum_{ i = 1}^2 ( I^i_{1, \leq} + I^i_\Gamma ) &\geq \frac{ C \delta^2 R_-^3 }{ R_+^3 } \left( \frac{ R_- }{ 4 } e^\frac{ b R_- }{4} \right)^{4 a} \int_{ t_0 - \frac{ R_- }{4} }^{ t_0 + \frac{ R_- }{4} } \int_{ \mc{U}_\tau } [ | \nabla_{ t, x } \phi |^2 + ( 1 + M_0 ) \phi^2 ] d \tau \\
\notag &\geq \frac{ C \delta^2 R_-^4 }{ R_+^3 } \left( \frac{ R_- }{ 4 } e^\frac{ b R_- }{4} \right)^{4 a} e^{ - K R_- ( 1 + M_0^\frac{1}{2} + M_1 ) } \int_{ \mc{U}_{ t_0 } } [ | \nabla_{ t, x } \phi |^2 + ( 1 + M_0 ) \phi^2 ] \text{,}
\end{align}
where $C, K > 0$ depend on $\mc{U}$.
Moreover, analogous to the proof of Theorem \ref{thm.obs_ext}, we apply \eqref{eq.energy_est_loc}, \eqref{eq.carleman_weight}, \eqref{eq.uvf_bound}, \eqref{eq.obs_int_MR}, \eqref{eql.obs_int_a}, and \eqref{eql.obs_int_Udec} to estimate, for some $C', K'$ depending on $\mc{U}$,
\begin{align}
\label{eql.obs_int_80} I^i_{1, \leq} &\leq \frac{ C' \delta^2 R_-^4 }{ R_+^4 } \left( \frac{ R_- }{ 8 } e^\frac{ b R_- }{8} \right)^{4 a} \int_{ \mc{U}^i } | \nabla_{ t, x } \phi |^2 \\
\notag &\leq \frac{ C' \delta^2 R_-^4 }{ R_+^3 } \left( \frac{ R_- }{ 8 } e^\frac{ b R_- }{8} \right)^{4 a} e^{ K' R_+ ( 1 + M_0^\frac{1}{2} + M_1 ) } \int_{ \mc{U}_{ t_0 } } [ | \nabla_{ t, x } \phi |^2 + ( 1 + M_0 ) \phi^2 ] \text{.}
\end{align}
By the same reasoning as in the proof of Theorem \ref{thm.obs_ext} (see \eqref{eql.obs_ext_absorb}), we have that
\[
C' \left( \frac{ R_- }{ 8 } e^\frac{ b R_- }{8} \right)^{4 a} e^{ K' R_+ ( 1 + M_0^\frac{1}{2} + M_1 ) } \ll C \left( \frac{ R_- }{ 4 } e^\frac{ b R_- }{4} \right)^{4 a} e^{ - K R_- ( 1 + M_0^\frac{1}{2} + M_1 ) } \text{,}
\]
so that combining all the above with \eqref{eql.obs_int_7} and \eqref{eql.obs_int_80} yields
\begin{equation}
\label{eql.obs_int_9} \sum_{i = 1}^2 I^i_\Gamma \geq \frac{ C \delta^2 R_-^4 }{ R_+^3 } \left( \frac{ R_- }{ 4 } e^\frac{ b R_- }{4} \right)^{4 a} e^{ - K R_- ( 1 + M_0^\frac{1}{2} + M_1 ) } \int_{ \mc{U}_{ t_0 } } ( | \nabla_{ t, x } \phi |^2 + \phi^2 ) \text{.}
\end{equation}

\subsubsection{The Boundary Terms}

Noting the signs of $\mc{S}_i$ in $I^i_\Gamma$, we see from \eqref{eq.obs_int_delta} and \eqref{eql.obs_int_9} that
\begin{equation}
\label{eql.obs_int_A} \sum_{ i = 1 }^2 \int_{ \Gamma_+^i } \zeta_{ a, b; \varepsilon }^{ P_i } | \mc{S}_i | \cdot | \mc{N} \phi |^2 \geq \frac{ C \delta^2 R_-^4 }{ R_+^3 } \left( \frac{ R_- }{4} \right)^{4a} e^{ - K R_- ( 1 + M_0^\frac{1}{2} + M_1 ) } \int_{ \mc{U}_{ t_0 } } ( | \nabla_{ t, x } \phi |^2 + \phi^2 ) \text{.}
\end{equation}
The remainder of the proof again proceeds similarly to the corresponding argument in Theorem \ref{thm.obs_ext}.
First, we can use \eqref{eq.carleman_weight}, \eqref{eq.uvf_bound}, \eqref{eq.conf_comp}, \eqref{eq.obs_int_MR}, and \eqref{eql.obs_int_b} to bound
\[
\zeta^{ P_i }_{ a, b; \varepsilon } \leq ( 2 R_+ )^{4a} \text{,}
\]
so that \eqref{eql.obs_int_A} becomes
\begin{align}
\label{eql.obs_int_12} \int_{ \mc{U}_{ t_0 } } ( | \nabla_{ t, x } \phi |^2 + \phi^2 ) &\leq \frac{ C }{ \delta^2 R_- } \left( \frac{ 8 R_+ }{ R_- } \right)^{4a + 3} e^{ K R_- ( 1 + M_0^\frac{1}{2} + M_1 ) } \sum_{ i = 1 }^2 \int_{ \Gamma^i_+ } | \mc{S}_i | \cdot | \mc{N} \phi |^2 \\
\notag &\leq \frac{ C }{ \delta^2 R_- } \left( \frac{ 16 R_+ }{ R_- } \right)^{4a + 3} \sum_{ i = 1 }^2 \int_{ \Gamma^i_+ } | \mc{S}_i | \cdot | \mc{N} \phi |^2 \text{.}
\end{align}
(In the last step, we used the direct analogue of \eqref{eql.obs_ext_absorb2} in the current setting.)
Making a suitable choice for $a \geq n$ in accordance with \eqref{eql.obs_int_a} results in \eqref{eq.obs_int_est}.

\subsection{Some Corollaries} \label{sec.obs_cor}

Finally, we discuss various consequences of Theorems \ref{thm.obs_ext} and \ref{thm.obs_int}.

\subsubsection{The Observation Region}

Our first corollary compares the observation regions from Theorems \ref{thm.obs_ext} and \ref{thm.obs_int} with corresponding regions in existing Carleman method results.
For this, we examine the quantities $\mc{S}$ and $\mc{S}_i$ in \eqref{eq.obs_ext_delta} and \eqref{eq.obs_int_delta}, whose signs determine the observation region.

\begin{proposition} \label{thm.obs_calc}
Let $\mc{U}$ be a GTC, and let $P \in \R^{1+n} \setminus \partial \mc{U}$.
Moreover:
\begin{itemize}
\item Let $\mc{N}$ denote the outer-pointing (Minkowski) unit normal of $\mc{U}$, and let $\nu^t: \partial \mc{U} \rightarrow \R$ and $\nu: \partial \mc{U} \rightarrow \R^n$ denote the Cartesian components of $\mc{N}$:
\begin{equation}
\label{eq.obs_calc_N} \mc{N} := \nu^t \partial_t + \sum_{ i = 1 }^n \nu^i \partial_{ x^i } \text{,} \qquad \nu := ( \nu^1, \dots, \nu^n ) \text{.}
\end{equation}

\item Let $\theta_{ \nu, P }: \partial \mc{U} \rightarrow ( - \pi, \pi ]$ represent the angle between the vectors $\nu$ and $x_P |_{ \partial \mc{U} }$.
\end{itemize}
Then:
\begin{itemize}
\item The following identity holds on $\partial \mc{U} \cap \mc{D}_P$:
\begin{equation}
\label{eq.obs_calc_pre} \mc{N} r_P = \nu \cdot \frac{ x_P }{ r_P } = \sqrt{ 1 + ( \nu^t )^2 } \cdot \cos \theta_{ \nu, P } \text{,}
\end{equation}

\item Furthermore, for any $\varepsilon > 0$, we have on $\partial \mc{U} \cap \mc{D}_P$ that
\begin{align}
\label{eq.obs_calc} ( 1 - \varepsilon r_P ) \mc{N} f_P + \varepsilon f_P \mc{N} r_P &= \frac{1}{2} \left[ 1 - \frac{ \varepsilon ( r_P^2 + t_P^2 ) }{ 2 r_P } \right] ( \nu \cdot x_P ) - \frac{1}{2} ( 1 - \varepsilon r_P ) t_P \nu^t \text{.}
\end{align}
\end{itemize}
\end{proposition}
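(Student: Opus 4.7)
The proposition is essentially a bookkeeping exercise, so the plan is to record carefully what $\mc{N}$ does to each of the basic shifted coordinates and then combine the results. First I would compute the action of $\mc{N}$ on $r_P$ and on $t_P$. Since $r_P = |x_P|$ depends only on the spatial variables, the definition \eqref{eq.obs_calc_N} of $\mc{N}$ yields
\[
\mc{N} r_P \;=\; \sum_{i=1}^n \nu^i \, \partial_{x^i} r_P \;=\; \frac{\nu \cdot x_P}{r_P}, \qquad \mc{N} t_P \;=\; \nu^t,
\]
which is the first equality in \eqref{eq.obs_calc_pre}.

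For the second equality in \eqref{eq.obs_calc_pre}, I would use that $\partial\mc{U}$ is timelike (by Definition \ref{def.gtc}), so its Minkowski unit normal is spacelike: $g(\mc{N},\mc{N}) = -(\nu^t)^2 + |\nu|^2 = 1$, whence $|\nu| = \sqrt{1+(\nu^t)^2}$. The Euclidean definition of $\theta_{\nu,P}$ then gives $\nu \cdot x_P = |\nu| \, r_P \cos\theta_{\nu,P}$, and dividing by $r_P$ completes the identity.

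Next, for \eqref{eq.obs_calc}, the idea is simply to expand $f_P$. From \eqref{eq.ruvf_shift} we have $f_P = \tfrac{1}{4}(r_P^2 - t_P^2)$, so
\[
\mc{N} f_P \;=\; \tfrac{1}{2} r_P \, \mc{N} r_P - \tfrac{1}{2} t_P \, \mc{N} t_P \;=\; \tfrac{1}{2}\, \nu \cdot x_P - \tfrac{1}{2}\, t_P \nu^t,
\]
using the identities already derived. Substituting this into the left-hand side of \eqref{eq.obs_calc} and also using $\mc{N} r_P = (\nu \cdot x_P)/r_P$ in the second term, the expression becomes
\[
(1-\varepsilon r_P)\bigl[\tfrac{1}{2}\,\nu \cdot x_P - \tfrac{1}{2}\, t_P \nu^t\bigr] + \frac{\varepsilon(r_P^2 - t_P^2)}{4 r_P}\, (\nu \cdot x_P).
\]
Grouping the coefficients of $\nu \cdot x_P$ gives $\tfrac{1}{2} - \tfrac{\varepsilon r_P}{2} + \tfrac{\varepsilon r_P}{4} - \tfrac{\varepsilon t_P^2}{4 r_P} = \tfrac{1}{2}\bigl[1 - \tfrac{\varepsilon(r_P^2 + t_P^2)}{2 r_P}\bigr]$, and the coefficient of $t_P \nu^t$ remains $-\tfrac{1}{2}(1 - \varepsilon r_P)$, yielding exactly \eqref{eq.obs_calc}.

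There is no serious obstacle here; the proof is a short calculation, and the only conceptual ingredient is the spacelike character of $\mc{N}$, which follows immediately from Definition \ref{def.gtc}. The value of the statement lies not in its difficulty but in isolating the geometric meaning of the quantities $\mc{S}$ and $\mc{S}_i$ used in Theorems \ref{thm.obs_ext} and \ref{thm.obs_int}, which the corollaries in Section \ref{sec.obs_cor} will exploit to compare the new observation regions with the classical ones.
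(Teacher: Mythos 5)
Your proof is correct and takes essentially the same route as the paper's: compute $\mc{N} r_P$, $\mc{N} t_P$ directly, use $g(\mc{N},\mc{N})=1$ to get $|\nu|=\sqrt{1+(\nu^t)^2}$, then expand $f_P=\tfrac14(r_P^2-t_P^2)$ and collect terms. The only cosmetic difference is that the paper groups by $\mc{N} r_P$ and $\mc{N} t_P$ and substitutes at the end, while you substitute first and group coefficients of $\nu\cdot x_P$ and $t_P\nu^t$ — the same calculation in a slightly different order.
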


\begin{proof}
First, by Definition \ref{def.mink_shift} and \eqref{eq.obs_calc_N}, we have that
\[
\mc{N} ( r_P^2 ) = 2 \sum_{ i = 1 }^n \nu^i \partial_{ x_i } | x_P |^2 = 2 \nu \cdot x_P \text{,} \qquad \mc{N} r_P = \frac{1}{ 2 r_P } \mc{N} ( r_P^2 ) = \nu \cdot \frac{ x_P }{ r_P } = | \nu | \cdot \cos \theta_{ \nu, x_P } \text{.}
\]
Since $g ( \mc{N}, \mc{N} ) = 1$, it follows that
\[
| \nu | = \sqrt{ 1 + ( \nu^t )^2 } \text{,}
\]
which completes the proof of \eqref{eq.obs_calc_pre}.

Next, using \eqref{eq.ruvf_shift}, we expand
\begin{align*}
( 1 - \varepsilon r_P ) \mc{N} f_P + \varepsilon f_P \mc{N} r_P &= \frac{1}{2} ( 1 - \varepsilon r_P ) ( r_P \mc{N} r_P - t_P \mc{N} t_P ) + \frac{1}{4} \varepsilon ( r_P^2 - t_P^2 ) \mc{N} r_P \\
&= \frac{1}{2} \left[ r_P - \frac{1}{2} \varepsilon ( r_P^2 + t_P^2 ) \right] \mc{N} r_P - \frac{1}{2} ( 1 - \varepsilon r_P ) t_P \mc{N} t_P \text{.}
\end{align*}
Since $\mc{N} t_P = \nu^t$ by \eqref{eq.obs_calc_N}, then \eqref{eq.obs_calc_pre} and the above identity imply \eqref{eq.obs_calc}.
\end{proof}

\begin{remark} \label{rmk.obs_standard}
Note that by \eqref{eq.obs_calc_pre}, the condition $\mc{N} r_P > 0$ is equivalent to both
\[
x_P \cdot \nu = [ x - x (P) ] \cdot \nu > 0 \text{,} \qquad \cos \theta_{ \nu, P } > 0 \text{,}
\]
that is, the criterion for the observation region obtained from standard Carleman estimate methods for linear wave equations on a time-independent domain; see, for instance, \eqref{eq.intro_obs_region} and \cite{lasie_trigg_zhang:wave_global_uc, lionj:ctrlstab_hum, zhang:obs_wave_pot}.
\end{remark}

Using Proposition \ref{thm.obs_calc}, we can now reformulate Theorems \ref{thm.obs_ext} and \ref{thm.obs_int} so that the regions of observation are characterized in terms of the aforementioned angle $\theta_{ \nu, P }$:

\begin{theorem} \label{thm.obs_angle}
Let $\mc{U}$ be a GTC, and fix $P \in \R^{1+n} \setminus \partial \mc{U}$ and $0 < \delta \ll 1$.
In addition:
\begin{itemize}
\item Let $\mc{N}$ be the outward-pointing ($g$-)unit normal to $\mc{U}$, let $\nu^t$ and $\nu$ be defined as in \eqref{eq.obs_calc_N}, and let $\theta_{ \nu, P }: \partial \mc{U} \rightarrow ( - \pi, \pi ]$ denote the angle between $\nu$ and $x_P$.

\item Consider the setting of Problem \ref{prb.linear_wave}, and let $\mc{X}$, $V$ be as in \eqref{eq.XV}.

\item Assume $\mc{U} \cap \mc{D}_P$ is a bounded subset of $\R^{1+n}$, and let $M_0$, $M_1$ be as in \eqref{eq.obs_ext_MR}.

\item Let $\Gamma_{ P, \delta }$ denote the following boundary region:
\begin{equation}
\label{eq.obs_angle_Gamma} \Gamma_{ P, \delta } := \partial \mc{U} \cap \mc{D}_P \cap \left\{ \cos \theta_{ \nu, P } > ( 1 - \delta^2 )^{ \operatorname{sgn} ( t_P \nu^t ) } \cdot \frac{ t_P \nu^t }{ r_P \sqrt{ 1 + ( \nu^t )^2 } } \right\} \text{.}
\end{equation}
\end{itemize}
Then, for any solution $\phi \in C^2 ( \mc{U} ) \cap C^1 ( \bar{\mc{U}} )$ of \eqref{eq.linear_wave} that also satisfies $\phi |_{ \partial \mc{U} \cap \mc{D}_P } = 0$:
\begin{itemize}
\item If $P \not\in \bar{\mc{U}}$, then we have the observability inequality
\begin{equation}
\label{eq.obs_angle_ext} \int_{ \mc{U}_{ t (P) } } ( | \nabla_{ t, x } \phi |^2 + \phi^2 ) \lesssim_{ \mc{U}, P, \delta, M_0, M_1 } \int_{ \Gamma_{ P, \delta } } | \mc{N} \phi |^2 \text{.}
\end{equation}

\item If $P \in \mc{U}$, then for any open subset $\mc{Y}_{ P, \delta }$ of $\partial \mc{U}$ containing $\bar{\Gamma}_{ P, \delta }$, we have that
\begin{equation}
\label{eq.obs_angle_int} \int_{ \mc{U}_{ t (P) } } ( | \nabla_{ t, x } \phi |^2 + \phi^2 ) \lesssim_{ \mc{U}, P, \delta, M_0, M_1, \mc{Y}_{ P, \delta } } \int_{ \mc{Y}_{ P, \delta } } | \mc{N} \phi |^2 \text{.}
\end{equation}
\end{itemize}
\end{theorem}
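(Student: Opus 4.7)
The plan is to deduce Theorem \ref{thm.obs_angle} from Theorems \ref{thm.obs_ext} and \ref{thm.obs_int} by rewriting the sign conditions $\mc{S} > 0$ and $\mc{S}_i > 0$ of \eqref{eq.obs_ext_delta} and \eqref{eq.obs_int_delta} into the angular form appearing in \eqref{eq.obs_angle_Gamma}. First, I would observe that with the choice $\varepsilon := \delta^2 / R_+$, the quantity $\mc{S}$ coincides exactly with the left-hand side of the identity \eqref{eq.obs_calc}. Applying Proposition \ref{thm.obs_calc} together with $\nu \cdot x_P = r_P \sqrt{ 1 + (\nu^t)^2 } \cos \theta_{\nu, P}$ then gives, on $\partial \mc{U} \cap \mc{D}_P$, the representation
\begin{equation*}
2 \mc{S} = r_P \sqrt{ 1 + (\nu^t)^2 } \cdot A \cos \theta_{\nu, P} - B \cdot t_P \nu^t, \qquad A := 1 - \frac{ \varepsilon ( r_P^2 + t_P^2 ) }{ 2 r_P }, \quad B := 1 - \varepsilon r_P.
\end{equation*}

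Next, I would use $|t_P| < r_P \leq R_+$ on $\bar{\mc{U}} \cap \mc{D}_P$ together with $\varepsilon = \delta^2/R_+$ and $\delta \ll 1$: a short calculation shows $A \in (1-\delta^2, 1]$ and $B \in [1-\delta^2, 1]$, and the inequality $r_P \geq (r_P^2 + t_P^2)/(2r_P)$ on $\mc{D}_P$ forces $\rho := B/A \in [1-\delta^2, 1]$. Dividing $\mc{S} > 0$ by the positive quantity $\tfrac{1}{2} r_P \sqrt{1+(\nu^t)^2} A$ converts it into $\cos \theta_{\nu, P} > \rho \cdot t_P \nu^t / (r_P \sqrt{1+(\nu^t)^2})$. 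Now a case analysis on $\operatorname{sgn}(t_P \nu^t)$ shows that $(1-\delta^2)^{\operatorname{sgn}(t_P \nu^t)} \cdot t_P \nu^t \leq \rho \cdot t_P \nu^t$ in all cases (the case $t_P \nu^t = 0$ is trivial, since both sides vanish), so the condition $\mc{S} > 0$ implies the defining inequality of $\Gamma_{P, \delta}$ in \eqref{eq.obs_angle_Gamma}. Consequently $\Gamma_+ \subseteq \Gamma_{P, \delta}$, and the same reasoning applied to $P_i$ gives $\Gamma^i_+ \subseteq \Gamma_{P_i, \delta}$.

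With these inclusions in hand, the exterior inequality \eqref{eq.obs_angle_ext} follows directly from Theorem \ref{thm.obs_ext}: since $|\mc{S}|$ is bounded on the compact set $\partial \mc{U} \cap \bar{\mc{D}}_P$, we have $\int_{\Gamma_+} |\mc{S}| |\mc{N}\phi|^2 \lesssim \int_{\Gamma_{P,\delta}} |\mc{N}\phi|^2$, and the explicit constants in \eqref{eq.obs_ext_est} depending on $R_\pm, M_0, M_1$ are absorbed into the implicit constant. For the interior inequality \eqref{eq.obs_angle_int}, I would choose two distinct points $P_1, P_2 \in \mc{U}$ on the hyperplane $\{ t = t(P) \}$, both sufficiently close to $P$, and apply Theorem \ref{thm.obs_int}. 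A continuity/compactness argument then shows that, for $P_1, P_2$ close enough to $P$ (depending on $\mc{Y}_{P,\delta}$), one has $\bar{\Gamma}_{P_i, \delta} \subseteq \mc{Y}_{P, \delta}$ for $i = 1, 2$; combining this with the uniform boundedness of $|\mc{S}_i|$ and summing over $i$ yields \eqref{eq.obs_angle_int}.

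The main obstacle I expect is this final continuity step in the interior case. The subtlety is that the defining function of $\Gamma_{P', \delta}$ has a kink at $\{ t_{P'} \nu^t = 0 \}$ due to the $\operatorname{sgn}$ exponent, so one must justify that as $P' \to P$ the set $\bar{\Gamma}_{P', \delta}$ varies continuously (in the Hausdorff sense) into any fixed open neighborhood of $\bar{\Gamma}_{P, \delta}$. This is resolved by noting that, although not smooth, the defining function is still \emph{jointly continuous} in $P'$ and the spatial variable (the map $s \mapsto (1-\delta^2)^{\operatorname{sgn}(s)} s$ is continuous through $s = 0$), so standard compactness of $\bar{\Gamma}_{P, \delta}$ inside the open set $\mc{Y}_{P, \delta}$ gives the required inclusion for $P_1, P_2$ close enough to $P$.
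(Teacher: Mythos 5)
Your proposal is correct and follows essentially the same route as the paper's proof: apply Theorem \ref{thm.obs_ext} (resp.\ Theorem \ref{thm.obs_int} at two nearby points $P_1,P_2$ with $t(P_1)=t(P_2)=t(P)$), use Proposition \ref{thm.obs_calc} with $\varepsilon=\delta^2/R_+$ to rewrite $\mc{S}>0$ in the angular form, and conclude $\Gamma_+\subseteq\Gamma_{P,\delta}$ by the same sign-case analysis on $t_P\nu^t$ (your packaging of the two coefficient bounds into the single ratio $\rho=B/A\in[1-\delta^2,1)$ is a minor cosmetic difference). Your closing discussion of the continuity of $P'\mapsto\bar{\Gamma}_{P',\delta}$ in the interior case is at the same level of detail as the paper's ``by continuity'' step, so no gap relative to the paper.
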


\begin{remark}
In particular, recalling Remark \ref{rmk.obs_standard}, we can make, modulo restrictions to $\mc{D}_P$, the following comparison between the region $\Gamma_{ P, \delta }$ in Theorem \ref{thm.obs_angle} and the classical region $\theta_{ P, \delta } > 0$:
\begin{itemize}
\item At points where $t_P \nu^t > 0$ (that is, where $\mc{U}$ expands away from $P$), the observation region $\Gamma_{ P, \delta }$ is smaller than the corresponding region $\cos \theta_{ \nu, P } > 0$ in classical results.

\item At points where $t_P \nu^t < 0$ (that is, where $\mc{U}$ shrinks away from $P$), the observation region $\Gamma_{ P, \delta }$ is larger than the corresponding region $\cos \theta_{ \nu, P } > 0$ in classical results.

\item At points where $t_P \nu^t = 0$, the observation region $\Gamma_{ P, \delta }$ matches the classical results.
\end{itemize}
\end{remark}

\begin{proof}
First, we consider the case $P \not\in \bar{\mc{U}}$.
Applying Theorem \ref{thm.obs_ext} to $\mc{U}$, $P$, and $\delta$ yields
\[
\int_{ \mc{U}_{ t (P) } } ( | \nabla_{ t, x } \phi |^2 + \phi^2 ) \lesssim_{ \mc{U}, P, \delta, M_0, M_1 } \int_{ \Gamma_+ } | \mc{N} \phi |^2
\]
for $\phi$ as in the assumptions of the theorem, and with $\Gamma_+$ as defined in \eqref{eq.obs_ext_delta}.
Thus, to complete the proof in the case, it suffices to show that this $\Gamma_+$ is contained in $\Gamma_{ P, \delta }$.

Let $R_+$ be as in \eqref{eq.obs_ext_MR}.
Applying \eqref{eq.obs_calc_pre} and \eqref{eq.obs_calc}, with $\varepsilon := \delta^2 R_+^{-1}$, we see that on $\Gamma_+$,
\begin{align*}
0 &< ( 1 - \varepsilon r_P ) \mc{N} f_P + \varepsilon f_P \mc{N} r_P \\
&= \frac{1}{2} \left[ 1 - \frac{ \delta^2 ( r_P^2 + t_P^2 ) }{ 2 R_+ r_P } \right] r_P \sqrt{ 1 + ( \nu^t )^2 } \cdot \cos \theta_{ \nu, P } - \frac{1}{2} \left( 1 - \frac{ \delta^2 r_P }{ R_+ } \right) \cdot t_P \nu^t \text{,}
\end{align*}
or equivalently,
\begin{equation}
\label{eql.obs_angle_1} \cos \theta_{ \nu, x_P } > \left[ \frac{ 1 - \frac{ \delta^2 ( r_P^2 + t_P^2 ) }{ 2 r_P R_+ } }{ 1 - \frac{ \delta^2 r_P }{ R_+ } } \right] \cdot \frac{ t_P \nu_t }{ r_P \sqrt{ 1 + ( \nu^t )^2 } } \text{.}
\end{equation}
Using that $| t_P | \leq r_P \leq R_+$ on $\partial \mc{U} \cap \mc{D}_P$, we observe that:
\begin{itemize}
\item If $t_P \nu^t = 0$, then \eqref{eql.obs_angle_1} implies $\cos \theta_{ \nu, x_P } > 0$.

\item If $t_P \nu^t > 0$, then \eqref{eql.obs_angle_1} implies
\[
\cos \theta_{ \nu, x_P } > \left( 1 - \frac{ 2 \delta^2 r_P^2 }{ 2 r_P R_+ } \right) \cdot \frac{ t_P \nu_t }{ r_P \sqrt{ 1 + ( \nu^t )^2 } } \geq ( 1 - \delta^2 ) \cdot \frac{ t_P \nu_t }{ r_P \sqrt{ 1 + ( \nu^t )^2 } } \text{.}
\]

\item If $t_P \nu^t < 0$, then \eqref{eql.obs_angle_1} implies
\[
\cos \theta_{ \nu, x_P } > \frac{1}{ 1 - \frac{ \delta^2 r_P }{ R_+ } } \cdot \frac{ t_P \nu_t }{ r_P \sqrt{ 1 + ( \nu^t )^2 } } \geq \frac{1}{ 1 - \delta^2 } \cdot \frac{ t_P \nu_t }{ r_P \sqrt{ 1 + ( \nu^t )^2 } } \text{.}
\]
\end{itemize}
Thus, comparing the above with \eqref{eq.obs_angle_Gamma}, we conclude that $\Gamma_+ \subseteq \Gamma_{ P, \delta }$, as desired.

Next, consider the case $P \in \mc{U}$, and let $\mc{Y}_{ P, \delta }$ be as assumed.
Since $\mc{U} \cap \mc{D}_P$ is bounded, $\bar{\Gamma}_{ P, \delta }$ is compact.
Thus, by continuity, we can find nearby $P_1, P_2 \in \mc{U}$, with $t ( P_1 ) = t ( P_2 ) = t ( P )$, with
\begin{equation}
\label{eql.obs_angle_2} \Gamma_{ P_1, \delta } \cup \Gamma_{ P_2, \delta } \subseteq \mc{Y}_{ P, \delta } \text{,}
\end{equation}
where $\Gamma_{ P_i, \delta }$ is defined as in \eqref{eq.obs_angle_Gamma}, but with $P$ replaced by $P_i$.

Applying Theorem \ref{thm.obs_int} to $\mc{U}$, $( P_1, P_2 )$, and $\delta$ yields
\[
\int_{ \mc{U}_{ t (P) } } ( | \nabla_{ t, x } \phi |^2 + \phi^2 ) \lesssim_{ \mc{U}, P, \delta, M_0, M_1, \mc{Y}_{ P, \delta } } \int_{ \Gamma_+^1 \cup \Gamma_+^2 } | \mc{N} \phi |^2 \text{,}
\]
for $\phi$ as in the assumptions of the theorem, and with $\Gamma_+^1$, $\Gamma_+^2$ as defined in \eqref{eq.obs_int_delta}.\footnote{Note that the constant $R_-$ in \eqref{eq.obs_int_est} depends on $P$ and our choice of $\mc{Y}_{ P, \delta }$.}
Consequently, to complete the proof, it suffices to show that $\Gamma_+^1 \cup \Gamma_+^2$ is contained in $\mc{Y}_{ P, \delta }$.

For this, we apply \eqref{eq.obs_calc_pre} and \eqref{eq.obs_calc} as before in order to obtain
\[
\Gamma_+^1 \subseteq \Gamma_{ P_1, \delta } \text{,} \qquad \Gamma_+^2 \subseteq \Gamma_{ P_2, \delta } \text{.}
\]
Combining this with \eqref{eql.obs_angle_2} yields, as desired,
\[
\Gamma_+^1 \cup \Gamma_+^2 \subseteq \Gamma_{ P_1, \delta } \cup \Gamma_{ P_2, \delta } \subseteq \mc{Y}_{ P, \delta } \text{.} \qedhere
\]
\end{proof}

\subsubsection{Static Domains}

Next, we consider the special case of time-independent domains mentioned in Example \ref{ex.static_cyl}.
Applying Theorem \ref{thm.obs_angle} to this setting results in the following:

\begin{corollary} \label{thm.obs_static}
Let $\Omega \subseteq \R^n$ be open, and consider the time-independent GTC
\begin{equation}
\label{eq.obs_static_U} \mc{U} := \R \times \Omega \text{.}
\end{equation}
In addition:
\begin{itemize}
\item Fix $x_0 \in \R^n$, as well as $\tau_\pm \in \R$ satisfying
\begin{equation}
\label{eq.obs_static_R} \tau_+ - \tau_- > 2 R \text{,} \qquad R := \sup_{ y \in \partial \Omega } | y - x_0 | \text{.}
\end{equation}

\item Consider the setting of Problem \ref{prb.linear_wave}, and let $\mc{X}$, $V$ be as in \eqref{eq.XV}.

\item Let $\nu: \partial \Omega \rightarrow \R^n$ represent the outer unit normal to $\Omega$ in $\R^n$, and define
\begin{equation}
\label{eq.obs_static_Gamma} \Gamma_{ x_0; \tau_\pm } := \mc{D}_P \cap \{ ( \tau, y ) \in \partial \mc{U}_{ \tau_-, \tau_+ } \mid ( y - x_0 ) \cdot \nu (y) > 0 \} \text{,} \qquad P := \left( \frac{ \tau_+ + \tau_- }{2}, x_0 \right) \text{.}
\end{equation}
\end{itemize}
Then, for any solution $\phi \in C^2 ( \mc{U} ) \cap C^1 ( \bar{\mc{U}} )$ of \eqref{eq.linear_wave} that also satisfies $\phi |_{ \partial \mc{U}_{ \tau_-, \tau_+ } } = 0$:
\begin{itemize}
\item If $x_0 \not\in \bar{\Omega}$, then we have the observability inequality
\begin{equation}
\label{eq.obs_static_ext} \int_{ \mc{U}_{ \tau_\pm } } ( | \nabla_{ t, x } \phi |^2 + \phi^2 ) \lesssim_{ \Omega, x_0, \tau_\pm, V, \mc{X} } \int_{ \Gamma_{ x_0; \tau_\pm } } \left| \sum_{ i = 1 }^n \nu^i \partial_{ x_i } \phi \right|^2 \text{.}
\end{equation}

\item If $x_0 \in \bar{\Omega}$, then for any open subset $\mc{Y}_{ x_0; \tau_\pm }$ of $\partial \mc{U}$ containing $\bar{\Gamma}_{ x_0; \tau_\pm }$,
\begin{equation}
\label{eq.obs_static_int} \int_{ \mc{U}_{ \tau_\pm } } ( | \nabla_{ t, x } \phi |^2 + \phi^2 ) \lesssim_{ \Omega, x_0, \tau_\pm, V, \mc{X}, \mc{Y}_{ x_0; \tau_\pm } } \int_{ \mc{Y}_{ x_0; \tau_\pm } } \left| \sum_{ i = 1 }^n \nu^i \partial_{ x_i } \phi \right|^2 \text{.}
\end{equation}
\end{itemize}
\end{corollary}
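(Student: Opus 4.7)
The plan is to apply Theorem \ref{thm.obs_angle} at the spacetime center
\[
P := \left( \tfrac{ \tau_+ + \tau_- }{ 2 }, x_0 \right) \text{,}
\]
and then to transport the resulting bound on $\mc{U}_{ t (P) }$ to the desired cross-sections $\mc{U}_{ \tau_\pm }$ using the energy estimate in Proposition \ref{thm.energy_est}.

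First I would verify that the hypotheses of Theorem \ref{thm.obs_angle} are met. By Example \ref{ex.static_cyl}, $\mc{U} = \R \times \Omega$ is a GTC, and its outward $g$-unit normal is $\mc{N} = ( 0, \nu )$, so in the notation of \eqref{eq.obs_calc_N} we have $\nu^t \equiv 0$, and $\mc{N} \phi = \sum_{i = 1}^n \nu^i \partial_{ x_i } \phi$.  Since $\mc{D}_P = \{ | t_P | < r_P \}$ and $r_P \leq R$ on $\bar{\Omega}$, the assumption \eqref{eq.obs_static_R} forces $\partial \mc{U} \cap \mc{D}_P \subseteq \{ \tau_- < t < \tau_+ \}$, from which $\mc{U} \cap \mc{D}_P$ is bounded.

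Next I would identify the observation region.  Since $\nu^t \equiv 0$, the right-hand side of the condition defining $\Gamma_{ P, \delta }$ in \eqref{eq.obs_angle_Gamma} vanishes identically, so the condition reduces to $\cos \theta_{ \nu, P } > 0$.  By the identity \eqref{eq.obs_calc_pre} in Proposition \ref{thm.obs_calc}, this is in turn equivalent to $( y - x_0 ) \cdot \nu > 0$, so $\Gamma_{ P, \delta }$ coincides exactly with the region $\Gamma_{ x_0; \tau_\pm }$ defined in \eqref{eq.obs_static_Gamma}.  Moreover, because $\partial \mc{U} \cap \mc{D}_P$ is already contained in $\partial \mc{U}_{ \tau_-, \tau_+ }$, the Dirichlet hypothesis $\phi |_{ \partial \mc{U}_{ \tau_-, \tau_+ } } = 0$ suffices to invoke Theorem \ref{thm.obs_angle}.

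Now, when $x_0 \notin \bar{\Omega}$ (respectively $x_0 \in \Omega$), the point $P$ lies outside $\bar{\mc{U}}$ (respectively inside $\mc{U}$), so the exterior branch \eqref{eq.obs_angle_ext} (respectively the interior branch \eqref{eq.obs_angle_int}) of Theorem \ref{thm.obs_angle} yields
\[
\int_{ \mc{U}_{ t (P) } } ( | \nabla_{t, x} \phi |^2 + \phi^2 ) \lesssim \int_{ \Gamma_{ x_0; \tau_\pm } } \Big| \sum_{i} \nu^i \partial_{ x_i } \phi \Big|^2 \qquad \text{respectively} \qquad \lesssim \int_{ \mc{Y}_{ x_0; \tau_\pm } } \Big| \sum_{i} \nu^i \partial_{ x_i } \phi \Big|^2 \text{,}
\]
with implicit constants depending on $\Omega, x_0, \tau_\pm, \mc{X}, V$ (and $\mc{Y}_{ x_0; \tau_\pm }$ in the second case).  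To pass from $\mc{U}_{ t (P) }$ to $\mc{U}_{ \tau_\pm }$, I would apply Proposition \ref{thm.energy_est} twice, once on each of the slabs $\mc{U} ( \mc{U}_{ \tau_- }, \mc{U}_{ t (P) } )$ and $\mc{U} ( \mc{U}_{ t (P) }, \mc{U}_{ \tau_+ } )$, with generator $Z = \partial_t$; since $\phi$ vanishes on $\partial \mc{U}_{ \tau_-, \tau_+ }$ and the coefficients $\mc{X}, V$ are uniformly bounded there, the boundary hypotheses are met and the conclusion follows.

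The one remaining obstacle is the boundary case $x_0 \in \partial \Omega$, which is covered by the statement (since $\bar{\Omega}$ is closed) but escapes Theorem \ref{thm.obs_angle} because there $P \in \partial \mc{U}$.  I would handle this by approximation: fix a sequence $x_\eta \in \Omega$ with $x_\eta \to x_0$, set $P_\eta := ( t(P), x_\eta )$, and observe that the observation region $\Gamma_{ P_\eta, \delta }$ varies continuously and collapses onto $\Gamma_{ x_0; \tau_\pm }$ as $\eta \to 0$.  Since $\bar{\Gamma}_{ x_0; \tau_\pm }$ is compact and $\mc{Y}_{ x_0; \tau_\pm }$ is an open neighborhood, for all sufficiently small $\eta$ we have $\bar{\Gamma}_{ P_\eta, \delta } \subseteq \mc{Y}_{ x_0; \tau_\pm }$, at which point the interior case of Theorem \ref{thm.obs_angle} applied at $P_\eta$ (combined with the same energy estimate) gives the desired bound \eqref{eq.obs_static_int}.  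Verifying this continuous collapse—specifically, that the implicit constants in Theorem \ref{thm.obs_angle} remain bounded as $\eta \to 0$—is the main technical subtlety, but it follows by noting that the quantities $R_+, R_-, M_0, M_1$ governing the constants in Theorems \ref{thm.obs_ext} and \ref{thm.obs_int} depend continuously on the choice of reference point(s) away from the degeneracies already excluded.
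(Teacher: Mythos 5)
Your argument is essentially the same as the paper's: invoke Theorem \ref{thm.obs_angle} at the central point $P$, verify (via $\nu^t \equiv 0$ and Proposition \ref{thm.obs_calc}) that $\Gamma_{P,\delta}$ collapses to $\Gamma_{x_0;\tau_\pm}$, and transport from $\mc{U}_{t(P)}$ to $\mc{U}_{\tau_\pm}$ via the energy estimate \eqref{eq.energy_est}.

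The one place you diverge is the boundary case $x_0 \in \partial \Omega$. You approach $x_0$ from \emph{inside} $\Omega$ (using the interior branch \eqref{eq.obs_angle_int} at nearby $P_\eta \in \mc{U}$), while the paper picks a nearby $x_0' \not\in \Omega$ and uses the exterior branch \eqref{eq.obs_angle_ext}; both work, and the paper's choice is marginally simpler since the exterior estimate already delivers a bound on $\Gamma_{P',\delta}$ itself rather than a neighborhood. One note of caution on your final sentence: the worry about whether the implicit constants remain bounded as $\eta \to 0$ is a red herring. No limit is taken. You only need a \emph{single} fixed $\eta$ small enough that $\bar{\Gamma}_{P_\eta,\delta} \subseteq \mc{Y}_{x_0;\tau_\pm}$ (which exists by the compactness and continuity you already invoke), and the constant in Theorem \ref{thm.obs_angle} applied at that one fixed $P_\eta$ is whatever it is — no uniformity in $\eta$ is required. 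Stripping out the discussion of "bounded constants as $\eta \to 0$" would make the final step both shorter and more clearly correct.
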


\begin{remark}
In particular, we can directly compare the results in Corollary \ref{thm.obs_static} with those from classical Carleman-based methods.
Recalling Remark \ref{rmk.obs_standard}, we see that:
\begin{itemize}
\item When $x_0 \not\in \bar{\Omega}$, the observation region $\Gamma_{ x_0, \tau_\pm }$ is simply the observation region from classical results, but further restricted to the null cone exterior $\mc{D}_P$.

\item When $x_0 \in \bar{\Omega}$, the observation region $\mc{Y}_{ x_0, \tau_\pm }$ is strictly larger than the standard observation region restricted to $\mc{D}_P$, though by an arbitrarily small amount.
This arises from the fact that the interior estimate \eqref{eq.obs_int_est} must be applied at two different points.
\end{itemize}
\end{remark}

\begin{proof}
First, we observe that $\nu$ in the theorem statement coincides with the $\nu$ defined in Theorem \ref{thm.obs_angle}.
In addition, here $\nu$ also represents the Minkowski outer normal to $\mc{U}$.

Let us first assume that $x_0 \not\in \partial \Omega$, and hence $P \not\in \partial \mc{U}$.
We now apply Theorem \ref{thm.obs_angle} to the above $\mc{U}$ and $P$, along with any fixed $0 < \delta \ll 1$.
We claim that $\Gamma_{ x_0; \tau_\pm }$ coincides with $\Gamma_{ P, \delta }$, as defined in \eqref{eq.obs_angle_Gamma}; this is a consequence of the following observations:
\begin{itemize}
\item Since $\nu^t \equiv 0$ for our $\mc{U}$, we have from \eqref{eq.obs_calc_pre} and \eqref{eq.obs_angle_Gamma} that
\[
\Gamma_{ P, \delta } = \partial \mc{U} \cap \mc{D}_P \cap \{ \cos \theta_{ \nu, P } > 0 \} = \mc{D}_P \cap \{ ( y, \tau ) \in \partial \mc{U} \mid x_P ( \tau, y ) \cdot \nu (y) > 0 \} \text{.}
\]

\item The assumption $\tau_+ - \tau_- > 2 R$ in \eqref{eq.obs_static_R} implies that $\mc{D}_P \cap \partial \mc{U} \subseteq \partial \mc{U}_{ \tau_-, \tau_+ }$.
\end{itemize}
The application of Theorem \ref{thm.obs_angle} now splits into two cases:
\begin{itemize}
\item If $x_0 \not\in \bar{\Omega}$, then $P \not\in \bar{\mc{U}}$, and \eqref{eq.energy_est} and \eqref{eq.obs_angle_ext} yield
\[
\int_{ \mc{U}_{ \tau_\pm } } ( | \nabla_{ t, x } \phi |^2 + \phi^2 ) \lesssim_{ \Omega, \tau_\pm, V, \mc{X} } \int_{ \mc{U}_{ t ( P ) } } ( | \nabla_{ t, x } \phi |^2 + \phi^2 ) \lesssim_{ \Omega, x_0, \tau_\pm, V, \mc{X} } \int_{ \Gamma_{ x_0; \tau_\pm } } \left| \sum_{ i = 1 }^n \nu^i \partial_{ x_i } \phi \right|^2 \text{.}
\]

\item Similarly, if $x_0 \in \Omega$, then $P \in \mc{U}$, and \eqref{eq.energy_est} and \eqref{eq.obs_angle_int} yield
\[
\int_{ \mc{U}_{ \tau_\pm } } ( | \nabla_{ t, x } \phi |^2 + \phi^2 ) \lesssim_{ \Omega, x_0, \tau_\pm, M_0, M_1, \mc{Y}_{ x_0, \tau_\pm } } \int_{ \mc{Y}_{ x_0; \tau_\pm } } \left| \sum_{ i = 1 }^n \nu^i \partial_{ x_i } \phi \right|^2 \text{.}
\]
\end{itemize}

Finally, the remaining case $x_0 \in \partial \Omega$ can be obtained by applying Theorem \ref{thm.obs_angle} to a nearby point
\[
P' = \left( \frac{ \tau_+ + \tau_- }{2}, x_0' \right) \text{,} \qquad x_0' \not\in \Omega \text{,}
\]
with $| x_0' - x_0 |$ small enough so that $\Gamma_{ P', \delta } \subseteq \mc{Y}_{ x_0; \tau_\pm }$.\footnote{Here, $\Gamma_{ P', \delta }$ is defined as in \eqref{eq.obs_angle_Gamma}, but with $P$ replaced by $P'$.}
\end{proof}

\subsubsection{A Unified Estimate}

We conclude with a precise version of the result roughly stated in Theorem \ref{thm.intro_obs_main}.
While this is slightly weaker (in terms of the observation region) than Theorems \ref{thm.obs_ext} and \ref{thm.obs_int}, it unifies the interior and exterior estimates and provides a cleaner statement.

\begin{theorem} \label{thm.obs_combo}
Let $\mc{U} \subseteq \R^{1+n}$ be a GTC, and fix $x_0 \in \R^n$ and $\tau_\pm \in \R$ such that
\begin{equation}
\label{eq.obs_combo_R} \tau_+ - \tau_- > R_+ + R_- \text{,} \qquad R_\pm := \sup_{ ( \tau_\pm, y ) \in \partial \mc{U} } | y - x_0 | \text{.}
\end{equation}
\begin{itemize}
\item In addition, choose $t_0 \in ( \tau_-, \tau_+ )$ so that
\begin{equation}
\label{eq.obs_combo_t0} t_0 - \tau_- > R_- \text{,} \qquad \tau_+ - t_0 > R_+ \text{.}
\end{equation}

\item Consider the setting of Problem \ref{prb.linear_wave}, and let $\mc{X}$, $V$ be as in \eqref{eq.XV}.

\item Let $\mc{N}$ denote the outer-pointing ($g$-)unit normal of $\mc{U}$, let
\begin{equation}
\label{eq.obs_combo_Gamma} \Gamma_\dagger := \partial \mc{U}_{ \tau_-, \tau_+ } \cap \mc{D}_P \cap \{ \mc{N} f_P > 0 \} \text{,} \qquad P := ( t_0, x_0 ) \text{,}
\end{equation}
and let $\mc{Y}_\dagger$ be a neighborhood of $\bar{\Gamma}_\dagger$ in $\partial \mc{U}$.
\end{itemize}
Then, for any solution $\phi \in C^2 ( \mc{U} ) \cap C^1 ( \bar{\mc{U}} )$ of \eqref{eq.linear_wave} that also satisfies $\phi |_{ \partial \mc{U}_{ \tau_-, \tau_+ } \cap \mc{D}_P } = 0$,
\begin{equation}
\label{eq.obs_combo} \int_{ \mc{U}_{ \tau_\pm } } ( | \nabla_{ t, x } \phi |^2 + \phi^2 ) \lesssim_{ \mc{U}, P, \tau_\pm, V, \mc{X}, \mc{Y}_\dagger } \int_{ \mc{Y}_\dagger } | \mc{N} \phi |^2 \text{.}
\end{equation}
\end{theorem}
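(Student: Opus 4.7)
The plan is to deduce Theorem~\ref{thm.obs_combo} from Theorem~\ref{thm.obs_ext} or Theorem~\ref{thm.obs_int}, according to whether $P := (t_0, x_0)$ lies outside $\bar{\mc{U}}$, inside $\mc{U}$, or on $\partial \mc{U}$, and then to transfer the bound obtained at the slice $\{t = t_0\}$ to $\mc{U}_{\tau_\pm}$ via the energy estimate \eqref{eq.energy_est}.

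First I would record a geometric consequence of \eqref{eq.obs_combo_R}--\eqref{eq.obs_combo_t0}: by subharmonicity of $|y - x_0|^2$ and the maximum principle, $|y - x_0| \leq R_\pm$ for every $y \in \overline{\Omega_{\tau_\pm}}$; combined with $t_0 - \tau_- > R_-$ and $\tau_+ - t_0 > R_+$, this gives $f_P(\tau_\pm, y) < 0$. Hence the two slices $\overline{\mc{U}}_{\tau_\pm}$ sit strictly inside the time cone of $P$, so $\overline{\mc{U} \cap \mc{D}_P} \cap \{t = \tau_\pm\} = \emptyset$ and $\partial \mc{U} \cap \mc{D}_P \cap [\tau_-, \tau_+] \subseteq \partial \mc{U}_{\tau_-, \tau_+}$. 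The Dirichlet hypothesis on $\partial \mc{U}_{\tau_-, \tau_+} \cap \mc{D}_P$ then covers all of the relevant boundary. If $\mc{U} \cap \mc{D}_P$ is not already bounded, I pinch $\mc{U}$ off along a generator outside a slight enlargement of $[\tau_-, \tau_+]$ to produce a GTC $\tilde{\mc{U}}$ that agrees with $\mc{U}$ on $[\tau_-, \tau_+]$ and satisfies $\tilde{\mc{U}} \cap \mc{D}_P$ bounded, and extend $\phi$ by a Dirichlet-preserving modification outside $[\tau_-, \tau_+]$; this puts us in the setting of Theorems~\ref{thm.obs_ext} and \ref{thm.obs_int}.

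Fix $0 < \delta \ll 1$. If $P \notin \bar{\mc{U}}$, apply Theorem~\ref{thm.obs_ext} to $(\tilde{\mc{U}}, P, \delta)$; if $P \in \mc{U}$, apply Theorem~\ref{thm.obs_int} with reference points $P_1, P_2 \in \mc{U} \cap \{t = t_0\}$ chosen near $P$; if $P \in \partial \mc{U}$, perturb $P$ slightly in either direction and reduce to one of the previous cases. In each case, the quantity $\mc{S}$ appearing in \eqref{eq.obs_ext_delta} or \eqref{eq.obs_int_delta} converges uniformly to $\mc{N} f_P$ on the compact set $\overline{\partial \mc{U} \cap \mc{D}_P \cap \{\tau_- \leq t \leq \tau_+\}}$ as $\delta \searrow 0$ (and as $P_i \to P$). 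Since $\mc{Y}_\dagger$ is an open neighborhood of $\bar{\Gamma}_\dagger = \overline{\{\mc{N} f_P > 0\} \cap \partial \mc{U}_{\tau_-, \tau_+} \cap \mc{D}_P}$, a direct compactness argument shows that the observation regions produced by Theorem~\ref{thm.obs_ext}/\ref{thm.obs_int} sit inside $\mc{Y}_\dagger$ for all sufficiently small $\delta$ (and $|P_i - P|$). Fixing such a choice yields a bound of $\int_{\mc{U}_{t_0}} (|\nabla_{t,x}\phi|^2 + \phi^2)$ by $\int_{\mc{Y}_\dagger} |\mc{N}\phi|^2$, up to a constant that depends on $\mc{U}$, $P$, $\tau_\pm$, $V$, $\mc{X}$, and $\mc{Y}_\dagger$; propagating this to $\mc{U}_{\tau_\pm}$ via \eqref{eq.energy_est} produces \eqref{eq.obs_combo}.

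The main obstacle is the compactness step that passes from the $\delta$-dependent regions $\{\mc{S} > 0\}$ to the $\delta$-independent region $\Gamma_\dagger = \{\mc{N} f_P > 0\}$. This is precisely why \eqref{eq.obs_combo} involves the enlargement $\mc{Y}_\dagger$ rather than $\Gamma_\dagger$ itself: the thin collar $\{\mc{S} > 0\} \setminus \Gamma_\dagger$, while concentrated near the transition locus $\{\mc{N} f_P = 0\}$, need not vanish outright and must be absorbed into $\mc{Y}_\dagger$ by openness. Since the constants in \eqref{eq.obs_ext_est} and \eqref{eq.obs_int_est} remain finite for any fixed small $\delta > 0$ (and, in the interior case, any fixed positive $|P_i - P|$), they can be folded into the final constant without further difficulty.
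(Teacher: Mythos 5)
Your proposal is correct and follows essentially the same route as the paper: case-split on the position of $P$ relative to $\bar{\mc{U}}$, apply Theorem \ref{thm.obs_ext} or Theorem \ref{thm.obs_int} with $\delta$ (and, where needed, perturbed reference points) chosen small enough that the resulting observation regions land inside $\mc{Y}_\dagger$ by continuity and compactness of $\bar{\Gamma}_\dagger$, and then propagate from $\mc{U}_{t_0}$ to $\mc{U}_{\tau_\pm}$ via the energy estimate \eqref{eq.energy_est}. The only superfluous step is the ``pinching off'' of $\mc{U}$ and the vague ``Dirichlet-preserving modification'' of $\phi$: your own first observation that \eqref{eq.obs_combo_R}--\eqref{eq.obs_combo_t0} force $\bar{\mc{U}} \cap \mc{D}_P \subseteq \bar{\mc{U}}_{\tau_-, \tau_+}$ already shows that $\mc{U} \cap \mc{D}_P$ is bounded and that the stated Dirichlet hypothesis covers all of $\partial \mc{U} \cap \mc{D}_P$, so no modification of $\mc{U}$ or of $\phi$ is needed.
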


\begin{remark}
See Figure \ref{fig.intro_obs_main} for visual depictions of the observation region in Theorem \ref{thm.obs_combo}.
\end{remark}

\begin{proof}
We divide the proof into cases depending on whether $P$, as defined in \eqref{eq.obs_combo_Gamma}, lies in $\bar{\mc{U}}$.
First, if $P \not\in \bar{\mc{U}}$, we apply Theorem \ref{thm.obs_ext} to $\mc{U}$, $P$, and some $0 < \delta \ll 1$ to be fixed later; this yields
\begin{equation}
\label{eql.obs_combo_1} \int_{ \mc{U}_{ t (P) } } ( | \nabla_{ t, x } \phi |^2 + \phi^2 ) \lesssim_{ \mc{U}, P, \tau_\pm, V, \mc{X}, \delta } \int_{ \Gamma_+ } | \mc{N} \phi |^2 \text{,}
\end{equation}
with $\Gamma_+$ as in \eqref{eq.obs_ext_delta}.
By continuity and \eqref{eq.obs_ext_MR}, taking $\delta$ arbitrarily small makes $\mc{S}$ (as defined in \eqref{eq.obs_ext_delta}) arbitrarily close to $\mc{N} f_P$.
Thus, by taking $0 < \delta \ll 1$ sufficiently small (depending on $\mc{Y}_\dagger$), we have that $\Gamma_+ \subseteq \mc{Y}_\dagger$.
Recalling \eqref{eq.energy_est} and \eqref{eql.obs_combo_1} then yields, as desired,
\[
\int_{ \mc{U}_{ \tau_\pm } } ( | \nabla_{ t, x } \phi |^2 + \phi^2 ) \lesssim_{ \mc{U}, \tau_\pm, V, \mc{X} } \int_{ \mc{U}_{ t (P) } } ( | \nabla_{ t, x } \phi |^2 + \phi^2 ) \lesssim_{ \mc{U}, P, \tau_\pm, V, \mc{X}, \mc{Y}_\dagger } \int_{ \mc{Y}_\dagger } | \mc{N} \phi |^2 \text{.}
\]

Next, if $P \in \partial \mc{U}$, then we apply Theorem \ref{thm.obs_ext} to a point $P'$ near $P$ that is not in $\bar{\mc{U}}$.
Again, by continuity, if $P'$ is sufficiently near $P$ and $\delta$ is sufficiently small, then the resulting control region $\Gamma_+$ (now associated with $P'$) lies inside $\mc{Y}_\dagger$, and \eqref{eq.energy_est} yields the desired bound:
\[
\int_{ \mc{U}_{ \tau_\pm } } ( | \nabla_{ t, x } \phi |^2 + \phi^2 ) \lesssim_{ \mc{U}, \tau_\pm, V, \mc{X} } \int_{ \mc{U}_{ t (P') } } ( | \nabla_{ t, x } \phi |^2 + \phi^2 ) \lesssim_{ \mc{U}, P, \tau_\pm, V, \mc{X}, \mc{Y}^\dagger } \int_{ \mc{Y}_\dagger } | \mc{N} \phi |^2 \text{.}
\]

Finally, when $P \in \mc{U}$, we apply Theorem \ref{thm.obs_int}, with respect to some $0 < \delta \ll 1$ and distinct points $P_1, P_2 \in \mc{U}$ with $t ( P ) = t ( P_1 ) = t ( P_2 )$.
By continuity, if $\delta$ is sufficiently small, and if $P_1$, $P_2$ are sufficiently close to $P$, then $\Gamma_+^1 \cup \Gamma_+^2$ (as defined in \eqref{eq.obs_int_delta}) lies in $\mc{Y}_\dagger$, and the result follows from \eqref{eq.energy_est} and \eqref{eq.obs_int_est} in a manner analogous to the preceding two cases.
\end{proof}

\section{Consequences and Applicatons} \label{sec.app}

In this section, we provide further discussions relating to the observability results in Section \ref{sec.obs}.

\subsection{The Case $n = 1$} \label{sec.app_1d}

The first objective is to take a closer look at GTCs in one spatial dimension.
Here, we extend known results in the literature; we show, in full generality, that one can recover observability for linear waves on time-dependent domains up to the optimal required timespan.

More specifically, throughout this subsection, we will study the following setting:

\begin{definition} \label{def.gtc_1d}
Fix two curves $\ell_1$ and $\ell_2$ in $\R^{1+1}$, parametrized as
\begin{equation}
\label{eq.gtc_1d_ell} \ell_1 ( \tau ) := ( \tau, \lambda_1 ( \tau ) ) \text{,} \qquad \ell_2 ( \tau ) := ( \tau, \lambda_2 ( \tau ) ) \text{,}
\end{equation}
where $\lambda_1, \lambda_2: \R \rightarrow \R$ are smooth and satisfy, for all $\tau \in \R$,
\begin{equation}
\label{eq.gtc_1d_lambda} \lambda_1 ( \tau ) < \lambda_2 ( \tau ) \text{,} \qquad | \lambda_1' ( \tau ) | < 1 \text{,} \qquad | \lambda_2' ( \tau ) | < 1 \text{.}
\end{equation}
We now let $\mc{U}^\ell$ be the region bounded by $\ell_1$ and $\ell_2$, that is,
\begin{equation}
\label{eq.gtc_1d_U} \mc{U}^\ell := \{ ( \tau, y ) \in \R^{1+1} \mid \lambda_1 ( \tau ) < y < \lambda_2 ( \tau ) \} \text{,} \qquad \partial \mc{U}^\ell = \ell_1 \cup \ell_2 \text{.}
\end{equation}
\end{definition}

In particular, the second and third conditions in \eqref{eq.gtc_1d_lambda} imply that $\ell_1$ and $\ell_2$ are timelike, and hence $\partial \mc{U}^\ell$ is timelike as well.
We also observe the following basic facts:

\begin{proposition} \label{thm.gtc_1d}
Let the region $\mc{U}^\ell \subseteq \R^{1+1}$ be as in Definition \ref{def.gtc_1d}.
Then, $\mc{U}^\ell$ is a GTC, and the outward-pointing ($g$-)unit normal $\mc{N}$ to $\mc{U}^\ell$ satisfies
\begin{equation}
\label{eq.gtc_1d_normal} \left[ 1 - | \lambda_1' ( \tau ) |^2 \right]^\frac{1}{2} \mc{N} |_{ \ell_1 ( \tau ) } = - ( \lambda_1' ( \tau ), 1 ) \text{,} \qquad \left[ 1 - | \lambda_2' ( \tau ) |^2 \right]^\frac{1}{2} \mc{N} |_{ \ell_2 } ( \tau, \lambda_2 ( \tau ) ) = ( \lambda_2' ( \tau ), 1 ) \text{.}
\end{equation}
\end{proposition}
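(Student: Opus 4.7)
The plan is to verify the three bullet points of Definition \ref{def.gtc} and then carry out a direct computation of the unit normal, since both tasks reduce to elementary linear algebra in $\R^{1+1}$.

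First, I would check that $\partial \mc{U}^\ell = \ell_1 \cup \ell_2$ is a smooth timelike hypersurface. Smoothness is immediate from the smoothness of $\lambda_1, \lambda_2$ and the disjointness of the two curves (guaranteed by $\lambda_1 < \lambda_2$). For timelikeness, note that the tangent vector to $\ell_i$ at parameter $\tau$ is $(1, \lambda_i'(\tau))$, whose $g$-norm squared is $-1 + |\lambda_i'(\tau)|^2 < 0$ by \eqref{eq.gtc_1d_lambda}. Next, by \eqref{eq.gtc_1d_U}, the time-slice is $\Omega_\tau = (\lambda_1(\tau), \lambda_2(\tau))$, which is nonempty, bounded, and open in $\R$. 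For the generator, I would take the interpolated vector field
\begin{equation*}
Z (\tau, y) := \partial_t + \left[ \frac{ \lambda_2 ( \tau ) - y }{ \lambda_2 ( \tau ) - \lambda_1 ( \tau ) } \lambda_1' ( \tau ) + \frac{ y - \lambda_1 ( \tau ) }{ \lambda_2 ( \tau ) - \lambda_1 ( \tau ) } \lambda_2' ( \tau ) \right] \partial_x \text{,}
\end{equation*}
which is smooth on $\bar{\mc{U}^\ell}$, restricts to the tangent fields of $\ell_1, \ell_2$ on those curves (hence is tangent to $\partial \mc{U}^\ell$), has positive $t$-component (future-directed), and whose $x$-component is a convex combination of $\lambda_1'(\tau), \lambda_2'(\tau)$ and hence has absolute value strictly less than $1$ by \eqref{eq.gtc_1d_lambda}, so $Z$ is timelike everywhere.

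For the normal formulas \eqref{eq.gtc_1d_normal}, I would just solve the defining conditions pointwise. At $\ell_i (\tau)$, the outward unit normal $\mc{N}$ must satisfy: (a) $g$-orthogonality with the tangent vector $(1, \lambda_i'(\tau))$, giving $\mc{N}^t = \lambda_i'(\tau) \mc{N}^x$; (b) $g(\mc{N}, \mc{N}) = 1$, giving $( \mc{N}^x )^2 ( 1 - | \lambda_i'(\tau) |^2 ) = 1$; and (c) the correct orientation (pointing into $\{y < \lambda_1(\tau)\}$ at $\ell_1$ and into $\{y > \lambda_2(\tau)\}$ at $\ell_2$), which fixes the sign of $\mc{N}^x$ to be negative along $\ell_1$ and positive along $\ell_2$. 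Substituting these sign choices into (a) and (b) yields exactly \eqref{eq.gtc_1d_normal}.

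No real obstacle is anticipated here, as everything reduces to one-dimensional linear algebra; the only point that requires any care is ensuring that the interpolated generator $Z$ remains timelike throughout $\bar{\mc{U}^\ell}$, which follows immediately from the convexity of the interval $( -1, 1 )$ containing $\lambda_1'(\tau), \lambda_2'(\tau)$.
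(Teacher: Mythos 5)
Your proposal is correct and follows essentially the same route as the paper: direct verification of the three conditions in Definition \ref{def.gtc} followed by an elementary computation of the normal (the paper verifies that the displayed vectors are orthogonal, unit, and outward, while you solve for the normal from those conditions — the same calculation read in the other direction). Your explicit interpolated generator $Z$ is a nice touch; the paper merely asserts that such a $Z$ can be chosen.
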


\subsubsection{One-Sided Observation}

We first consider the case in which we observe only on $\ell_2$.
(In terms of controllability, this corresponds to the problem of imposing a control only on $\ell_2$.)

\begin{theorem} \label{thm.obs_ext_1d}
Let $\mc{U}^\ell$ be as in Definition \ref{def.gtc_1d}, and fix $\tau_\pm \in \R$.
Consider the setting of Problem \ref{prb.linear_wave}, in the case $n = 1$, and let $\mc{X}$, $V$ be as in \eqref{eq.XV}.
In addition, assume:
\begin{itemize}
\item There exists $T_- > 0$ such that the forward, leftward null ray emanating from the point $( \tau_-, \lambda_2 ( \tau_- ) ) \in \ell_2$ intersects $\ell_1$ at time $\tau_- + T_-$, that is,
\begin{equation}
\label{eq.obs_ext_1d_ass1} \lambda_2 ( \tau_- ) - T_- = \lambda_1 ( \tau_- + T_- ) \text{.}
\end{equation}

\item There exists $T_+ > 0$ such that the forward, rightward null ray emanating from the point $( \tau_- + T_-, \lambda_1 ( \tau_- + T_- ) ) \in \ell_1$ hits $\ell_2$ at time $\tau_- + T_- + T_+$, that is,
\begin{equation}
\label{eq.obs_ext_1d_ass2} \lambda_1 ( \tau_- + T_- ) + T_+ = \lambda_2 ( \tau_- + T_- + T_+ ) \text{.}
\end{equation}

\item The following relation holds:
\begin{equation}
\label{eq.obs_ext_1d_ass3} \tau_+ > \tau_- + T_- + T_+ \text{.}
\end{equation}
\end{itemize}
Then, for any solution $\phi \in C^2 ( \mc{U}^\ell ) \cap C^1 ( \bar{\mc{U}}^\ell )$ of \eqref{eq.linear_wave} that also satisfies $\phi |_{ \ell_1 \cup \ell_2 } = 0$, we have
\begin{equation}
\label{eq.obs_ext_1d} \int_{ \mc{U}^\ell_{ \tau_\pm } } ( | \nabla_{ t, x } \phi |^2 + \phi^2 ) \lesssim_{ \ell_1, \ell_2, \tau_\pm, V, \mc{X} } \int_{ \ell_2 \cap \{ \tau_- < t < \tau_+ \} } | \partial_x \phi |^2 \text{.}
\end{equation}
\end{theorem}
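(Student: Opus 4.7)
The plan is to apply the exterior observability estimate in Theorem \ref{thm.obs_ext} at a carefully chosen reference point $P \in \R^{1+1} \setminus \bar{\mc{U}}^\ell$, and then to transport the resulting bound from $\mc{U}^\ell_{ t ( P ) }$ to $\mc{U}^\ell_{ \tau_\pm }$ via Proposition \ref{thm.energy_est}. Concretely I would take $P := ( t_0, x_0 )$ with $t_0 := \tau_- + T_- + s$ and $x_0 := \lambda_1 ( \tau_- + T_- ) - \eta$, where $s, \eta > 0$ are small parameters obeying $| \lambda_1' ( \tau_- + T_- ) | \, s < \eta < s$; this open window is nonempty by \eqref{eq.gtc_1d_lambda}. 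Geometrically, $P$ is a small perturbation of the reflection event $( \tau_- + T_-, \lambda_1 ( \tau_- + T_- ) )$: the upper bound $\eta < s$ combined with the first inequality forces $x_0 < \lambda_1 ( t_0 )$ so that $P \not\in \bar{\mc{U}}^\ell$, while the lower bound forces the past-rightward null ray from $P$ to exit $\ell_2$ strictly later than $\tau_-$. Because the hypothesis $| \lambda_i' | < 1$ (rather than a uniform bound) does not by itself guarantee that $\mc{U}^\ell \cap \mc{D}_P$ is bounded, as a preliminary I would modify $\lambda_1, \lambda_2$ outside a compact time interval containing $[ \tau_-, \tau_+ ]$ so that they become constant, and extend $\phi$ across the resulting modified GTC by solving the Cauchy-Dirichlet problem with data at $t = \tau_-$; uniqueness of solutions guarantees the extension coincides with the original $\phi$ on $\mc{U}^\ell_{ \tau_-, \tau_+ }$, which is all the argument will see.

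The heart of the proof is a sign computation on $\partial \mc{U}^\ell \cap \mc{D}_P$. Combining \eqref{eq.gtc_1d_normal} with $f_P = \tfrac{1}{4} [ ( x - x_0 )^2 - ( t - t_0 )^2 ]$ gives
\[
\sqrt{ 1 - | \lambda_i' ( \tau ) |^2 } \cdot \mc{N} f_P |_{ \ell_i ( \tau ) } = \tfrac{ ( -1 )^i }{ 2 } \bigl[ ( \lambda_i ( \tau ) - x_0 ) - \lambda_i' ( \tau ) ( \tau - t_0 ) \bigr] \text{.}
\]
On $\mc{D}_P \cap \ell_i$ one has $\lambda_i ( \tau ) - x_0 > | \tau - t_0 |$ (a short calculation using $| \lambda_i' | < 1$ rules out the other branch of $\mc{D}_P$), while $| \lambda_i' ( \tau ) ( \tau - t_0 ) | < | \tau - t_0 |$ by \eqref{eq.gtc_1d_lambda}. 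Combining these two inequalities yields $\mc{N} f_P < 0$ strictly on $\ell_1 \cap \mc{D}_P$ and $\mc{N} f_P > 0$ strictly on $\ell_2 \cap \mc{D}_P$. Taking $\delta$ in Theorem \ref{thm.obs_ext} small enough transfers the same signs to the weighted quantity $\mc{S}$ in \eqref{eq.obs_ext_delta}, so the observation region $\Gamma_+$ automatically lies inside $\ell_2 \cap \mc{D}_P$. Moreover, the two intersection points $t^*, t^{**}$ of $\partial \mc{D}_P$ with $\ell_2$ depend continuously on $( s, \eta )$ and tend to $\tau_-$ and $\tau_- + T_- + T_+$ respectively as $( s, \eta ) \to ( 0, 0 )$; the strict inequality \eqref{eq.obs_ext_1d_ass3} then lets me shrink $s, \eta$ so that $\tau_- < t^* < t^{**} < \tau_+$, confining $\Gamma_+$ inside $\ell_2 \cap \{ \tau_- < t < \tau_+ \}$.

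With these geometric facts established, Theorem \ref{thm.obs_ext} produces a bound of $\int_{ \mc{U}^\ell_{ t_0 } } ( | \nabla_{ t, x } \phi |^2 + \phi^2 )$ by $\int_{ \ell_2 \cap \{ \tau_- < t < \tau_+ \} } | \mc{N} \phi |^2$. Differentiating the identity $\phi ( \tau, \lambda_2 ( \tau ) ) \equiv 0$ along $\ell_2$ gives $\partial_t \phi = - \lambda_2' \partial_x \phi$ there, and combining with \eqref{eq.gtc_1d_normal} yields $| \mc{N} \phi |^2 = ( 1 - | \lambda_2' |^2 ) | \partial_x \phi |^2 \leq | \partial_x \phi |^2$ on $\ell_2$; Proposition \ref{thm.energy_est} then transfers the bound from $t_0$ to $\tau_\pm$, delivering \eqref{eq.obs_ext_1d}. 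The main difficulty I anticipate is the triple balancing act in the choice of $P$: the null cone about $P$ must unfold the reflected null trajectory within the interval $( \tau_-, \tau_+ )$, yet $P$ must remain strictly outside $\bar{\mc{U}}^\ell$ and produce $\mc{N} f_P \leq 0$ on $\ell_1 \cap \mc{D}_P$. The strict inequalities $| \lambda_i' | < 1$ are precisely what furnish the quantitative slack for all three conditions to be met simultaneously.
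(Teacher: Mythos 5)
Your proof is correct and follows essentially the same route as the paper's: both arguments choose an exterior reference point $P$ just beyond the reflected null trajectory, check that $\mc{N} f_P$ (hence $\mc{S}$ for $\delta$ small) is strictly negative on $\ell_1 \cap \mc{D}_P$ and strictly positive on $\ell_2 \cap \mc{D}_P$ so that the observation region collapses onto $\ell_2$, confine $\mc{D}_P \cap \ell_2$ to $\{ \tau_- < t < \tau_+ \}$ by continuity using the strict inequality \eqref{eq.obs_ext_1d_ass3}, and conclude via Theorem \ref{thm.obs_ext} (the paper routes through its reformulation, Theorem \ref{thm.obs_angle}) together with the energy estimate and the identity $| \mc{N} \phi |^2 = ( 1 - | \lambda_2' |^2 ) | \partial_x \phi |^2$ on $\ell_2$. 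Two small corrections: you have the roles of your two constraints on $( s, \eta )$ reversed --- it is the lower bound $\eta > | \lambda_1' | s$ that pushes $P$ outside $\bar{\mc{U}}^\ell$ (when $\lambda_1' < 0$) and the upper bound $\eta < s$ that forces the past null ray from $P$ to meet $\ell_2$ strictly after $\tau_-$ --- and the preliminary modification of $\lambda_1, \lambda_2$ is unnecessary, since once the null rays from $P$ are known to meet $\ell_2$ at times $t^\ast < t^{\ast\ast}$, the strict monotonicity of $\lambda_2 ( t ) \mp t$ already confines all of $\mc{U}^\ell \cap \mc{D}_P$ to the bounded slab $\{ t^\ast < t < t^{\ast\ast} \}$.
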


\begin{remark}
Note that the only possibility of the assumptions \eqref{eq.obs_ext_1d_ass1}, \eqref{eq.obs_ext_1d_ass2} failing is if either $\ell_1$ or $\ell_2$ very quickly becomes asymptotically null in the future of $\{ t = \tau_- \}$.
\end{remark}

\begin{remark}
Theorem \ref{thm.obs_ext_1d} states that, with initial data at $t = \tau_-$, we have observability from $\ell_2$ if the timespan is strictly greater than $T_+ + T_-$.
Moreover, this timespan $T_+ + T_-$ is precisely what is required by the geometric control condition and hence is optimal.
\end{remark}

\begin{proof}
We begin by fixing $P = ( t_0, x_0 ) \in \R^{1+1}$ satisfying
\begin{equation}
\label{eql.obs_ext_1d_P} 0 < t_0 - ( \tau_- + T_- ) \ll_{ \tau_+ } 1 \text{,} \qquad x_0 := \lambda_2 ( \tau_- ) - ( t_0 - \tau_- ) \text{.}
\end{equation}
Note that $P$ lies on the forward, leftward null line from $( \tau_-, \lambda_2 ( \tau_- ) ) \in \ell_2$ and is slightly to the left of $\ell_1$.
By continuity, along with \eqref{eq.obs_ext_1d_ass1} and \eqref{eq.obs_ext_1d_ass2}, we see that as long as $t_0 - ( \tau_- + T_- )$ is sufficiently small, the forward, rightward null line from $P$ intersects $\ell_2$ before time $t = \tau_+$.

We now apply Theorem \ref{thm.obs_angle}, with $\mc{U} = \mc{U}^\ell$ and $P \in \R^{1+1} \setminus \bar{\mc{U}}^\ell$ as above, and with $0 < \delta \ll 1$ sufficiently small.
In particular, we let $\nu$, $\nu^t$, and $\theta_{ \nu, P }$ be as in the statement of Theorem \ref{thm.obs_angle} (see also Proposition \ref{thm.obs_calc}), again with the above $\mc{U}^\ell$ and $P$.
Observe, from \eqref{eq.gtc_1d_normal}, the following:
\begin{itemize}
\item On each $( \tau, y ) \in \mc{D}_P \cap \ell_1$, we have $\nu ( \tau, y ) < 0$ and $x_P ( \tau, y ) > 0$.

\item On each $( \tau, y ) \in \mc{D}_P \cap \ell_2$, we have $\nu ( \tau, y ) > 0$ and $x_P ( \tau, y ) > 0$.
\end{itemize}
In particular, this implies:
\begin{equation}
\label{eql.obs_ext_1d_0} \cos \theta_{ \nu, P } |_{ \mc{D}_P \cap \ell_1 } \equiv -1 < - \frac{ | t_P \nu^t | }{ r_P \sqrt{ 1 + ( \nu^t )^2 } } \text{,} \qquad \cos \theta_{ \nu, P } |_{ \mc{D}_P \cap \ell_2 } \equiv 1 > \frac{ | t_P \nu^t | }{ r_P \sqrt{ 1 + ( \nu^t )^2 } } \text{.}
\end{equation}
With $\Gamma_{ P, \delta }$ as in \eqref{eq.obs_angle_Gamma}, we see from \eqref{eq.obs_ext_1d_ass1}--\eqref{eq.obs_ext_1d_ass3}, \eqref{eql.obs_ext_1d_0}, and our choices of $\delta$ and $P$ that
\begin{equation}
\label{eql.obs_ext_1d_1} \Gamma_{ P, \delta } = \mc{D}_P \cap \ell_2 \subseteq \ell_2 \cap \{ \tau_- < t < \tau_+ \} \text{.}
\end{equation}

Finally, since the Dirichlet boundary condition $\phi |_{ \ell_1 \cup \ell_2 } \equiv 0$ implies
\[
| \mc{N} \phi | \lesssim | \partial_x \phi | \text{,}
\]
the observability estimate \eqref{eq.obs_angle_ext} along with \eqref{eql.obs_ext_1d_1} imply \eqref{eq.obs_ext_1d}, as desired.
\end{proof}

\subsubsection{Two-Sided Observation}

Next, we consider the case in which both $\ell_1$ and $\ell_2$ are observed.
(In particular, this corresponds to the problem of imposing controls on both $\ell_1$ and $\ell_2$.)

\begin{theorem} \label{thm.obs_int_1d}
Let $\mc{U}^\ell$ be as in Definition \ref{def.gtc_1d}, and fix $\tau_-, \tau_{+, 1}, \tau_{+, 2} \in \R$.
Consider the setting of Problem \ref{prb.linear_wave}, in the case $n = 1$, and let $\mc{X}$, $V$ be as in \eqref{eq.XV}.
In addition, assume:
\begin{itemize}
\item There exists $T_1 > 0$ such that the forward, leftward null ray emanating from the point $( \tau_-, \lambda_2 ( \tau_- ) ) \in \ell_2$ intersects $\ell_1$ at time $\tau_- + T_1$, that is,
\begin{equation}
\label{eq.obs_int_1d_ass1} \lambda_2 ( \tau_- ) - T_1 = \lambda_1 ( \tau_- + T_1 ) \text{.}
\end{equation}

\item There exists $T_2 > 0$ such that the forward, rightward null ray emanating from the point $( \tau_-, \lambda_1 ( \tau_- ) ) \in \ell_1$ hits $\ell_2$ at the time $\tau_- + T_2$, i.e.,
\begin{equation}
\label{eq.obs_int_1d_ass2} \lambda_1 ( \tau_- ) + T_2 = \lambda_2 ( \tau_- + T_2 ) \text{,}
\end{equation}

\item The following relations hold:
\begin{equation}
\label{eq.obs_int_1d_ass3} \tau_{+, 1} > \tau_- + T_1 \text{,} \qquad \tau_{+, 2} > \tau_- + T_2 \text{.}
\end{equation}
\end{itemize}
Then, for any solution $\phi \in C^2 ( \mc{U}^\ell ) \cap C^1 ( \bar{\mc{U}}^\ell )$ of \eqref{eq.linear_wave} that also satisfies $\phi |_{ \ell_1 \cup \ell_2 } = 0$, we have
\begin{equation}
\label{eq.obs_int_1d} \int_{ \mc{U}^\ell_{ \tau_\pm } } ( | \nabla_{ t, x } \phi |^2 + \phi^2 ) \lesssim_{ \ell_1, \ell_2, \tau_\pm, V, \mc{X} } \sum_{ i = 1 }^2 \int_{ \ell_i \cap \{ \tau_- < t < \tau_{+, i} \} } | \partial_x \phi |^2 \text{.}
\end{equation}
\end{theorem}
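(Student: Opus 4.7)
The plan is to apply the interior observability bound \eqref{eq.obs_angle_int} from Theorem \ref{thm.obs_angle} at a single, carefully placed reference point $P \in \mc{U}^\ell$, and then to transfer the resulting estimate at $t = t ( P )$ back to $t = \tau_-$ via the energy inequality \eqref{eq.energy_est} from Proposition \ref{thm.energy_est}. The reference point will be chosen to exploit the precise ``reflection geometry'' encoded by $T_1, T_2$ in \eqref{eq.obs_int_1d_ass1}--\eqref{eq.obs_int_1d_ass2}, with a small upward perturbation in time creating the necessary slack.

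More concretely, I would take
\[
P_\eta := ( t_0 + \eta, x_0 ) \text{,} \qquad t_0 := \tau_- + \tfrac{1}{2} [ \lambda_2 ( \tau_- ) - \lambda_1 ( \tau_- ) ] \text{,} \qquad x_0 := \tfrac{1}{2} [ \lambda_1 ( \tau_- ) + \lambda_2 ( \tau_- ) ] \text{,}
\]
for some small $\eta > 0$ to be fixed. The unperturbed point $P_0$ is precisely the apex of the Minkowski ``diamond'' whose past vertices are $( \tau_-, \lambda_1 ( \tau_- ) )$ and $( \tau_-, \lambda_2 ( \tau_- ) )$; its backward null cone meets $\ell_1 \cup \ell_2$ exactly at these two points, while its forward null cone meets $\ell_1$ at time $\tau_- + T_1$ and $\ell_2$ at time $\tau_- + T_2$, by \eqref{eq.obs_int_1d_ass1}--\eqref{eq.obs_int_1d_ass2}. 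Because each $\lambda_i$ is timelike, the four functions locating these intersections depend continuously and monotonically on $P$; combined with the strict inequalities \eqref{eq.obs_int_1d_ass3}, this lets us fix $\eta > 0$ so small that $P_\eta \in \mc{U}^\ell$, and that for each $i \in \{ 1, 2 \}$, the set $\ell_i \cap \mc{D}_{ P_\eta }$ is a single bounded time interval $( \tau_{ i, - }^\eta, \tau_{ i, + }^\eta )$ satisfying
\[
\tau_- < \tau_{ i, - }^\eta \leq \tau_{ i, + }^\eta < \tau_{ +, i } \text{.}
\]
The single-interval property follows from factoring $| \lambda_i ( \tau ) - x_0 |^2 - ( \tau - t ( P_\eta ) )^2$ into two strictly monotone factors, each of which has at most one root.

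With such $P_\eta$, the set $\bar{\Gamma}_{ P_\eta, \delta }$ (defined as in \eqref{eq.obs_angle_Gamma}) is a compact subset of $\bigcup_{ i = 1 }^2 \ell_i \cap \{ \tau_- < t < \tau_{ +, i } \}$, so I would choose a neighborhood $\mc{Y}_{ P_\eta, \delta }$ of it lying inside this observation window and apply \eqref{eq.obs_angle_int}, which yields
\[
\int_{ \mc{U}^\ell_{ t ( P_\eta ) } } ( | \nabla_{ t, x } \phi |^2 + \phi^2 ) \lesssim \int_{ \mc{Y}_{ P_\eta, \delta } } | \mc{N} \phi |^2 \text{.}
\]
Since the Dirichlet condition $\phi |_{ \ell_1 \cup \ell_2 } \equiv 0$ kills the tangential derivative of $\phi$ along the boundary, Proposition \ref{thm.gtc_1d} gives the pointwise identity $| \mc{N} \phi | = \sqrt{ 1 - | \lambda_i' |^2 } \cdot | \partial_x \phi |$ on $\ell_i$, which converts the right-hand side into $\sum_i \int_{ \ell_i \cap \{ \tau_- < t < \tau_{ +, i } \} } | \partial_x \phi |^2$ (up to constants depending on $\sup_\tau | \lambda_i' ( \tau ) |$). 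Finally, since $t ( P_\eta ) > \tau_-$, Proposition \ref{thm.energy_est} propagates this bound from $t = t ( P_\eta )$ back to $t = \tau_-$, yielding \eqref{eq.obs_int_1d}.

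The main hurdle is choosing $\eta$ so that each backward intersection $\tau_{ i, - }^\eta$ moves strictly above $\tau_-$ while each forward intersection $\tau_{ i, + }^\eta$ stays strictly below $\tau_{ +, i }$. This captures the asymmetry between the past (which is pinned exactly by the apex construction at $\eta = 0$) and the future (which enjoys the slack provided by the strict inequalities in \eqref{eq.obs_int_1d_ass3}), and it is resolved by continuity together with the timelike condition $| \lambda_i' | < 1$.
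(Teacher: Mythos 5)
Your proposal is correct and follows the same strategy as the paper's proof. The paper defines $t_0'$ as the time at which the two forward null rays emanating from $(\tau_-, \lambda_1(\tau_-))$ and $(\tau_-, \lambda_2(\tau_-))$ cross, and sets $P := (t_0' + d,\, \lambda_2(\tau_-) - (t_0' - \tau_-))$ for small $d > 0$; unwinding the definitions, this is exactly your $P_\eta$ with $t_0 = t_0'$, $x_0 = \tfrac12[\lambda_1(\tau_-) + \lambda_2(\tau_-)]$, and $\eta = d$, and the rest (applying the interior case of Theorem \ref{thm.obs_angle}, converting $\mc{N}\phi$ to $\partial_x\phi$ via Proposition \ref{thm.gtc_1d} and the Dirichlet condition, then propagating with Proposition \ref{thm.energy_est}) also matches, with your write-up merely spelling out the continuity and single-interval arguments that the paper compresses into ``by continuity.''
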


\begin{remark}
Again, like in the preceding one-sided setting, the timespan $\max ( T_1, T_2 )$ implied by Theorem \ref{thm.obs_int_1d} exactly matches the sharp value dictated by the geometric control condition.
\end{remark}

\begin{proof}
Let $t_0' \in \R$ be such that
\[
\lambda_2 ( \tau_- ) - ( t_0' - \tau_- ) = \lambda_1 ( \tau_- ) + ( t_0' - \tau_- ) \text{,}
\]
In other words, $t_0'$ is the time at which the forward, leftward null ray from $( \tau_-, \lambda_2 ( \tau_- ) ) \in \ell_2$ and the forward, rightward null ray from $( \tau_-, \lambda_1 ( \tau_- ) ) \in \ell_1$ intersect.
We now set the point $P$ to be slightly above this intersection point of the two null rays described above:
\begin{equation}
\label{eql.obs_int_1d_P} P := ( t_0, x_0 ) := ( t_0' + d, \lambda_2 ( \tau_- ) - ( t_0' - \tau_- ) ) \text{,} \qquad 0 < d \ll 1 \text{.}
\end{equation}
By continuity, choosing $d$ sufficiently small, we have $P \in \mc{U}^\ell$, and:
\begin{itemize}
\item The forward, leftward null ray from $P$ hits $\ell_1$ before time $\tau_{+, 1}$.

\item The forward, rightward null ray from $P$ hits $\ell_2$ before time $\tau_{+, 2}$.
\end{itemize}

We now apply Theorem \ref{thm.obs_angle}, with $\mc{U} = \mc{U}^\ell$ and $P$ as above, and with $0 < \delta \ll 1$.
Similar to the proof of Theorem \ref{thm.obs_ext_1d}, letting $\theta_{ \nu, P }$ be as in the statement of Theorem \ref{thm.obs_angle}, we see that
\begin{equation}
\label{eql.obs_int_1d_0} \cos \theta_{ \nu, P } |_{ \mc{D}_P \cap \ell_1 } \equiv 1 \text{,} \qquad \cos \theta_{ \nu, P } |_{ \mc{D}_P \cap \ell_2 } \equiv 1 \text{,}
\end{equation}
Therefore, we obtain from \eqref{eq.obs_int_1d_ass1}--\eqref{eq.obs_int_1d_ass3}, \eqref{eql.obs_int_1d_0}, and our choices of $\delta$ and $P$ that
\begin{equation}
\label{eql.obs_int_1d_1} \Gamma_{ P, \delta } = \mc{D}_P \cap ( \ell_1 \cup \ell_2 ) \subseteq \bigcup_{ i = 1 }^2 ( \ell_i \cap \{ \tau_- + \varepsilon < t < \tau_{+, i} - \varepsilon \} ) \text{,}
\end{equation}
for some $\varepsilon > 0$, with $\Gamma_{ P, \delta }$ as in \eqref{eq.obs_angle_Gamma}.
Finally, note that the desired observation region
\[
( \ell_1 \cap \{ \tau_- < t < \tau_{+, 1} \} ) \cup ( \ell_2 \cap \{ \tau_- < t < \tau_{+, 2} \} )
\]
is an open subset of $\partial \mc{U}$ containing $\bar{\Gamma}_{ P, \delta }$.
Thus, \eqref{eq.obs_int_1d} follows from \eqref{eq.obs_angle_int} and the above.
\end{proof}

\subsubsection{Linear $\ell_1$ and $\ell_2$}

We now look at special cases of Theorems \ref{thm.obs_ext_1d} and \ref{thm.obs_int_1d}, in which $\ell_1$ and $\ell_2$ are straight lines.
This directly extends results in \cite{cui_jiang_wang:control_wave_fec, sengou:obs_control_wave, sengou:obs_control_wave2, sun_li_lu:control_wave_moving} to general linear waves.
In particular, we explicitly recover the optimal timespans for both the one-sided and two-sided problems.

We begin with the case in which $\ell_1$ and $\ell_2$ are moving apart from each other:

\begin{corollary} \label{thm.obs_lines_1d_plus}
Let $\mc{U}^\ell$ be as in Definition \ref{def.gtc_1d}, and fix $\tau_- > 0$.
Consider the setting of Problem \ref{prb.linear_wave}, in the case $n = 1$, and let $\mc{X}$, $V$ be as in \eqref{eq.XV}.
In addition:
\begin{itemize}
\item Fix $-1 < h_1 < h_2 < 1$, and assume $\ell_1$ and $\ell_2$ satisfy\footnote{Recall $\lambda_1$ and $\lambda_2$ are related to $\ell_1$ and $\ell_2$, respectively, via \eqref{eq.gtc_1d_lambda}.}
\begin{equation}
\label{eq.obs_lines_1d} \lambda_1 ( \tau ) = h_1 \tau \text{,} \qquad \lambda_2 ( \tau ) = h_2 \tau \text{.}
\end{equation}

\item Define the ``optimal timespans"
\begin{equation}
\label{eq.obs_lines_1d_plus} T := \frac{ 2 ( h_2 - h_1 ) \tau_- }{ ( 1 + h_1 ) ( 1 - h_2 ) } \text{,} \qquad T_1 := \frac{ ( h_2 - h_1 ) \tau_- }{ 1 + h_1 } \text{,} \qquad T_2 := \frac{ ( h_2 - h_1 ) \tau_- }{ 1 - h_2 } \text{.}
\end{equation}
\end{itemize}
Then, for any solution $\phi \in C^2 ( \mc{U}^\ell ) \cap C^1 ( \bar{\mc{U}}^\ell )$ of \eqref{eq.linear_wave} that also satisfies $\phi |_{ \ell_1 \cup \ell_2 } = 0$:
\begin{itemize}
\item For any $\tau_+ > \tau_- + T$, the observability inequality \eqref{eq.obs_ext_1d} holds for the above $\phi$ and $\tau_\pm$.

\item For any $\tau_{+, 1} > \tau_- + T_1$ and $\tau_{+, 2} > \tau_- + T_2$, the observability estimate \eqref{eq.obs_int_1d} holds.
\end{itemize}
\end{corollary}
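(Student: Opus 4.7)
The plan is to simply verify that the assumptions of Theorems \ref{thm.obs_ext_1d} and \ref{thm.obs_int_1d} hold for the linear curves \eqref{eq.obs_lines_1d}, with the $T$'s computed from these theorems matching the quantities $T, T_1, T_2$ defined in \eqref{eq.obs_lines_1d_plus}. Since $-1 < h_1 < h_2 < 1$, both $\lambda_1(\tau) = h_1 \tau$ and $\lambda_2(\tau) = h_2 \tau$ are smooth and satisfy the conditions \eqref{eq.gtc_1d_lambda} of Definition \ref{def.gtc_1d}, so $\mc{U}^\ell$ is indeed a GTC.

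For the two-sided estimate (the easier case), I would verify the hypotheses of Theorem \ref{thm.obs_int_1d} directly. Solving $\lambda_2(\tau_-) - T_1^\ast = \lambda_1(\tau_- + T_1^\ast)$ for $T_1^\ast$ reduces to a linear equation in $T_1^\ast$, yielding $T_1^\ast = (h_2 - h_1)\tau_- / (1 + h_1)$, which matches the $T_1$ in \eqref{eq.obs_lines_1d_plus}. Similarly, $\lambda_1(\tau_-) + T_2^\ast = \lambda_2(\tau_- + T_2^\ast)$ gives $T_2^\ast = (h_2 - h_1)\tau_- / (1 - h_2) = T_2$. Positivity of $T_1^\ast, T_2^\ast$ follows from $h_2 > h_1$ and $|h_i| < 1$. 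Thus the hypotheses \eqref{eq.obs_int_1d_ass1}--\eqref{eq.obs_int_1d_ass3} are satisfied whenever $\tau_{+,1} > \tau_- + T_1$ and $\tau_{+,2} > \tau_- + T_2$, and Theorem \ref{thm.obs_int_1d} yields the desired two-sided bound.

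For the one-sided estimate, I would invoke Theorem \ref{thm.obs_ext_1d}. Its $T_-$ is determined by the same equation as $T_1^\ast$ above, so $T_- = (h_2 - h_1)\tau_- / (1 + h_1)$; then by the assumption \eqref{eq.obs_ext_1d_ass2}, $T_+$ must satisfy $h_1(\tau_- + T_-) + T_+ = h_2(\tau_- + T_- + T_+)$, which gives $T_+ = (h_2 - h_1)(\tau_- + T_-)/(1 - h_2)$. Substituting the value of $\tau_- + T_- = \tau_-(1 + h_2)/(1 + h_1)$ and adding yields
\begin{equation*}
T_- + T_+ = \frac{(h_2 - h_1)\tau_-}{(1 + h_1)(1 - h_2)}\bigl[(1 - h_2) + (1 + h_2)\bigr] = \frac{2(h_2 - h_1)\tau_-}{(1 + h_1)(1 - h_2)} = T.
\end{equation*}
Positivity of $T_\pm$ again follows from the constraints on $h_1, h_2$. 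Therefore, whenever $\tau_+ > \tau_- + T$, the hypothesis \eqref{eq.obs_ext_1d_ass3} holds, and Theorem \ref{thm.obs_ext_1d} delivers the one-sided inequality \eqref{eq.obs_ext_1d}.

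There is no real obstacle here: the entire proof is a routine verification of algebraic identities and sign conditions, after which the theorems from Section \ref{sec.app_1d} do all of the work. The only point worth double-checking is the arithmetic showing $T_- + T_+ = T$, which is the reason the corollary gives a unified ``optimal timespan'' for the one-sided problem.
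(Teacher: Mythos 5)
Your proposal is correct and follows essentially the same approach as the paper: verify the hypotheses of Theorems \ref{thm.obs_ext_1d} and \ref{thm.obs_int_1d} by solving the linear equations for the null-ray intersection times, and check that $T_- + T_+ = T$, $T_1$, $T_2$ agree with \eqref{eq.obs_lines_1d_plus}. The algebra (including the intermediate value $\tau_- + T_- = \tau_-(1+h_2)/(1+h_1)$ and the value of $T_+$) matches the paper's computations exactly.
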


\begin{proof}
This is a consequence of Theorems \ref{thm.obs_ext_1d} and \ref{thm.obs_int_1d}, along with the following observations:
\begin{itemize}
\item The assumptions \eqref{eq.obs_ext_1d_ass1} and \eqref{eq.obs_ext_1d_ass2} hold, with
\[
T_- = \frac{ ( h_2 - h_1 ) \tau_- }{ 1 + h_1 } \text{,} \qquad T_+ = \frac{ ( h_2 - h_1 ) ( 1 + h_2 ) \tau_- }{ ( 1 + h_1 ) ( 1 - h_2 ) } \text{,} \qquad T = T_- + T_+ \text{.}
\]

\item The assumptions \eqref{eq.obs_int_1d_ass1} and \eqref{eq.obs_int_1d_ass2} hold, with
\[
T_1 = \frac{ ( h_2 - h_1 ) \tau_- }{ 1 + h_1 } \text{,} \qquad T_2 = \frac{ ( h_2 - h_1 ) \tau_- }{ 1 - h_2 } \text{.} \qedhere
\]
\end{itemize}
\end{proof}

\begin{remark}
Suppose, in addition to the setting of Corollary \ref{thm.obs_lines_1d_plus}, that $\ell_1$ is a vertical line:
\[
0 = h_1 < h_2 < 1 \text{.}
\]
Then, the optimal times for one-sided and two-sided observability, respectively, reduce to
\[
T = \frac{ 2 h_2 \tau_- }{ 1 - h_2 } \text{,} \qquad \max ( T_1, T_2 ) = \frac{ h_2 \tau_- }{ 1 - h_2 } \text{.}
\]
\end{remark}

For completeness, we also consider the case when $\ell_1$ and $\ell_2$ are moving toward each other:

\begin{corollary} \label{thm.obs_lines_1d_minus}
Let $\mc{U}^\ell$ be as in Definition \ref{def.gtc_1d}, and fix $\tau_- < 0$.
Consider the setting of Problem \ref{prb.linear_wave}, in the case $n = 1$, and let $\mc{X}$, $V$ be as in \eqref{eq.XV}.
In addition:
\begin{itemize}
\item Fix $-1 < h_2 < h_1 < 1$, and assume $\ell_1$ and $\ell_2$ satisfy \eqref{eq.obs_lines_1d}.

\item Define the ``optimal timespans"
\begin{equation}
\label{eq.obs_lines_1d_minus} T := \frac{ 2 ( h_1 - h_2 ) | \tau_- | }{ ( 1 + h_1 ) ( 1 - h_2 ) } \text{,} \qquad T_1 := \frac{ ( h_1 - h_2 ) | \tau_- | }{ 1 + h_1 } \text{,} \qquad T_2 := \frac{ ( h_1 - h_2 ) | \tau_- | }{ 1 - h_2 } \text{.}
\end{equation}
\end{itemize}
Then, for any solution $\phi \in C^2 ( \mc{U}^\ell ) \cap C^1 ( \bar{\mc{U}}^\ell )$ of \eqref{eq.linear_wave} that also satisfies $\phi |_{ \ell_1 \cup \ell_2 } = 0$:
\begin{itemize}
\item For any $\tau_+ > \tau_- + T$, the observability inequality \eqref{eq.obs_ext_1d} holds for the above $\phi$ and $\tau_\pm$.

\item For any $\tau_{+, 1} > \tau_- + T_1$ and $\tau_{+, 2} > \tau_- + T_2$, the observability estimate \eqref{eq.obs_int_1d} holds.
\end{itemize}
\end{corollary}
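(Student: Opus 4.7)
The plan is to mirror the proof of Corollary \ref{thm.obs_lines_1d_plus} exactly: apply Theorems \ref{thm.obs_ext_1d} and \ref{thm.obs_int_1d} directly, and verify their hypotheses by computing the relevant null-ray intersection times for the two linear boundaries $\lambda_i ( \tau ) = h_i \tau$. All of the work reduces to short elementary algebra; the formulas in \eqref{eq.obs_lines_1d_minus} are then read off as the values of the abstract constants $T_- + T_+$, $T_1$, $T_2$ appearing in the general one-dimensional theorems.

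For the one-sided assertion, I would first check \eqref{eq.obs_ext_1d_ass1} by parametrizing the forward-leftward null ray from $( \tau_-, h_2 \tau_- ) \in \ell_2$ and finding its intersection with the line $\ell_1 = \{ y = h_1 t \}$; a direct computation yields $T_- = ( h_1 - h_2 ) | \tau_- | / ( 1 + h_1 )$, which is positive since $\tau_- < 0$ and $h_2 < h_1$. Repeating the same computation from the resulting point on $\ell_1$, now along a forward-rightward null ray back to $\ell_2$, produces $T_+ = ( 1 + h_2 )( h_1 - h_2 ) | \tau_- | / [ ( 1 + h_1 )( 1 - h_2 ) ]$, confirming \eqref{eq.obs_ext_1d_ass2}. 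Summing and putting the fractions over a common denominator gives $T_- + T_+ = T$ with $T$ as in \eqref{eq.obs_lines_1d_minus}, so \eqref{eq.obs_ext_1d_ass3} holds for any $\tau_+ > \tau_- + T$, and Theorem \ref{thm.obs_ext_1d} yields \eqref{eq.obs_ext_1d}.

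For the two-sided assertion, the identical computation (applied now to the forward null rays issuing from \emph{both} boundary points at time $\tau_-$) gives $T_1 = ( h_1 - h_2 ) | \tau_- | / ( 1 + h_1 )$ and $T_2 = ( h_1 - h_2 ) | \tau_- | / ( 1 - h_2 )$, verifying \eqref{eq.obs_int_1d_ass1}--\eqref{eq.obs_int_1d_ass2}. The hypotheses $\tau_{+, i} > \tau_- + T_i$ then supply \eqref{eq.obs_int_1d_ass3}, and Theorem \ref{thm.obs_int_1d} delivers \eqref{eq.obs_int_1d}.

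I do not anticipate any substantive obstacle: the argument is pure bookkeeping parallel to the expanding case. The one minor subtlety is that the lines $y = h_i \tau$ coincide at the origin, so strictly speaking $\mc{U}^\ell$ fails the condition $\lambda_1 < \lambda_2$ of Definition \ref{def.gtc_1d} at $\tau = 0$. This is handled exactly as is implicit in Corollary \ref{thm.obs_lines_1d_plus}: a short check using $-1 < h_2 < h_1 < 1$ shows $T, T_1, T_2 < | \tau_- |$, so the relevant time window lies entirely in $\{ t < 0 \}$, and one may perturb $\lambda_1, \lambda_2$ outside this window to produce a genuine GTC without altering any of the null-ray computations above.
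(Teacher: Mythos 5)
Your proposal is correct and follows the same route as the paper, which proves this corollary by direct analogy with Corollary \ref{thm.obs_lines_1d_plus}: apply Theorems \ref{thm.obs_ext_1d} and \ref{thm.obs_int_1d} after computing the null-ray intersection times, and your values $T_- = (h_1-h_2)|\tau_-|/(1+h_1)$, $T_+ = (1+h_2)(h_1-h_2)|\tau_-|/[(1+h_1)(1-h_2)]$, $T_1$, $T_2$ all check out. Your closing remark on the degeneracy of $\mc{U}^\ell$ at $\tau = 0$ is a legitimate point the paper leaves implicit, and your fix (the bounds $T, T_1, T_2 < |\tau_-|$ plus perturbing the lines outside the relevant window) is a reasonable way to make the statement literally conform to Definition \ref{def.gtc_1d}.
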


\begin{proof}
This is completely analogous to the proof of Corollary \ref{thm.obs_lines_1d_plus}.
\end{proof}

\begin{remark}
Again, let us assume, on top of Corollary \ref{thm.obs_lines_1d_minus}, that $\ell_1$ is a vertical line:
\[
-1 < h_2 < h_1 = 0 \text{.}
\]
Then, the optimal times for one-sided and two-sided observability, respectively, reduce to
\[
T = \frac{ 2 | h_2 | | \tau_- | }{ 1 - h_2 } \text{,} \qquad \max ( T_1, T_2 ) = | h_2 | \tau_- \text{.}
\]
\end{remark}

\subsection{Exact Controllability} \label{sec.app_control}

It is well known that observability inequalities are necessary for establishing exact controllability properties; see the discussion in Section \ref{sec.intro_bg}.
Here, we briefly describe this connection in the context of time-dependent domains with moving boundaries.

\subsubsection{Well-Posedness}

The first step in this discussion is to precisely define the relevant spaces and norms.
In the subsequent definitions, we accomplish this in the context of GTCs:

\begin{definition} \label{def.norm_leb}
Let $\mc{M}$ be a submanifold of $( \R^{1+n}, g )$, and suppose its induced metric is either Riemannian or Lorentzian.
We then define the following norms and spaces:
\begin{itemize}
\item Let $C^\infty_0 ( \mc{M} )$ be the space of compactly supported smooth functions on $\mc{M}$.

\item For any $\phi \in C^\infty_0 ( \mc{M} )$, we define
\begin{equation}
\label{eq.norm_cs_L2} \| \phi \|_{ L^2 ( \mc{M} ) }^2 := \int_\mc{M} | \phi |^2 \text{,}
\end{equation}
where the integral is defined with respect to the metric induced by $g$.

\item Let $L^2 ( \mc{M} )$ denote the (Hilbert space) completion of $C^\infty_0 ( \mc{M} )$ with respect to \eqref{eq.norm_cs_L2}.
\end{itemize}
\end{definition}

In order to state the usual well-posedness theorems for wave equations, we will need to define additional functional spaces on cross-sections of GTCs.

\begin{definition} \label{def.norms_cs}
Let $\mc{U}$ be a GTC, and let $\mc{V}$ be a cross-section of $\mc{U}$.
\begin{itemize}
\item For any $\phi \in C^\infty_0 ( \mc{V} )$, we define
\begin{equation}
\label{eq.norm_cs_H1} \| \phi \|_{ H^1 ( \mc{V} ) }^2 := \int_\mc{V} \gamma ( D \phi, D \phi ) + \int_\mc{V} \phi^2 \text{,}
\end{equation}
where $\gamma$ is the (Riemannian) metric on $\mc{V}$ induced by $g$, where the integrals on the right-hand side are with respect to $\gamma$, and where $D \phi$ denotes $\gamma$-gradient of $\phi$.

\item Let $H^1_0 ( \mc{V} )$ denote the (Hilbert space) completion of $C^\infty_0 ( \mc{V} )$ with respect to \eqref{eq.norm_cs_H1}.

\item Let $H^{-1} ( \mc{V} )$ denote the (Hilbert) dual space of $H^1_0 ( \mc{V} )$.
\end{itemize}
\end{definition}

We now state the standard existence and uniqueness results for the linear wave equations of Problem \ref{prb.linear_wave} that will be relevant to Dirichlet boundary controllability (see also \cite{ev:pde, lionj_mage:bvp1}).

\begin{theorem} \label{thm.weak_sol}
Let $\mc{U}$ be a GTC in $\R^{1+n}$, let $Z$ be a generator of $\mc{U}$, let $\mc{N}$ be the outer ($g$-)unit normal of $\mc{U}$, and let $\mc{V}$ be a cross-section of $\mc{U}$.
Also, consider the setting of Problem \ref{prb.linear_wave}.
Then:
\begin{itemize}
\item Given any initial data $( \phi_0, \phi_1 ) \in H^1_0 ( \mc{V} ) \times L^2 ( \mc{V} )$, there exists a unique $\phi \in L^2_\mathrm{loc} ( \mc{U} )$ that solves the wave equation \eqref{eq.linear_wave} and satisfies, in a trace sense,
\begin{equation}
\label{eq.weak_sol_data} ( \phi, Z \phi ) |_{ \mc{V} } = ( \phi_0, \phi_1 ) \text{,} \qquad \phi |_{ \partial \mc{U} } = 0 \text{.}
\end{equation}
Furthermore, for any cross-sections $\mc{V}_\pm$ such that $\mc{V}_- < \mc{V} < \mc{V}_+$,
\begin{align}
\label{eq.weak_sol_energy} \| \phi \|_{ H^1 ( \mc{V}_+ ) }^2 + \| Z \phi \|_{ L^2 ( \mc{V}_+ ) }^2 + \| \mc{N} \phi \|_{ L^2 ( \partial \mc{U} ( \mc{V}, \mc{V}_+ ) ) }^2 &\lesssim_{ Z, \mc{V}, \mc{V}_+ } \| \phi_0 \|_{ H^1 ( \mc{V} ) }^2 + \| \phi_1 \|_{ L^2 ( \mc{V} ) }^2 \text{,} \\
\notag \| \phi \|_{ H^1 ( \mc{V}_- ) }^2 + \| Z \phi \|_{ L^2 ( \mc{V}_- ) }^2 + \| \mc{N} \phi \|_{ L^2 ( \partial \mc{U} ( \mc{V}_-, \mc{V} ) ) }^2 &\lesssim_{ Z, \mc{V}, \mc{V}_- } \| \phi_0 \|_{ H^1 ( \mc{V} ) }^2 + \| \phi_1 \|_{ L^2 ( \mc{V} ) }^2 \text{.}
\end{align}

\item Given any initial data $( \phi_0, \phi_1 ) \in L^2 ( \mc{V} ) \times H^{-1} ( \mc{V} )$ and boundary data $\phi_b \in L^2 ( \partial \mc{U} )$, there is a unique solution $\phi \in L^2_\mathrm{loc} ( \mc{U} )$ of \eqref{eq.linear_wave} that satisfies, in a trace sense,
\begin{equation}
\label{eq.dual_sol_data} ( \phi, Z \phi ) |_{ \mc{V} } = ( \phi_0, \phi_1 ) \text{,} \qquad \phi |_{ \partial \mc{U} } = \phi_b \text{.}
\end{equation}
Furthermore, for any cross-sections $\mc{V}_\pm$ such that $\mc{V}_- < \mc{V} < \mc{V}_+$,
\begin{align}
\label{eq.dual_sol_energy} \| \phi \|_{ L^2 ( \mc{V}_+ ) }^2 + \| Z \phi \|_{ H^{-1} ( \mc{V}_+ ) }^2 &\lesssim_{ Z, \mc{V}, \mc{V}_+ } \| \phi_0 \|_{ L^2 ( \mc{V} ) }^2 + \| \phi_1 \|_{ H^{-1} ( \mc{V} ) }^2 + \| \phi_b \|_{ L^2 ( \partial \mc{U} ( \mc{V}, \mc{V}_+ ) ) }^2 \text{,} \\
\notag \| \phi \|_{ L^2 ( \mc{V}_- ) }^2 + \| Z \phi \|_{ H^{-1} ( \mc{V}_- ) }^2 &\lesssim_{ Z, \mc{V}, \mc{V}_- } \| \phi_0 \|_{ L^2 ( \mc{V} ) }^2 + \| \phi_1 \|_{ H^{-1} ( \mc{V} ) }^2 + \| \phi_b \|_{ L^2 ( \partial \mc{U} ( \mc{V}_-, \mc{V} ) ) }^2 \text{.}
\end{align}
\end{itemize}
\end{theorem}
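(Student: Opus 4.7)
The plan is to reduce Theorem \ref{thm.weak_sol} to classical well-posedness theory for linear wave equations on a fixed cylindrical domain; see for instance \cite{ev:pde, lionj_mage:bvp1}. Let $Z$ be a generator of $\mc{U}$ as in Definition \ref{def.gtc}. Flowing the points of $\mc{V}$ along the integral curves of $Z$ produces a diffeomorphism $\Psi: \R \times \mc{V} \to \mc{U}$ with $\Psi(\{0\} \times \mc{V}) = \mc{V}$ and $\Psi(\R \times \partial \mc{V}) = \partial \mc{U}$, and with $\Psi^\ast Z = \partial_s$ (where $s$ is the flow parameter). Pulling back \eqref{eq.linear_wave} via $\Psi$ yields a second-order linear hyperbolic equation on the static cylinder $\R \times \mc{V}$ with smooth coefficients and zero Dirichlet data on $\R \times \partial \mc{V}$. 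Under this identification, both well-posedness statements and both sets of energy estimates can be transcribed from the pulled-back problem back to $\mc{U}$.

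For the first statement, existence and uniqueness of $\phi \in C(\R; H^1_0(\mc{V})) \cap C^1(\R; L^2(\mc{V}))$ with Cauchy data $(\phi_0, \phi_1)$ on $\mc{V}$ and zero Dirichlet data on $\partial \mc{U}$ follow from a standard Galerkin approximation, since the pulled-back equation is a linear hyperbolic PDE with smooth coefficients. The bounds on the $H^1 \times L^2$ norms at $\mc{V}_\pm$ in \eqref{eq.weak_sol_energy} are exactly those furnished by Proposition \ref{thm.energy_est}. The only nontrivial point is the \emph{hidden regularity} estimate $\| \mc{N} \phi \|_{L^2(\partial \mc{U}(\mc{V}_-, \mc{V}_+))} \lesssim \| \phi_0 \|_{H^1(\mc{V})} + \| \phi_1 \|_{L^2(\mc{V})}$. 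To prove it, I would smoothly extend the outer unit normal $\mc{N}$ to a vector field $X$ on a neighborhood of $\partial \mc{U}(\mc{V}_-, \mc{V}_+)$ inside $\bar{\mc{U}}$, and integrate the multiplier identity
\begin{equation*}
\nabla^\alpha ( Q_{\alpha \beta} X^\beta ) = \Box \phi \cdot X \phi + Q_{\alpha \beta} \nabla^\alpha X^\beta, \qquad Q_{\alpha \beta} := \nabla_\alpha \phi \nabla_\beta \phi - \tfrac{1}{2} g_{\alpha \beta} \nabla^\mu \phi \nabla_\mu \phi,
\end{equation*}
over $\mc{U}(\mc{V}_-, \mc{V}_+)$ via the Lorentzian divergence theorem. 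Since $\phi$ vanishes on $\partial \mc{U}$, its spacetime gradient is proportional to $\mc{N}$ along $\partial \mc{U}$, so the boundary integral over $\partial \mc{U}(\mc{V}_-, \mc{V}_+)$ reduces to a multiple of $\int |\mc{N} \phi|^2$, with the correct sign thanks to $\partial \mc{U}$ being timelike. The interior contribution, the cross-sectional terms at $\mc{V}_\pm$, and the $\nabla_{\mc{X}} \phi + V \phi$ contribution from \eqref{eq.linear_wave} are all controlled by Proposition \ref{thm.energy_est} and a Gronwall absorption.

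For the second statement, I would apply the transposition method of Lions \cite{lionj:ctrlstab_hum, lionj_mage:bvp1}. Given data $(\phi_0, \phi_1, \phi_b)$, define $\phi$ by duality: for each $\omega \in C^\infty_0(\mc{U}(\mc{V}, \mc{V}_+))$, let $\psi$ be the unique solution, furnished by the first statement applied to the formal adjoint $\mc{P}^\ast$, of the backward problem $\mc{P}^\ast \psi = \omega$ with vanishing Cauchy data on $\mc{V}_+$ and $\psi|_{\partial \mc{U}} = 0$. Formal integration by parts dictates that any $\phi$ satisfying \eqref{eq.dual_sol_data} must obey
\begin{equation*}
\int_{\mc{U}(\mc{V}, \mc{V}_+)} \phi \cdot \omega \;=\; -\int_{\partial \mc{U}(\mc{V}, \mc{V}_+)} \phi_b \cdot \mc{N} \psi \;+\; \langle \phi_1, \psi |_\mc{V} \rangle_{H^{-1}, H^1_0} \;-\; \int_{\mc{V}} \phi_0 \cdot Z \psi |_{\mc{V}},
\end{equation*}
and the right-hand side is a continuous linear functional of $\omega$ precisely because of the hidden regularity $\mc{N} \psi \in L^2$ just established, combined with the energy bound \eqref{eq.weak_sol_energy} for $\psi$. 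The Riesz representation theorem then produces $\phi \in L^2_{\mathrm{loc}}(\mc{U})$ uniquely, and the estimates \eqref{eq.dual_sol_energy} follow by duality (with trace values at $\mc{V}_\pm$ extracted by varying the Cauchy data of $\psi$ on $\mc{V}_+$).

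The main technical obstacle is the hidden regularity bound for $\mc{N} \phi$ in the first statement, since this is exactly the ingredient that makes the transposition argument work. The multiplier computation above is classical, but must be executed with care so that the smooth extension $X$ of $\mc{N}$ respects the time-dependent geometry of $\partial \mc{U}$, and so that the boundary integrand on $\partial \mc{U}(\mc{V}_-, \mc{V}_+)$ retains the correct sign throughout (which is why timelikeness of $\partial \mc{U}$, built into Definition \ref{def.gtc}, is essential). Once this estimate is secured, the remainder of the theorem is a routine adaptation of the Lions framework, which I would omit in the same spirit as the proof sketches of Propositions \ref{thm.energy_est} and \ref{thm.energy_est_loc}.
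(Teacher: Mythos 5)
Your top-level strategy is the same as the paper's: straighten $\mc{U}$ into a static cylinder via the generator $Z$, then invoke the classical $H^1_0 \times L^2$ and $L^2 \times H^{-1}$ well-posedness theory for second-order linear hyperbolic equations. The Galerkin argument, the multiplier identity for the hidden regularity of $\mc{N}\phi$, and the Lions transposition construction are precisely what sits behind the references \cite{ev:pde, lionj_mage:bvp1} the paper cites, so the last three paragraphs of your proposal are essentially an unpacking of the same route.

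There is, however, a genuine gap in your reduction. You take the $Z$-flow parameter $s$ as the time coordinate, so the time slices $\Psi(\{s\} \times \mc{V})$ are the flow-outs of $\mc{V}$ along $Z$. These need not be spacelike once $|s|$ is away from zero. For example, with $\mc{U} = \R \times (0, 1) \subseteq \R^{1+1}$, $Z = (1 + \epsilon x) \partial_t$, and $\mc{V} = \mc{U}_0$, one has $\Psi(s, x) = ((1 + \epsilon x) s, x)$, so the level set $\{s = c\}$ is the graph $t = (1 + \epsilon x) c$, whose tangent $(\epsilon c, 1)$ becomes timelike once $|c| > 1/\epsilon$. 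When the time slices tip over, the pulled-back operator is no longer in normal second-order hyperbolic form in the $(s, y)$-coordinates, and the Galerkin and energy machinery you invoke does not apply. The paper sidesteps this by decoupling the two roles of the coordinate system: it takes the spatial coordinates $y^1, \dots, y^n$ constant along the $Z$-flow (as you do), but it chooses the time coordinate to be a separate function $\mf{t}$ whose $g$-gradient is everywhere past-directed timelike (with $Z\mf{t} > 0$ so that $Z$ is transverse to the slices), which forces the $\mf{t}$-level sets to be spacelike cross-sections and keeps the normal hyperbolic structure globally. With that replacement your argument is sound. A secondary omission: the theorem asserts uniqueness in the broad class $L^2_{\mathrm{loc}}(\mc{U})$, not merely in the energy class that Galerkin or transposition naturally produces; the paper closes this via a separate unique-continuation argument in the $(\mf{t}, y)$-coordinates, which your proposal does not discuss.
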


\begin{remark}
A technical issue inherent to Theorem \ref{thm.weak_sol} (and more generally to the theory of hyperbolic PDEs) is establishing the precise sense that $\phi$ is a solution of Problem \ref{prb.linear_wave}.
\begin{itemize}
\item When $( \phi_0, \phi_1 ) \in H^1_0 ( \mc{V} ) \times L^2 ( \mc{V} )$, the solution $\phi$ can be interpreted as a \emph{weak solution} of \eqref{eq.linear_wave}, via integrations by parts and the Hilbert space structures of the spaces $\mc{H}^1_0 ( \mc{V}_\pm )$.
This is described, for a slightly simpler class of hyperbolic PDEs, in \cite{ev:pde}.

\item When $( \phi_0, \phi_1 ) \in L^2 ( \mc{V} ) \times H^{-1} ( \mc{V} )$, one can make sense of the ``transposition" solution $\phi$ via the above theory of weak solutions and a duality argument; see, for instance, \cite{lionj_mage:bvp1}.

\item More generally, as long as we assume $\mc{X}$ and $\mc{V}$ are smooth, then $\phi$, in all the above, can also be interpreted as a distributional solution of \eqref{eq.linear_wave}.
\end{itemize}
\end{remark}

\subsubsection{Controllability and Observability}

Using the well-posedness results of Theorem \ref{thm.weak_sol}, we can now give a precise statement of the exact controllability problem we will consider:

\begin{definition} \label{def.control}
Let $\mc{U}$ be a GTC in $\R^{1+n}$, let $Z$ be a generator of $\mc{U}$, and fix cross-sections $\mc{V}_\pm$ of $\mc{U}$ with $\mc{V}_- < \mc{V}_+$.
Moreover, we consider the setting of Problem \ref{prb.linear_wave} on $\mc{U}$.
\begin{itemize}
\item The wave equation \eqref{eq.linear_wave} is \emph{exactly} (\emph{Dirichlet boundary}) \emph{controllable} on $\mc{U} ( \mc{V}_-, \mc{V}_+ )$, with control on some open $\Gamma \subseteq \partial \mc{U} ( \mc{V}_-, \mc{V}_+ )$, iff given any initial and final data,
\begin{equation}
\label{eq.control_cauchy_data} ( \phi^\pm_0, \phi^\pm_1 ) \in L^2 ( \mc{V}_\pm ) \times H^{-1} ( \mc{V}_\pm ) \text{,}
\end{equation}
there exists $\phi_b \in L^2 ( \partial \mc{U} )$, supported in $\Gamma$, such that the solution of \eqref{eq.linear_wave} satisfying\footnote{This solution exists and is unique due to the second part of Theorem \ref{thm.weak_sol}.}
\begin{equation}
\label{eq.control_existence} ( \phi, Z \phi ) |_{ \mc{V}_- } = ( \phi^-_0, \phi^-_1 ) \text{,} \qquad \phi |_{ \partial \mc{U} } = \phi_b \text{,}
\end{equation}
also attains the final data
\begin{equation}
\label{eq.control_goal} ( \phi, Z \phi ) |_{ \mc{V}_+ } = ( \phi^+_0, \phi^+_1 ) \text{.}
\end{equation}

\item When \eqref{eq.control_existence} and \eqref{eq.control_goal} hold, we say that $\phi_b$ drives \eqref{eq.linear_wave} from $( \phi^-_0, \phi^-_1 )$ to $( \phi^+_0, \phi^+_1 )$.
\end{itemize}
\end{definition}

The following theorem summarizes, again in the context of GTCs and Problem \ref{prb.linear_wave}, the connection, via the HUM (see \cite{lionj:ctrlstab_hum}), between observability and exact controllability.

\begin{theorem} \label{thm.control_hum}
Assume the setting of Definition \ref{def.control} and Problem \ref{prb.linear_wave}, let $\mc{N}$ denote the outer ($g$-)unit normal to $\mc{U}$, and let $\Gamma$ be an open subset of $\partial \mc{U} ( \mc{V}_-, \mc{V}_+ )$.
In addition, assume that for any $( \psi^-_0, \psi^-_1 ) \in H^1_0 ( \mc{V}_- ) \times L^2 ( \mc{V}_- )$, we have the observability inequality
\begin{equation}
\label{eq.control_hum_obs} \| \psi^-_0 \|_{ H^1 ( \mc{V}_- ) }^2 + \| \psi^-_1 \|_{ L^2 ( \mc{V}_- ) }^2 \lesssim_{ Z, \mc{V}_\pm } \| \mc{N} \psi \|_{ L^2 ( \Gamma ) }^2 \text{,}
\end{equation}
where $\psi$ denotes the solution of the adjoint problem\footnote{This solution exists and is unique due to the first part of Theorem \ref{thm.weak_sol}.} 
\begin{align}
\label{eq.control_hum_adj} [ \Box \psi - \nabla_{ \mc{X} } \psi + ( V - \nabla_\alpha \mc{X}^\alpha ) \psi ] |_{ \mc{U} } &= 0 \text{,} \\
\notag ( \psi, Z \psi ) |_{ \mc{V}_- } &= ( \psi^-_0, \psi^-_1 ) \text{,} \\
\notag \psi |_{ \partial \mc{U} } &\equiv 0 \text{.}
\end{align}
Then, the equation \eqref{eq.linear_wave} is exactly controllable on $\mc{U} ( \mc{V}_-, \mc{V}_+ )$, with control on $\Gamma$.

Furthermore, for each $( \phi^-_0, \phi^-_1 ) \in L^2 ( \mc{V}_- ) \times H^{-1} ( \mc{V}_- )$, there exists a functional
\begin{equation}
\label{eq.control_hum_fct} \mc{J} [ \phi^-_0, \phi^-_1 ]: H^1_0 ( \mc{V}_- ) \times L^2 ( \mc{V}_- ) \rightarrow \R
\end{equation}
such that the following hold:
\begin{itemize}
\item $\mc{J} [ \phi^-_0, \phi^-_1 ]$ has a unique minimizer $( \psi^-_0, \psi^-_1 )$; moreover, the zero extension of $\mc{N} \psi |_\Gamma$ to $\partial \mc{U}$, where $\psi$ solves \eqref{eq.control_hum_adj} with the above $( \psi^-_0, \psi^-_1 )$, drives \eqref{eq.linear_wave} from $( \phi^-_0, \phi^-_1 )$ to $(0, 0)$.

\item If $\phi_b \in L^2 ( \partial \mc{U} )$ is also supported in $\Gamma$ and drives \eqref{eq.linear_wave} from $( \phi^-_0, \phi^-_1 )$ to $(0, 0)$, then
\begin{equation}
\label{eq.control_hum_min} \| \mc{N} \psi \|_{ L^2 ( \Gamma ) } \leq \| \phi_b \|_{ L^2 ( \Gamma ) } \text{.}
\end{equation}
\end{itemize}
\end{theorem}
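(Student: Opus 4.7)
The plan is to execute the Hilbert Uniqueness Method of \cite{lionj:ctrlstab_hum} in the geometric language developed in Section \ref{sec.prelim}, reducing the controllability claim to minimizing a coercive convex functional over the adjoint Cauchy data. By time-reversibility of \eqref{eq.linear_wave} and linearity, one may assume the target state at $\mc{V}_+$ is zero: controls driving arbitrary $(\phi_0^-,\phi_1^-)$ to $(0,0)$ forward and $(\phi_0^+,\phi_1^+)$ to $(0,0)$ backward in time superpose to produce controls driving $(\phi_0^-,\phi_1^-)$ to $(\phi_0^+,\phi_1^+)$. Fix $(\phi_0^-,\phi_1^-)\in L^2(\mc{V}_-)\times H^{-1}(\mc{V}_-)$, and for each $(\psi_0^-,\psi_1^-)\in H^1_0(\mc{V}_-)\times L^2(\mc{V}_-)$ let $\psi$ be the adjoint solution from \eqref{eq.control_hum_adj}. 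Define
\[
\mc{J}[\phi_0^-,\phi_1^-](\psi_0^-,\psi_1^-) := \tfrac{1}{2}\|\mc{N}\psi\|_{L^2(\Gamma)}^2 + \langle \phi_1^-,\psi_0^-\rangle_{H^{-1},H^1_0} - (\phi_0^-,\psi_1^-)_{L^2(\mc{V}_-)},
\]
with sign conventions dictated by the integration-by-parts identity invoked below. The hidden-regularity bound \eqref{eq.weak_sol_energy} makes $(\psi_0^-,\psi_1^-)\mapsto\|\mc{N}\psi\|_{L^2(\Gamma)}$ a continuous seminorm, so $\mc{J}$ is continuous and strictly convex; the observability assumption \eqref{eq.control_hum_obs} promotes this seminorm to an equivalent norm, forcing $\mc{J}$ to be coercive and yielding a unique minimizer $(\psi_0^-,\psi_1^-)$ by the direct method of the calculus of variations.

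The key step is to verify that the zero extension $\phi_b$ of $\mc{N}\psi|_\Gamma$ to $\partial\mc{U}$ drives $(\phi_0^-,\phi_1^-)$ to $(0,0)$. Let $\phi$ be the transposition solution of \eqref{eq.linear_wave} furnished by the second part of Theorem \ref{thm.weak_sol} with this boundary data. For any test pair $(\tilde\psi_0,\tilde\psi_1)$ with adjoint solution $\tilde\psi$, applying the Lorentzian divergence theorem (first to smooth approximants, then extending by the estimates \eqref{eq.weak_sol_energy}--\eqref{eq.dual_sol_energy}) to the spacetime pairing of $\phi$ against \eqref{eq.control_hum_adj} yields
\[
\int_\Gamma \mc{N}\psi\cdot\mc{N}\tilde\psi = -\langle\phi_1^-,\tilde\psi_0\rangle + (\phi_0^-,\tilde\psi_1) + \langle(\phi,Z\phi)|_{\mc{V}_+},(\tilde\psi_1,-\tilde\psi_0)|_{\mc{V}_+}\rangle,
\]
with pairings interpreted in the appropriate dual sense. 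Comparing with the Euler-Lagrange equation $d\mc{J}[\phi_0^-,\phi_1^-](\psi_0^-,\psi_1^-)(\tilde\psi_0,\tilde\psi_1)=0$ at the minimizer, the first three terms cancel, and the remaining pairing must vanish for all test data. Since the adjoint equation is reversible, the trace map $(\tilde\psi_0,\tilde\psi_1)\mapsto(\tilde\psi,Z\tilde\psi)|_{\mc{V}_+}$ is a bijection onto $H^1_0(\mc{V}_+)\times L^2(\mc{V}_+)$, whose image is dense in $L^2(\mc{V}_+)\times H^{-1}(\mc{V}_+)$; this forces $(\phi,Z\phi)|_{\mc{V}_+}=(0,0)$. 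The minimality claim \eqref{eq.control_hum_min} then follows immediately: if $\tilde\phi_b\in L^2(\partial\mc{U})$ is also supported in $\Gamma$ and drives $(\phi_0^-,\phi_1^-)$ to $(0,0)$, the same identity with $\tilde\phi_b$ in place of $\mc{N}\psi$ gives $\int_\Gamma \tilde\phi_b\cdot\mc{N}\psi = \|\mc{N}\psi\|_{L^2(\Gamma)}^2$, and Cauchy-Schwarz closes the proof.

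The principal obstacle will be rigorously justifying the displayed integration-by-parts identity at the low regularity $\phi\in C([\tau_-,\tau_+];L^2)$, $\phi_b\in L^2(\Gamma)$, $\tilde\psi\in C([\tau_-,\tau_+];H^1_0)$. Standard Lions--Magenes duality handles this in the cylindrical case; here the only new feature is that the ``time slicing'' is performed using a generator $Z$ of the GTC rather than $\partial_t$, but the trace spaces on $\mc{V}_\pm$ and the integral structure on $\partial\mc{U}$ are already compatible with the Hilbert framework of Definition \ref{def.norms_cs} and Theorem \ref{thm.weak_sol}, so the cylindrical argument transposes with only notational changes.
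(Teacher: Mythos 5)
Your proposal follows essentially the same route as the paper: reduce to null controllability by time-reversibility and superposition, characterize null controls by a spacetime duality identity, build the HUM functional $\mc{J}$ whose quadratic part is $\frac{1}{2}\|\mc{N}\psi\|_{L^2(\Gamma)}^2$, get coercivity from \eqref{eq.control_hum_obs} and strict convexity from the same inequality applied to differences, minimize by the direct method, and read off the control from the Euler--Lagrange equation; the optimality \eqref{eq.control_hum_min} then follows by applying the duality identity twice and Cauchy--Schwarz. The one place where your displayed formulas are not quite right is the duality pairing on $\mc{V}_-$: it is not simply $\langle\phi_1^-,\tilde\psi_0\rangle - (\phi_0^-,\tilde\psi_1)$, because the Cauchy data on $\mc{V}_\pm$ is prescribed as $(\phi,Z\phi)$ with respect to a \emph{generator} $Z$ rather than the unit normal $N_-$ of $\mc{V}_-$, and because the first-order coefficient $\mc{X}$ contributes a boundary term $g(\mc{X},N_-)\,\phi\psi$ after the divergence theorem. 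The paper handles this by decomposing $N_- = a_- Z + B_-$ with $B_-$ tangent to $\mc{V}_-$ and integrating the tangential piece by parts on $\mc{V}_-$, which produces the linear functional $\int_{\mc{V}_-}(a_-\phi_1^- + 2B_-\phi_0^- + b_-\phi_0^- + \mc{X}_-\phi_0^-)\psi_0^- - \int_{\mc{V}_-} a_-\phi_0^-\,\psi_1^-$ appearing in $\mc{J}$. Since you explicitly defer the sign conventions of the linear part to whatever the integration-by-parts identity dictates, and since any bounded linear functional of $(\psi_0^-,\psi_1^-)$ leaves the coercivity and convexity arguments untouched, this is a repairable imprecision rather than a structural flaw; but you should carry out the computation once to see that the extra geometric terms ($a_-$, $B_-$, $b_-$, $\mc{X}_-$) genuinely appear, as otherwise the Euler--Lagrange equation of your $\mc{J}$ would not cancel against the duality identity and the minimizer's normal derivative would not be a null control.
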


As is standard, we can now combine Theorem \ref{thm.control_hum} with our main observability results in order to establish exact Dirichlet boundary controllability for Problem \ref{prb.linear_wave} on time-dependent domains.
Below, we demonstrate this for the unified observability estimate of Theorem \ref{thm.obs_combo}.

\begin{corollary} \label{thm.control_combo}
Assume the setting of Theorem \ref{thm.obs_combo}, that is, let $\mc{U} \subseteq \R^{1+n}$ be a GTC, and:
\begin{itemize}
\item Fix $x_0 \in \R^n$ and $\tau_\pm \in \R$ satisfying \eqref{eq.obs_combo_R}.
Also, choose $t_0 \in ( \tau_-, \tau_+ )$ so that \eqref{eq.obs_combo_t0} holds.

\item Consider the setting of Problem \ref{prb.linear_wave}, and let $\mc{X}$, $V$ be as in \eqref{eq.XV}.

\item Let $\mc{N}$ denote the outer ($g$-)unit normal of $\mc{U}$, let $\Gamma_\dagger \subseteq \partial \mc{U}_{ \tau_-, \tau_+ }$ be defined as in \eqref{eq.obs_combo_Gamma}, and let $\mc{Y}_\dagger \subseteq \partial \mc{U}_{ \tau_-, \tau_+ }$ be a neighborhood of $\bar{\Gamma}_\dagger$ in $\partial \mc{U}$.
\end{itemize}
Then, the equation \eqref{eq.linear_wave} is exactly controllable on $\mc{U}_{ \tau_-, \tau_+ }$, with control on $\mc{Y}_\dagger$.

More generally, under the above assumptions, given any cross sections $\mc{V}_\pm$ of $\mc{U}$ that satisfy
\begin{equation}
\label{eq.control_combo_gen} \mc{V}_- \subseteq I^- ( \mc{Y}_\dagger ) \text{,} \qquad \mc{V}_+ \subseteq I^+ ( \mc{Y}_\dagger ) \text{,}
\end{equation}
we have that \eqref{eq.linear_wave} is exactly controllable on $\mc{U} ( \mc{V}_-, \mc{V}_+ )$, with control on $\mc{Y}_\dagger$.
\end{corollary}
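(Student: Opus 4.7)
The plan is to reduce to an observability inequality via the HUM framework of Theorem~\ref{thm.control_hum}: it will suffice to establish \eqref{eq.control_hum_obs} for solutions $\psi$ of the adjoint equation \eqref{eq.control_hum_adj} on the region $\mc{U}(\mc{V}_-, \mc{V}_+)$, with observation region $\Gamma := \mc{Y}_\dagger$. The adjoint problem is itself a linear wave equation of the form \eqref{eq.linear_wave}, with smooth lower-order coefficients $-\mc{X}$ and $V - \nabla_\alpha \mc{X}^\alpha$, and its solutions vanish on all of $\partial \mc{U}$. I would therefore first apply Theorem~\ref{thm.obs_combo} directly to $\psi$ (with the same $\mc{U}$, $x_0$, $\tau_\pm$, $t_0$, $\mc{Y}_\dagger$) to obtain
\[
\int_{\mc{U}_{\tau_-}} \bigl( |\nabla_{t,x} \psi|^2 + \psi^2 \bigr) \lesssim \int_{\mc{Y}_\dagger} |\mc{N} \psi|^2 \text{.}
\]

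In the basic case $\mc{V}_\pm = \mc{U}_{\tau_\pm}$, this already implies \eqref{eq.control_hum_obs} after a short norm comparison: any generator $Z$ of $\mc{U}$ has uniformly bounded Cartesian components on the compact set $\overline{\mc{U}_{\tau_-}}$, so $\|Z \psi\|_{L^2(\mc{U}_{\tau_-})}^2$ is controlled by the integral of $|\nabla_{t,x}\psi|^2 + \psi^2$ on $\mc{U}_{\tau_-}$, and similarly for $\|\psi\|_{H^1(\mc{U}_{\tau_-})}^2$. Theorem~\ref{thm.control_hum} then yields exact controllability on $\mc{U}_{\tau_-, \tau_+}$ with control in $\mc{Y}_\dagger$. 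For the general case, I would transport this observability bound from the level slice $\mc{U}_{\tau_-}$ to the cross-section $\mc{V}_-$: since $\psi$ vanishes everywhere on $\partial \mc{U}$, Proposition~\ref{thm.energy_est} applies and yields a two-sided equivalence between the energies of $\psi$ on any two comparable cross-sections of $\mc{U}$. Because $\mc{V}_-$ and $\mc{U}_{\tau_-}$ need not themselves be comparable (two cross-sections may in principle cross each other), I would introduce an auxiliary cross-section $\mc{V}_\ast$ lying in the past of both $\mc{V}_-$ and $\mc{U}_{\tau_-}$---obtained by flowing sufficiently far backward along any fixed generator of $\mc{U}$---and chain the bounds as $E(\mc{V}_-) \simeq E(\mc{V}_\ast) \simeq E(\mc{U}_{\tau_-})$. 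This delivers the full observability inequality \eqref{eq.control_hum_obs} on $\mc{U}(\mc{V}_-, \mc{V}_+)$.

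The last point to verify, and the one at which hypothesis \eqref{eq.control_combo_gen} enters essentially, is that the HUM control produced by Theorem~\ref{thm.control_hum} can indeed be taken supported in $\mc{Y}_\dagger$ as an $L^2$-function on $\partial \mc{U}(\mc{V}_-, \mc{V}_+)$; this requires $\mc{Y}_\dagger \subseteq \partial \mc{U}(\mc{V}_-, \mc{V}_+)$, which is ensured by the causal inclusions $\mc{V}_- \subseteq I^-(\mc{Y}_\dagger)$ and $\mc{V}_+ \subseteq I^+(\mc{Y}_\dagger)$. The main obstacle I anticipate is not analytical---the heavy lifting has already been carried out in Theorem~\ref{thm.obs_combo}---but rather the geometric bookkeeping needed to ensure that the causal ordering among $\mc{V}_\pm$, $\mc{U}_{\tau_\pm}$, the auxiliary slice $\mc{V}_\ast$, and $\mc{Y}_\dagger$ is compatible with both the energy-transfer step and the support condition on the HUM control.
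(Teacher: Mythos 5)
Your proposal is correct and follows essentially the same route as the paper: apply Theorem~\ref{thm.obs_combo} to solutions of the adjoint problem \eqref{eq.control_hum_adj} (which is again of the form \eqref{eq.linear_wave}), upgrade to \eqref{eq.control_hum_obs} by a norm comparison and approximation, invoke the HUM machinery of Theorem~\ref{thm.control_hum}, and handle general cross-sections $\mc{V}_\pm$ via the energy estimate of Proposition~\ref{thm.energy_est}. Your extra care with an auxiliary cross-section $\mc{V}_\ast$ to compare $\mc{V}_-$ with $\mc{U}_{\tau_-}$ is a reasonable piece of bookkeeping that the paper leaves implicit.
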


\begin{proof}
Consider a solution $\psi$ of the adjoint problem \eqref{eq.control_hum_adj}, with $\mc{V}_- := \mc{U}_{ \tau_- }$ and $Z$ being any generator of $\mc{U}$.
Note the wave equation in \eqref{eq.control_hum_adj} is of the same form as those in Problem \ref{prb.linear_wave}.
Thus, whenever $\psi$ is smooth, we can apply Theorem \ref{thm.obs_combo} to obtain the observability estimate
\begin{equation}
\label{eql.control_combo_1} \| \psi^-_0 \|_{ H^1 ( \mc{U}_{ \tau_- } ) }^2 + \| \psi^-_1 \|_{ L^2 ( \mc{U}_{ \tau_- } ) }^2 \lesssim_{ \mc{U}, ( x_0, t_0 ), \tau_\pm, V, \mc{X}, \mc{Y}_\dagger } \int_{ \mc{Y}_\dagger } | \mc{N} \psi |^2 \text{.}
\end{equation}
Moreover, a standard approximation argument combined with \eqref{eql.control_combo_1} yields the inequality
\begin{equation}
\label{eql.control_combo_2} \| \psi^-_0 \|_{ H^1 ( \mc{U}_{ \tau_- } ) }^2 + \| \psi^-_1 \|_{ L^2 ( \mc{U}_{ \tau_- } ) }^2 \lesssim_{ \mc{U}, x_0, \tau_\pm, V, \mc{X}, \mc{Y}_\dagger } \| \mc{N} \psi \|_{ L^2 ( \mc{Y}_\dagger ) }^2 \text{,}
\end{equation}
for all solutions $\psi$ generated from initial data $( \psi^-_0, \psi^-_1 ) \in H^1_0 ( \mc{U}_{ \tau_- } ) \times L^2 ( \mc{U}_{ \tau_- } )$, that is, solutions generated from the first part of Theorem \ref{thm.weak_sol}.
Applying Theorem \ref{thm.control_hum} along with \eqref{eql.control_combo_2} yields that \eqref{eq.linear_wave} is indeed exactly controllable on $\mc{U}_{ \tau_-, \tau_+ }$, with control on $\mc{Y}_\dagger$.

Finally, for general cross-sections $\mc{V}_\pm$, we combine the observability inequality \eqref{eql.control_combo_1} with the energy estimate \eqref{eq.energy_est} and an approximate argument to obtain \eqref{eq.control_hum_obs} for all solutions $\psi$ arising from initial data $( \psi^-_0, \psi^-_1 ) \in H^1_0 ( \mc{V}_- ) \times L^2 ( \mc{V}_- )$.
By Theorem \ref{thm.control_hum}, we again conclude that \eqref{eq.linear_wave} is exactly controllable on $\mc{U} ( \mc{V}_-, \mc{V}_+ )$, with control on $\mc{Y}_\dagger$.
\end{proof}

\begin{remark}
Note that Theorem \ref{thm.control_hum} and Corollary \ref{thm.control_combo} allow for geometric extensions of controllability results, where the initial and final data can be placed on cross-sections of $\mc{U}$ that are not level sets of $t$.
As before, one still needs to impose the Dirichlet boundary data on a sufficiently large portion of $\partial \mc{U}$ in order to achieve exact controllability.
\end{remark}

Finally, we note that all the previous results in one spatial dimension---Theorems \ref{thm.obs_ext_1d} and \ref{thm.obs_int_1d}, Corollaries \ref{thm.obs_lines_1d_plus} and \ref{thm.obs_lines_1d_minus}---also lead to corresponding exact controllability results.


\appendix

\if\comp1
\newpage

\section{Details and Computations}

In this appendix, we provide---for interested readers' convenience---additional proofs, computations, and details that were omitted in the main sections of this article.

\subsection{Preliminary Computations}

First, a number of computations throughout this appendix will make use of the following null coordinate computations for the warped metric:

\begin{lemma} \label{thm.met_coord_wp}
Let $\varepsilon \in \R$ and $\bar{g}$ be defined as in Definition \ref{def.wp_fct} and \ref{def.wp_met}.
Then:
\begin{itemize}
\item The nonzero components of $\bar{g}$ and $\bar{g}^{-1}$ in the null coordinates $( u, v, \omega )$ are given by
\begin{equation}
\label{eql.met_comp_wp} \bar{g}_{uv} \equiv -2 \text{,} \qquad \bar{g}_{ab} = \bar{\rho}^2 \mathring{\gamma}_{ab} \text{,} \qquad \bar{g}^{uv} \equiv - \frac{1}{2} \text{,} \qquad \bar{g}^{ab} = \bar{\rho}^{-2} \mathring{\gamma}^{ab} \text{.}
\end{equation}

\item The nonzero Christoffel symbols of $\bar{g}$ in the null coordinates $( u, v, \omega )$ are given by
\begin{align}
\label{eql.Gamma_comp_wp} \bar{\Gamma}^u_{ab} = \frac{1}{ 2 \bar{\rho} } ( 1 - 2 \varepsilon u ) \bar{g}_{ab} \text{,} &\qquad \bar{\Gamma}^v_{ab} = - \frac{1}{ 2 \bar{\rho} } ( 1 + 2 \varepsilon v ) \bar{g}_{ab} \text{,} \\
\notag \bar{\Gamma}^a_{ub} = - \frac{1}{ \bar{\rho} }  ( 1 + 2 \varepsilon v ) \delta^a_b \text{,} &\qquad \bar{\Gamma}^a_{vb} = \frac{1}{ \bar{\rho} } ( 1 - 2 \varepsilon u ) \delta^a_b \text{.}
\end{align}
\end{itemize}
\end{lemma}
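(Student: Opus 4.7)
The plan is to verify all four claims of the lemma by direct computation from Definitions \ref{def.wp_fct} and \ref{def.wp_met}, reducing everything to elementary derivatives of $\bar{\rho}$ in null coordinates.

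First I would establish \eqref{eql.met_comp_wp}. By Definition \ref{def.wp_met}, $\bar{g} = -4\, du\, dv + \bar{\rho}^2 \mathring{\gamma}$, and symmetrizing the mixed term gives $\bar{g}_{uv} = \bar{g}_{vu} = -2$ while $\bar{g}_{ab} = \bar{\rho}^2 \mathring{\gamma}_{ab}$ with all other components vanishing. The inverse is then immediate from the block-diagonal structure: the $(u,v)$-block is anti-diagonal with off-diagonal entries $-2$, whose inverse is anti-diagonal with off-diagonal entries $-\tfrac{1}{2}$; the angular block inverts as $\bar{\rho}^{-2}\mathring{\gamma}^{ab}$.

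Next, for the Christoffel symbols, I would apply the standard formula
\[
\bar{\Gamma}^\sigma_{\mu\nu} = \tfrac{1}{2}\bar{g}^{\sigma\lambda}\bigl(\partial_\mu \bar{g}_{\nu\lambda} + \partial_\nu \bar{g}_{\mu\lambda} - \partial_\lambda \bar{g}_{\mu\nu}\bigr).
\]
The only metric components depending on $u$ or $v$ are the angular ones, so the single ingredient needed is the $u$- and $v$-derivatives of $\bar{\rho}^2$. Using $r = v - u$ and $f = -uv$ (see Definition \ref{def.mink_fct}), Definition \ref{def.wp_met} gives $\bar{\rho} = (v - u) - 2\varepsilon u v$, and therefore
\[
\partial_u \bar{\rho} = -(1 + 2\varepsilon v), \qquad \partial_v \bar{\rho} = 1 - 2\varepsilon u,
\]
hence $\partial_u(\bar{\rho}^2) = -2\bar{\rho}(1+2\varepsilon v)$ and $\partial_v(\bar{\rho}^2) = 2\bar{\rho}(1-2\varepsilon u)$.

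Finally, I would plug these derivatives into the Christoffel formula. For instance, $\bar{\Gamma}^u_{ab}$ receives a contribution only from $\bar{g}^{uv}\partial_v \bar{g}_{ab}$ (the other terms vanish since $\bar{g}_{av} = \bar{g}_{bv} = 0$), giving $\bar{\Gamma}^u_{ab} = -\tfrac{1}{4}\cdot(-2\bar{\rho}(1-2\varepsilon u))\mathring{\gamma}_{ab} = \tfrac{1}{2\bar{\rho}}(1-2\varepsilon u)\bar{g}_{ab}$; the identities for $\bar{\Gamma}^v_{ab}$, $\bar{\Gamma}^a_{ub}$, $\bar{\Gamma}^a_{vb}$ follow by the same mechanical substitution, using $\bar{g}^{ac}\mathring{\gamma}_{bc} = \bar{\rho}^{-2}\delta^a_b$ for the mixed symbols. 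The computation has no serious obstacle; the only care required is bookkeeping of the signs and of which metric component derivatives contribute, given that only the angular block of $\bar{g}$ varies in $u$ and $v$.
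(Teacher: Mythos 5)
Your computation is correct and is exactly the routine verification the paper intends: the lemma is stated in the appendix without proof, and the direct route — read off the components from $\bar{g} = -4\,du\,dv + \bar{\rho}^2\mathring{\gamma}$, compute $\partial_u\bar{\rho} = -(1+2\varepsilon v)$ and $\partial_v\bar{\rho} = 1-2\varepsilon u$ from $\bar{\rho} = (v-u) - 2\varepsilon uv$, and substitute into the Christoffel formula — is the only natural one, and your signs and factors all check out. The single cosmetic caveat is that the purely angular symbols $\bar{\Gamma}^a_{bc}$ equal those of $\mathring{\gamma}$ and are not literally zero; the paper's phrasing "nonzero Christoffel symbols" implicitly excludes these intrinsic spherical symbols, so this does not affect your argument.
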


\subsection{Proof of Proposition \ref{thm.f_deriv_wp}}

To derive \eqref{eq.f_grad_wp}, we use \eqref{eql.met_comp_wp} to obtain that
\[
\bar{\nabla}^\sharp f = \bar{g}^{\alpha \beta} \partial_\alpha f \partial_\beta = \bar{g}^{vu} \partial_v f \partial_u + \bar{g}^{uv} \partial_u f \partial_v = \frac{1}{2} ( u \partial_u + v \partial_v ) \text{,}
\]
and that
\[
\bar{\nabla}^\alpha f \bar{\nabla}_\beta f = 2 \bar{g}^{uv} \partial_u f \partial_v f = - (-v) (-u) = f \text{.}
\]

Next, using \eqref{eql.Gamma_comp_wp}, we obtain
\[
\bar{\nabla}_{uu} f = \partial_u \partial_u f = 0 \text{,} \qquad \bar{\nabla}_{vv} f = \partial_v \partial_v f = 0 \text{,} \qquad \bar{\nabla}_{uv} f = \partial_u \partial_v f = -1 \text{,}
\]
as well as
\[
\bar{\nabla}_{ua} f = - \bar{\Gamma}^b_{u a} \partial_b f = 0 \text{,} \qquad \bar{\nabla}_{va} f = - \bar{\Gamma}^b_{v a} \partial_b f = 0 \text{.}
\]
For the purely spherical components, \eqref{eql.Gamma_comp_wp} also implies
\begin{align*}
\bar{\nabla}_{ab} f &= - \bar{\Gamma}^u_{ab} \partial_u f - \bar{\Gamma}^v_{ab} \partial_v f \\
&= \frac{1}{ 2 \bar{\rho} } [ ( - 1 + 2 \varepsilon u ) ( -v ) + ( 1 + 2 \varepsilon v ) ( -u ) ] \bar{g}_{ab} \\
&= \frac{1}{ 2 \bar{\rho} } ( r + 4 \varepsilon f ) \bar{g}_{ab} \\
&= \left( \frac{1}{2} + \frac{ \varepsilon f }{ \bar{\rho} } \right) \bar{g}_{ab} \text{,}
\end{align*}
which completes the proof of \eqref{eq.f_hess_wp}.

Contracting $\bar{\nabla}^2 f$ and recalling \eqref{eq.f_hess_wp}, we then obtain
\[
\bar{\Box} f = 2 \bar{g}^{uv} \bar{\nabla}_{uv} f + \bar{g}^{ab} \bar{\nabla}_{ab} f = 1 + (n - 1) \left( \frac{1}{2} + \frac{ \varepsilon f }{ \rho } \right) = \frac{n + 1}{2} + \frac{ (n - 1) \varepsilon f }{ \bar{\rho} } \text{,}
\]
Furthermore, recalling the second equation in \eqref{eq.f_grad_wp},
\[
\bar{\nabla}^\alpha f \bar{\nabla}^\beta f \bar{\nabla}_{\alpha \beta} f = \frac{1}{2} \bar{\nabla}^\alpha f \bar{\nabla}_\alpha ( \bar{\nabla}^\beta f \bar{\nabla}_\beta f ) = \frac{1}{2} \bar{\nabla}^\alpha f \bar{\nabla}_\alpha f = \frac{1}{2} f \text{.}
\]
Combining the above results in \eqref{eq.f_box_wp}.

\subsection{Proof of Proposition \ref{thm.f_rho_wp}}

First, \eqref{eq.f_rho_deriv_wp} follows from a direct computation:
\begin{align*}
\partial_u \left( \frac{f}{ \bar{\rho} } \right) &= - \frac{v}{ \bar{\rho} } - \frac{f}{ \bar{\rho}^2 } \partial_u \bar{\rho} = - \frac{ v ( v - u + 2 \varepsilon f ) }{ \bar{\rho}^2 } + \frac{f}{ \bar{\rho}^2 } ( 1 + 2 \varepsilon v ) = - \frac{ v^2 }{ \bar{\rho}^2 } \text{,} \\
\partial_v \left( \frac{f}{ \bar{\rho} } \right) &= - \frac{u}{ \bar{\rho} } - \frac{f}{ \bar{\rho}^2 } \partial_v \bar{\rho} = - \frac{ u ( v - u + 2 \varepsilon f ) }{ \bar{\rho}^2 } - \frac{f}{ \bar{\rho}^2 } ( 1 - 2 \varepsilon u ) = \frac{ u^2 }{ \bar{\rho}^2 } \text{.}
\end{align*}
Taking a second derivative, we then see that
\[
\partial_u \partial_v \left( \frac{f}{ \bar{\rho} } \right) = \frac{ 2 u }{ \bar{\rho}^2 } - \frac{ 2 u^2 }{ \bar{\rho}^3 } \partial_u \bar{\rho} = \frac{ 2 u ( v - u + 2 \varepsilon f ) }{ \bar{\rho}^3 } + \frac{ 2 u^2 ( 1 + 2 \varepsilon v ) }{ \bar{\rho}^3 } = \frac{ 2 u v }{ \bar{\rho}^3 } = - \frac{ 2 f }{ \bar{\rho}^3 } \text{.}
\]
Next, for spherical derivatives, we recall \eqref{eq.rho_wp}, \eqref{eql.Gamma_comp_wp}, and the above to obtain
\begin{align*}
\bar{g}^{ab} \bar{\nabla}_{ab} \left( \frac{f}{ \bar{\rho} } \right) &= - \bar{g}^{ab} \left[ \bar{\Gamma}^u_{ab} \partial_u \left( \frac{f}{ \bar{\rho} } \right) + \bar{\Gamma}^v_{ab} \partial_v \left( \frac{f}{ \bar{\rho} } \right) \right] \\
&= - \frac{ n - 1 }{ 2 \bar{\rho} } \left[ ( 1 - 2 \varepsilon u ) \left( - \frac{ v^2 }{ \bar{\rho}^2 } \right) - ( 1 + 2 \varepsilon v ) \frac{ u^2 }{ \bar{\rho}^2 } \right] \\
&= \frac{ n - 1 }{ 2 \bar{\rho}^3 } ( v^2 + u^2 + 2 \varepsilon r f ) \\
&= \frac{ n - 1 }{ 2 \bar{\rho}^3 } ( \bar{\rho}^2 - 2 f - 2 \varepsilon f \bar{\rho} ) \text{.}
\end{align*}
The identity \eqref{eq.f_rho_box_wp} follows, since
\begin{align*}
\bar{\Box} \left( \frac{f}{ \bar{\rho} } \right) &= - \partial_u \partial_v \left( \frac{f}{ \bar{\rho} } \right) + \bar{g}^{ab} \bar{\nabla}_{ab} \left( \frac{f}{ \bar{\rho} } \right) \\
&= \frac{2 f}{ \bar{\rho}^3 } + \frac{ n - 1 }{ 2 \bar{\rho}^3 } ( \bar{\rho}^2 - 2 f - 2 \varepsilon f \bar{\rho} ) \\
&= \frac{ n - 1 }{ 2 \bar{\rho} } \left( 1 - \frac{ 2 \varepsilon f }{ \bar{\rho} } \right) - \frac{ (n - 3) f }{ \bar{\rho}^3 } \text{.}
\end{align*}

\subsection{Proof of Proposition \ref{thm.TN_wp}}

First, $N$ is normal to the level sets of $f$, since by \eqref{eq.f_grad_wp},
\[
N = f^{ - \frac{1}{2} } \bar{\nabla}^\sharp f \text{.}
\]
That $T$ is tangent to the level sets of $f$ follows from the fact that
\[
T f = \frac{1}{2} f^{ - \frac{1}{2} } [ (-u) (-v) + v (-u) ] = 0 \text{.}
\]

Observe that since $\bar{\nabla}_{u a} f = \bar{\nabla}_{v a} f = 0$ by \eqref{eq.f_hess_wp}, then
\[
\bar{\nabla}_{T a} f = \bar{\nabla}_{N a} f = 0 \text{.}
\]
For the remaining components of $\bar{\nabla}^2 f$, we apply \eqref{eq.f_hess_wp} and \eqref{eq.TN} to obtain
\begin{align*}
\bar{\nabla}_{T T} f &= \frac{1}{4} f^{-1} [ 2 (-u) v \bar{\nabla}_{u v} f ] = - \frac{1}{2} \text{,} \\
\bar{\nabla}_{N N} f &= \frac{1}{4} f^{-1} [ 2 u v \bar{\nabla}_{u v} f ] = \frac{1}{2} \text{,} \\
\bar{\nabla}_{T N} f &= \frac{1}{4} f^{-1} [ (-u) v \bar{\nabla}_{u v} f + u v \bar{\nabla}_{u v} f ] = 0 \text{,}
\end{align*}
which is precisely \eqref{eq.f_TN_wp}.

\subsection{Proof of Proposition \ref{thm.pseudoconvex_wp}}

The components of $\bar{\pi}$ can be directly computed using its definition \eqref{eq.pi_wp} along with the identities \eqref{eq.f_hess_wp} and \eqref{eq.f_TN_wp}.
First, note that
\[
\bar{\pi}_{T N} = \bar{\nabla}_{T N} f = 0 \text{,} \qquad \bar{\pi}_{T a} = \bar{\nabla}_{T a} f = 0 \text{,} \qquad \bar{\pi}_{N a} = \bar{\nabla}_{N a} f = 0 \text{.}
\]
Similarly, for the nontrivial components, we compute
\begin{align*}
\bar{\pi}_{T T} &= \bar{\nabla}_{T T} f + \bar{h} = - \frac{1}{2} + \frac{1}{2} + \frac{ \varepsilon f }{ 2 \bar{\rho} } = \frac{ \varepsilon f }{ 2 \bar{\rho} } \text{,} \\
\bar{\pi}_{N N} &= \bar{\nabla}_{N N} f - \bar{h} = \frac{1}{2} - \frac{1}{2} - \frac{ \varepsilon f }{ 2 \bar{\rho} } = - \frac{ \varepsilon f }{ 2 \bar{\rho} } \text{,} \\
\bar{\pi}_{a b} &= \bar{\nabla}_{a b} f - \bar{h} \cdot \bar{g}_{a b} = \left( \frac{1}{2} + \frac{ \varepsilon f }{ \bar{\rho} } - \frac{1}{2} - \frac{ \varepsilon f }{ 2 \bar{\rho} } \right) \bar{g}_{ab} = \frac{ \varepsilon f }{ 2 \bar{\rho} } \cdot \bar{g}_{ab} \text{.}
\end{align*}
This completes the proof of \eqref{eq.pseudoconvex_wp}.

Finally, for \eqref{eq.w_box_wp}, we simply apply \eqref{eq.f_rho_box_wp}:
\[
\bar{\Box} \bar{w} = \frac{ (n - 2) \varepsilon }{ 2 } \cdot \bar{\Box} \left( \frac{f}{ \bar{\rho} } \right) = - \frac{ (n - 2) \varepsilon }{ 2 } \left[ \frac{ (n - 3) f }{ \bar{\rho}^3 } - \frac{ n - 1 }{ 2 \bar{\rho} } \left( 1 - \frac{ 2 \varepsilon f }{ \bar{\rho} } \right) \right] \text{.}
\]

\subsection{Proof of Proposition \ref{thm.met_conf}}

The identities in \eqref{eq.rf_conf} follow from direct computations.
First,
\[
f \circ \bar{\Phi} = - ( u \circ \bar{\Phi} ) ( v \circ \bar{\Phi} ) = \frac{ - u v }{ ( 1 + \varepsilon u ) ( 1 - \varepsilon v ) } = \xi^{-1} f \text{.}
\]
For the remaining identity, we combine \eqref{eq.uv} and \eqref{eq.conformal} with the above to obtain
\begin{align*}
\rho \circ \bar{\Phi} &= \frac{ v }{ ( 1 - \varepsilon v ) } - \frac{ u }{ ( 1 + \varepsilon u ) } - \frac{ 2 \varepsilon u v }{ ( 1 + \varepsilon u ) ( 1 - \varepsilon v ) } = \xi^{-1} ( v - u ) = \xi^{-1} r \text{.}
\end{align*}

Next, we compute the derivative of $u \circ \bar{\Phi}$ of $v \circ \bar{\Phi}$:
\begin{align*}
\partial_u ( u \circ \bar{\Phi} ) &= \frac{1}{ 1 + \varepsilon u } - \frac{ \varepsilon u }{ ( 1 + \varepsilon u )^2 } = \frac{1}{ ( 1 + \varepsilon u )^2 } \text{,} \\
\partial_v ( v \circ \bar{\Phi} ) &= \frac{1}{ 1 - \varepsilon v } + \frac{ \varepsilon v }{ ( 1 - \varepsilon v )^2 } = \frac{1}{ ( 1 - \varepsilon v )^2 } \text{.}
\end{align*}
In particular, by \eqref{eq.uvf_bound}, both derivatives are positive and uniformly bounded from below.
This implies $u \circ \bar{\Phi}$ and $v \circ \bar{\Phi}$ define invertible functions of $u$ and $v$, respectively, with smooth inverses.

It remains to show the conformal identity \eqref{eq.met_conf}.
For this, we first note that the above derivatives imply that the push-forwards of the null coordinate vector fields satisfy
\[
d \bar{\Phi} ( \partial_u ) = ( 1 + \varepsilon u )^{-2} \partial_u \text{,} \qquad d \bar{\Phi} ( \partial_v ) = ( 1 - \varepsilon v )^{-2} \partial_v \text{,}
\]
and hence the corresponding pullbacks satisfy
\[
\bar{\Phi}_\ast ( d u ) = ( 1 + \varepsilon u )^{-2} du \text{,} \qquad \bar{\Phi}_\ast ( d v ) = ( 1 - \varepsilon v )^{-2} dv \text{.}
\]
(Moreover, $\bar{\Phi}$ by definition leaves the angular components unchanged.)
As a result, by \eqref{eq.rf_conf},
\begin{align*}
\bar{\Phi}_\ast \bar{g} &= \bar{\Phi}_\ast ( - 4 du dv + \bar{\rho}^2 \mathring{\gamma} ) = - \frac{ 4 du dv }{ ( 1 + \varepsilon u ) ( 1 - \varepsilon v ) } + ( \bar{\rho} \circ \bar{\Phi} )^2 \mathring{\gamma} = \xi^{-2} ( - 4 du dv + r^2 \mathring{\gamma} ) \text{,}
\end{align*}
and \eqref{eq.met_conf} now follows from \eqref{eq.mink_met_alt}.

\subsection{Proof of Proposition \ref{thm.conf_wave}}

The main step is to compute the scalar curvature $\bar{\mc{R}}$ with respect to $\bar{g}$.
Since Minkowski spacetime is flat, the standard formula for the change of scalar curvature under conformal transformations (see, for instance, \cite[Appendix D]{wald:gr}) yields that\footnote{The formula found in \cite{wald:gr} only takes into account the conformal factor $\omega$, and not the presence of the diffeomorphism $\bar{\Phi}$.
However, the formula with $\bar{\Phi}$ is similar, as one only has to appropriately map points through $\bar{\Phi}$.}
\begin{align*}
\bar{\mc{R}} \circ \bar{\Phi} &= \xi^2 [ 2 n \cdot \Box ( \log \xi ) - n ( n - 1 ) \cdot g^{ \alpha \beta } \nabla_\alpha ( \log \xi ) \nabla_\beta ( \log \xi ) ] \\
&= 2 n \xi \cdot \Box \xi - n ( n + 1 ) \cdot g^{ \alpha \beta } \nabla_\alpha \xi \nabla_\beta \xi \text{.}
\end{align*}

Recalling Lemma \ref{thm.met_coord_wp} (with $\varepsilon = 0$), we compute
\[
g^{ \alpha \beta } \nabla_\alpha \xi \nabla_\beta \xi = - \partial_u \xi \partial_v \xi = \varepsilon^2 ( 1 - \varepsilon v ) ( 1 + \varepsilon u ) = \varepsilon^2 \xi \text{,}
\]
as well as
\begin{align*}
\Box \xi &= g^{ \alpha \beta } \partial_{ \alpha \beta } \xi - g^{ \alpha \beta } \Gamma_{ \alpha \beta }^\mu \partial_\mu \xi \\
&= - \partial_{ u v } \xi - g^{ a b } \Gamma_{ a b }^u \partial_u \xi - g^{ a b } \Gamma_{ a b }^v \partial_v \xi \\
&= \varepsilon^2 - \frac{ n - 1 }{ 2 r } \cdot \varepsilon ( 1 - \varepsilon v ) - \frac{ n - 1 }{ 2 r } \cdot \varepsilon ( 1 + \varepsilon u ) \\
&= \frac{ ( n + 1 ) \varepsilon^2 }{2} - \frac{ ( n - 1 ) \varepsilon }{ r } \text{.}
\end{align*}
Thus, combining all the above with \eqref{eq.rf_conf} yields
\begin{align*}
\bar{\mc{R}} \circ \bar{\Phi} &= 2 n \xi \left[ \frac{ ( n + 1 ) \varepsilon^2 }{2} - \frac{ ( n - 1 ) \varepsilon }{ r } \right] - n ( n + 1 ) \varepsilon^2 \xi \\
&= - 2 n ( n - 1 ) \varepsilon \cdot \frac{ \xi }{ r } \\
&= - \frac{ 2 n ( n - 1 ) \varepsilon }{ \bar{\rho} \circ \bar{\Phi} } \text{.}
\end{align*}

Finally, recall that the standard formula for how the wave operator changes under conformal transformations (again, see \cite[Appendix D]{wald:gr}, for instance) is given by
\[
\left[ \bar{\Box} - \frac{n - 1}{4 n} ( \bar{\mc{R}} \circ \bar{\Phi} ) \right] \xi^{ \frac{n + 1}{2} - 1 } = \xi^{ \frac{n + 1}{2} + 1 } \Box \text{.}
\]
Substituting our identity for $\bar{\mc{R}}$ into the above results in \eqref{eq.conf_wave}.

\subsection{Proof of Proposition \ref{thm.gtc_1d}}

Clearly, $\partial \mc{U}^\ell$, a disjoint union of two smooth timelike curves $\ell_1$ and $\ell_2$, is a smooth timelike hypersurface of $\R^{1+1}$.
By \eqref{eq.gtc_1d_lambda}, we have, for any $\tau \in \R$, that
\[
\{ y \in \R \mid ( \tau, y ) \in \mc{U} \} = ( \lambda_1 ( \tau ), \lambda_2 ( \tau ) )
\]
is a nonempty bounded open interval in $\R$.
Finally, for the generator, one can choose any smooth vector field $Z$ in $\R^{1+1}$ such that $Z |_{ \ell_1 ( \tau ) } = \ell_1' ( \tau )$ and $Z |_{ \ell_2 ( \tau ) } = \ell_2' ( \tau )$.

Finally, for \eqref{eq.gtc_1d_normal}, we simply note that:
\begin{itemize}
\item On the left curve $\ell_1$, the vector $- ( \lambda_1' ( \tau ), 1 )$ is pointing away from $\mc{U}^\ell$.

\item On the right curve $\ell_2$, the vector $( \lambda_2' ( \tau ), 1 )$ is pointing away from $\mc{U}^\ell$.

\item The above two vectors are normal to $\ell_1$ and $\ell_2$---for any $\tau \in \R$,
\[
\left[ \begin{matrix} - \lambda_1' ( \tau ) & - 1 \end{matrix} \right] \left[ \begin{matrix} -1 & 0 \\ 0 & 1 \end{matrix} \right] \left[ \begin{matrix} 1 \\ \lambda_1' ( \tau ) \end{matrix} \right] = 0 \text{,} \qquad \left[ \begin{matrix} \lambda_2' ( \tau ) & 1 \end{matrix} \right] \left[ \begin{matrix} -1 & 0 \\ 0 & 1 \end{matrix} \right] \left[ \begin{matrix} 1 \\ \lambda_2' ( \tau ) \end{matrix} \right] = 0 \text{.}
\]

\item The above two vectors are ($g$-)unit---for any $\tau \in \R$,
\begin{align*}
\left[ \begin{matrix} - \lambda_1' ( \tau ) & - 1 \end{matrix} \right] \left[ \begin{matrix} -1 & 0 \\ 0 & 1 \end{matrix} \right] \left[ \begin{matrix} - \lambda_1' ( \tau ) \\ -1 \end{matrix} \right] &= 1 - | \lambda_1' ( \tau ) |^2 \text{,} \\
\left[ \begin{matrix} \lambda_2' ( \tau ) & 1 \end{matrix} \right] \left[ \begin{matrix} -1 & 0 \\ 0 & 1 \end{matrix} \right] \left[ \begin{matrix} \lambda_2' ( \tau ) \\ 1 \end{matrix} \right] &= 1 - | \lambda_2' ( \tau ) |^2 \text{.}
\end{align*}
\end{itemize}

\subsection{Proof of Theorem \ref{thm.weak_sol}}

Fix a coordinate system $( \mf{t}, y^1, \dots, y^n )$ on $\bar{\mc{U}}$ such that:
\begin{itemize}
\item $y^1, \dots, y^n$ are constant along the integral curves of $Z$.

\item The $g$-gradient $\nabla^\sharp \mf{t}$ is everywhere timelike and past-directed; in particular, $Z \mf{t} > 0$.

\item $\mc{V}$ is precisely the level set $\mc{U} \cap \{ \mf{t} = 0 \}$.
\end{itemize}
In these new coordinates, \eqref{eq.linear_wave} takes the form of a second-order hyperbolic PDE,
\[
- \partial_{ \mf{t} \mf{t} } u + \sum_{ i, j = 1 }^n \partial_{ y^j } ( A^{ i j } \partial_{ y^i } u ) + \sum_{ i = 1 }^n \partial_{ y^i } ( B^i \partial_{ \mf{t} } u ) + \sum_{ i = 1 }^n X^i \partial_{ y^i } u + Y \partial_{ \mf{t} } u + V u = 0 \text{,}
\]
for some smooth coefficients $A^{i j}$, $B^i$, $X^i$, $Y$, $V$, on a static cylinder of the form $\R \times \Sigma$.

Treating $( \phi_0, \phi_1 )$ as initial data on $\Sigma$ and $\phi_b$ (or zero in the first statement of Theorem \ref{thm.weak_sol}) as boundary data on $\R \times \partial \Sigma$, we can now apply the standard well-posedness theory for linear second-order hyperbolic equations; see \cite{ev:pde, lionj_mage:bvp1}.
More specifically, invoking this well-posedness theory in $H^1_0 \times L^2$ and $L^2 \times H^{-1}$ yields existence in both cases of Theorem \ref{thm.weak_sol}.

The above also yields the energy estimates \eqref{eq.weak_sol_energy} and \eqref{eq.dual_sol_energy}, in the case that $\mc{V}_\pm$ are also level sets of $\mf{t}$.
(The $H^1_0 \times L^2$ case also furnishes a ``hidden regularity", in the form an $L^2$-bound on the Neumann data $\mc{N} \phi$.)
Moreover, \eqref{eq.weak_sol_energy} and \eqref{eq.dual_sol_energy} imply that $\phi \in L^2_\mathrm{loc} ( \mc{U} )$.

Suppose now that $\phi_1$ and $\phi_2$ are two solutions of \eqref{eq.linear_wave} (in either statement of Theorem \ref{thm.weak_sol}), and consider the difference $\phi := \phi_2 - \phi_1$, which also solves \eqref{eq.linear_wave}, lies in $L^2_\mathrm{loc} ( \mc{U} )$, and has zero traces on $\mc{V}$ and $\partial \mc{U}$.
Applying a standard unique continuation argument with respect to any ``time" coordinate $\mf{t}$, as described above, yields that $\phi$ vanishes on $\mc{U}$, establishing uniqueness.

Finally, to establish \eqref{eq.weak_sol_energy} and \eqref{eq.dual_sol_energy} in general, we repeat the above argument for another system of coordinates $( \tilde{\mf{t}}, \tilde{y}^1, \dots, \tilde{y}^n )$ such that $\mc{V}$ and $\mc{V}_\pm$ are level sets of $\tilde{\mf{t}}$.
The above uniqueness argument ensures that the solutions obtained in the two coordinate systems coincide.

\subsection{Proof of Theorem \ref{thm.control_hum}}

The proof is largely an adaptation of standard methods to the GTC setting; for references, see, e.g., \cite{micu_zua:control_pde}.
The first step is to reduce the full problem of exact controllability to the slightly simpler question of null controllability.

\begin{lemma} \label{thm.control_null}
Suppose that given any $( \alpha_0^-, \alpha_1^- ) \in L^2 ( \mc{V}_- ) \times H^{-1} ( \mc{V}_- )$, there exists an $\alpha_b \in L^2 ( \partial \mc{U} )$ that is supported in $\Gamma$ and drives the wave equation \eqref{eq.linear_wave} from $( \alpha_0^-, \alpha_0^- )$ to $( 0, 0 )$.
Then, \eqref{eq.linear_wave} is exactly controllable on $\mc{U} ( \mc{V}_-, \mc{V}_+ )$, with control on $\Gamma$.
\end{lemma}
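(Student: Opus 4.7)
The plan is to reduce exact controllability to two instances of null controllability by exploiting the time-reversal structure of the wave equation. Given initial data $( \phi_0^-, \phi_1^- ) \in L^2 ( \mc{V}_- ) \times H^{-1} ( \mc{V}_- )$ and final data $( \phi_0^+, \phi_1^+ ) \in L^2 ( \mc{V}_+ ) \times H^{-1} ( \mc{V}_+ )$, I would construct the desired boundary control as a sum $\phi_b := \alpha_b^- + \alpha_b^+$, with each summand handling one end of the problem.

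First, apply the null controllability hypothesis directly to the data $( \phi_0^-, \phi_1^- )$ to obtain $\alpha_b^- \in L^2 ( \partial \mc{U} )$, supported in $\Gamma$, that drives \eqref{eq.linear_wave} from $( \phi_0^-, \phi_1^- )$ on $\mc{V}_-$ to $( 0, 0 )$ on $\mc{V}_+$; call the resulting solution $\phi^-$. Second, to handle the final data, introduce a time-reversing diffeomorphism $\Psi : ( t, x ) \mapsto ( T - t, x )$, chosen so that $\Psi$ carries $\mc{U} ( \mc{V}_-, \mc{V}_+ )$ onto another GTC slab $\tilde{\mc{U}} ( \tilde{\mc{V}}_-, \tilde{\mc{V}}_+ )$, with $\tilde{\mc{V}}_\pm$ corresponding to $\mc{V}_\mp$ and $\Gamma$ mapped to a region $\tilde{\Gamma} \subseteq \partial \tilde{\mc{U}}$. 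Under $\Psi$, the equation \eqref{eq.linear_wave} transforms into another equation of the same form, with the same potential $V$ and with $\mc{X}$ replaced by a smooth vector field whose $t$-component has been negated. The null controllability property, understood at the natural level of generality of Problem \ref{prb.linear_wave} on GTCs (i.e., as it arises from the observability estimates of Section \ref{sec.obs}), then provides a control $\tilde{\alpha}_b^+ \in L^2 ( \partial \tilde{\mc{U}} )$, supported in $\tilde{\Gamma}$, driving the transformed equation from $( \phi_0^+, \phi_1^+ )$ on $\tilde{\mc{V}}_-$ to $( 0, 0 )$ on $\tilde{\mc{V}}_+$. Pulling back through $\Psi$ produces a solution $\phi^+$ of \eqref{eq.linear_wave} on $\mc{U}$ with $( \phi^+, Z \phi^+ ) |_{ \mc{V}_- } = ( 0, 0 )$ and $( \phi^+, Z \phi^+ ) |_{ \mc{V}_+ } = ( \phi_0^+, \phi_1^+ )$, together with a boundary control $\alpha_b^+ \in L^2 ( \partial \mc{U} )$ supported in $\Gamma$. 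Setting $\phi := \phi^- + \phi^+$, the linearity of \eqref{eq.linear_wave} and the uniqueness part of Theorem \ref{thm.weak_sol} imply that $\phi$ is the unique solution with initial data $( \phi_0^-, \phi_1^- )$ and boundary data $\phi_b$, and that it attains the final state $( \phi_0^+, \phi_1^+ )$ at $\mc{V}_+$.

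The main subtlety concerns the time-reversal step: strictly speaking, the hypothesis of the lemma asserts null controllability only for our original equation \eqref{eq.linear_wave}, whereas the second step above requires the analogous property for the companion equation obtained by negating the $t$-component of $\mc{X}$. In the context of this paper this is not a genuine obstacle, since the null controllability is ultimately derived via Corollary \ref{thm.control_combo} from the observability estimates of Section \ref{sec.obs}, which apply uniformly to both the original and the time-reversed equation; nonetheless, this point should be recorded explicitly in the write-up, either by restating the hypothesis at this natural level of generality or by invoking the observability estimates directly, in order to avoid any appearance of circularity.
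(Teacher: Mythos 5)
Your overall strategy --- splitting the task into two halves by linearity and handling the final data by time reversal --- is a genuinely different route from the paper's, but as you yourself flag at the end, it does not prove the lemma from its stated hypothesis. Your second step needs null controllability for the \emph{time-reversed} equation (same $V$, but $\mc{X}$ with its $t$-component negated, on the reflected GTC), whereas the hypothesis grants null controllability only for \eqref{eq.linear_wave} itself. Resolving this by ``restating the hypothesis at the natural level of generality'' or by reaching back to the observability estimates of Section \ref{sec.obs} changes the lemma: it is meant to be a self-contained functional-analytic reduction (exact controllability $\Leftarrow$ null controllability), usable as a black box once null controllability for the \emph{given} equation has been established. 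So as written there is a genuine gap.

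The gap is avoidable, and the fix makes the argument shorter: the final-data half of the problem requires no controllability at all, only backward \emph{well-posedness}. Let $\alpha$ be the solution of \eqref{eq.linear_wave} with vanishing Dirichlet data on $\partial \mc{U}$ and with $( \alpha, Z \alpha ) |_{ \mc{V}_+ } = ( \phi_0^+, \phi_1^+ )$; this exists and is unique by Theorem \ref{thm.weak_sol} (solving from $\mc{V}_+$ toward the past is covered by the same theorem --- no reversal of any controllability property is involved). Set $( \alpha_0^-, \alpha_1^- ) := ( \alpha, Z \alpha ) |_{ \mc{V}_- }$, and apply the null controllability hypothesis --- for the original equation, in the forward direction --- to the data $( \phi_0^- - \alpha_0^-, \phi_1^- - \alpha_1^- )$ to obtain a control $\phi_b$ supported in $\Gamma$ and a solution $\beta$ vanishing together with $Z \beta$ on $\mc{V}_+$. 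Then $\alpha + \beta$ has initial data $( \phi_0^-, \phi_1^- )$, boundary data $\phi_b$ (since $\alpha |_{ \partial \mc{U} } \equiv 0$), and final data $( \phi_0^+, \phi_1^+ )$. This is the paper's argument; I recommend you adopt it, since it closes the gap without touching the hypothesis.
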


\begin{proof}
Fix $( \phi^\pm_0, \phi^\pm_1 ) \in L^2 ( \mc{V}_\pm ) \times H^{-1} ( \mc{V}_\pm )$.
Let $\alpha$ be the solution of \eqref{eq.linear_wave} satisfying
\[
( \alpha, Z \alpha ) |_{ \mc{V}_+ } = ( \phi^+_0, \phi^+_1 ) \in L^2 ( \mc{V}_+ ) \times H^{-1} ( \mc{V}_+ ) \text{,} \qquad \alpha |_{ \partial \mc{U} } \equiv 0 \text{,}
\]
which can be obtained from Theorem \ref{thm.weak_sol}.
Moreover, we set
\[
( \alpha^-_0, \alpha^-_1 ) := ( \alpha, Z \alpha ) |_{ \mc{V}_- } \text{.}
\]
By our assumption, there exists $\phi_b \in L^2 ( \partial \mc{U} )$, supported in $\Gamma$, and a solution $\beta$ of \eqref{eq.linear_wave} satisfying
\[
( \beta, Z \beta ) |_{ \mc{V}_- } = ( \phi^-_0 - \alpha^-_0, \phi^-_1 - \alpha^-_1 ) \text{,} \qquad ( \beta, Z \beta ) |_{ \mc{V}_+ } = ( 0, 0 ) \text{,} \qquad \beta |_{ \partial \mc{U} ( \mc{V}_-, \mc{V}_+ ) } = \phi_b \text{.}
\]
Then, the solution $\alpha + \beta$ to \eqref{eq.linear_wave} shows that $\phi_b$ drives \eqref{eq.linear_wave} from $( \phi^-_0, \phi^-_1 )$ to $( \phi^+_0, \phi^+_1 )$.
\end{proof}

As a result, it remains only to prove Theorem \ref{thm.control_hum} in the case $( \phi_0^+, \phi_1^+ ) = ( 0, 0 )$.
Toward this end, the following lemma provides the main identity characterizing null controllability:

\begin{lemma} \label{thm.ibp_nc}
Let $N_-$ denote the future-pointing unit normal to $\mc{V}_-$.
Also, we set
\begin{equation}
\label{eq.ibp_XN} \mc{X}_- := g ( \mc{X}, N_- ) \text{,} \qquad N_- := a_- Z + B_- \text{,} \qquad b_- := \operatorname{div}_{ \mc{V}_- } B_- \text{,}
\end{equation}
where $a_- \in C^\infty ( \mc{V}_- )$, where the vector field $B_-$ is everywhere tangent to $\mc{V}_-$, and where $\operatorname{div}_{ \mc{V}_- }$ denotes the divergence on $\mc{V}_-$ with respect to the metric induced by $g$.
In addition, fix
\begin{equation}
\label{eq.ibp_nc_data} ( \phi^-_0, \phi^-_1 ) \in L^2 ( \mc{V}_- ) \times H^{-1} ( \mc{V}_- ) \text{,} \qquad \phi_b \in L^2 ( \partial \mc{U} ) \text{,}
\end{equation}
and assume $\phi_b$ is supported in $\Gamma$.
Then, $\phi_b$ drives \eqref{eq.linear_wave} from $( \phi^-_0, \phi^-_1 )$ to $(0, 0)$ if and only if for any $( \psi^-_0, \psi^-_1 ) \in H^1_0 ( \mc{V}_- ) \times L^2 ( \mc{V}_- )$, we have the identity
\begin{equation}
\label{eq.ibp_nc} \int_\Gamma \phi_b \mc{N} \psi = \int_{ \mc{V}_- } ( a_- \phi^-_1 + 2 B_- \phi^-_0 + b_- \phi^-_0 + \mc{X}_- \phi^-_0 ) \psi^-_0 - \int_{ \mc{V}_- } a_- \phi^-_0 \cdot \psi^-_1 \text{,}
\end{equation}
where $\psi$ denotes the corresponding solution (see Theorem \ref{thm.weak_sol}) of \eqref{eq.control_hum_adj}.
\end{lemma}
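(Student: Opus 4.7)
The plan is to derive \eqref{eq.ibp_nc} by integration by parts against the adjoint, first for smooth data and then by density for the low-regularity data allowed in \eqref{eq.ibp_nc_data}. The starting point is that any sufficiently smooth $\phi$ and $\psi$ satisfy the Green-type identity
\begin{equation*}
\mc{P}\phi \cdot \psi - \phi \cdot \mc{P}^\ast \psi = \nabla^\alpha J_\alpha [\phi, \psi] \text{,} \qquad J_\alpha [\phi, \psi] := \nabla_\alpha \phi \cdot \psi - \phi \cdot \nabla_\alpha \psi + \mc{X}_\alpha \phi \psi \text{,}
\end{equation*}
where $\mc{P}^\ast$ is the adjoint operator featured in \eqref{eq.control_hum_adj}. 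Applying the Lorentzian divergence theorem on $\mc{U} ( \mc{V}_-, \mc{V}_+ )$ to both sides yields
\begin{equation*}
\int_{ \mc{U} ( \mc{V}_-, \mc{V}_+ ) } ( \mc{P}\phi \cdot \psi - \phi \cdot \mc{P}^\ast \psi ) = \mc{B}_+ - \mc{B}_- + \mc{B}_{ \partial \mc{U} } \text{,}
\end{equation*}
where $\mc{B}_\pm$ and $\mc{B}_{ \partial \mc{U} }$ are boundary integrals along $\mc{V}_\pm$ and $\partial \mc{U} ( \mc{V}_-, \mc{V}_+ )$, respectively.

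The first step is to simplify each boundary piece under the hypotheses of the lemma. Since $\psi |_{ \partial \mc{U} } \equiv 0$, only the term $-\phi \mc{N} \psi$ from $J$ survives on $\partial \mc{U}$; combined with $\mathrm{supp} \, \phi_b \subseteq \Gamma$, this gives $\mc{B}_{ \partial \mc{U} } = - \int_\Gamma \phi_b \mc{N} \psi$. On $\mc{V}_-$, I would expand $N_- = a_- Z + B_-$ via \eqref{eq.ibp_XN}, use $(\phi, Z\phi)|_{\mc{V}_-} = (\phi_0^-, \phi_1^-)$ and $(\psi, Z\psi)|_{\mc{V}_-} = (\psi_0^-, \psi_1^-)$, and integrate by parts the tangential vector field $B_-$ on $\mc{V}_-$ (whose boundary lies in $\partial \mc{U}$, where $\psi_0^-$ vanishes as an element of $H^1_0 ( \mc{V}_- )$). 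Invoking the identity $b_- = \operatorname{div}_{ \mc{V}_- } B_-$, a direct computation matches $\mc{B}_-$ with the right-hand side of \eqref{eq.ibp_nc}. Finally, on $\mc{V}_+$ the analogous calculation expresses $\mc{B}_+$ as a pairing of $(\phi, Z\phi)|_{\mc{V}_+}$ against $(\psi, Z\psi)|_{\mc{V}_+}$ through the unit normal $N_+$ to $\mc{V}_+$ and the decomposition $N_+ = a_+ Z + B_+$.

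Since $\psi$ solves the adjoint equation \eqref{eq.control_hum_adj}, we have $\mc{P}^\ast \psi \equiv 0$; if in addition $\phi$ solves \eqref{eq.linear_wave}, then the left-hand side of the Green identity vanishes, so
\begin{equation*}
\mc{B}_+ = \mc{B}_- - \int_\Gamma \phi_b \mc{N} \psi \text{.}
\end{equation*}
To extend this from smooth to merely $( \phi_0^-, \phi_1^- ) \in L^2 ( \mc{V}_- ) \times H^{-1} ( \mc{V}_- )$ and $\phi_b \in L^2 ( \partial \mc{U} )$, I would invoke the transposition framework: the defining property of the weak solution in the second part of Theorem \ref{thm.weak_sol} is precisely that the displayed identity holds for all test data $(\psi_0^-, \psi_1^-) \in H^1_0 ( \mc{V}_- ) \times L^2 ( \mc{V}_- )$. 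The needed integrability of the boundary trace on $\Gamma$ comes from the hidden regularity $\mc{N} \psi \in L^2 ( \partial \mc{U} ( \mc{V}_-, \mc{V}_+ ) )$ in \eqref{eq.weak_sol_energy}, which makes the pairing $\int_\Gamma \phi_b \mc{N}\psi$ well-defined; density of smooth data in $L^2 ( \mc{V}_- ) \times H^{-1} ( \mc{V}_- )$ and $L^2 ( \partial \mc{U} )$, combined with \eqref{eq.weak_sol_energy} and \eqref{eq.dual_sol_energy}, propagates the identity to the full class of data in \eqref{eq.ibp_nc_data}.

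It remains to dispatch the equivalence. The forward direction is immediate: if $\phi_b$ drives $\phi$ to $(0,0)$, then $\mc{B}_+ = 0$ for every $\psi$, and the displayed identity reduces to \eqref{eq.ibp_nc}. For the converse, the main obstacle is extracting $(\phi, Z\phi)|_{\mc{V}_+} = (0, 0)$ from the vanishing of $\mc{B}_+$ against all $(\psi_0^-, \psi_1^-)$. I would handle this by noting that the map $(\psi_0^-, \psi_1^-) \mapsto (\psi, Z\psi)|_{\mc{V}_+}$ is a bijection from $H^1_0 ( \mc{V}_- ) \times L^2 ( \mc{V}_- )$ onto $H^1_0 ( \mc{V}_+ ) \times L^2 ( \mc{V}_+ )$, by time-reversibility of the adjoint wave equation, which follows from applying both estimates in \eqref{eq.weak_sol_energy} to \eqref{eq.control_hum_adj}. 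Thus the pairing $\mc{B}_+$ realizes a nondegenerate duality between $L^2 ( \mc{V}_+ ) \times H^{-1} ( \mc{V}_+ )$ and $H^1_0 ( \mc{V}_+ ) \times L^2 ( \mc{V}_+ )$ (this is where the specific algebra of $a_+, B_+, b_+, \mc{X}_+$ mirroring the $\mc{V}_-$ computation is used), and its vanishing on all test data forces $(\phi, Z\phi)|_{\mc{V}_+} = (0, 0)$, as required.
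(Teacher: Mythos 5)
Your proposal is correct and follows essentially the same route as the paper's own proof: integrate the Green identity for the pair $(\mc{P},\mc{P}^\ast)$ over $\mc{U}(\mc{V}_-,\mc{V}_+)$, simplify the $\partial\mc{U}$ and $\mc{V}_\pm$ boundary terms via the decomposition $N_\pm = a_\pm Z + B_\pm$ and a tangential integration by parts that produces $b_\pm$, and then deduce the equivalence from the observation that time-reversibility of the adjoint equation makes $(\psi, Z\psi)|_{\mc{V}_+}$ freely prescribable, so that vanishing of the $\mc{V}_+$ boundary pairing against all test data forces $(\phi, Z\phi)|_{\mc{V}_+} = (0,0)$. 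The organizational framing differs slightly (you isolate the Green current and the three boundary pieces up front before substituting the adjoint PDE), but the mathematical content and the key time-reversibility argument are the same as in the paper.
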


\begin{proof}
For convenience, we also let $N_+$ denote the future-pointing unit normal to $\mc{V}_+$, and we set
\begin{equation}
\label{eql.ibp_XN} \mc{X}_+ := g ( \mc{X}, N_+ ) \text{,} \qquad N_+ := a_+ Z + B_+ \text{,} \qquad b_+ := \operatorname{div}_{ \mc{V}_+ } B_+ \text{,}
\end{equation}
with $a_+ \in C^\infty ( \mc{V}_+ )$, $B_+$ a vector field tangent to $\mc{V}_+$, and $b_+$ the induced divergence of $B_+$ on $\mc{V}_+$.
Furthermore, we let $\phi$ be the solution (see again Theorem \ref{thm.weak_sol}) to \eqref{eq.linear_wave} which also satisfies
\begin{equation}
\label{eql.ibp_nc_data} ( \phi, Z \phi ) |_{ \mc{V}_- } = ( \phi^-_0, \phi^-_1 ) \text{,} \qquad \phi |_{ \partial \mc{U} } = \phi_b \text{.}
\end{equation}

By standard approximation arguments, it suffices to only consider the case in which $\phi$ and $\psi$ are both everywhere smooth.
Recalling \eqref{eq.linear_wave} and integrating by parts, we see that
\begin{align*}
0 &= \int_{ \mc{U} ( \mc{V}_-, \mc{V}_+ ) } ( \Box \phi + \nabla_{ \mc{X} } \phi + V \phi ) \psi \\
&= \int_{ \mc{U} ( \mc{V}_-, \mc{V}_+ ) } [ ( - \nabla^\alpha \phi \nabla_\alpha \psi + \nabla^\alpha ( \mc{X}_\alpha \phi ) \cdot \psi + ( V - \nabla^\alpha \mc{X}_\alpha ) \cdot \phi \psi ] + \int_{ \mc{V}_- } N_- \phi \cdot \psi - \int_{ \mc{V}_+ } N_+ \phi \cdot \psi \\
&= \int_{ \mc{U} ( \mc{V}_-, \mc{V}_+ ) } \phi [ \Box \psi - \nabla_{ \mc{X} } \psi + ( V - \nabla^\alpha \mc{X}_\alpha ) \psi ] + \int_{ \mc{V}_- } [ N_- \phi \cdot \psi - \phi \cdot N_- \psi + g ( \mc{X}, N_- ) \cdot \phi \psi ] \\
&\qquad - \int_{ \mc{V}_+ } [ N_+ \phi \cdot \psi - \phi \cdot N_+ \psi + g ( \mc{X}, N_+ ) \cdot \phi \psi ] - \int_{ \partial \mc{U} ( \mc{V}_-, \mc{V}_+ ) } \phi \mc{N} \psi \text{.}
\end{align*}
In the above, we also used that $\psi$ vanishes on $\partial \mc{U} ( \mc{V}_-, \mc{V}_+ )$.

Recalling \eqref{eq.control_hum_adj} and expanding using \eqref{eq.ibp_XN}, \eqref{eql.ibp_XN}, and \eqref{eql.ibp_nc_data} results in the identity
\begin{align*}
\int_\Gamma \phi_b \mc{N} \psi &= \int_{ \mc{V}_- } [ ( a_- Z \phi + B_- \phi ) \psi - \phi ( a_- Z \psi + B_- \psi ) + \mc{X}_- \phi \psi ] \\
&\qquad - \int_{ \mc{V}_+ } [ ( a_+ Z \phi + B_+ \phi ) \psi - \phi ( a_+ Z \psi + B_+ \psi ) + \mc{X}_+ \phi \psi ] \\
&= \int_{ \mc{V}_- } [ ( a_- \phi^-_1 + B_- \phi^-_0 + \mc{X}_- \phi^-_0 ) \psi^-_0 - \phi^-_0 ( a_- \psi^-_1 + B_- \psi^-_0 ) ] \\
\notag &\qquad - \int_{ \mc{V}_+ } [ ( a_+ Z \phi + B_+ \phi + \mc{X}_+ \phi ) \psi - \phi ( a_+ Z \psi + B_+ \psi ) ] \text{.}
\end{align*}
We can now integrate the quantities $\phi \cdot B_- \psi$ on $\mc{V}_\pm$ by parts.
Recalling that $\psi$ has vanishing trace on $\partial \mc{V}_\pm$, we then see that the above identity becomes
\begin{align}
\label{eql.ibp_id} \int_\Gamma \phi_b \mc{N} \psi &= \int_{ \mc{V}_- } ( a_- \phi^-_1 + 2 B_- \phi^-_0 + b_- \phi^-_0 + \mc{X}_- \phi^-_0 ) \psi^-_0 - \int_{ \mc{V}_- } a_- \phi^-_0 \cdot \psi^-_1 \\
\notag &\qquad - \int_{ \mc{V}_+ } ( a_+ Z \phi + 2 B_+ \phi + b_+ \phi + \mc{X}_+ \phi ) \psi + \int_{ \mc{V}_+ } a_+ \phi Z \psi \text{.}
\end{align}

Consider now the last term on the right-hand side of \eqref{eql.ibp_id}:
\begin{equation}
\label{eql.ibp_ip} \mc{I}_+ := - \int_{ \mc{V}_+ } ( a_+ Z \phi + 2 B_+ \phi + b_+ \phi + \mc{X}_+ \phi ) \psi + \int_{ \mc{V}_+ } a_+ \phi Z \psi \text{.}
\end{equation}
Observe that because of the time reversibility of linear wave equations, both $\psi$ and $Z \psi$ can be freely prescribed on $\mc{V}_+$ by setting appropriate values for $( \psi^-_0, \psi^-_1 )$.
Thus, it follows that $\mc{I}_+ = 0$ for all such $( \psi^-_0, \psi^-_1, \psi )$ if and only if $a_+ Z \phi + B_+ \phi - \mc{X}_+ \phi$ and $\phi$ vanish on $\mc{V}_+$.

Since $a_+ \neq 0$, it follows that $\mc{I}_+ = 0$ vanishes if and only if $( \phi, Z \phi ) |_{ \mc{V}_+ }$ vanishes.
This completes the proof of the lemma, since by \eqref{eql.ibp_id}, the condition $\mc{I}_+ = 0$ is equivalent to \eqref{eq.ibp_nc}, whereas $( \phi, Z \phi ) |_{ \mc{V}_+ } \equiv ( 0, 0 )$ if and only if $\phi_b$ drives \eqref{eq.linear_wave} from $( \phi^-_0, \phi^-_1 )$ to $(0, 0)$.
\end{proof}

Now, given $( \phi^-_0, \phi^-_1 ) \in L^2 ( \mc{V}_- ) \times H^{-1} ( \mc{V}_- )$, we define the functional $\mc{J} [ \phi^-_0, \phi^-_1 ]$ from \eqref{eq.control_hum_fct} by
\begin{align}
\label{eql.control_hum_J} \mc{J} [ \phi^-_0, \phi^-_1 ] ( \psi^-_0, \psi^-_1 ) &:= \frac{1}{2} \int_\Gamma | \mc{N} \psi |^2 - \int_{ \mc{V}_- } ( a_- \phi^-_1 + 2 B_- \phi^-_0 + b_- \phi^-_0 + \mc{X}_- \phi^-_0 ) \psi^-_0 \\
\notag &\qquad + \int_{ \mc{V}_- } a_- \phi^-_0 \cdot \psi^-_1 \text{,}
\end{align}
where $\psi$ is defined from $( \psi^-_0, \psi^-_1 )$ via \eqref{eq.control_hum_adj}.
We now connect $\mc{J} [ \phi^-_0, \phi^-_1 ]$ to null controllability:

\begin{lemma} \label{thm.control_J}
Suppose $( \psi^-_0, \psi^-_1 ) \in H^1_0 ( \mc{V}_- ) \times L^2 ( \mc{V}_- )$ is a minimizer of $\mc{J} [ \phi^-_0, \phi^-_1 ]$, and let $\psi$ denote the solution of \eqref{eq.control_hum_adj}, with initial data given by $( \psi^-_0, \psi^-_1 )$.
Then, $\mc{N} \psi |_\Gamma$ lies in $L^2 ( \Gamma )$, and its zero extension to $\partial \mc{U}$ drives \eqref{eq.linear_wave} from $( \phi^-_0, \phi^-_1 )$ to $(0, 0)$.
\end{lemma}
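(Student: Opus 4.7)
The plan is to extract the first-order (Euler--Lagrange) conditions for the minimizer of the quadratic functional $\mc{J}[\phi_0^-, \phi_1^-]$ and match them against the characterization of null controllability provided by Lemma \ref{thm.ibp_nc}. Throughout, the map sending $(\psi_0^-, \psi_1^-) \in H^1_0 ( \mc{V}_- ) \times L^2 ( \mc{V}_- )$ to the corresponding solution $\psi$ of the adjoint problem \eqref{eq.control_hum_adj} is linear, and by the ``hidden regularity'' bound \eqref{eq.weak_sol_energy} in Theorem \ref{thm.weak_sol}, the trace $\mc{N} \psi |_\Gamma$ lies in $L^2 ( \Gamma )$ with
\[
\| \mc{N} \psi \|_{ L^2 ( \Gamma ) } \lesssim \| \psi^-_0 \|_{ H^1 ( \mc{V}_- ) } + \| \psi^-_1 \|_{ L^2 ( \mc{V}_- ) } \text{.}
\]
In particular, this already furnishes the claimed $L^2$-integrability of $\mc{N} \psi |_\Gamma$; extending by zero to $\partial \mc{U}$ then gives a well-defined candidate $\phi_b \in L^2 ( \partial \mc{U} )$ supported in $\Gamma$.

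Next, I would compute the Gateaux derivative of $\mc{J} [ \phi^-_0, \phi^-_1 ]$ at the minimizer $( \psi^-_0, \psi^-_1 )$ in an arbitrary test direction $( \tilde{\psi}^-_0, \tilde{\psi}^-_1 ) \in H^1_0 ( \mc{V}_- ) \times L^2 ( \mc{V}_- )$. Linearity of \eqref{eq.control_hum_adj} yields a corresponding adjoint solution $\tilde{\psi}$; because the quadratic part of \eqref{eql.control_hum_J} polarizes to $\int_\Gamma \mc{N} \psi \cdot \mc{N} \tilde{\psi}$ and the remaining terms are linear in the initial data, the minimality condition becomes
\[
\int_\Gamma \mc{N} \psi \cdot \mc{N} \tilde{\psi} = \int_{ \mc{V}_- } ( a_- \phi^-_1 + 2 B_- \phi^-_0 + b_- \phi^-_0 + \mc{X}_- \phi^-_0 ) \tilde{\psi}^-_0 - \int_{ \mc{V}_- } a_- \phi^-_0 \cdot \tilde{\psi}^-_1 \text{,}
\]
for every choice of $( \tilde{\psi}^-_0, \tilde{\psi}^-_1 )$.

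The final step is to recognize the left-hand side as $\int_\Gamma \phi_b \mc{N} \tilde{\psi}$, where $\phi_b$ is the zero extension of $\mc{N} \psi |_\Gamma$ to $\partial \mc{U}$. The resulting identity is then exactly \eqref{eq.ibp_nc}, with the roles of $\phi_b$ and the adjoint test function as in Lemma \ref{thm.ibp_nc}. Applying the ``if'' direction of that lemma shows that $\phi_b$ drives \eqref{eq.linear_wave} from $( \phi^-_0, \phi^-_1 )$ to $(0, 0)$, which is the desired conclusion.

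I expect the main (though still mild) technical obstacle to be justifying the differentiation under the integral sign in $\mc{J}$, specifically that $( \psi_0^-, \psi_1^- ) \mapsto \int_\Gamma | \mc{N} \psi |^2$ is Gateaux-differentiable on $H^1_0 ( \mc{V}_- ) \times L^2 ( \mc{V}_- )$ with the polarized derivative stated above. This follows from the hidden regularity estimate in \eqref{eq.weak_sol_energy}, which bounds the quadratic form $( \psi_0^-, \psi_1^- ) \mapsto \| \mc{N} \psi \|_{ L^2 ( \Gamma ) }^2$ by a continuous quadratic form on $H^1_0 \times L^2$, so it is both continuous and Gateaux-differentiable in the usual way. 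Once this is in place, the Euler--Lagrange computation and invocation of Lemma \ref{thm.ibp_nc} conclude the proof with no further work.
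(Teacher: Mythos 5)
Your proposal is correct and follows essentially the same route as the paper: use the hidden-regularity bound \eqref{eq.weak_sol_energy} to obtain $\mc{N}\psi|_\Gamma \in L^2(\Gamma)$, compute the first variation of $\mc{J}[\phi_0^-,\phi_1^-]$ at the minimizer in an arbitrary direction, polarize the quadratic term to get the Euler--Lagrange identity, and then invoke Lemma \ref{thm.ibp_nc}. The extra remarks on Gateaux differentiability are a mild elaboration of what the paper leaves implicit, but the argument is the same.
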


\begin{proof}
That $\mc{N} \psi |_\Gamma \in L^2 ( \Gamma )$ is an immediate consequence of \eqref{eq.weak_sol_energy}, thus we need only show the null control property.
Let $( \alpha^-_0, \alpha^-_1 ) \in H^1_0 ( \mc{V}_- ) \times L^2 ( \mc{V}_- )$, and let $\alpha$ be the corresponding solution to \eqref{eq.control_hum_adj} from initial data $( \alpha^-_0, \alpha^-_1 )$.
Since $( \psi^-_0, \psi^-_1 )$ minimizes $\mc{J} [ \phi^-_0, \phi^-_1 ]$, then for any $h \in \R$,
\begin{align*}
0 &= \lim_{ h \rightarrow 0 } \frac{1}{h} \{ \mc{J} [ \phi^-_0, \phi^-_1 ] ( \psi^-_0 + h \alpha^-_0 , \psi^-_1 + h \alpha^-_0 ) - \mc{J} [ \phi^-_0, \phi^-_1 ] ( \psi^-_0, \psi^-_1 ) \} \\
&= \lim_{ h \rightarrow 0 } \frac{1}{2 h} \int_\Gamma [ | \mc{N} ( \psi + h \alpha ) |^2 - | \mc{N} \psi |^2 ] - \int_{ \mc{V}_- } ( a_- \phi^-_1 + 2 B_- \phi^-_0 + b_- \phi^-_0 + \mc{X}_- \phi^-_0 ) \alpha^-_0 \\
&\qquad + \int_{ \mc{V}_- } a_- \phi^-_0 \cdot \alpha^-_1 \\
&= \int_\Gamma \mc{N} \psi \mc{N} \alpha - \int_{ \mc{V}_- } ( a_- \phi^-_1 + 2 B_- \phi^-_0 + b_- \phi^-_0 + \mc{X}_- \phi^-_0 ) \alpha^-_0 + \int_{ \mc{V}_- } a_- \phi^-_0 \cdot \alpha^-_1 \text{.}
\end{align*}
Since the above holds for any $( \alpha^-_0, \alpha^-_1 )$, then Lemma \ref{thm.ibp_nc} completes the proof.
\end{proof}

Therefore, we have reduced the problem of controllability to finding a minimizer of $\mc{J} [ \phi^-_0, \phi^-_1 ]$:

\begin{lemma} \label{thm.control_J_min}
$\mc{J} [ \phi^-_0, \phi^-_1 ]$ has a unique minimizer $( \psi^-_0, \psi^-_1 ) \in H^1_0 ( \mc{V}_- ) \times L^2 ( \mc{V}_- )$.
\end{lemma}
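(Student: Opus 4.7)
The plan is to apply the direct method of the calculus of variations, showing that $\mc{J}[\phi_0^-,\phi_1^-]$ is continuous, strictly convex, and coercive on the Hilbert space $\mc{H} := H^1_0(\mc{V}_-) \times L^2(\mc{V}_-)$. Let me write $J := \mc{J}[\phi_0^-,\phi_1^-]$ for brevity, and split
\[
J(\psi_0^-,\psi_1^-) = \tfrac{1}{2} Q(\psi_0^-,\psi_1^-) - L(\psi_0^-,\psi_1^-),
\]
where $Q(\psi_0^-,\psi_1^-) := \|\mc{N}\psi\|_{L^2(\Gamma)}^2$ and $L$ is the (manifestly bounded) linear functional on $\mc{H}$ consisting of the remaining terms in \eqref{eql.control_hum_J}. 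Note $L$ is continuous on $\mc{H}$ by the Cauchy--Schwarz inequality together with the duality pairing $H^1_0 \times H^{-1}$; the smooth bounded coefficients $a_-$, $b_-$, $B_-$, $\mc{X}_-$ cause no trouble.

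The first step is continuity of $Q$. For any $(\psi_0^-,\psi_1^-) \in \mc{H}$, the hidden regularity bound from \eqref{eq.weak_sol_energy} (the first part of Theorem \ref{thm.weak_sol}, applied to the adjoint equation \eqref{eq.control_hum_adj}) gives
\[
\|\mc{N}\psi\|_{L^2(\Gamma)}^2 \;\leq\; \|\mc{N}\psi\|_{L^2(\partial \mc{U}(\mc{V}_-,\mc{V}_+))}^2 \;\lesssim\; \|\psi_0^-\|_{H^1(\mc{V}_-)}^2 + \|\psi_1^-\|_{L^2(\mc{V}_-)}^2,
\]
so $Q$ is a continuous nonnegative quadratic form on $\mc{H}$, and hence $J$ is continuous on $\mc{H}$.

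The crucial step is coercivity, and this is where the hypothesized observability inequality \eqref{eq.control_hum_obs} enters as the essential ingredient. By \eqref{eq.control_hum_obs},
\[
\|\psi_0^-\|_{H^1(\mc{V}_-)}^2 + \|\psi_1^-\|_{L^2(\mc{V}_-)}^2 \;\lesssim\; Q(\psi_0^-,\psi_1^-).
\]
Combining this with the continuity $|L(\psi_0^-,\psi_1^-)| \leq C\|(\psi_0^-,\psi_1^-)\|_{\mc{H}}$ yields, for some constants $c_1,c_2 > 0$,
\[
J(\psi_0^-,\psi_1^-) \;\geq\; c_1 \|(\psi_0^-,\psi_1^-)\|_{\mc{H}}^2 - c_2 \|(\psi_0^-,\psi_1^-)\|_{\mc{H}},
\]
which goes to $+\infty$ as $\|(\psi_0^-,\psi_1^-)\|_\mc{H} \to \infty$. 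Moreover, since $Q$ now defines a norm on $\mc{H}$ equivalent to the ambient Hilbert norm, $J$ is strictly convex.

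With these three properties in hand, the remainder is standard. Take a minimizing sequence $(\psi_{0,k}^-,\psi_{1,k}^-) \in \mc{H}$; coercivity gives a uniform bound in $\mc{H}$, so a subsequence converges weakly to some $(\psi_0^-,\psi_1^-) \in \mc{H}$. Since $J$ is continuous and convex, it is weakly lower-semicontinuous, so $(\psi_0^-,\psi_1^-)$ is a minimizer. Uniqueness follows from strict convexity in the standard way (any two minimizers would force strict inequality at their midpoint). The main (and only nontrivial) obstacle is the use of observability to obtain coercivity; everything else is standard Hilbert space functional analysis, and it is exactly this reduction that exhibits how observability yields exact controllability through the HUM framework.
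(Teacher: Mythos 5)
Your proposal is correct and follows essentially the same route as the paper: continuity from the energy/hidden-regularity estimate \eqref{eq.weak_sol_energy}, coercivity from the observability inequality \eqref{eq.control_hum_obs}, strict convexity from the quadratic term (the paper verifies this by computing the convex-combination defect as $\tfrac{\lambda(1-\lambda)}{2}\int_\Gamma |\mc{N}(\alpha-\beta)|^2$ and invoking observability, which is the same fact as your norm-equivalence observation), and then the direct method. No gaps.
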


\begin{proof}
By the estimates \eqref{eq.weak_sol_energy} and the linearity of \eqref{eq.linear_wave}, we obtain that $\mc{J} [ \phi^-_0, \phi^-_1 ]$ is continuous.
Furthermore, the observability estimate \eqref{eq.control_hum_obs} yields constants $c, C > 0$ such that
\begin{align*}
\mc{J} [ \phi^-_0, \phi^-_1 ] ( \alpha^-_0, \alpha^-_1 ) &\geq c [ \| \alpha^-_0 \|_{ H^1 ( \mc{V}_- ) }^2 + \| \alpha^-_1 \|_{ L^2 ( \mc{V}_- ) }^2 ] \\
&\qquad - C [ \| \phi^-_0 \|_{ L^2 ( \mc{V}_- ) } + \| \phi^-_1 \|_{ H^{-1} ( \mc{V}_- ) } ] [ \| \alpha^-_0 \|_{ H^1 ( \mc{V}_- ) } + \| \alpha^-_1 \|_{ L^2 ( \mc{V}_- ) } ] \text{,}
\end{align*}
for all $( \alpha^-_0, \alpha^-_1 ) \in H^1_0 ( \mc{V}_- ) \times L^2 ( \mc{V}_- )$, which establishes that $\mc{J} [ \phi^-_0, \phi^-_1 ]$ is coercive.

We next claim that $\mc{J} [ \phi^-_0, \phi^-_1 ]$ is strictly convex.
Indeed, given any
\[
\lambda \in (0, 1) \text{,} \qquad ( \alpha^-_0, \alpha^-_1 ), ( \beta^-_0, \beta^-_1 ) \in H^1_0 ( \mc{V}_- ) \times L^2 ( \mc{V}_- ) \text{,} \qquad ( \alpha^-_0, \alpha^-_1 ) \neq ( \beta^-_0, \beta^-_1 ) \text{,}
\]
and letting $\alpha$, $\beta$ be the solutions of \eqref{eq.control_hum_adj}, with data $( \alpha^-_0, \alpha^-_1 )$ and $( \beta^-_0, \beta^-_1 )$, respectively, we have
\begin{align*}
&\lambda \mc{J} [ \phi^-_0, \phi^-_1 ] ( \alpha^-_0, \alpha^-_1 ) + ( 1 - \lambda ) \mc{J} [ \phi^-_0, \phi^-_1 ] ( \beta^-_0, \beta^-_1 ) \\
&\qquad - \mc{J} [ \phi^-_0, \phi^-_1 ] ( \lambda \alpha^-_0 + ( 1 - \lambda ) \beta^-_0, \lambda \alpha^-_1 + ( 1 - \lambda ) \beta^-_1 ) \\
&\quad = \frac{1}{2} \int_\Gamma \{ \lambda | \mc{N} \alpha |^2 + ( 1 - \lambda ) | \mc{N} \beta |^2 - | \mc{N} [ \lambda \alpha + ( 1 - \lambda ) \beta ] |^2 \} \\
&\quad = \frac{ \lambda ( 1 - \lambda ) }{ 2 } \int_\Gamma | \mc{N} ( \alpha - \beta ) |^2 \text{.}
\end{align*}
Applying the observability inequality \eqref{eq.control_hum_obs} to $\alpha - \beta$ and $( \alpha^-_0 - \beta^-_0, \alpha^-_1 - \beta^-_1 )$, we see that the right-hand side of the above is strictly positive, hence $\mc{J} [ \phi^-_0, \phi^-_1 ]$ is strictly convex.

Via the direct method in the calculus of variations (see, for instance, \cite{daco:direct_calcvar}), the above considerations then imply that $\mc{J} [ \phi^-_0, \phi^-_1 ]$ indeed has a unique minimizer.
\end{proof}

Let $( \psi^-_0, \psi^-_1 )$ now denote the minimizer from Lemma \ref{thm.control_J_min}, and let $\psi$ denote the corresponding solution to \eqref{eq.control_hum_adj}.
By Lemma \ref{thm.control_J}, the zero extension of $\mc{N} \psi |_\Gamma$ drives \eqref{eq.linear_wave} from $( \phi^-_0, \phi^-_1 )$ to $(0, 0)$, and hence Lemma \ref{thm.control_null} implies exact controllability for \eqref{eq.linear_wave} on $\mc{U} ( \mc{V}_-, \mc{V}_+ )$, with control on $\Gamma$.

Finally, suppose $\phi_b \in L^2 ( \partial \mc{U} )$ also is supported in $\Gamma$ and drives \eqref{eq.linear_wave} from $( \phi^-_0, \phi^-_1 )$ to $(0, 0)$.
Applying Lemma \ref{thm.ibp_nc} twice, first to the control $\mc{N} \psi |_\Gamma$ and then to $\phi_b$, we obtain that
\begin{align*}
\int_\Gamma | \mc{N} \psi |^2 &= \int_{ \mc{V}_- } ( a_- \phi^-_1 + 2 B_- \phi^-_0 + b_- \phi^-_0 + \mc{X}_- \phi^-_0 ) \psi^-_0 - \int_{ \mc{V}_- } a_- \phi^-_0 \cdot \psi^-_1 \\
&= \int_\Gamma \phi_b \mc{N} \psi \\
&\leq \| \phi_b \|_{ L^2 ( \Gamma ) } \| \mc{N} \psi \|_{ L^2 ( \Gamma ) } \text{,}
\end{align*}
and the inequality \eqref{eq.control_hum_min} follows immediately.

\fi

\raggedright
\raggedbottom
\bibliographystyle{amsplain}
\bibliography{articles,books,misc}

\end{document}